\documentclass[11pt,reqno]{amsproc}
\usepackage[english]{babel}
\usepackage[margin=1in]{geometry}
\usepackage[utf8]{inputenc}
\usepackage{amsmath, amsthm, amssymb}
\usepackage{hyperref}
\hypersetup{colorlinks=true, pdfstartview=FitV, linkcolor=BrickRed,citecolor=BrickRed, urlcolor=BrickRed}
\usepackage[abbrev,lite,nobysame]{amsrefs}
\usepackage{times}
\usepackage[usenames,dvipsnames]{color}
\usepackage{mathtools}
\usepackage{bm}
\usepackage{verbatim}


\renewcommand{\bar}[1]{\underline{#1}}
\newcommand{\cH}{\eta} 

\def\nh {{\rm nh}}
\def\h {{\rm h}}
\def\d {{\rm d}}
\def\eq {{\rm eq}}

\DeclareMathOperator{\dd}{d\!}
\DeclareMathOperator{\Id}{Id}
\DeclareMathOperator{\esssup}{ess\,sup}
\DeclareMathOperator{\di}{div}

\def\NN {\mathbb{N}}
\def\RR {\mathbb{R}}

\def\balpha{{\bm{\alpha}}}
\def\bbeta{{\bm{\beta}}}
\def\bgamma{{\bm{\gamma}}}

\newcommand{\be}{{\bm{e}}}
\newcommand{\bx}{{\bm{x}}}
\newcommand{\bu}{{\bm{u}}}
\newcommand{\bv}{{\bm{v}}}
\newcommand{\br}{{\bm{r}}}

\newcommand{\bL}{{\bm{L}}}
\newcommand{\bP}{{\bm{P}}}
\newcommand{\bR}{{\bm{R}}}
\newcommand{\bU}{{\bm{U}}}

\newcommand{\cE}{\mathcal{E}}
\newcommand{\cF}{\mathcal{F}}

\newcommand{\cC}{\mathcal{C}}

\newcommand{\Peq}{\bar P_\eq}
\newcommand{\Ph}{P_\h}
\newcommand{\Pnh}{P_\nh}

\def\i{\imath }

\def\de{{\partial}}

\DeclarePairedDelimiter\norm{\big\lvert}{\big\rvert}
\DeclarePairedDelimiter\Norm{\big\lVert}{\big\rVert}

\numberwithin{equation}{section}
\theoremstyle{definition}

\newtheorem{proposition}{Proposition}[section]
\newtheorem{theorem}[proposition]{Theorem}
\newtheorem{corollary}[proposition]{Corollary}
\newtheorem{lemma}[proposition]{Lemma}
\newtheorem{remark}[proposition]{Remark}

\title[On the hydrostatic limit of stably stratified fluids]{On the hydrostatic limit of stably stratified fluids with isopycnal diffusivity}

\author[R.\ Bianchini]{Roberta Bianchini}
\address{IAC, Consiglio Nazionale delle Ricerche, 00185 Rome, Italy}
\email{roberta.bianchini@cnr.it}
\author[V.\ Duchêne]{Vincent Duchêne}
\address{IRMAR, CNRS, Univ. Rennes, F-35000 Rennes, France}
\email{vincent.duchene@univ-rennes.fr}

\subjclass{35Q35, 76B03, 76B70, 76M45, 76U60, 86A05}
\keywords{Hydrostatic limit, stratified fluids, isopycnal diffusivity}

\begin{document}
\begin{abstract}
This article is concerned with rigorously justifying the hydrostatic limit for continuously stratified incompressible fluids under the influence of gravity.

The main distinction of this work compared to previous studies is the absence of any (regularizing) viscosity contribution added to the fluid-dynamics equations; only thickness diffusivity effects are considered. Motivated by applications to oceanography, the diffusivity effects in this work arise from an additional advection term, the specific form of which was proposed by Gent and McWilliams in the 1990s to model the effective contributions of geostrophic eddy correlations in non-eddy-resolving systems.

The results of this paper heavily rely on the assumption of stable stratification. We establish the well-posedness of the hydrostatic equations and the original (non-hydrostatic) equations for stably stratified fluids, along with their convergence in the limit of vanishing shallow-water parameter. These results are obtained in high but finite Sobolev regularity and carefully account for the various parameters involved.

A key element of our analysis is the reformulation of the systems using isopycnal coordinates, enabling us to provide meticulous energy estimates that are not readily apparent in the original Eulerian coordinate system.
\end{abstract}

\maketitle

\section{Introduction}
The following system describes the evolution of heterogeneous incompressible flows under the influence of gravity,
\begin{equation}\label{eq:non-hydrostatic}
	\begin{aligned}
		\de_t \rho + (\bu+\bu_\star) \cdot \nabla_\bx \rho + (w+w_\star) \de_z \rho&=0, \\
		\rho\big( \de_t \bu + ((\bu+\bu_\star) \cdot \nabla_\bx) \bu + (w+w_\star) \de_z \bu\big) + \nabla_\bx P&=0, \\
		\rho\big(\de_t w +(\bu+ \bu_\star) \cdot \nabla_\bx w + (w+w_\star) \de_z w \big)+ \de_z P +g\,\rho&=0, \\
		\nabla_\bx \cdot\bu +\de_z w&=0,\\
		P|_{z=\zeta}-P_{\rm atm}&=0,\\
		\de_t\zeta+(\bu+\bu_\star)|_{z=\zeta}\cdot\nabla_\bx \zeta-(w+w_\star)|_{z=\zeta}&=0,\\
		w|_{z=-H}&=0.
	\end{aligned}
\end{equation}
Here, $t$ and $(\bx,z)$ are the time, and horizontal-vertical space variables, and we denote by $\nabla_\bx,\nabla_\bx\cdot, \Delta_\bx$ the gradient, divergence and Laplacian with respect to $\bx$. The vector field $(\bu, w) \in \RR^d \times \RR$ is the (horizontal and vertical) velocity, $\rho>0 $ is the density, $P\in\RR$ is the incompressible pressure, all being defined in the spatial domain 
\[
\Omega_t=\{(\bx,z) \ : \ \bx\in\RR^d ,\ -H<z<\zeta(t,\bx)\}, 
\]
where $\zeta(t, \bx)$ describes the location of a free surface, and $H$ is the depth of the layer at rest. The gravity field is assumed to be constant and vertical, and $g>0$ is the gravity acceleration constant.  Finally, the advection terms associated with the 'bolus velocity' $(\bu_\star, w_\star) \in \RR^d \times \RR$ were proposed by Gent and McWilliams~\cite{GMCW90}. These terms are introduced to account for the contribution of geostrophic eddy correlations to the effective transport velocities in non-eddy-resolving (large-scale) models. Their specific forms in the simplest case of constant diffusivity parameter $\kappa$ read as follows
\begin{equation}\label{eq:McWilliams}
	\bu_\star=\kappa \de_z\left(\frac{\nabla_\bx\rho}{\de_z\rho}\right)\ , \quad w_\star = -\kappa \nabla_\bx\cdot\left(\frac{\nabla_\bx\rho}{\de_z\rho}\right)\,, \qquad \kappa>0\,.
\end{equation}
Discarding the effective advection terms  (i.e. setting $\kappa=0$), one recovers the Euler equations for heterogeneous incompressible fluids under the influence of vertical gravity forces, where the last two lines of~\eqref{eq:non-hydrostatic} model the kinematic equation at the free surface and the impermeability condition of the rigid bottom respectively.

In~\eqref{eq:non-hydrostatic}, the pressure $P$ can be recovered from its (atmospheric) value at the surface, $P_{\rm atm}$, by solving the elliptic boundary-value problem induced by the incompressibility constraint of divergence-free velocity fields. Yet in
the shallow-water regime, where the horizontal scale of the perturbation is large compared with the depth of the layer $H$, formal computations (see below) suggest that vertical accelerations can be neglected and that the pressure $P$ approximately satisfies the hydrostatic balance law, that is
\begin{equation}\label{eq:hydr-balance}
	\de_z P+g\,\rho=0.
\end{equation}
Replacing the equation for the vertical velocity in ~\eqref{eq:non-hydrostatic} by the identity in \eqref{eq:hydr-balance} yields the so-called \emph{hydrostatic equations}:
\begin{equation}\label{eq:hydrostatic}
	\begin{aligned}
		\de_t \rho + (\bu+\bu_\star)  \cdot \nabla_\bx \rho + (w+w_\star)  \de_z \rho&=0, \\
		\rho\big( \de_t \bu + ((\bu+\bu_\star) \cdot \nabla_\bx) \bu + (w+w_\star) \de_z \bu\big) + \nabla_\bx P&= 0, \\
		\de_t\zeta+(\bu+\bu_\star)|_{z=\zeta}\cdot\nabla_\bx \zeta-(w+w_\star)|_{z=\zeta}&=0,\\
		P&=P_{\rm atm}+g\int_z^\zeta \rho(z',\cdot) \dd z',\\
		w&=-\int_{-H}^z\nabla_\bx \cdot\bu(z',\cdot) \dd z'.
	\end{aligned}
\end{equation}
{\em Our aim in this work is to rigorously justify the hydrostatic equations~\eqref{eq:hydrostatic} as an asymptotic model for the non-hydrostatic equations~\eqref{eq:non-hydrostatic}-\eqref{eq:McWilliams} in the shallow-water regime, for regular and stably stratified flows.}

	\subsection{The ``bolus'' velocities in geophysical flows}
	Let us now delve into the physical motivation behind introducing additional transport velocities, $\bu_\star$ and $w_\star$, defined in~\eqref{eq:McWilliams}, into the systems of equations~\eqref{eq:non-hydrostatic} and~\eqref{eq:hydrostatic}. As mentioned earlier, these ``bolus'' velocities serve as modifications to the standard incompressible Euler equations, whether with or without the hydrostatic assumption, derived from fundamental principles of fluid mechanics. In this regard, they play a role analogous to the viscosity contributions in the Navier--Stokes equations. Concerning the latter, it is worth pointing out that in theoretical and laboratory studies on density-stratified geophysical flows, viscosity effects do not model molecular viscosity but rather ``turbulent'' or ``eddy'' viscosities.
	It is important to mention that in the shallow-water regime, where horizontal scales are larger than vertical scales, the non-dimensionalization of standard molecular viscosity contributions results in anisotropic viscosity terms that are predominant in the vertical direction. In contrast, turbulent viscosity is widely reported to be predominant in the horizontal (or more precisely isopycnal) direction; see, for example, \cite{Griffies2003}*{Section 17.6}. In this work, we choose to neglect viscosity effects entirely and instead focus on eddy-induced diffusivity.

	The deterministic modeling of effective diffusivity induced by eddy correlation that we adopt in this work takes its roots in the 1990s and is due to Gent \& McWilliams~\cite{GMCW90}, see also~\cites{GMCW95, GMCW96}. 
	Motivated by the need to model the averaged dissipative effects of mesoscale eddies, which were not computationally feasible to resolve at that time, on the large-scale flow at the macroscopic level, they consider unknowns $(\rho, \bu, w)$ as the large-scale components of the density and velocity field, respectively. The approach involves introducing suitable correctors to the mass conservation equation and equations for tracers (such as salinity and temperature). Specifically, they suggest incorporating bolus velocity fields $\bu_\star$ and $w_\star$, defined in~\eqref{eq:McWilliams}.
	Adding the bolus velocity contribution in the momentum conservation equation was suggested in \cites{GMCW96,Gent01} as well.\footnote{Let us point out that our analysis would hold (and would in fact be simpler) without the contributions $\bu_\star$ and $w_\star$ in the evolution equations for the velocity. We add these terms because we believe they are important from a modeling point of view, and would play a crucial role in a refined analysis of the large-time behavior and/or less regular (weak) solutions.}  One of the main ingredient leading to the specific form of the bolus velocity fields is that the averaged dissipative effect of mesoscale eddies should act predominantly along isopycnals sheets (that is along and not across surfaces of equal densities).
	It should be mentioned that the Gent \& McWilliams eddy-induced advection is only part of the subgridscale parametrization of mesoscale eddies used in global ocean general circulation models, which also include Redi's eddy-induced diffusion~\cite{Redi82}. Specifically, the equations for tracers including the so-called Gent--McWilliams--Redi eddy parametrization read (see \cite{KornTiti} and references therein)
	\begin{equation}\label{eq.tracers}
		\partial_t C+ (\bU_3\cdot \nabla_3) C= \nabla_3\cdot \big(K_{\rm R} \nabla_3 C\big)+ \nabla_3\cdot \big(K_{\rm GM} \nabla_3 C\big),
	\end{equation}
	where $C$ is a tracer (typically the temperature $\theta$ and the salinity $S$), $\bU_3:=(\bu,w)$ denotes the three-dimensional velocity field, and $\nabla_3$ the three-dimensional gradient. Above, we define $K_{\rm R} $ and $K_{\rm GM}$ by
	\[ K_{\rm R}:=\frac{K_I}{1+|\bL|^2}
	\begin{pmatrix} 1+L_y^2 & -L_x L_y & L_x \\
		-L_x L_y & 1+L_x^2 & L_y \\
		L_x & L_y & |\bL|^2
	\end{pmatrix}
	+\frac{K_D}{1+|\bL|^2}
	\begin{pmatrix} L_x^2 & L_x L_y & -L_x \\
		L_x L_y & L_y^2 &- L_y \\
		-L_x & -L_y & 1
	\end{pmatrix}
	\]
	\[ K_{\rm GM}:=\kappa
	\begin{pmatrix} 0 & 0 & -L_x \\
		0 & 0 & -L_y \\
		L_x & L_y & 0
	\end{pmatrix}
	\]
	where we used the notation $\bL:=(L_x,L_y)=\frac{-\nabla_{\bx} \rho}{\partial_z\rho}$, and the non-negative constants $K_I$ and $K_D$ are the isoneutral and dianeutral diffusivity coefficients. Notice that the tensor $K_{\rm R}$ is symmetric, while the tensor $K_{\rm GM}$ is skew-symmetric, so that the dissipative effect due to Gent \& McWilliams eddy-advection is not obvious. In fact, there has been some debate about the adiabatic nature of the Gent \& McWilliams parametrization \cite{Gent11}. As it is evident from the formulation of the equations in isopycnal coordinates (see equations \eqref{eq:hydro-iso-intro0} below), the Gent \& McWilliams eddy-induced advection terms introduce a diffusive contribution to the thickness variable. This is why the parameter $\kappa$ is often referred to as the {\em thickness diffusivity} parameter.
	
	Another important aspect is that, when applied to the density variable, the isoneutral contribution of Redi's eddy-induced diffusion vanishes exactly, while the dianeutral contribution becomes an isotropic three-dimensional Laplacian, and the Gent \& McWilliams contribution can be interpreted as advection. Consequently, in situations where the equation of state $\rho=\rho(\theta,S)$ is a linear combination of the tracers (with equal diffusivity parameters for all tracers), the equations \eqref{eq.tracers} yield
	\[ \partial_t \rho+ (\bU_3\cdot \nabla_3) \rho +(\bU_3^\star\cdot \nabla_3 )\rho = K_D\Delta_3\rho,\]
	where we denote $\bU_3^\star:=(\bu^\star,w^\star)$ defined in~\eqref{eq:McWilliams}.
	Since the dianeutral diffusivity coefficient is typically set much smaller than the isoneutral diffusivity coefficient, $0\leq K_D\ll K_I$, it makes sense to neglect the right-hand side of the above equation. This results in the mass conservation equations in~\eqref{eq:non-hydrostatic} and~\eqref{eq:hydrostatic}.

	Let us emphasize that our framework leaves aside many important ingredients which are usually considered in the so-called primitive equations modeling large-scale flows (see {\em e.g.} \cite{Griffies2003}), and which are in fact responsible for the emergence of mesoscale eddies that led to the Gent--McWilliams--Redi eddy parametrization: typically the rotational effects, vertical boundaries, bathymetry and nonlinear equation of state. 
	As such neither the non-hydrostatic system \eqref{eq:non-hydrostatic} nor the hydrostatic system~\eqref{eq:hydrostatic} can be considered as relevant global ocean models. 
	This work is motivated by studying on a theoretical grounds the interplay between (stable) stratification, hydrostatic (shallow water) limits, and the eddy-induced thickness diffusivity contributions.
	Many of these aforementioned constituents which are essential to modeling aspects are not necessarily important in the mathematical analysis, and could be easily incorporated in our study at the price of blurring the main mechanisms that we would like to pinpoint, while interesting singular limits (geostrophic balance, boundary layers, {\em etc.}) would of course require and deserve a specific treatment.
	
	Let us finally mention that there exists a huge mathematical literature dedicated to the investigation of fluid-dynamics equations in the probabilistic setting, where the cumulative effect of mesoscale eddies on the large-scale flow is modeled by means of suitable (additive or multiplicative) noises. For that context, we refer to~\cites{flandoliGL2021,flandoliGL2021-2,ChapronCrisanHolmEtAl23,ChapronCrisanHolmEtAl24}, while our setting will be completely deterministic.

	\subsection{Two-velocity hydrodynamics in other contexts} \label{S.two-velocities} Interestingly, equations involving two distinct velocities have been proposed and studied independently, in different contexts. We refer, for instance, to a series of works initiated by Brenner in \cites{Brenner04,Brenner05} (see also \cite{Brenner12}) on compressible barotropic flows, which are described by the following equations:
	\begin{equation}\label{eq:Brenner}
		\begin{aligned}
			\partial_t \varrho+\nabla_\bx\cdot(\varrho\bu)&=\nabla_\bx\cdot(\kappa\nabla_\bx  \varrho),\\
			\partial_t(\varrho\bu)+\di_\bx\big(\varrho\bu\otimes (\bu-\kappa\tfrac{\nabla_\bx  \varrho}{ \varrho}) \Big)+ \nabla_\bx p&=\di_\bx\mathbb S(\nabla_\bx\bu),
		\end{aligned}
	\end{equation}
	where the convention for the divergence $\di_\bx$ is such that $\di_\bx(\bu\otimes\bv)=(\bv\cdot\nabla_\bx)\bu+(\nabla_\bx\cdot\bv)\bu$, and $\mathbb S$ is the standard Newtonian viscous stress:
	\begin{equation}\label{eq:viscous-stress}
		\mathbb S(\nabla_\bx\bu)=\mu\big(\nabla_\bx\bu+\nabla_\bx^{\rm T}\bu \big)+\lambda(\nabla_\bx\cdot\bu) \Id.
	\end{equation}
	Moreover, $\varrho$ (resp. $\bu$) represents the density (resp. velocity), and $p=p(\varrho)$ is the given pressure function.
	The system \eqref{eq:Brenner} proposes additional diffusive contributions to the standard barotropic Navier--Stokes equation. Remarkably, these additional terms bear a striking resemblance to the Gent \& McWilliams ``bolus velocity'' advection when expressed in isopycnal coordinates. Indeed, anticipating the derivation in Section \ref{S.isopycnal}, the hydrostatic system \eqref{eq:hydrostatic} may be reformulated as
	\begin{equation}\label{eq:hydro-iso-intro0}
		\begin{aligned}
			\partial_t  h+\nabla_\bx\cdot(h\bu)&=\nabla_\bx\cdot(\kappa\nabla_\bx  h),\\
			\partial_t(h\bu)+\di_\bx\big(\varrho\bu\otimes(\bu-\kappa\tfrac{\nabla_\bx  h}{ h})\Big)+ h\nabla_\bx \psi&=0,
		\end{aligned}
	\end{equation}
	where $h$ represents the infinitesimal thickness of pycnoclines, and the explicit expression of the Montgomery potential $\psi$ is given below. Despite their apparent similarity, there are important differences between \eqref{eq:Brenner} and \eqref{eq:hydro-iso-intro0}, in addition from the obvious fact that the viscous stress is discarded in the latter ($\mu=\lambda=0$).
	In \eqref{eq:Brenner}, the space variable $\bx$ is typically three-dimensional, whereas in analogous situations, it is only two-dimensional in \eqref{eq:hydro-iso-intro0}. However, the variables $h$ and $\bu$ therein depend on an additional ``density'' variable, which labels pycnoclines based on their vertical distribution. Importantly, the contribution $\psi$ is not expressed as a function of $h$ but rather as a linear operator within this infinite-dimensional framework:
	\begin{equation}\label{eq:Montgomery-intro}
		\psi(t,\bx,r)
		=g\int_{\rho_0}^r \min(1,r'/r) h(t,\bx,r')\dd r' .
	\end{equation}
	In particular, a crucial difference between \eqref{eq:Brenner} and \eqref{eq:hydro-iso-intro0} is that, in the absence of any diffusive and viscous effect ($\kappa=\mu=\lambda=0$), the initial-value problem for \eqref{eq:Brenner} is locally-in-time well-posed in Sobolev spaces $H^s(\RR^d)$ when $s>d/2+1$ for initial data satisfying the non-vacuum condition $\varrho\geq c_0>0$ and provided that the pressure function is sufficiently regular and satisfies $p'(\varrho)>0$, while  the well-posedness of the initial-value problem for \eqref{eq:hydro-iso-intro0} in finite-regularity spaces is an open problem, as we discuss below.
	
	Notwithstanding this fact we can remark that, from a mathematical point of view, the diffusive contributions ($\kappa>0$) provide a very valuable regularization mechanism which remedies issues encountered in the mathematical theory (in particular the global-in-time existence of weak solutions) of the standard Navier--Stokes system, that is the absence of uniform bounds on the density $\varrho$, the possibility of the  appearance of vacuum regions, and the possible development of uncontrollable oscillations for the density due to the low regularity of the velocity field; see \cites{FeireislVasseur10,FeireislGwiazdaSwierczewska-GwiazdaEtAl16}. In fact, motivated by the construction of suitable regulariza\-tions of the compressible Euler equations (in the baroclinic framework), Guermond and Popov derived independently a generalization of \eqref{eq:Brenner} without invoking  phenomenological assumptions \cite{GuermondPopov14}.

	Interestingly, systems similar to \eqref{eq:Brenner} arise also in relation with compressible barotropic flows with degenerate viscosities (which coincides when $p(\rho)\propto \rho^2$ with the viscous shallow water equations advocated by Gent in \cite{Gent93} and derived from the Navier--Stokes equations in \cites{GerbeauPerthame01,Marche07,BreschNoble07}, and to a variant of the the Aw--Rascle model for traffic flows studied in \cite{ChaudhuriGwiazdaZatorska23} in the pressureless case; similar models also appear in quantum hydrodynamics, see \cite{Juengel10}). Indeed, restricting to spatial dimension $d=1$, discarding the Newtonian viscous stress ($\mu=\lambda=0$), and denoting $v =u-\kappa\frac{\de_x\varrho}{\varrho}$, we find after straightforward computations that \eqref{eq:Brenner} reads
	\begin{equation}\label{eq:Bresch}
		\begin{aligned}
			\partial_t \varrho+\de_x(\varrho v)&=0,\\
			\partial_t(\varrho v)+\de_x\big(\varrho v^2 \big)+ \nabla_\bx p&=\de_x(\kappa\varrho\de_x v).
		\end{aligned}
	\end{equation}
	In the above, the diffusive contributions act as a degenerate viscosity, in the sense that the variable viscosity coefficient vanishes when the density $\varrho$ vanishes. In their analysis of such systems (and generalizations thereof), Bresch and Desjardins introduced the so-called BD entropy in \cites{BreschDesjardins03,BreschDesjardins04}. In fact this BD entropy stems from a reformulation of systems of the form \eqref{eq:Bresch} (in higher dimension) with viscous stresses as in \eqref{eq:viscous-stress} but with variable coefficients $\lambda(\varrho)$ and $\mu(\varrho)$, under a form analogous to \eqref{eq:Brenner}. This is made possible thanks to a key cancellation when a special algebraic relation between $\lambda(\varrho)$ and $\mu(\varrho)$ holds, namely $\lambda(\varrho)=2(\varrho\mu'(\varrho)-\mu(\varrho))$. The system \eqref{eq:Bresch} corresponds to the special case $\mu(\rho)=\frac12\kappa\varrho$ and $\lambda(\varrho)=0$ initially considered in \cite{BreschDesjardinsLin03}. This discovery triggered a series of work, including \cites{MelletVasseur07,VasseurYu16,BreschDesjardinsZatorska15,BreschVasseurYu22} (see also references therein) which address the global existence of weak solutions to the barotropic Navier--Stokes equations with degenerate viscosities. In \cites{BreschDesjardinsZatorska15,BreschVasseurYu22}, the authors extend the BD entropy to a so-called $\kappa$-entropy, considering the equation for the velocity $v+\theta\kappa\frac{\de_x \varrho}{\varrho}$ together with the equation for $v$  (where $\theta\in(0,1)$ is a free parameter, which is denoted $\kappa$ in \cite{BreschDesjardinsZatorska15}). This leads to an entropy depending on the parameter $\theta$, where the choice $\theta=0$ corresponds to the standard energy associated with \eqref{eq:Bresch}, the choice $\theta=1$ corresponds to the BD entropy, while the choice $\theta=1/2$ was considered in \cite{GisclonLacroix-Violet15}. In all these works, the crucial matter consists in exploiting some compactness on the density variable $\varrho$ gained from the parabolic nature of the evolution equation for the density when written with suitable modified velocities so as to prevent, in particular, the appearance of vacuum regions. Let us clarify again that our analysis concerns local-in-time regular solutions, and that for this matter the diffusivity-induced regularizing mechanism that we exploit is much less subtle than in the previously mentioned works, while the key difficulties in our analysis stem from the pressure contributions, namely \eqref{eq:Montgomery-intro} in \eqref{eq:hydro-iso-intro0}.
	
	Finally, we would like to highlight the works~\cites{CoudercDuranVila17,DuranVilaBaraille17}, in which the authors introduce (artificially) diffusive contributions analogous to those in \eqref{eq:Brenner} at the discrete level for the multilayer shallow water system. This is done with the aim of developing numerical schemes that ensure control over the discrete total energy.
	\footnote{The effective velocities are defined slightly differently in \cites{CoudercDuranVila17,DuranVilaBaraille17} compared with \eqref{eq:hydro-iso-intro0}, 
			since the analogous fully continuous equations read
			\[
			\begin{aligned}
				\partial_t  h+\nabla_\bx\cdot(h\bu)&=\nabla_\bx\cdot(\kappa\nabla_\bx  \psi),\\
				\partial_t(h\bu)+\di_\bx\big(\varrho\bu\otimes(\bu-\kappa\tfrac{\nabla_\bx  \psi}{ h})\Big)+ h\nabla_\bx \psi&=0,
			\end{aligned}
			\]
			where $\psi$ is defined in \eqref{eq:Montgomery-intro}. The multilayer shallow water system can be viewed as a system of several equations of the form \eqref{eq:Brenner}, coupled through the pressure contributions. Alternatively it may be interpreted as a semi-discretized (with respect to the density variable) version of the system \eqref{eq:hydro-iso-intro0}, as rigorously shown in \cite{Adim}. }

\subsection{Results on the hydrostatic equations and hydrostatic limit}
The initial-value problem for the hydrostatic equations \eqref{eq:hydrostatic} (and \eqref{eq:hydro-iso-intro0}-\eqref{eq:Montgomery-intro}) without diffusivity or viscosity contributions is not well-posed in finite-regularity functional spaces in general. In fact, restricting to homogeneous flows (that is $\rho$ being constant), ill-posedness was established by Renardy~\cite{Renardy09} at the linear level, and by Han-Kwan and Nguyen~\cite{HanKwanNguyen16} at the nonlinear level. Yet if we additionally assume that the initial data satisfy the Rayleigh condition of (strict) convexity/concavity in the vertical direction, well-posedness is restored~\cites{Brenier99,Grenier99,MasmoudiWong12}. 
The picture is very different in the framework of stably stratified flows. The celebrated Miles and Howard criterion~\cites{Miles61,Howard61} states that the linearized equations about equilibria $(\bar\rho(z),\bar\bu'(z))$ do not exhibit unstable modes (in dimension $d = 1$, see~\cite{Gallay}*{Remark 1.3} when $d = 2$) provided that the local Richardson number is greater than 1/4 everywhere, that is
\[\forall z\in [-H,0],\qquad  |\bar\bu'(z)|^2\leq 4 g \left(\frac{-\bar\rho'(z)}{\bar\rho(z)}\right).\]
Notice that the stabilizing (resp. destabilizing) effect of the stable stratification (resp. shear velocity) is clearly encoded by the above criterion.
However, let us underline again that the local\footnote{Incidentally, notice that it was proven that smooth solutions to the hydrostatic equations in the homogeneous framework can develop singularities in finite time; see~\cites{CaoIbrahimNakanishiEtAl15,Wong15}, and~\cite{IbrahimLinTiti21} in the presence of rotation. Again, such a result is not known in the stably stratified setting. Our study will be limited to local-in-time solutions.} well-posedness of the (nonlinear) hydrostatic equations in finite regularity spaces, even for initial data (strictly) satisfying the above inequality, is an open problem. 

This is in sharp contrast with the available results on the non-hydrostatic equations. In this context, we mention the recent work by Desjardins, Lannes and Saut~\cite{Desjardins-Lannes-Saut},
which is the closest to our framework and provides the well-posedness of the (inviscid and non-diffusive) non-hydrostatic equations in Sobolev spaces (using the rigid-lid assumption). Even though the stabilizing effect of the stable stratification is also a key ingredient of that work, it is not powerful enough to prove that the lifespan of the solutions to the non-hydrostatic equations 
is uniform with respect to the shallow-water parameter measuring the ratio of vertical to horizontal lengths, without additional smallness conditions on the initial data. A more detailed comparison between~\cite{Desjardins-Lannes-Saut} and our results is provided in Section \ref{S.strategy} below. 

From the technical viewpoint, the reason of this discrepancy -- in terms of the available results -- between the non-hydrostatic and  hydrostatic equations is that the vertical velocity variable $w$ changes its role passing from prognostic (when it belongs to the set of unknowns) to diagnostic (when it is reconstructed from the unknowns), whence losing one order of regularity; see the fourth equation in~\eqref{eq:hydrostatic}. In an analogous way, the available control on the pressure contributions lose one derivative with respect to the horizontal space variable between the non-hydrostatic and hydrostatic systems. One of the main observations in~\cite{Desjardins-Lannes-Saut} and in our work is that for stably stratified flows the equations benefit from a symmetric structure which can be exploited to partially (but not fully) overcome the difficulties related to these loss of derivatives. In this way we are able to pinpoint contributions which may generate high-frequency instabilities, revealing clearly the destabilizing influence of shear velocities. 

In order to deal with loss of derivatives without restricting the analysis to the analytic setting as in~\cites{KukavicaTemamVicolEtAl11,PaicuZhangZhang20}, one natural approach is the introduction of (regularizing) viscosity and diffusivity contributions.
This is the framework of most of the theoretical studies concerning the hydrostatic equations and/or the hydrostatic limit, starting with the work 
of Azérad and Guillén~\cite{AzeradGuillen2001}. A landmark in the theory is the work of Cao and Titi~\cite{CaoTiti07} where the 
global well-posedness of the initial-value problem for the hydrostatic equations was proved in dimension $d+1=3$: this striking result should be compared with the state of the art concerning the Navier-Stokes equations.
Several mathematical studies were later established to investigate situations involving partial viscosities and diffusivities, as well as more physically relevant boundary conditions.
Rather than providing an extensive bibliography for this huge set of results, we limit ourselves to point out the works~\cites{CT2014,CT2016}, which extended the previous results to the case where only horizontal viscosity and diffusivity are added to the equations. 
We also mention~\cites{LiTiti2019,FurukawaGigaHieberEtAl20,LiTitiYuan21} (in the homogeneous case) and~\cites{PuZhou21,PuZhou22} (in the heterogeneous case) for recent results on the hydrostatic limit and an extended list of references (therein).

A specificity of our analysis with respect to the previous ones (with the exception of~\cite{Desjardins-Lannes-Saut}) is that we shall crucially {\em use} the (stable) density stratification assumption, but we completely neglect viscosity-induced regularization and only allow for thickness diffusivity effects. We shall also keep track of all relevant parameters in our estimates, and in particular we use diffusivity-induced regularization only when crucially needed. This allows to characterize the relevant convergence rates and timescales, and to exhibit a balance between the destabilizing effect of shear velocities and the stabilizing result of thickness diffusivity.
Moreover, to the best of our knowledge,
this is the first rigorous mathematical study where the specific form of the diffusivity contributions, due to Gent and McWilliams~\cite{GMCW90} and modeling effective diffusivity induced by eddy correlation, is taken into account, with the exception of the work of Korn and Titi \cite{KornTiti} which appeared after our work was completed. In~\cite{KornTiti}, the authors extend the work~\cite{CaoTiti07} to a framework with the full Redi-Gent-McWilliams parametrization and nonlinear equation of state, pointing out the necessity of using a suitably regularized density in the definitions of the diffusion operators in~\eqref{eq.tracers}. Such regularization is not necessary in our analysis, which differs by two main ingredients. Firstly, as previously mentioned, we restrict ourselves to stably stratified flows and local-in-time regular solutions. Secondly, we will consider the equations~\eqref{eq:non-hydrostatic} and~\eqref{eq:hydrostatic} in the isopycnal coordinate system.

	\subsection{Plan of the paper} 
	
	The paper is organized as follows. In Section \ref{S.Main-results} we first rewrite the systems of equations~\eqref{eq:non-hydrostatic} and~\eqref{eq:hydrostatic} with bolus velocities \eqref{eq:McWilliams} in isopycnal coordinates. This yields systems~\eqref{eq:nonhydro-iso-intro} and~\eqref{eq:hydro-iso-intro}, respectively. We then present the main results of our work, namely Theorem~\ref{thm-well-posedness} on the initial-value problem for the hydrostatic equations~\eqref{eq:hydro-iso-intro}, and Theorem~\ref{thm-convergence} on the strong convergence of solutions to the non-hydrostatic equations~\eqref{eq:nonhydro-iso-intro} towards corresponding solutions to the hydrostatic equations~\eqref{eq:hydro-iso-intro}, as the shallow-water parameter vanishes.
	
	Section~\ref{S.Hydro} is dedicated to the proof of Theorem~\ref{thm-well-posedness}.
	
	In Section~\ref{S.NONHydro}, we analyze the non-hydrostatic equations. We first provide elliptic estimates for the boundary-value problem of the pressure reconstruction (Lemma~\ref{L.Poisson} and Corollary~\ref{C.Poisson}), and use them to infer two partial results concerning the initial-value problem: Proposition~\ref{P.NONHydro-small-time} (restricted to small time) and Proposition~\ref{P.NONHydro-large-time} (restricted to small data). 
	
	In Section~\ref{S.Convergence}, we bypass these restrictions assuming sufficiently small values of the shallow-water parameter, and complete the proof of Theorem~\ref{thm-convergence}. 
	
	Finally, in Appendix~\ref{S.Appendix} we provide product, commutator and composition estimates in anisotropic Sobolev spaces which are of independent interest.

	\section{Main results}\label{S.Main-results}

\subsection{The model in isopycnal coordinates and non-dimensionalization}\label{S.isopycnal}
Let us consider smooth solutions to~\eqref{eq:non-hydrostatic} defined on a time interval $I_t$. Assuming that the flow is \emph{stably stratified}, i.e.
\[
\inf(-\de_z \rho ) > 0,
\]
the density $\rho: z \rightarrow \rho(\cdot, \cdot, z)$ is an invertible function of $z$. We denote its inverse $\cH: \varrho \rightarrow \cH(\cdot, \cdot, \varrho)$, so that
\[
\rho(t, \bx, \cH(t, \bx, \varrho))=\varrho, \quad \cH(t, \bx, \rho(t, \bx, z))=z.
\]
We also assume that $\rho(t, \bx, -H)=\rho_1, \; \rho(t, \bx, \zeta(t, \bx))=\rho_0$ for $(t, \bx) \in I_t \times \RR^d$, where $\rho_0 < \rho_1$ are two fixed and positive constant reference densities. Then we have
\begin{equation}\label{def:eta}
	\cH :I_t\times\Omega\to \RR \quad \text{ with } \quad \Omega := \RR^d\times (\rho_0, \rho_1) \quad \text{ and } \quad  h:=- \partial_\varrho \cH>0 ,
\end{equation}
the latter inequality accounting for the stable stratification assumption.
We now introduce 
\[
\check\bu(t,\bx,\varrho)=\bu(t,\bx,\cH(t,\bx,\varrho)), \quad \check w(t,\bx,\varrho)=w(t,\bx, \cH(t,\bx,\varrho)),  \quad \check P(t,\bx,\varrho)=P(t,\bx, \cH(t,\bx,\varrho)).
\]
From the chain rule, we infer that
system~\eqref{eq:non-hydrostatic} in isopycnal coordinates reads 
\begin{equation}\label{eq:nonhydro-iso}
	\begin{aligned}
		\partial_t \cH+\check\bu \cdot\nabla_\bx \cH-\check w&=\kappa \Delta_\bx \cH,\\
		\varrho\Big( \partial_t\check\bu+\big((\check\bu-\kappa\tfrac{\nabla_\bx  h}{ h}\big)\cdot\nabla_\bx \big)\check\bu\Big)+ \nabla_\bx \check P+\frac{\nabla_\bx \cH}{ h} \partial_\varrho \check P&=0,\\
		\varrho\Big( \partial_t\check w+\big(\check \bu-\kappa\tfrac{\nabla_\bx  h}{ h}\big)\cdot\nabla_\bx \check w\Big)- \frac{\de_\varrho \check P}{ h} + g \varrho&=0,\\
		-h\nabla_\bx\cdot \check\bu-(\nabla_\bx \cH)\cdot (\partial_\varrho\check\bu) +\partial_\varrho \check w&=0,\\
		\check P\big|_{\varrho=\rho_0} =P_{\rm atm}, \qquad   \check w\big|_{\varrho=\rho_1}&=0.
	\end{aligned}
\end{equation}
Notice that differentiating with respect to $\varrho$ the first equation and using the fourth equation (stemming from the incompressibility constraint), the mass conservation reads
\begin{equation}\label{eq:h}
	\partial_t h+\nabla_\bx\cdot(h\check\bu)=\kappa\Delta_\bx h.
\end{equation}

At this point, we are ready to introduce a dimensionless version of the previous system. 
We are interested in deviations from steady solutions to the incompressible Euler equations with variable density:
\[
({h_{\rm eq}},\bu_{\rm eq}, w_{\rm eq}, P_{\rm eq})=({\bar h(\varrho)},\bar\bu(\varrho), 0, \bar P(\varrho)), 
\]
which satisfy the equilibrium condition
\[
\de_\varrho \bar P(\varrho) = g\varrho \bar h(\varrho).
\]
Therefore, we consider (non-necessarily small) fluctuations of that steady solution, so that our unknowns admit the following decomposition: 
\begin{align*}
	h(t,\bx,\varrho)&{=\bar h(\varrho)+h_{\rm pert}(t,\bx,\varrho) },& \quad \check\bu(t,\bx,\varrho)&=\bar{\bu}(\varrho)+ \bu_{\rm pert}(t,\bx,\varrho),  \\  
	\check w(t,\bx,\varrho)&=0+ w_{\rm pert}(t,\bx,\varrho), &\quad \check P(t,\bx,\varrho)&=\bar P(\varrho)+ P_{\rm pert}(t,\bx,\varrho).
\end{align*}
Furthermore, we non-dimensionalize the equations through the following scaled variables: we set
\[
\frac{h (t, \bx, \varrho)}{H}=\tilde {\bar h}(\varrho) +  \tilde h (\tilde t, \tilde\bx, \varrho)  \quad \text{ and } \quad \frac{\check \bu(t,\bx,\varrho)}{\sqrt{gH}}=\tilde{\bar\bu}( \varrho)+ \tilde \bu(\tilde t,\tilde\bx, \varrho),
\]
and \footnote{Notice the different scaling between the horizontal and vertical velocity fields. There, $\lambda$ is a reference horizontal length.}
\[   \frac{\lambda}{H}\frac{\check w(t,\bx,\varrho)}{\sqrt{gH}}= \tilde w(\tilde t,\tilde\bx, \varrho), \quad \frac{\check P(t,\bx,\varrho)}{gH}=\frac{P_{\rm atm}}{ g H} +\int_{\rho_0}^\varrho \varrho' \tilde{\bar h}(\varrho') \, d\varrho'+ \tilde P(\tilde t,\tilde\bx, \varrho),\]
where we use the following scaled coordinates\footnote{\label{f.rho}We could scale also the $\varrho$-coordinate. Adjusting accordingly the other variables, we can set without loss of generality $\rho_0=1$. In the following we shall not discuss the dependency with respect to $\rho_1$, and in particular the physically relevant limit of small density contrast, $\frac{\rho_1-\rho_0}{\rho_0}\ll1$; see~\cite{Duchene16} and references therein. } 
\[ \tilde \bx=\frac{\bx}{\lambda}  \quad \text{ and } \quad  \tilde t=\frac{\sqrt{gH}}{\lambda} t.\]
Introducing the dimensionless (thickness) diffusion parameter, $\tilde\kappa$ and the shallowness parameter, $\mu$, through
\[ \tilde \kappa = \frac{\kappa}{ \lambda\sqrt{gH}}  \quad \text{ and } \quad  \mu=\frac{H^2}{\lambda^2},\]
substituting the scaled coordinates/variables in system~\eqref{eq:nonhydro-iso} and the subsequent equation {\em and dropping the tildes for the sake of readability} yields 
\begin{align}\label{eq:nonhydro-iso-intro}
	\partial_{ t}   h+\nabla_{ \bx} \cdot\big(({\bar h}+ h)({\bar \bu}+   \bu)\big)&= \kappa \Delta_{ \bx}  h,\notag\\
	\varrho\Big( \partial_{ t}  \bu+\big(({\bar \bu} +   \bu -  \kappa\tfrac{\nabla_{ \bx}  h}{ {\bar h}+  h})\cdot\nabla_\bx\big)  \bu\Big)+ \nabla_{ \bx}  P+\frac{\nabla_{ \bx}  \cH}{ {\bar h}+  h}(\varrho {\bar h} + \partial_\varrho   P ) &=0,\\
	\mu \varrho\Big( \partial_{ t}  w+(\bar \bu +   \bu -  \kappa\tfrac{\nabla_{ \bx}   h}{ \bar h+  h})\cdot\nabla_{ \bx}   w\Big)- \frac{ \de_\varrho  P}{\bar h+  h} + \frac{\varrho  h}{\bar h +   h}&=0,\notag\\
	-(\bar h +   h) \nabla_{\bx} \cdot  \bu-\nabla_{\bx}  \cH  \cdot({\bar \bu}'+\partial_\varrho\bu) +\partial_\varrho  w&=0, \notag\quad \text{(div.-free cond.)} \\
	\cH(\cdot, \varrho)=\int_{\varrho}^{\rho_1}  h(\cdot, \varrho')\dd\varrho', \qquad   P\big|_{\varrho=\rho_0} =0, \qquad   w\big|_{\varrho=\rho_1}&=0.  \notag \quad \text{(bound. cond.)}
\end{align}

The hydrostatic system is obtained by setting $\mu=0$ in~\eqref{eq:nonhydro-iso-intro}. Specifically, plugging the hydrostatic balance
\[ \frac{ \de_\varrho  P}{\bar h+  h}  = \frac{\varrho  h}{\bar h +   h} \quad \text{ and } \quad   P\big|_{\varrho=\rho_0} =0\]
into the second equation of~\eqref{eq:nonhydro-iso-intro} yields
\begin{subequations}\label{eq:hydro-iso-intro}
	\begin{equation}\label{eq:hydro-iso-intro-eq}
		\begin{aligned}
			\partial_t  h+\nabla_\bx\cdot((\bar h+ h)({\bar \bu}+\bu))&=\kappa\Delta_\bx  h,\\
			\varrho\Big( \partial_t\bu+\big(({\bar \bu}+\bu-\kappa\tfrac{\nabla_\bx  h}{\bar h+ h})\cdot\nabla_\bx \big)\bu\Big)+ \nabla_\bx \psi&=0,
		\end{aligned}
	\end{equation}
	with 
	\begin{align} 
		\psi(t,\bx,\varrho)
		&=\int_{\rho_0}^\varrho \varrho' h(t,\bx,\varrho')\dd\varrho' + \varrho \int_{\varrho}^{\rho_1} h(t, \bx, \varrho')\dd\varrho' \nonumber\\
		&=\rho_0\int_{\rho_0}^{\rho_1} h(t,\bx,\varrho')\dd\varrho'   +\int_{\rho_0}^\varrho \int_{\varrho'}^{\rho_1}  h(t,\bx,\varrho'')\dd\varrho''\dd\varrho'. \label{eq:nablaphi-intro}
	\end{align}
\end{subequations}
We shall provide a rigorous proof of the convergence of (smooth) solutions to~\eqref{eq:nonhydro-iso-intro} towards (smooth) solutions to~\eqref{eq:hydro-iso-intro} as $\mu \searrow 0$, under the stable stratification assumption, $\bar h+h>0$.

\subsection{Our main results}
Our main results are stated and commented below. Some notations, and in particular the Sobolev spaces $H^{s,k}(\Omega)$, are introduced right after. First, we prove the existence, uniqueness and control of the solutions to the hydrostatic system~\eqref{eq:hydro-iso-intro} for sufficiently smooth initial data. Let us point out that the existence time of our solutions encodes the aforementioned stabilizing (resp. destabilizing) effect of the stable stratification and thickness diffusivity (resp. shear velocity).

\begin{theorem}\label{thm-well-posedness} 
	Let $s,k\in\NN$ be such that $s> 2 +\frac d 2$, $2\leq k\leq s$, and $\bar M,M,h_\star,h^\star>0$ and $0<\rho_0<\rho_1$ be fixed. Then there exists $C>0$ such that for any $\kappa\in(0,1]$, any $\bar h,\bar \bu\in W^{k,\infty}((\rho_0,\rho_1)) $ satisfying
	\[  \norm{\bar h}_{W^{k,\infty}_\varrho } + \norm{\bar \bu'}_{W^{k-1,\infty}_\varrho }\leq \bar M\]
	and any initial data $(h_0, \bu_0) \in H^{s,k}(\Omega)$, with $\cH_0(\cdot,\varrho)=\int_{\varrho}^{\rho_1} h_0(\cdot,\varrho')\dd\varrho'$, satisfying 
	the following estimate
	\[
	M_0:=\Norm{\cH_0}_{H^{s,k}}+\Norm{\bu_0}_{H^{s,k}}+\norm{\cH_0\big\vert_{\varrho=\rho_0}}_{H^s_\bx}+\kappa^{1/2}\Norm{h_0}_{H^{s,k}}
	\le M;
	\]
	and the stable stratification assumption
	\[ \forall (\bx,\varrho)\in  \Omega , \qquad h_\star \leq  \bar h(\varrho)+h_0(\bx,\varrho) \leq h^\star , \]
	there exists a unique $(h^\h,\bu^\h)\in   \cC^0([0,T];H^{s,k}(\Omega)^{1+d})$ solution to~\eqref{eq:hydro-iso-intro} and $(h^\h,\bu^\h)\big\vert_{t=0}=(h_0,\bu_0)$, where
	\begin{equation}\label{def:time}
		T^{-1}= C\, \big(1+ \kappa^{-1} \big(\norm{\bar \bu'}_{L^2_\varrho}^2+M_0^2\big)  \big).
	\end{equation}
	Moreover, $h^\h\in  L^2(0,T;H^{s+1,k}(\Omega))$ and  one has, for any $t\in[0,T]$,
	\[ \forall (\bx,\varrho)\in  \Omega , \qquad h_\star/2 \leq  \bar h(\varrho)+h(t,\bx,\varrho) \leq 2\,h^\star , \]
	and, denoting $\cH^\h(\cdot,\varrho)=\int_{\varrho}^{\rho_1} h^\h(\cdot,\varrho')\dd\varrho'$,
	\begin{multline*}
		\Norm{\cH^\h(t,\cdot)}_{H^{s,k}}+\Norm{\bu^\h(t,\cdot)}_{H^{s,k}} +\norm{\cH^\h\big\vert_{\varrho=\rho_0}(t,\cdot)}_{H^s_\bx} +\kappa^{1/2}\Norm{h^\h(t,\cdot)}_{H^{s,k}} \\+ \kappa^{1/2} \Norm{\nabla_\bx \cH^\h}_{L^2(0,T;H^{s,k})} +  \kappa^{1/2} \norm{\nabla_\bx \cH^\h \big\vert_{\varrho=\rho_0}  }_{ L^2(0,T;H^s_\bx)} +\kappa \Norm{\nabla_\bx h^\h}_{L^2(0,T;H^{s,k})} \leq CM_0.
	\end{multline*}
\end{theorem}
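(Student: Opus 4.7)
The plan is to construct a solution by a parabolic iteration scheme, founded on a priori energy estimates in the anisotropic space $H^{s,k}(\Omega)$ that carefully track the role of $\kappa$. Given an iterate $(h^{(n)},\bu^{(n)})$, the linearized equation for $h^{(n+1)}$ is a linear advection--diffusion equation, parabolic in $\bx$ with coefficient $\kappa$, while the equation for $\bu^{(n+1)}$ is a transport equation with forcing $\nabla_\bx \psi^{(n+1)}$, where $\psi^{(n+1)}$ is obtained from $h^{(n+1)}$ by a double $\varrho$-integration as in~\eqref{eq:nablaphi-intro}. The existence of each linear iterate is standard; the heart of the argument is to close uniform bounds in the norm controlled in the theorem, so that one can extract a limit on a common time interval.

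For the a priori estimates I would apply $\partial_\bx^\alpha \partial_\varrho^\beta$ with $|\alpha|+\beta\leq s$ and $\beta\leq k$ to both equations, and test the first by $\partial^{(\alpha,\beta)} h$ and the second by $\varrho\,\partial^{(\alpha,\beta)} \bu$. The dissipation from the $h$-equation yields the coercive term $\kappa\Norm{\nabla_\bx\partial^{(\alpha,\beta)} h}_{L^2}^2$, which is the sole source of regularization since viscosity is absent. The key algebraic identity to uncover is the symmetric cancellation between the pressure term in the momentum equation and the transport term in the mass equation: integration by parts recasts $\int \nabla_\bx\psi\cdot \bu$ as $-\int \psi\,\nabla_\bx\cdot\bu$, and via the mass equation $\nabla_\bx\cdot\bu$ is (up to diffusive and commutator contributions) related to $\partial_t h/(\bar h+h)$. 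Since $\psi$ is essentially a double $\varrho$-primitive of $h$, two integrations by parts in $\varrho$ should convert this into the time derivative of a coercive quadratic form in $\cH$, producing the $\Norm{\cH}_{H^{s,k}}$ control together with the boundary contribution $\norm{\cH\vert_{\varrho=\rho_0}}_{H^s_\bx}$ from $\varrho=\rho_0$; this is exactly why the theorem controls $\cH$ (gaining one $\varrho$-derivative) at the $L^\infty_tH^{s,k}$ level, while $h$ itself only appears with a $\kappa^{1/2}$ weight.

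The main obstacle, and the reason $\kappa^{-1}$ appears in the definition of $T$ in~\eqref{def:time}, is the control of the ``destabilizing'' contributions coming from the background shear $\bar\bu'$ and the top-order commutators between $\partial^{(\alpha,\beta)}$ and the modified transport field $\bar\bu+\bu-\kappa\tfrac{\nabla_\bx h}{\bar h+ h}$. Several of these terms formally lose one $\bx$-derivative and cannot be handled by integration by parts alone. They are tamed by absorbing the top-order loss into the parabolic dissipation via Young's inequality, at the cost of a constant of size $\kappa^{-1}(\norm{\bar\bu'}_{L^2_\varrho}^2+\cdots)$; this is precisely the mechanism producing the stated time of existence. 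The product, commutator and composition estimates of the appendix in $H^{s,k}$ will be invoked repeatedly to treat nonlinear factors such as $(\bar h+h)^{-1}$ and to estimate lower-order remainders, using in particular $s>2+d/2$ to keep $\bar h+h$ bounded away from zero on a short time interval.

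Once the uniform bound holds on $[0,T]$, a contraction estimate in a lower-regularity norm (obtained by repeating the above scheme on the difference of two iterates) yields convergence of the scheme and, separately, uniqueness. The persistence of the pointwise bounds $h_\star/2\leq \bar h+h\leq 2 h^\star$ follows from the Sobolev embedding $H^{s,k}\hookrightarrow L^\infty$ together with the short-time control of $\Norm{\partial_t h}_{L^\infty}$ provided by the equation; continuity in time with values in $H^{s,k}$ is then standard given the parabolic smoothing for $h$ and the transport structure for $\bu$.
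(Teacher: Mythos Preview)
Your proposal has the right large-scale architecture (energy method, symmetric pairing between mass and momentum equations, absorption of a bad term into the $\kappa$-dissipation via Young), but two specific points are off in ways that would derail the argument as written.

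First, you misidentify the term that forces the $\kappa^{-1}$ in \eqref{def:time}. The top-order commutators between $\partial_\bx^\alpha\partial_\varrho^\beta$ and the transport field $\bar\bu+\bu-\kappa\tfrac{\nabla_\bx h}{\bar h+h}$ are in fact benign: once you organize them as in Lemma~\ref{lem:quasilinearization}, the remainders $R_{\balpha,j},\bR_{\balpha,j}$ are controlled by $CM(1+\kappa\Norm{\nabla_\bx h}_{H^{s,k}})$ without any $\kappa^{-1}$. The genuinely problematic contribution is not a differentiation commutator at all; it is the \emph{structural} term that appears when one integrates the mass equation in $\varrho$ to obtain an equation for $\cH$, namely
\[
\int_\varrho^{\rho_1}(\bar\bu'+\partial_\varrho\bu)\cdot\nabla_\bx\cH^{(\balpha)}\,\dd\varrho',
\]
i.e.\ the commutator between advection and $\varrho$-integration. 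Tested against $\cH^{(\balpha)}$ (and against the trace at $\varrho=\rho_0$), this cannot be integrated by parts in $\bx$ because the coefficient lives at level $\varrho'$ while the test function lives at level $\varrho$; one is left with $\Norm{\bar\bu'+\partial_\varrho\bu}_{L^\infty_\bx L^2_\varrho}\Norm{\nabla_\bx\dot\cH}_{L^2}\cE^{1/2}$, and \emph{this} is what is absorbed into $\kappa\Norm{\nabla_\bx\dot\cH}_{L^2}^2$ to produce the factor $\kappa^{-1}\Norm{\bar\bu'+\partial_\varrho\bu}^2$.

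Second, your description of the symmetric cancellation is too loose to close. Going through $\nabla_\bx\cdot\bu\sim\partial_t h/(\bar h+h)$ introduces the weight $(\bar h+h)^{-1}$ in the wrong place. The paper instead (i) writes directly the evolution equation for $\cH^{(\balpha)}$ (integrated mass equation), (ii) tests it against $\cH^{(\balpha)}$ \emph{and} separately tests its trace at $\varrho=\rho_0$ against $\rho_0\,\cH^{(\balpha)}|_{\varrho=\rho_0}$, (iii) tests the $\bu$-equation against $\varrho(\bar h+h)\,\bu^{(\balpha)}$ so that the prefactor $1/\varrho$ in the pressure and the $(\bar h+h)$ from mass conservation both cancel exactly, and (iv) uses Fubini in $\varrho$ to pair $\int_\varrho^{\rho_1}(\bar h+h)\nabla_\bx\cdot\dot\bu\,\dd\varrho'$ with $\int_{\rho_0}^\varrho\nabla_\bx\dot\cH\,\dd\varrho'$. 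The boundary trace testing in (ii) is what produces both the control of $\norm{\cH|_{\varrho=\rho_0}}_{H^s_\bx}$ and the cancellation with the $\rho_0\nabla_\bx\cH|_{\varrho=\rho_0}$ term in the momentum equation; without it the pairing does not close.

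On methodology: the paper does not iterate. It adds an artificial viscosity $\nu\Delta_\bx\bu$ to the velocity equation, obtains short-time existence of the regularized system by a parabolic fixed point (Proposition~\ref{P.WP-nu}), proves the uniform-in-$\nu$ energy estimates via the mechanism above (Lemma~\ref{lem:estimate-system} and Proposition~\ref{P.regularized-large-time-WP}), and then passes to the limit $\nu\searrow0$ by Banach--Alaoglu and Aubin--Lions. An iteration scheme is a plausible alternative, but note that the advection velocity contains $\kappa\tfrac{\nabla_\bx h}{\bar h+h}$, which is only in $L^2_tH^{s,k}$ (not $L^\infty_t$), so solving the pure transport equation for $\bu$ at each step and recovering $\cC^0_tH^{s,k}$ requires care that the $\nu$-regularization neatly sidesteps.
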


In our second main result, we prove that within the timescale defined by~\eqref{def:time}, there exist solutions to the non-hydrostatic equations~\eqref{eq:nonhydro-iso-intro} for $\mu$ sufficiently small, and they converge towards the corresponding solutions of the hydrostatic equations, with the expected $\mathcal O(\mu)$ convergence rate.

\begin{theorem}\label{thm-convergence}
	There exists $p\in\NN$ such that for any  $ k=s \in \NN$, $\bar M,M,h_\star,h^\star>0$ and $0<\rho_0<\rho_1$,  there exists $C>0$ such that the following holds. For any $ 0< M_0\leq M$, $0<\kappa\leq 1$, and $\mu>0$ such that 
	\[\mu \leq \kappa/(C M_0^2 ),\] 
	for any for any $(\bar h, \bar \bu) \in W^{k+p,\infty}((\rho_0,\rho_1))^2 $ satisfying
	\[  \norm{\bar h}_{W^{k+p,\infty}_\varrho } + \norm{\bar \bu'}_{W^{k+p-1,\infty}_\varrho }\leq \bar M \,;\]
	for any initial data $(h_0, \bu_0,w_0)\in H^{s+p,k+p}(\Omega)^{2+d}$ satisfying the boundary condition $w_0|_{\varrho=\rho_1}=0$ and the incompressibility condition
	\[-(\bar h+h_0)\nabla_\bx\cdot \bu_0-(\nabla_\bx \cH_0)\cdot({\bar \bu}'+\partial_\varrho\bu_0)+\partial_\varrho  w_0=0,\] 
	(denoting $\cH_0(\cdot,\varrho)=\int_{\varrho}^{\rho_1} h_0(\cdot,\varrho')\dd\varrho'$), the inequality
	\[
	\Norm{\cH_0}_{H^{s+p,k+p}}+\Norm{\bu_0}_{H^{s+p,k+p}}+\norm{\cH_0\big\vert_{\varrho=\rho_0}}_{H^{s+p}_\bx}+\kappa^{1/2}\Norm{h_0}_{H^{s+p,k+p}}  =M_0\leq M
	\]
	and the stable stratification assumption
	\[ \forall (\bx,\varrho)\in  \Omega , \qquad h_\star \leq  \bar h(\varrho)+h_0(\bx,\varrho) \leq h^\star , \]
	the following holds. Denoting $(h^\h,\bu^\h)\in   \cC^0([0,T^\h];H^{s+p,k+p}(\Omega)^{1+d})$ the solution to the hydrostatic equations~\eqref{eq:hydro-iso-intro} with initial data $(h^\h,\bu^\h)\big\vert_{t=0}=(h_0,\bu_0)$ provided by Theorem~\ref{thm-well-posedness},
	there exists a unique strong solution $(h^\nh,\bu^\nh,w^\nh)\in  \cC([0,T^\h];H^{s,k}(\Omega)^{2+d})$ to the non-hydrostatic equations~\eqref{eq:nonhydro-iso-intro} with initial data $(h^\nh,\bu^\nh, w^\nh)\big\vert_{t=0}=(h_0,\bu_0, w_0)$.
	Moreover, one has
	\[	\Norm{h^\nh-h^\h}_{L^\infty(0,T^\h;H^{s,k})}+\Norm{\bu^\nh-\bu^\h}_{L^\infty(0,T^\h;H^{s,k})}\leq C\,\mu.\]
\end{theorem}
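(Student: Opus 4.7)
The plan is to combine the local well-posedness of~\eqref{eq:nonhydro-iso-intro} (Proposition~\ref{P.NONHydro-small-time}) with a stability estimate comparing non-hydrostatic and hydrostatic solutions, and then close by a continuity bootstrap extending $(h^\nh,\bu^\nh,w^\nh)$ up to the full hydrostatic lifespan $T^\h$ provided $\mu$ is small enough with respect to $\kappa/M_0^2$. First, Proposition~\ref{P.NONHydro-small-time} applied to the compatible data $(h_0,\bu_0,w_0)$ produces a unique strong solution on some short interval $[0,T_\star]$; the derivative excess ``$+p$'' in the hypothesis provides the margin needed for the commutator, composition and elliptic estimates below to close at the $H^{s,k}$ level.

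Next I set up the equations for the differences. Given $(h^\h,\bu^\h)$ from Theorem~\ref{thm-well-posedness}, I reconstruct an auxiliary vertical velocity $w^\h$ by integrating the divergence-free constraint from $\varrho=\rho_1$ (where $w^\h=0$), and define the hydrostatic pressure $\psi^\h$ via~\eqref{eq:nablaphi-intro}. Setting $\delta h=h^\nh-h^\h$, $\delta\bu=\bu^\nh-\bu^\h$, $\delta P=P^\nh-\psi^\h$, and subtracting~\eqref{eq:hydro-iso-intro} from the first two lines of~\eqref{eq:nonhydro-iso-intro}, I obtain a transport-diffusion equation for $\delta h$ and a momentum equation for $\delta\bu$ whose right-hand sides are quadratic in $(\delta h,\delta\bu)$, plus the pressure-discrepancy contributions $\nabla_\bx\delta P+\tfrac{\nabla_\bx\cH}{\bar h+h}\partial_\varrho\delta P$. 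The structural observation at the heart of the proof is that the vertical-momentum equation of~\eqref{eq:nonhydro-iso-intro} collapses to the hydrostatic balance at $\mu=0$; subtracting the two balances and invoking the elliptic estimates of Lemma~\ref{L.Poisson} and Corollary~\ref{C.Poisson} converts this $\mu$-smallness into a bound of the form $\Norm{\delta P}_{H^{s,k}}+\Norm{\partial_\varrho\delta P}_{H^{s,k}}\lesssim \mu\,\Pi(\bar M,M_0)$. This elliptic/consistency analysis is, in my view, the main obstacle of the proof.

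With $\delta P=\mathcal{O}(\mu)$ in hand, I run an energy estimate on $(\delta h,\delta\bu)$ mirroring the one used for Theorem~\ref{thm-well-posedness}: I exploit the symmetric structure of the linearized hydrostatic operator around $(h^\h,\bu^\h)$ under stable stratification, and use the thickness diffusivity $\kappa$ to absorb the half-derivative losses coming from the shear terms and from $\partial_\varrho\delta P$. This produces a differential inequality of the schematic form
\[ \frac{\d}{\d t}\cE(\delta h,\delta\bu) + \kappa\Norm{\nabla_\bx\delta h}_{H^{s,k}}^2 \le C(\bar M,M_0)\,\cE(\delta h,\delta\bu) + C(\bar M,M_0)\,\mu^2, \]
with $\cE\sim \Norm{\delta h}_{H^{s,k}}^2+\Norm{\delta\bu}_{H^{s,k}}^2$; Gronwall on the hydrostatic lifespan $T^\h$, which by~\eqref{def:time} satisfies $T^\h\lesssim \kappa/M_0^2$, yields $\cE\lesssim \mu^2$ on $[0,T_\star]$.

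To finish, I close by a continuity bootstrap. Let $T_{\max}\in (0,T^\h]$ be the largest time on which the non-hydrostatic solution exists in $H^{s,k}$ and satisfies both $\Norm{h^\nh-h^\h}_{H^{s,k}}+\Norm{\bu^\nh-\bu^\h}_{H^{s,k}}\le \mu^{1/2}$ and $h_\star/4\le \bar h+h^\nh\le 4h^\star$. On $[0,T_{\max})$ the stability estimate improves the difference bound to $\le C\mu<\mu^{1/2}$ provided $\mu\le\mu_0(\bar M,M_0)$, and the stratification bound is improved analogously using the control on $h^\h$ from Theorem~\ref{thm-well-posedness}. Applying Proposition~\ref{P.NONHydro-small-time} at $T_{\max}$ then strictly extends the solution past $T_{\max}$, forcing $T_{\max}=T^\h$ and yielding the announced $\mathcal{O}(\mu)$ convergence. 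The quantitative hypothesis $\mu\le \kappa/(CM_0^2)$ encodes the compatibility of the $\mu$-forcing with the exponential growth rate set by the hydrostatic timescale $T^\h\sim\kappa/M_0^2$.
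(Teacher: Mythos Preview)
Your outline captures the right intuition—use consistency of the hydrostatic solution in the non-hydrostatic system, estimate the difference via the hydrostatic energy structure, and bootstrap—but it collapses two distinct steps into one, and this creates a genuine gap.

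The issue is in your claim that $\Norm{\delta P}_{H^{s,k}}+\Norm{\partial_\varrho\delta P}_{H^{s,k}}\lesssim\mu\,\Pi(\bar M,M_0)$. First, $\delta P=P^\nh-\psi^\h$ is not the right object: it contains $\int_{\rho_0}^\varrho \varrho'\,\delta h\,\dd\varrho'$, which is not $\mathcal{O}(\mu)$ a priori. What Corollary~\ref{C.Poisson} controls with a $\mu$-prefactor is $P_\nh:=P^\nh-\int_{\rho_0}^\varrho\varrho'h^\nh\,\dd\varrho'$; the hydrostatic-in-$\delta h$ part must be kept on the left-hand side as part of the symmetric structure (this is exactly what the paper does in~\eqref{eq:difference-CV}). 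More seriously, to use $P_\nh$ as an $\mathcal{O}(\mu)$ forcing in the $H^{s,k}$ energy for $(\delta\cH,\delta\bu)$ you need $\Norm{\nabla_\bx P_\nh}_{H^{s,k}}\lesssim\mu$, and the elliptic estimate~\eqref{ineq:est-Ptame-nonhydro} produces this only if the \emph{non-hydrostatic} quantities $(\cH^\nh,\bu^\nh,w^\nh,\bu_\star^\nh,w_\star^\nh)$ are controlled in norms strictly stronger than $H^{s,k}$; note in particular the term $\Norm{w^\nh}_{H^{s,k-1}}$ appearing there without a $\sqrt\mu$ prefactor. Your bootstrap only controls the difference at level $H^{s,k}$; by triangle inequality with $h^\h\in H^{s+p}$ this gives $(h^\nh,\bu^\nh)\in H^{s,k}$, but reconstructing $w^\nh$ from the divergence-free condition costs a derivative, and $\bu_\star^\nh,w_\star^\nh$ cost more. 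You therefore cannot close the $\mathcal{O}(\mu)$ pressure bound at the same regularity level as the bootstrap, and Proposition~\ref{P.NONHydro-small-time} alone only propagates higher regularity on a $\mu$-dependent lifespan.

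The paper resolves this by a two-step argument. In the first step (Proposition~\ref{P.CV-control}) the bootstrap uses the \emph{non-hydrostatic} energy of Lemma~\ref{lem:estimates-nonhydro}, which carries $\sqrt\mu\,w^\d$ as an independent variable and does not suffer the reconstruction loss; this yields uniform-in-$\mu$ control of $(h^\nh,\bu^\nh,\sqrt\mu\,w^\nh)$ on $[0,T^\h]$ at the full $H^{s+p'}$ level, with only an $\mathcal{O}(\mu^{1/2})$ convergence rate. In the second step (Proposition~\ref{P.CV-convergence}) this higher-order uniform control is fed into Corollary~\ref{C.Poisson} to obtain $\Norm{P_\nh}_{H^{s+1,k+1}}\lesssim\mu$, and only then does the hydrostatic energy on the difference close at $H^{s,k}$ with the optimal $\mathcal{O}(\mu)$ rate. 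Your single-step scheme provides no mechanism to propagate the non-hydrostatic solution at high regularity uniformly in $\mu$, which is precisely the role of the intermediate $\mathcal{O}(\mu^{1/2})$ step.
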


\subsection{Strategy of the proofs} \label{S.strategy}
Our results rely mainly on the energy method, exhibiting the structure of the systems of equations trough well-chosen energy functionals and making use of product, commutator and composition estimates in the $L^2$-based Sobolev spaces $H^{s,k}(\Omega)$ that are summarized in the Appendix.

	Based on the structure of hydrostatic equations without thickness diffusivity ($\kappa=0$)~\eqref{eq:hydro-iso-intro}, namely
	\begin{equation*}
		\begin{aligned}
			\partial_t  h+\nabla_\bx\cdot((\bar h+ h)({\bar \bu}+\bu))&=0,\\
			\varrho\Big( \partial_t\bu+\big(({\bar \bu}+\bu)\cdot\nabla_\bx \big)\bu\Big)+ \mathcal M\nabla_\bx \psi&=0,
		\end{aligned}
	\end{equation*}
	where the operator $\mathcal M$ defining the Montgomery potential in \eqref{eq:nablaphi-intro} is self-adjoint for the $L^2_\varrho$ inner-product and satisfies for all $h_1,h_2\in L^2_\varrho:=L^2((\rho_0,\rho_1))$
	\[ \big(\mathcal M h_1\ ,\ h_2\big)_{L^2_\varrho}=\rho_0 \Big(\int_{\rho_0}^{\rho_1} h_1\dd\varrho\Big)\Big(\int_{\rho_0}^{\rho_1} h_2\dd\varrho\Big) + \Big(\int_{\cdot}^{\rho_1} h_1\dd\varrho\ ,\ \int_{\cdot}^{\rho_1} h_2\dd\varrho\Big)_{L^2_\varrho},\]
	the natural energy functional associated with the hydrostatic equations involves $\bu$, $\cH:=\int_{\cdot}^{\rho_1} h\dd\varrho$, and $\cH\vert_{\varrho=\rho_0}=\int_{\rho_0}^{\rho_1} h\dd\varrho$ (that represents the free surface deformation), and their derivatives. Notice we do {\em not} control $h=-\de_\varrho \cH$ (see \ref{def:eta}) in the same regularity class as $\bu,\eta$ unless it is multiplied by the prefactor $\kappa^{1/2}$. In our energy estimates, problematic contributions arise from the commutator between advection operator and density integration in the equation of mass conservation, {\em i.e.} the first equation in~\eqref{eq:hydro-iso-intro}. In order to control these contributions we crucially use the diffusivity-induced regularization, which explains why the lower bound on the time of existence of our solution stated in \eqref{def:time} vanishes as $\kappa\searrow 0$, yet with a prefactor involving the shear velocity, $\bar\bu'(\varrho)$ (since advection by a $\varrho$-independent velocity commutes with density integration). It is interesting to notice that the index of regularity with respect to the space variable, $s$, and the one with respect to the density variable, $k$, are decoupled (yet only in the hydrostatic framework). This is due to the fact that the isopycnal change of coordinate is semi-Lagrangian: the advection in isopycnal coordinates occurs only in the horizontal space directions. It would be of utmost interest (but outside of the scope of the present work) to decrease the regularity assumption with respect to the density variable, so as to admit discontinuities, representing density interfaces.

Concerning the non-hydrostatic system,~\eqref{eq:nonhydro-iso-intro}, the natural energy space involves additionally $\sqrt\mu w$ and its derivatives (hence the control vanishes as $\mu\searrow 0$). In order to obtain suitable energy estimates, we decompose the pressure as the sum of the hydrostatic contribution and the non-hydrostatic contribution, the latter being of lower order in terms of regularity and/or smallness with respect to $\mu\ll 1$. Then we use the structure of the hydrostatic equations, which we complement with an additional symmetric structure for the non-hydrostatic contributions. There, the difficulty consists in providing controls of the energy norms that are uniform with respect to the vanishing parameter $\mu\ll1$. Our estimates concerning the non-hydrostatic contribution of the pressure stem from elliptic estimates on a boundary-value problem. This strategy is hea\-vily inspired by the work of Desjardins, Lannes and Saut, and it is interesting to compare our results with the analogous ``large-time'' well-posedness result in~\cite{Desjardins-Lannes-Saut}*{Theorem~2}. 
Firstly, due to the choice of isopycnal coordinates, our boundary-value problem is no longer an anisotropic Poisson equation but involves a fully nonlinear elliptic operator. Since this operator involves $h$ that is not controlled in the energy space, we use again the diffusivity-induced regularization at this stage. On the positive side, using isopycnal coordinates rather than  Eulerian coordinates allows us to consider the free-surface framework (since isopycnal coordinates readily set the domain as a flat strip, thanks to our assumption that the density is constant at the surface and at the bottom) rather than the rigid-lid setting.  We believe that our study can be extended to the rigid-lid framework with small adjustments. Incidentally, we do not employ the often-used Boussinesq approximation, since it is not useful in our context. 
Additionally, we do not rely on the use of strong boundary conditions on the initial density and velocities and their derivatives at the surface and the bottom, which instead are assumed in~\cite{Desjardins-Lannes-Saut} (and in most of the other works, often put in a periodic framework) and rather use only the natural no-slip boundary condition at the bottom; the former allow to cancel the trace contributions resulting from vertical integration by parts. We also consider the general situation where the velocity field is a perturbation of a non-zero background current, $\bar\bu$. In turn, the price to pay to handle this general framework manifests in terms of some restrictions on the length of the time of existence of our solutions, which is inversely proportional with respect to the size of the fluctuations in~\cite{Desjardins-Lannes-Saut}*{Theorem~2}.

Now we describe our strategy to prove the convergence of the solutions to the non-hydrostatic system towards the hydrostatic one.
First, we point out that a direct use of energy estimates as previously allows to obtain the existence of solutions of the non-hydrostatic equations in a timescale uniform with respect to $\mu$ but not necessarily the same as the existence time of the corresponding solution to the hydrostatic equations, and (for technical reasons) restricted to sufficiently small data. To overcome these issues, we look at the hydrostatic solution as an approximate solution to the non-hydrostatic system (in the sense of consistency, as it approximately solves the non-hydrostatic equations), and deduce, using the aforementioned structure of the non-hydrostatic equations, an energy inequality that controls the difference between the solution to the non-hydrostatic system and the (respective) solution to the hydrostatic one. This stratey allows to bootstrap the control of the difference of the two solutions (and hence the control of the solution to the non-hydrostatic equations) within the timescale of existence (in a higher-regularity space) of the hydrostatic solution, provided that the parameter $\mu$ is sufficiently small. However, the rate of convergence obtained by this method is not optimal. Therefore, in a second step we implement another strategy to obtain the expected (optimal) convergence rate. It simply consists in taking the opposite viewpoint: to look at the solution to the non-hydrostatic equations as an approximate solution to the hydrostatic equations (again in the sense of consistency) and
use the structure of the hydrostatic equations to infer the $\mathcal{O}(\mu)$ convergence rate. Both steps involve loss of derivatives, described by the parameter $p$ in Theorem~\ref{thm-convergence}.

\subsection{Notation and conventions}
We highlight the following conventions used throughout the paper.
\begin{itemize}
	\item $\rho_0$ and $\rho_1$ are fixed constants such that $0<\rho_0<\rho_1$, and the dependency with respect to these constants is never explicitly displayed.
	\item For $k,s \in \NN$ and $k \leq s$, and $\Omega=\RR^d\times(\rho_0,\rho_1)$, we define the functional space
	\begin{align}\label{def:space}
		H^{s,k}(\Omega)=\left\{ f \ : \ \forall (\balpha,j)\in \NN^{d+1}, \ |\balpha|+ j\leq s,\ j\leq k,\  \partial_\bx^\balpha\partial_\varrho^j f  \in L^2(\Omega) \right\},
	\end{align}
	endowed with the topology of the norm
	\begin{equation}\label{def:Hsk}
		\Norm{f}_{H^{s,k}}^2:=\sum_{j=0}^k \sum_{|\balpha|=0}^{s-j} \Norm{\partial_\bx^\balpha\partial_\varrho^j f}_{L^2(\Omega)}^2 .
	\end{equation}
	When $s'\in\RR$ (and $k\in\NN$) we define $H^{s',k}(\Omega)=\left\{ f \ : \ \forall j\in \NN,\ j\leq k,\  \Lambda^{s'-j}\partial_\varrho^j f  \in L^2(\Omega) \right\}$ and
	\[
	\Norm{f}_{H^{s',k}}^2:=\sum_{j=0}^k  \Norm{\Lambda^{s'-j}\partial_\varrho^j f}_{L^2(\Omega)}^2.
	\]
	where $\Lambda=(\Id-\Delta_\bx)^{1/2}$. Of course the two notations are consistent when $s'=s \in \NN$, up to harmless factors in the definition of the norm.
	\item We use both the equivalent notations $H^s(\RR^d)=H^s_\bx$ (the usual $L^2$-based Sobolev space on $\RR^d$) and $W^{k,\infty}(\RR^d)=W^{k,\infty}_\bx$ (the $L^\infty$-based Sobolev space on $\RR^d$), and similarly $L^2((\rho_0,\rho_1))=L^2_\varrho$ and $W^{k,\infty}((\rho_0,\rho_1))=W^{k,\infty}_\varrho$. For functions with variables in $\Omega$ we denote for instance 
	\[L^2_\varrho L^\infty_\bx=L^2(\rho_0,\rho_1;L^\infty(\RR^d))=\{f \ : \ \esssup_{\bx\in\RR^d} |f(\cdot,\bx)|\in L^2((\rho_0,\rho_1))\}.\]
	Notice $L^2_\varrho L^2_\bx=L^2_\bx  L^2_\varrho=L^2(\Omega)$ and $L^\infty_\varrho L^\infty_\bx=L^\infty_\bx  L^\infty_\varrho=L^\infty(\Omega)$. We use similar notations for functions also depending on time. For instance, for $k\in\NN$, and $X$ a Banach space as above, $\cC^k([0,T];X)$ is the space of functions with values in $X$ which are continuously differentiable up to order $k$, and $L^p(0,T;X)$ the $p$-integrable $X$-valued functions. All these spaces are endowed with their natural norms. 
	\item For any operator $A: f \rightarrow Af$, we denote by $[A,f]g=A(fg)-f(Ag)$ the usual commutator, while $[A;f,g]=A(fg)-f(Ag)-g(Af)$ is the symmetric commutator,
	\item $C(\lambda_1,\lambda_2,\dots)$ denotes a constant which depends continuously on its parameters.
	\item For any $a, b \in \RR$,  we use the notation $a \lesssim b$ (resp. $ a \gtrsim b$) if there exists $C>0$, independent of relevant parameters, such that $a \le C b$ (resp. $a \ge C b$). We write $a\approx b$ if $a\lesssim b$ and $a\gtrsim b$.
	\item We put $a\vee b:=\max(a,b)$. Finally,
	\[ \big\langle B_a\big\rangle_{a> b} =\begin{cases}
		0 &\text{ if } a\leq b\,,\\
		B_a& \text{otherwise,}
	\end{cases} \quad \text{ and } \quad \big\langle B_a\big\rangle_{a= b} =\begin{cases}
		0 &\text{ if } a\neq  b\,,\\
		B_a& \text{otherwise.}
	\end{cases} \]
\end{itemize}

\section{The hydrostatic system}\label{S.Hydro}
\interfootnotelinepenalty=100000000

In this section we study the hydrostatic system in isopycnal coordinates. Specifically, we provide in this section a well-posedness result on the initial-value problem, namely Theorem~\ref{thm-well-posedness}. The result follows from careful {\em a priori} energy estimates, and the standard method of parabolic regularization. Therefore we will first study the system%
\begin{equation}\label{eq:hydro-iso-nu}
	\begin{aligned}
		\partial_t  h+\nabla_\bx\cdot((\bar h+ h)({\bar \bu}+\bu))&=\kappa\Delta_\bx  h,\\[1ex]
		\partial_t\bu+\big(({\bar \bu}+\bu-\kappa\tfrac{\nabla_\bx  h}{\bar h+ h})\cdot\nabla_\bx \big)\bu+ \tfrac{1}{\varrho}\nabla_\bx \psi&=\nu \Delta_\bx \bu,
	\end{aligned}
\end{equation}
with 
\[
\nabla_\bx\psi(t,\bx,\varrho):= \rho_0\int_{\rho_0}^{\rho_1} \nabla_\bx h(t,\bx,\varrho')\dd\varrho'   +\int_{\rho_0}^\varrho \int_{\rho'}^{\rho_1} \nabla_\bx  h(t,\bx,\varrho'')\dd\varrho''\dd\varrho',
\]
and $\nu>0$, and will rigorously establish the limit $\nu \rightarrow 0$.%
\begin{samepage}
	\footnote{As highlighted by the anonymous referee, another reasonable approach, inspired by the specific form of the viscosity contributions in the shallow water equations advocated in \cite{Gent93} and analyzed using the energy method in \cite{GustafssonSundstroem78}, would be to consider the following system
			\begin{equation}\label{eq:hydro-iso-nu-Gent}
				\begin{aligned}
					\partial_t  h+\nabla_\bx\cdot((\bar h+ h)({\bar \bu}+\bu))&=\kappa\Delta_\bx  h,\\[1ex]
					\partial_t\bu+\big(({\bar \bu}+\bu-\kappa\tfrac{\nabla_\bx  h}{\bar h+ h})\cdot\nabla_\bx \big)\bu+ \tfrac{1}{\varrho}\nabla_\bx \psi&=\nu \frac{\di_\bx((\bar h+h)\nabla_\bx \bu)}{\bar h+h}.
				\end{aligned}
			\end{equation}
			Notice that after straightforward computations the system \eqref{eq:hydro-iso-nu-Gent} can be reformulated as
			\begin{equation}\label{eq:hydro-iso-nu-Gent-2}
				\begin{aligned}
					\partial_t  h+\nabla_\bx\cdot((\bar h+ h)({\bar \bu}+\bu))&=\kappa\Delta_\bx  h,\\[1ex]
					\partial_t\bu+\big(({\bar \bu}+\bu-(\kappa+\nu)\tfrac{\nabla_\bx  h}{\bar h+ h})\cdot\nabla_\bx \big)\bu+ \tfrac{1}{\varrho}\nabla_\bx \psi&=\nu\Delta_\bx \bu,
				\end{aligned}
			\end{equation}so that the difference with respect to \eqref{eq:hydro-iso-nu} is minor, and our analysis can easily be extend to \eqref{eq:hydro-iso-nu-Gent-2}.
				Specifically, the only significant difference arises in the energy estimates of Lemma~\ref{lem:estimate-system}. Because the first contribution of (v) in the proof vanishes, the assumption on the control of $\nu^{1/2}\Norm{ \nabla_\bx h(t,\cdot) }_{L^\infty(\Omega)}$ is no longer necessary.
				As a consequence, Proposition \ref{P.regularized-large-time-WP} holds without assuming $\nu\leq \kappa$. This leads to the the interesting problem of the large-time well-posedness of the initial-value problem for \eqref{eq:hydro-iso-nu-Gent-2} when $\kappa=0$, $\nu>0$, which we believe can be addressed in the spirit of the BD and $\kappa$ entropies described in Section \ref{S.two-velocities}, but leave open for future studies.
	}
\end{samepage}

\subsection{Well-posedness of the regularized hydrostatic system}

We start with proving the well-posedness of the initial value problem.

\begin{proposition}\label{P.WP-nu}
	Let $s>\frac32 + \frac d 2$, $k\in\NN$ with $1\leq k\leq s$, and $\bar M$, $M_0 $, $h_\star$, $\nu$, $\kappa>0$  and $C_0>1$. Then there exists $T=T(s,k, \bar M,M_0, h_\star, \nu, \kappa,C_0)$ such that 
	for any $(\bar h,\bar\bu) =(\bar h(\varrho),\bar\bu(\varrho)) \in W^{k,\infty}((\rho_0, \rho_1))$ and for any $(h_0, \bu_0)=(h_0(\bx, \varrho), \bu_0(\bx, \varrho)) \in H^{s,k}(\Omega)$ such that
	\[ \inf_{(\bx,\varrho)\in\Omega}(\bar h(\varrho) + h_0(\bx,\varrho))\geq h_\star , \quad \norm{(\bar h,\bar\bu)}_{W^{k,\infty}_\varrho}\leq \bar M, \quad  \Norm{(h_0,\bu_0)}_{H^{s,k}} \leq M_0,\]
	there exists a unique solution $(h, \bu)\in \cC^0([0,T];H^{s,k}(\Omega))$ to system~\eqref{eq:hydro-iso-nu} with $(h, \bu)\big\vert_{t=0}=(h_0,\bu_0)$. Moreover, $(h, \bu)\in L^2(0,T;H^{s+1,k}(\Omega))$ and, for a universal constant $c_0>0$, the following estimates hold
	\begin{equation}\label{eq:bound-parabolic}
		\begin{cases}
			\Norm{h}_{L^\infty(0,T;H^{s,k})}+c_0\kappa^{1/2} \Norm{\nabla_\bx h}_{L^2(0,T;H^{s,k})}  < C_0 \Norm{h_0}_{H^{s,k}};\\
			\Norm{\bu}_{L^\infty(0,T;H^{s,k})}+c_0\nu^{1/2} \Norm{\nabla_\bx \bu }_{L^2(0,T;H^{s,k})}  < C_0 \Norm{\bu_0}_{H^{s,k}};\\
			\inf_{(t,\bx,\varrho)\in(0,T)\times \Omega}(\bar h(\varrho)+h(t,\bx,\varrho))> h_\star/C_0.
		\end{cases}
	\end{equation}
\end{proposition}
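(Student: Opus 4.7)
The system~\eqref{eq:hydro-iso-nu} is a semilinear horizontal-parabolic perturbation of a transport-pressure system: the terms $\kappa\Delta_\bx h$ and $\nu\Delta_\bx\bu$ provide smoothing in $\bx$ only, the nonlinear dependence on $(h,\bu)$ is either polynomial (the fluxes $(\bar h+h)(\bar\bu+\bu)$ and the advection) or smooth rational (the factor $1/(\bar h+h)$), and the Montgomery potential $\psi$ depends \emph{linearly} on $h$ through the double $\varrho$-integration~\eqref{eq:nablaphi-intro}. My plan is to construct the solution via a Banach fixed-point argument on the closed ball
\[ X_T := \big\{(h,\bu)\in \cC^0([0,T];H^{s,k}(\Omega))\cap L^2(0,T;H^{s+1,k}(\Omega))\ :\ (h,\bu)\vert_{t=0}=(h_0,\bu_0),\ \text{norm}\leq R\big\} \]
for $R$ slightly larger than $M_0$ and $T$ small (depending on $s,k,\bar M,M_0,h_\star,\kappa,\nu,C_0$). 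The iteration map $\Phi$ is defined by freezing nonlinearities: given $(\tilde h,\tilde\bu)\in X_T$, let $(h,\bu)=\Phi(\tilde h,\tilde\bu)$ solve the decoupled linear heat equations
\begin{align*}
\partial_t h-\kappa\Delta_\bx h &= -\nabla_\bx\cdot\big((\bar h+\tilde h)(\bar\bu+\tilde\bu)\big),\\
\partial_t \bu-\nu\Delta_\bx\bu &= -\big((\bar\bu+\tilde\bu-\kappa\tfrac{\nabla_\bx\tilde h}{\bar h+\tilde h})\cdot\nabla_\bx\big)\tilde\bu-\tfrac{1}{\varrho}\nabla_\bx\tilde\psi,
\end{align*}
with initial data $(h_0,\bu_0)$, where $\tilde\psi$ is built from $\tilde h$ via~\eqref{eq:nablaphi-intro}. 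Each is a linear heat equation in $\bx$ with source in $L^2(0,T;H^{s-1,k})$, so standard semigroup or Galerkin theory yields a unique $(h,\bu)\in \cC^0([0,T];H^{s,k})\cap L^2(0,T;H^{s+1,k})$.

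The crucial step is the \emph{a priori} bound ensuring that $\Phi$ stabilizes $X_T$. For each multi-index $(\balpha,j)$ with $|\balpha|+j\leq s$, $j\leq k$, I would apply $\partial_\bx^\balpha\partial_\varrho^j$ to each equation, test against the same derivative of $(h,\bu)$, and exploit the parabolic identity to produce the dissipation contributions $\kappa\Norm{\nabla_\bx\partial_\bx^\balpha\partial_\varrho^j h}_{L^2}^2$ and $\nu\Norm{\nabla_\bx\partial_\bx^\balpha\partial_\varrho^j\bu}_{L^2}^2$. The forcings are estimated in $H^{s,k}$ using the anisotropic product, commutator, and composition estimates of the Appendix: the advection and flux terms are tame, the division by $\bar h+\tilde h$ is permitted because this quantity remains bounded below by $h_\star/2$ on $X_T$ for $T$ small (by continuity in time and the embedding $H^{s,k}\hookrightarrow L^\infty(\Omega)$, valid for $s>d/2$ and $k\geq 1$), and $\Norm{\nabla_\bx\tilde\psi}_{H^{s,k}}\lesssim \Norm{\nabla_\bx\tilde h}_{H^{s,k}}$ follows from the two $\varrho$-integrations (which commute with $\partial_\bx$ and cost only a factor $|\rho_1-\rho_0|$). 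Integrating in time, Young's inequality absorbs contributions like $\Norm{\nabla_\bx\tilde h}_{H^{s,k}}\Norm{\bu}_{H^{s,k}}$ into the parabolic dissipation (at the cost of powers of $\kappa^{-1}$ and $\nu^{-1}$), and a Grönwall argument yields the $X_T$-bound for $T$ small. Since $C_0>1$, the strict inequalities in~\eqref{eq:bound-parabolic} hold at $t=0$ with room and persist on a short interval by continuity in $t$.

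Contraction is obtained by subtracting the equations satisfied by two iterates $(h_i,\bu_i)=\Phi(\tilde h_i,\tilde\bu_i)$, $i=1,2$, and running the same parabolic energy estimate on the differences in $L^2(\Omega)$: the right-hand sides are now linear in $\tilde h_1-\tilde h_2$ and $\tilde\bu_1-\tilde\bu_2$, and their derivatives (coming from $\nabla_\bx$ in the advection and the pressure) are absorbed either by factors of $T$ or by the parabolic dissipation $\kappa\Norm{\nabla_\bx(h_1-h_2)}_{L^2_tL^2}^2+\nu\Norm{\nabla_\bx(\bu_1-\bu_2)}_{L^2_tL^2}^2$. Smallness of $T$ makes $\Phi$ a strict contraction, and the Banach fixed-point theorem produces a unique solution to~\eqref{eq:hydro-iso-nu} in $X_T$. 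The strict lower bound $\bar h+h>h_\star/C_0$ holds at $t=0$ with margin $h_\star(1-1/C_0)$ and is propagated on a short interval using the continuity of $t\mapsto \bar h+h$ in $L^\infty(\Omega)$, possibly after a final shrinking of $T$.

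The main obstacle is bookkeeping in the anisotropic $H^{s,k}$ estimates: since the $\varrho$-derivatives of the background profile $\bar h,\bar\bu$ enjoy no $\bx$-smoothing, one must carefully split Leibniz' formula so that every ``high-derivative'' factor on $(h,\bu)$ is paired with a factor bearing at most $k$ $\varrho$-derivatives, in accordance with~\eqref{def:Hsk}. At this regularized level ($\kappa,\nu>0$ fixed), this is only a technical inconvenience: any temporary derivative loss is recovered by the parabolic dissipation, and the allowed time $T$ may depend badly on $\kappa^{-1},\nu^{-1}$. The far more delicate analysis of the singular limit $\nu\to 0$ underlying Theorem~\ref{thm-well-posedness} is not at stake here.
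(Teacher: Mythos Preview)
Your proposal is correct and follows the same strategy as the paper: a Banach fixed-point argument for the semilinear horizontally-parabolic system, with the nonlinearities controlled via the anisotropic product and composition estimates of the Appendix and the lower bound on $\bar h+\tilde h$. The paper's implementation differs only cosmetically---it works directly with the Duhamel formula and heat-semigroup bounds (treating the forcings as maps $H^{s+1,k}\to H^{s,k}$ and using the $L^1_t H^{s,k}$ source estimate) rather than energy estimates, carries out the contraction in the full $H^{s,k}$ norm rather than in $L^2$, and secures the lower bound on $\bar h+h$ via the positivity of the heat kernel rather than continuity in time---but the mechanism is the same.
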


\begin{proof}
	We will construct the solution as the  fixed point of the Duhamel formula
	\begin{align*} 
		h(t,\cdot) &= e^{\kappa t \Delta_\bx } h_0+\int_0^t e^{\kappa(t-\tau)\Delta_\bx } f(h(\tau,\cdot),\bu(\tau,\cdot)) \dd\tau , \\
		\bu(t,\cdot) &= e^{\nu t \Delta_\bx } \bu_0+\int_0^t e^{\nu(t-\tau)\Delta_\bx } (\bm{f}_1+\bm{f}_2)(h(\tau,\cdot),\bu(\tau,\cdot)) \dd\tau 
	\end{align*}
	where $e^{\alpha t \Delta_\bx } $ with $\alpha >0$ is the heat semigroup defined by $\mathcal F[e^{\alpha t \Delta_\bx } f](\bm{\xi})=e^{-\alpha t |\bm{\xi}|^2} \mathcal F[ f](\bm{\xi})$
	where $\mathcal F$ is the Fourier transform with respect to the variable $\bx$, and
	\begin{align*} f(h,\bu)&= -\nabla_\bx\cdot((\bar h+ h)({\bar \bu}+\bu)),\\
		\bm{f}_1(h,\bu)&=  -\big(({\bar \bu}+\bu-\kappa\tfrac{\nabla_\bx  h}{\bar h+ h})\cdot\nabla_\bx\big) \bu\\
		\bm{f}_2(h,\bu)&= - \frac1\varrho\left(\rho_0\int_{\rho_0}^{\rho_1} \nabla_\bx h(\cdot,\varrho')\dd\varrho'   +\int_{\rho_0}^\varrho \int_{\rho'}^{\rho_1} \nabla_\bx  h(\cdot,\varrho'')\dd\varrho''\dd\varrho'\right).
	\end{align*}
	Let us first recall the standard regularization properties of the heat flow. For any $\nu>0$, $T>0$ and for any $u_0\in H^{s,k}(\Omega)$ and $ g\in L^1(0,T;H^{s,k}(\Omega))$, there exists a unique $u \in \cC^0([0,T];H^{s,k}(\Omega)) \cap L^2(0,T;H^{s+1,k}(\Omega))$ solution to $\partial_t u-\nu\Delta_\bx u=g$ with $u(0,\cdot)=u_0$ which reads by definition
	\[u= e^{\nu t \Delta_\bx } u_0+\int_0^t e^{\nu(t-\tau)\Delta_\bx }  g(\tau,\cdot)\dd\tau,\]
	and we have 
	\[\Norm{u}_{L^\infty(0,T;H^{s,k})}+c_0\nu^{1/2} \Norm{\nabla_\bx u}_{L^2(0,T;H^{s,k})} \leq  \Norm{u_0}_{H^{s,k}} + \Norm{g}_{L^1(0,T;H^{s,k})},\]
	where $c_0>0$ is a universal constant.
	The existence and uniqueness of the solution as well as the above estimate easily follow from solving the equation (for almost every $\varrho\in(\rho_0,\rho_1)$) in Fourier space and using Plancherel's formula, then using that $\partial_\varrho$ commutes with $\partial_t$ and $\Delta_\bx$, and invoking Minkowski's integral inequality (resp. Fubini's theorem) to exchange the order of integration in the variables $(t,\varrho)$ (resp. $(\bx,\varrho)$). We also remark that, by the positivity of the heat kernel and the continuous embedding $H^{s-1,1}(\Omega)\subset L^\infty(\Omega)$ for $s>\frac32+\frac{d}{2}$  (see Lemma~\ref{L.embedding}),
	\[\inf_{\Omega} u \geq \inf_\Omega u_0 - \Norm{g}_{L^1(0,T;H^{s-1,1})}.\]
	
	Now we consider $(f,\bm{f}_1+\bm{f}_2)$ as a bounded operator from $H^{s+1,k}(\Omega)^{1+d}$ to $H^{s,k}(\Omega)^{1+d}$. Indeed, there exists $C_{s,k}>0$ such that for any $(h,\bu)\in H^{s+1,k}(\Omega)^{1+d}$, 
	\begin{align*}
		\Norm{f(h,\bu)}_{H^{s,k}} &\leq \Norm{\nabla_\bx\cdot(\bar h \bu + h{\bar \bu}+h \bu)}_{H^{s,k}} \\
		&\leq C_{s,k}\times\left( \norm{\bar h}_{W^{k,\infty}_\varrho} \Norm{\bu}_{H^{s+1,k}}+\norm{\bar \bu}_{W^{k,\infty}_\varrho} \Norm{h}_{H^{s+1,k}}+  \Norm{ h}_{H^{s,k}} \Norm{ \bu}_{H^{s+1,k}}+  \Norm{ h}_{H^{s+1,k}} \Norm{ \bu}_{H^{s,k}}\right),
	\end{align*}
	where we used straightforward product estimates for the first two terms, and Lemma~\ref{L.product-Hsk} for the last ones. Similarly, we have
	\begin{align*}
		\Norm{\bm{f}_1(h,\bu)}_{H^{s,k}} &\leq \Norm{\big(({\bar \bu}+\bu-\kappa\tfrac{\nabla_\bx  h}{\bar h+ h})\cdot\nabla_\bx \big)\bu }_{H^{s,k}} \\
		&\leq C_{s,k}\left( \norm{\bar \bu}_{W^{k,\infty}_\varrho} +\Norm{\bu}_{H^{s,k}} \right) \Norm{\bu}_{H^{s+1,k}}\\
		&\quad + \kappa C_{s,k} \big( \norm{\bar h^{-1}}_{W^{k,\infty}_\varrho} + \Norm{\tfrac{h}{\bar h(\bar h+ h)}}_{H^{s,k}} \big) \Norm{(\nabla_\bx h\cdot\nabla_\bx) \bu }_{H^{s,k}}.
	\end{align*}
	Using the constraint $\inf_{(\rho_0,\rho_1)} \bar h\geq \inf_{\Omega}(\bar h + h_0)\geq h_\star>0$ and Lemma~\ref{L.composition-Hsk-ex}, we find that for any $h_\star>0$ and $M_0,\bar M\geq 0$ there exists $C_{s,k}(h_\star,\bar M,M_0,C_0)$ such that for any $h\in H^{s,k}(\Omega)$ bounded by $\Norm{h}_{H^{s,k}}\leq  C_0M_0$ and satisfying $\inf_{(\bx,\varrho)\in\Omega}(\bar h(\varrho) + h(\bx,\varrho))\geq h_\star/C_0$, one has
	\[ \norm{\bar h^{-1}}_{W^{k,\infty}_\varrho} + \Norm{\tfrac{h}{\bar h(\bar h+ h)}}_{H^{s,k}} \leq C_{s,k}(h_\star,\bar M,M_0,C_0) .\]
	Using the last estimates in Lemma~\ref{L.product-Hsk}, since  $s>\frac32+\frac{d}{2}$, we have
	\[\Norm{(\nabla_\bx h\cdot\nabla_\bx) \bu }_{H^{s,k}} \leq \Norm{h}_{H^{s,k}} \Norm{ \bu }_{H^{s+1,k}}+\Norm{ h}_{H^{s+1,k}} \Norm{ \bu }_{H^{s,k}}.\]
	Finally, from the continuous embedding $L^\infty((\rho_0,\rho_1))\subset   L^2((\rho_0,\rho_1)) \subset L^1((\rho_0,\rho_1))$ we immediately infer
	\[\Norm{\bm{f}_2(h,\bu)}_{H^{s,k}}  \leq C_{s,k}\Norm{ h}_{H^{s+1,k}} .\]
	Altogether, we find that  for any $h_\star,C_0>0$ and $\bar M,M_0\geq 0$ there exists $C_{s,k}(h_\star,\bar M,M_0,C_0)$ such that for any $(h,\bu)\in H^{s+1,k}(\Omega)^{1+d}$ satisfying $\Norm{(h,\bu)}_{H^{s,k}}\leq  C_0M_0$ and $\inf_{(\bx,\varrho)\in\Omega}(\bar h(\varrho) + h(\bx,\varrho))\geq h_\star/C_0$, we have
	\[\Norm{\big( f(h,\bu),\bm{f}_1(h,\bu),\bm{f}_2(h,\bu)\big)}_{H^{s,k}}  \leq C_{s,k}(h_\star,\bar M,M_0,C_0)\, (1+\kappa)\, \Norm{ (h,\bu)}_{H^{s+1,k}} .\]
	
	By similar considerations, we find that for any $h_\star,C_0>0$ and $\bar M,M_0\geq 0$ there exists $C_{s,k}(h_\star,\bar M,M_0,C_0)$ such that for any $(h_1,\bu_1,h_2,\bu_2)\in H^{s+1,k}(\Omega)^{2(1+d)}$ satisfying the bound $\Norm{(h_i,\bu_i)}_{H^{s,k}}\leq C_0M_0$ as well as $\inf_{(\bx,\varrho)\in\Omega}(\bar h(\varrho) + h_i(\bx,\varrho))\geq h_\star/C_0$ (with $i\in\{1,2\}$),
	one has
	\begin{multline*}\Norm{ \big( f(h_2,\bu_2)-f(h_1,\bu_1), \bm f_1(h_2,\bu_2)- \bm f_1(h_1,\bu_1) , \bm f_2(h_2,\bu_2)-\bm f_2(h_1,\bu_1) \big)}_{H^{s,k}} \\
		\leq C_{s,k}(h_\star,\bar M,M_0,C_0) (1+\kappa)\Big( \Norm{ ( h_2-h_1, \bu_2-\bu_1)}_{H^{s+1,k}}\\
		+   \Norm{(h_1,h_2,\bu_1,\bu_2)}_{H^{s+1,k}} \Norm{ ( h_2-h_1, \bu_2-\bu_1)}_{H^{s,k}}    \Big).
	\end{multline*}
	
	From the above estimates, we easily infer that for $T>0$ sufficiently small (uniquely depending on $s,k,M_0,\bar M,h_\star,\nu,\kappa,C_0$),
	\[ \mathcal T:\begin{pmatrix}h\\ \bu
	\end{pmatrix}\mapsto \begin{pmatrix}  e^{\kappa t \Delta_\bx } h_0+\int_0^t e^{\kappa(t-\tau)\Delta_\bx } f(h(\tau,\cdot),\bu(\tau,\cdot)) \dd\tau \\  e^{\nu t \Delta_\bx } \bu_0+\int_0^t e^{\nu(t-\tau)\Delta_\bx } (\bm{f}_1+\bm{f}_2)(h(\tau,\cdot),\bu(\tau,\cdot)) \dd\tau 
	\end{pmatrix}
	\]
	is a contraction mapping on 
	\[ X=\left\{ (h,\bu) \in  \cC^0([0,T];H^{s,k}(\Omega)) \cap L^2(0,T;H^{s+1,k}(\Omega)) \ : \  \text{\eqref{eq:bound-parabolic} holds}\right\}.\]
	The Banach fixed point theorem provides the existence and uniqueness of a fixed point (solution to~\eqref{eq:hydro-iso-nu}) in $X$, and uniqueness in $\cC^0([0,T];H^{s,k}(\Omega))$ is easily checked (for instance by the energy method).
\end{proof}
\begin{remark}
	It should be emphasized that the time of existence provided by Proposition~\ref{P.WP-nu} is not uniform with respect to the parameters $\kappa,\nu>0$. More precisely, the proof provides a lower bound as 
	\[T\gtrsim   \min(\{1,\kappa,\nu\}) , \qquad \text{\em i.e.} \quad  T^{-1}\lesssim 1+\kappa^{-1}+\nu^{-1}.\]
\end{remark}

\subsection{Quasilinearization}\label{S.Hydro:quasi}

In the result below, we apply spatial derivatives to system~\eqref{eq:hydro-iso-nu} and rewrite it in such a way that the linearized equations satisfied by the highest-order terms exhibit a skew-symmetric structure, which will allow us to obtain improved energy estimates in the subsequent section.

\begin{lemma}\label{lem:quasilinearization}
	Let  $s, k \in \NN$ such that $s>2+\frac d 2$ and $2\leq k\leq s$, and $\bar M,M,h_\star>0$. Then there exists $C=C(s,k,\bar M,M,h_\star)>0$ such that for any $\kappa\in[0,1]$, $\nu\geq0$, for any
	$(\bar h, \bar \bu) \in W^{k,\infty}((\rho_0,\rho_1))$ such that 
	\[  \norm{\bar h}_{W^{k,\infty}_\varrho } + \norm{\bar \bu'}_{W^{k-1,\infty}_\varrho }\leq \bar M \,;\]
	and any $(h, \bu)  \in L^\infty(0,T;H^{s,k}(\Omega))$ solution to~\eqref{eq:hydro-iso-nu} with some $T>0$ and satisfying  for almost every $t\in[0,T]$
	\[
	\Norm{h(t,\cdot)}_{H^{s-1, k-1}} + \Norm{\cH(t,\cdot)}_{H^{s,k}}+\Norm{\bu(t,\cdot)}_{H^{s,k}}+\norm{\cH(t,\cdot)\big\vert_{\varrho=\rho_0}}_{H^s_\bx}+\kappa^{1/2}\Norm{h(t,\cdot)}_{H^{s,k}} \le M 
	\]
	(where $\cH(t,\bx,\varrho):=\int_{\varrho}^{\rho_1} h(t,\bx,\varrho')\dd\varrho'$) and 
	\[ \inf_{(\bx,\varrho)\in \Omega } \bar h(\varrho)+h(t, \bx,\varrho) \geq h_\star,\]
	the following holds. Denote, for any multi-index $\balpha \in \mathbb{N}^d$, $ \cH^{(\balpha)}=\de_\bx^\balpha  \cH, \, \bu^{(\balpha)}=\de_\bx^\balpha \bu$.
	\begin{itemize}
		\item For any $\balpha \in \mathbb{N}^d$ with $0 \le |\balpha| \le s$, we have that
		\begin{subequations}
			\begin{equation}\label{eq.quasilin}
				\begin{aligned}
					\partial_t  \cH^{(\balpha)}+ (\bar \bu + \bu) \cdot\nabla_\bx \cH^{(\balpha)} + \int_\varrho^{\rho_1} (\bar \bu' + \de_{\varrho}\bu) \cdot\nabla_\bx \cH^{(\balpha)} \, d{\varrho'} &\\
					+\int_{\varrho}^{\rho_1} (\bar h+h)\nabla_\bx \cdot\bu^{(\balpha)} \dd\varrho'&=\kappa\Delta_\bx   \cH^{(\balpha)}+R_{\balpha,0},\\
					\partial_t\bu^{(\balpha)}+\big(({\bar \bu}+\bu-\kappa\tfrac{\nabla_\bx h}{\bar h+ h})\cdot\nabla_\bx \big)\bu^{(\balpha)} \hspace{3.5cm} &\\
					+ \frac{\rho_0}{\varrho}\nabla_\bx \cH^{(\balpha)}\big\vert_{\varrho=\rho_0} +\frac1\varrho\int_{\rho_0}^\varrho  \nabla_\bx  \cH^{(\balpha)} \dd\varrho'&=\nu\Delta_\bx   \bu^{(\balpha)}+\bR_{\balpha,0} ,
				\end{aligned}
			\end{equation}
			where for almost every $t\in[0,T]$, $(R_{\balpha,0}(t,\cdot),\bR_{\balpha,0}(t,\cdot))\in \cC^0([\rho_0,\rho_1];L^2(\RR^d))\times L^2(\Omega)^d$ and 
			\begin{align} \label{eq.est-quasilin}
				\Norm{R_{\balpha,0}}_{L^2(\Omega) }+\Norm{ \bR_{\balpha,0}}_{L^2(\Omega)}  +\norm{ R_{\balpha,0}\big\vert_{\varrho=\rho_0}}_{L^2_\bx}  &\leq 
				C\, M \, \big(1 +\kappa\Norm{ \nabla_\bx h}_{H^{s,k}} \big).
			\end{align}
		\end{subequations}
		\item For any $j\in\NN$, $1\leq j\leq k$ 
		and any $\balpha \in \mathbb{N}^d$, $0 \le |\balpha| \le s-j$, it holds
		\begin{subequations}
			\begin{equation}\label{eq.quasilin-j}
				\begin{aligned}
					\partial_t  \partial_\varrho^{j} \cH^{(\balpha)}+(\bar\bu+\bu)\cdot \nabla_\bx \partial_\varrho^{j} \cH^{(\balpha)}
					&=\kappa\Delta_\bx  \partial_\varrho^{j} \cH^{(\balpha)}+R_{\balpha,j},\\
					\partial_t\partial_\varrho^j \bu^{(\balpha)}+\big(({\bar \bu}+\bu-\kappa\tfrac{\nabla_\bx h}{\bar h+ h})\cdot\nabla_\bx \big)\partial_\varrho^j \bu^{(\balpha)}
					&=\nu \Delta_\bx  \partial_\varrho^j  \bu^{(\balpha)}+\bR_{\balpha,j},
				\end{aligned}
			\end{equation}
			where for almost every $t\in[0,T]$, $(R_{\balpha,j}(t,\cdot),\bR_{\balpha,j}(t,\cdot))\in L^2(\Omega)\times L^2(\Omega)^{d}$ and
			\begin{equation}\label{eq.est-quasilin-j}
				\Norm{R_{\balpha,j}}_{L^2(\Omega) }+\Norm{ \bR_{\balpha,j}}_{L^2(\Omega)}  \leq 
				C\, M \, \big(1  +\kappa\Norm{ \nabla_\bx  h}_{H^{s,k}} \big).
			\end{equation}
		\end{subequations}
		\item For any $j\in\NN$, $0\leq j\leq k$ and any multi-index $\balpha \in \mathbb{N}^d$, $0 \le |\balpha| \le s-j$, it holds
		\begin{subequations}
			\begin{equation}\label{eq.quasilin-j-h}
				\begin{aligned}
					\partial_t  \partial_\varrho^{j} h^{(\balpha)}+(\bar\bu+\bu)\cdot \nabla_\bx \partial_\varrho^{j} h^{(\balpha)}
					&=\kappa\Delta_\bx  \partial_\varrho^{j} h^{(\balpha)}+r_{\balpha,j}+\nabla_{\bx} \cdot \br_{\balpha,j},
				\end{aligned}
			\end{equation}
			where for almost every $t\in[0,T]$, $(r_{\balpha,j}(t,\cdot),\br_{\balpha,j}(t,\cdot))\in L^2(\Omega)^{1+d}$ and
			\begin{equation}\label{eq.est-quasilin-j-h}
				\kappa^{1/2} \Norm{r_{\balpha,j}}_{L^2(\Omega) }  + \Norm{\br_{\balpha,j}}_{L^2(\Omega) }  \leq C\,M.
			\end{equation}
		\end{subequations}
	\end{itemize}
\end{lemma}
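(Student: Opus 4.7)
The plan is to treat each bullet separately, relying on systematic application of the Leibniz rule together with the tame product, commutator, and composition estimates collected in the Appendix.

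For the first bullet, I would first derive the $\balpha=\bz$ case of~\eqref{eq.quasilin} by direct manipulation. Integrating the mass equation $\de_t h+\nabla_\bx\cdot[(\bar h+h)(\bar\bu+\bu)]=\kappa\Delta_\bx h$ from $\varrho$ to $\rho_1$ using $\cH(\cdot,\rho_1)=0$, then expanding the divergence as $(\bar h+h)\nabla_\bx\cdot\bu+(\bar\bu+\bu)\cdot\nabla_\bx h$ and integrating by parts in $\varrho'$ (using $h=-\de_\varrho\cH$), yields the pure transport term $(\bar\bu+\bu)\cdot\nabla_\bx\cH$ together with the residual integrals appearing in~\eqref{eq.quasilin}. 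Similarly, $\int_{\varrho'}^{\rho_1}\nabla_\bx h\,\dd\varrho''=\nabla_\bx\cH(\varrho')$ rewrites the Montgomery potential as $\nabla_\bx\psi=\rho_0\nabla_\bx\cH|_{\varrho=\rho_0}+\int_{\rho_0}^\varrho\nabla_\bx\cH\,\dd\varrho'$. Applying $\de_\bx^\balpha$ to these two identities gives~\eqref{eq.quasilin} for any $\balpha$, because the Laplacian, the coefficient $1/\varrho$, the profiles $\bar h,\bar\bu$ and the $\varrho$-integration all commute exactly with $\de_\bx^\balpha$. The remainders $R_{\balpha,0}$ and $\bR_{\balpha,0}$ are then identified as horizontal commutators: of $\de_\bx^\balpha$ with $\bu$ and $\de_\varrho\bu$ (against $\nabla_\bx\cH$), with $\bar h+h$ (against $\nabla_\bx\cdot\bu$) inside the $\varrho$-integrals, and (for $\bR_{\balpha,0}$) with the full quasi-linear velocity $\bar\bu+\bu-\kappa\nabla_\bx h/(\bar h+h)$ acting on $\nabla_\bx\bu$. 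I would bound these via the Appendix estimates, together with the composition estimate for $(\bar h+h)^{-1}$ (using the lower bound $\bar h+h\ge h_\star$), Minkowski's inequality for the $\varrho$-integrals, and the trace inequality $\norm{f|_{\varrho=\rho_0}}_{H^s_\bx}\lesssim\Norm{f}_{H^{s,1}}$ (valid since $k\ge 1$) for the boundary contribution.

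The second bullet exploits the fact that $\de_\varrho$ annihilates each $\int_\varrho^{\rho_1}$-integral while producing local terms that exactly cancel $\de_\varrho[(\bar\bu+\bu)\cdot\nabla_\bx\cH]-(\bar\bu+\bu)\cdot\nabla_\bx\de_\varrho\cH$. Thus, after one vertical differentiation, the $\cH$-equation reduces to the purely local transport-diffusion identity
\[
\de_t\de_\varrho\cH+(\bar\bu+\bu)\cdot\nabla_\bx\de_\varrho\cH=(\bar h+h)\nabla_\bx\cdot\bu+\kappa\Delta_\bx\de_\varrho\cH.
\]
The analogous computation on the $\bu$-equation, followed by $\de_\bx^\balpha\de_\varrho^{j-1}$, yields~\eqref{eq.quasilin-j}, with remainders that are now bona fide anisotropic commutators in both $\bx$ and $\varrho$, bounded exactly as in the first bullet.

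For the third bullet, I would work directly from the divergence-form mass equation, apply $\de_\bx^\balpha\de_\varrho^j$, and use the identity $\bv\cdot\nabla_\bx f=\nabla_\bx\cdot(\bv f)-(\nabla_\bx\cdot\bu)f$ (with $\bv=\bar\bu+\bu$, $f=\de_\bx^\balpha\de_\varrho^j h$) to extract the advective structure. This naturally splits the remainder into a local piece $r_{\balpha,j}=-(\nabla_\bx\cdot\bu)\de_\bx^\balpha\de_\varrho^j h$ and a divergence piece $\nabla_\bx\cdot\br_{\balpha,j}$ with $\br_{\balpha,j}=(\bar\bu+\bu)\de_\bx^\balpha\de_\varrho^j h-\de_\bx^\balpha\de_\varrho^j[(\bar h+h)(\bar\bu+\bu)]$ modulo a pure-$\varrho$ term (of the form $\de_\varrho^j(\bar h\bar\bu)$ when $\balpha=\bz$) which has vanishing $\nabla_\bx\cdot$ and can be discarded so that $\br_{\balpha,j}\in L^2(\Omega)$. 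The bound $\kappa^{1/2}\Norm{r_{\balpha,j}}_{L^2}\le CM$ follows from $\Norm{\nabla_\bx\bu}_{L^\infty}\cdot\kappa^{1/2}\Norm{h}_{H^{s,k}}\lesssim M^2$, while $\Norm{\br_{\balpha,j}}_{L^2}\le CM$ follows, after the reorganization $\br_{\balpha,j}=-\de_\bx^\balpha\de_\varrho^j[\bar h\bu]-[\de_\bx^\balpha\de_\varrho^j,\bar\bu+\bu]h$, from tame product estimates on the first term and from the Appendix commutator estimate on the second, crucially invoking the hypothesis $\Norm{h}_{H^{s-1,k-1}}\le M$ so as to control $h$ at one derivative less than $\bu$. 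The main technical obstacle throughout is precisely to propagate, through every Leibniz and commutator expansion, the asymmetry between $\bu,\cH\in H^{s,k}$ and $h\in H^{s-1,k-1}$, and to route any surplus contribution of the form $[\de_\bx^\balpha,\kappa\nabla_\bx h/(\bar h+h)]\cdot\nabla_\bx\bu$ in $\bR_{\balpha,0}$ either into the Sobolev gain $\kappa\Norm{\nabla_\bx h}_{L^\infty}\lesssim\kappa^{1/2}M$ or into the admissible $\kappa\Norm{\nabla_\bx h}_{H^{s,k}}$ term of~\eqref{eq.est-quasilin} — this clean separation is what will make the subsequent energy estimate close compatibly with the parabolic regularization, yielding the $\kappa$-dependent lifespan~\eqref{def:time}.
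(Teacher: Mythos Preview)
Your overall architecture matches the paper's: derive the $\balpha=\bz$ identities by integrating the mass equation and rewriting the Montgomery potential, then differentiate, collect commutators, and estimate via the Appendix lemmas. The third bullet in particular is fine --- your splitting $r_{\balpha,j}=-(\nabla_\bx\cdot\bu)\,\partial_\bx^\balpha\partial_\varrho^j h$, $\br_{\balpha,j}=-\partial_\bx^\balpha\partial_\varrho^j(\bar h\bu)-[\partial_\bx^\balpha\partial_\varrho^j,\bar\bu+\bu]h$ is different from the paper's (which routes a symmetric commutator into $r_{\balpha,j}$) but closes equally well.

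There is, however, a genuine gap in your treatment of $R_{\balpha,0}$. You list the remainder as containing the commutator $\int_\varrho^{\rho_1}[\partial_\bx^\balpha,h]\,\nabla_\bx\cdot\bu\,\dd\varrho'$ and propose to bound it ``via the Appendix estimates''. But the ordinary commutator estimate (Lemma~\ref{L.commutator-Hs}(\ref{L.commutator-Hs-2})) requires $\nabla_\bx h\in H^{s-1}_\bx$, i.e.\ $h\in H^{s}_\bx$, which you do not have --- only $h\in H^{s-1,k-1}$ is assumed. The same obstruction appears in $\int_\varrho^{\rho_1}[\partial_\bx^\balpha,\partial_\varrho\bu]\cdot\nabla_\bx\cH\,\dd\varrho'$, where the worst Leibniz term carries $\partial_\bx^\balpha\partial_\varrho\bu$ with $|\balpha|=s$, one derivative beyond $\Norm{\bu}_{H^{s,k}}$. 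You correctly flag the $H^{s,k}$ versus $H^{s-1,k-1}$ asymmetry as ``the main technical obstacle'', but you do not say how to resolve it here. The paper's remedy is a two-step trick: first split each ordinary commutator into a \emph{symmetric} commutator $[\partial_\bx^\balpha;f,g]$ (controlled by Lemma~\ref{L.commutator-Hs}(\ref{L.commutator-Hs-3}) with both factors at level $s-1$) plus a top-order product such as $(\partial_\bx^\balpha h)(\nabla_\bx\cdot\bu)$ or $(\partial_\bx^\balpha\partial_\varrho\bu)\cdot\nabla_\bx\cH$; then integrate the latter by parts in $\varrho'$, using $h=-\partial_\varrho\cH$, to convert the dangerous $\partial_\bx^\balpha h$ into $\partial_\bx^\balpha\cH\in L^2$ (respectively to shift the $\partial_\varrho$ off $\bu$). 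Without this symmetric-commutator-plus-vertical-IBP manoeuvre the $L^2(\Omega)$ and trace bounds on $R_{\balpha,0}$ do not close at the stated regularity.

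A minor correction: the trace inequality you invoke should read $\norm{f|_{\varrho=\rho_0}}_{H^s_\bx}\lesssim\Norm{f}_{H^{s+1/2,1}}$ (Lemma~\ref{L.embedding}), not $H^{s,1}$. In practice this costs nothing, because the paper controls the trace terms in $R_{\balpha,0}\big\vert_{\varrho=\rho_0}$ directly via the hypothesis $\norm{\cH|_{\varrho=\rho_0}}_{H^s_\bx}\le M$ rather than through a trace of $\cH$.
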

\begin{proof} In this proof, we denote $s_0=s-2>\frac d 2$.
	
	\noindent {\em Estimate of $R_{\balpha,0}$.}  First we notice the identity by integration by parts in $\varrho$,
	\[ (\bar \bu + \bu) \cdot\nabla_\bx \cH^{(\balpha)} + \int_\varrho^{\rho_1} (\bar \bu' + \de_{\varrho}\bu) \cdot\nabla_\bx \cH^{(\balpha)} \, d{\varrho'}=\int_{\varrho}^{\rho_1}(\bar\bu+\bu)\cdot \nabla_\bx h^{(\balpha)}\dd\varrho'.\]
	Hence, recalling the notation $[P;u,v]=P(uv)-u(Pv)-v(Pu)$ and integrating by parts in $\varrho$, we get
	\begin{align*}
		R_{\balpha,0}&:=-\int_\varrho^{\rho_1} [\de_\bx^\balpha, \bu ]\cdot \nabla_\bx h  + [\de_\bx^\balpha, h] \nabla_\bx \cdot \bu \, \dd\varrho' \\
		&=-\int_\varrho^{\rho_1}  [\de_\bx^\balpha; \bu , \nabla_\bx h ]+ \bu^{(\balpha)} \cdot \nabla_\bx h + [\de_\bx^\balpha; h,\nabla_\bx \cdot \bu]+   h^{(\balpha)} (\nabla_\bx \cdot \bu)  \, \dd\varrho'\\
		&=-[\de_\bx^\balpha; \bu , \nabla_\bx \cH]   -  \cH^{(\balpha)} \nabla_\bx \cdot  \bu  \\
		&\qquad-\int_\varrho^{\rho_1} [\de_\bx^\balpha; \de_\varrho\bu , \nabla_\bx \cH ] + \bu^{(\balpha)} \cdot \nabla_\bx h   + [\de_\bx^\balpha; h,\nabla_\bx \cdot \bu]+   \cH^{(\balpha)} \nabla_\bx \cdot \de_\varrho \bu  \, \dd\varrho'.
	\end{align*}
	By the standard Sobolev embedding $H^{s_0}(\RR^d)\subset L^\infty(\RR^d)$ and  Lemma~\ref{L.embedding}, one gets
	\begin{align*} 
		\Norm{ \cH^{(\balpha)}  \nabla_\bx \cdot  \bu    }_{L^2(\Omega)} &\leq  
		\Norm{\cH^{(\balpha)} }_{ L^2(\Omega)} \Norm{  \nabla_\bx \cdot  \bu  }_{ L^\infty(\Omega)} 
		\lesssim  \Norm{\cH}_{ H^{s,0}}\Norm{ \bu }_{ H^{s_0+\frac 3 2,1}}.
	\end{align*}
	and
	\begin{align*} 
		\norm{ \big(\cH^{(\balpha)} \nabla_\bx \cdot  \bu \big)\big\vert_{\varrho=\rho_0}   }_{L^2_\bx} &\lesssim 
		\norm{\cH\big\vert_{\varrho=\rho_0} }_{  H^s_\bx } \Norm{ \bu }_{ H^{s_0+\frac32,1}}.
	\end{align*}
	By Lemma~\ref{L.commutator-Hs}(\ref{L.commutator-Hs-3}), and Lemma~\ref{L.embedding}, we have
	\begin{align*} 
		\Norm{ [\de_\bx^\balpha; \bu,\nabla_\bx \cH]  }_{L^2(\Omega)} &\lesssim 
		\Norm{\bu }_{ L^\infty_\varrho H^{s-1}_\bx} \Norm{\nabla_\bx \cH}_{ L^2_\varrho H^{s_0+1}_\bx} + \Norm{\bu }_{L^\infty_\varrho H^{s_0+1}_\bx } \Norm{\nabla_\bx \cH}_{ L^2_\varrho H^{s-1}_\bx}\\
		&\lesssim  \Norm{\bu }_{ H^{s-\frac 12,1}}  \Norm{\cH}_{ H^{s_0+2,0} } + \Norm{\bu }_{H^{s_0+\frac 3 2,1} } \Norm{\cH}_{H^{s,0}},
	\end{align*}
	\begin{align*} 
		\norm{ [\de_\bx^\balpha; \bu\big\vert_{\varrho=\rho_0},\nabla_\bx \cH\big\vert_{\varrho=\rho_0}   ]  }_{ L^2_\bx} &\lesssim 
		\norm{\bu\big\vert_{\varrho=\rho_0} }_{ H^{s-1}_\bx} \norm{\nabla_\bx \cH\big\vert_{\varrho=\rho_0} }_{  H^{s_0+1}_\bx} + \norm{\bu\big\vert_{\varrho=\rho_0} }_{ H^{s_0+1}_\bx } \norm{\nabla_\bx \cH\big\vert_{\varrho=\rho_0} }_{  H^{s-1}_\bx}\\
		&\lesssim \Norm{\bu }_{ H^{s-\frac 12,1}} \norm{ \cH \big\vert_{\varrho=\rho_0} }_{  H^{s_0+2}_\bx}+\Norm{\bu }_{H^{s_0+\frac 32,1} } \norm{ \cH \big\vert_{\varrho=\rho_0} }_{  H^{s}_\bx},
	\end{align*}
	and using additionally the Cauchy-Schwarz inequality,
	\begin{align*}
		\Norm{  [\de_\bx^\balpha; \de_\varrho \bu,\nabla_\bx \cH ]  }_{ L^1_\varrho L^2_\bx } &\lesssim \Norm{\de_\varrho \bu  }_{  L^2_\varrho H^{s-1}_\bx } \Norm{ \nabla_\bx \cH}_{ L^2_\varrho H^{s_0+1}_\bx }+\Norm{\de_\varrho \bu  }_{  L^2_\varrho H^{s_0+1}_\bx } \Norm{ \nabla_\bx \cH}_{ L^2_\varrho H^{s-1}_\bx }   \\
		&\lesssim \Norm{ \bu  }_{  H^{s,1} }\Norm{ \cH}_{  H^{s_0+2 ,0} }  + \Norm{ \bu  }_{   H^{s_0+2 ,1} }\Norm{ \cH}_{ H^{s,0} } ,
	\end{align*}
	\begin{align*}
		\Norm{ [\de_\bx^\balpha; h,\nabla_\bx \cdot \bu   ]  }_{ L^1_\varrho L^2_\bx }  &\lesssim\Norm{h }_{  L^2_\varrho H^{s-1}_\bx } \Norm{ \nabla_\bx \cdot\bu}_{ L^2_\varrho H^{s_0+1}_\bx } + \Norm{h  }_{  L^2_\varrho H^{s_0+1}_\bx } \Norm{ \nabla_\bx \cdot\bu}_{ L^2_\varrho H^{s-1}_\bx }  \\
		&\lesssim \Norm{ h }_{  H^{s-1,0} } {\Norm{  \bu }_{  H^{s_0+2,0} }}+ \Norm{h }_{  H^{s_0+1,0}} \Norm{  \bu }_{ H^{s,0} } ,
	\end{align*}
	and
	\begin{align*}
		\Norm{   \bu^{(\balpha)} \cdot \nabla_\bx h }_{L^1_\varrho L^2_\bx } &\lesssim \Norm{\bu}_{H^{s,0}}\Norm{h}_{  H^{s_0+1,0}},\\
		\Norm{   \cH^{(\balpha)} (\nabla_\bx \cdot \de_\varrho \bu) }_{L^1_\varrho L^2_\bx }  
		&\lesssim \Norm{\cH}_{H^{s,0}}\Norm{ \bu}_{   H^{s_0+2,1} }.
	\end{align*}
	
	Altogether, using  the continuous embedding $L^\infty((\rho_0,\rho_1))\subset   L^2((\rho_0,\rho_1)) \subset L^1((\rho_0,\rho_1))$, the Minkowski and triangle inequalities and $s\geq s_0+2$, we get
	\begin{equation}\label{eq:est-R0}
		\norm{R_{\balpha,0}\big\vert_{\varrho=\rho_0}}_{L^2_\bx} +\Norm{R_{\balpha,0}}_{L^2(\Omega)}    \lesssim 
		(\Norm{\cH}_{H^{s,0}}+\Norm{h}_{H^{s-1,0}}+\norm{ \cH \big\vert_{\varrho=\rho_0} }_{  H^{s}_\bx}) \Norm{\bu}_{H^{s, 1}}.
	\end{equation}
	
	\noindent {\em Estimate of $R_{\balpha,j}$ for $1\leq j\leq k$.} We have
	\begin{align*}
		R_{\balpha,j}&:= - [\de_\bx^{\balpha} \de_\varrho^{j-1} \nabla_\bx \cdot, \bar \bu + \bu]   h - \de_\bx^{\balpha} \de_\varrho^{j-1} \nabla_\bx \cdot (\bar h (\bar\bu+\bu)) \\
		&=-\sum_{i=1}^d[\de_\bx^{\balpha}\partial_{x_i}\de_\varrho^{j-1} , \bu_i]h-[\de_\varrho^{j-1}, \bar \bu]\cdot \de_\bx^\balpha \nabla_\bx h-  \de_\varrho^{j-1}  \cdot (\bar h \de_\bx^{\balpha}\nabla_\bx \bu),
	\end{align*}
	where $\bu_i$ is the $i^{\rm th}$ component of $\bu$.
	By Lemma~\ref{L.commutator-Hsk} and since $(|\balpha|+1)+(j-1)\leq s$ and $ j-1\leq k-1$, and $s\geq s_0+\frac32$, we find for $2\leq k-1\leq s$
	\[
	\Norm{[\de_\bx^{\balpha}\partial_{x_i}\de_\varrho^{j-1} ,\bu_i ] h}_{ L^2(\Omega) }  \lesssim  \Norm{h}_{H^{s-1,k-1}}\Norm{\bu}_{H^{s,k}}.
	\]
	There remains to consider $1\leq j\leq k\leq 2$. If $j=1$ we have by Lemma~\ref{L.commutator-Hs}(\ref{L.commutator-Hs-2}) and since $|\balpha|\leq s-1$ and $s\geq s_0+\frac32$
	\[\Norm{[\de_\bx^{\balpha}\partial_{x_i} ,\bu_i ] h}_{ L^2(\Omega) }  \lesssim \Norm{h  }_{  L^\infty_\varrho H^{s_0}_\bx } \Norm{ \bu }_{ L^2_\varrho H^{s}_\bx } +\Norm{h  }_{  L^2_\varrho H^{s-1}_\bx } \Norm{ \bu }_{ L^\infty_\varrho H^{s_0+1}_\bx }   \lesssim \Norm{h}_{H^{s-1,1}}\Norm{\bu}_{H^{s,1}}.\]
	If $j=k=2$, and since $|\balpha|\leq s-2$ and $s\geq s_0+\frac32$,
	\begin{align*}
		\Norm{[\de_\bx^{\balpha}\partial_{x_i}\de_\varrho^{j-1} ,\bu_i ] h}_{ L^2(\Omega) } &\leq \Norm{[\de_\bx^{\balpha}\partial_{x_i},\bu_i]  \partial_\varrho h}_{ L^2(\Omega) } + \Norm{\partial_\bx^{\balpha}\partial_{x_i}  ( h\partial_\varrho\bu_i)}_{ L^2(\Omega) }  \\
		&\lesssim \Norm{ \partial_\varrho h  }_{  L^2_\varrho H^{s_0}_\bx } \Norm{ \bu }_{ L^\infty_\varrho H^{s-1}_\bx } +\Norm{\partial_\varrho h  }_{  L^2_\varrho H^{s-2}_\bx } \Norm{ \bu }_{ L^\infty_\varrho H^{s_0+1}_\bx } \\
		&\qquad + \Norm{h  }_{  L^\infty_\varrho H^{s_0}_\bx } \Norm{\partial_\varrho \bu }_{ L^2_\varrho H^{s-1}_\bx } +\Norm{h  }_{  L^2_\varrho H^{s-1}_\bx } \Norm{ \partial_\varrho\bu }_{ L^\infty_\varrho H^{s_0}_\bx }  \\
		& \lesssim \Norm{h}_{H^{s-1,1}}\Norm{\bu}_{H^{s,2}}.
	\end{align*}
	Finally, we have immediately
	\begin{align*}
		\Norm{[\de_\varrho^{j-1}, \bar \bu]\cdot\de_\bx^\balpha \nabla_\bx h }_{ L^2(\Omega) } & \lesssim  \norm{\bar \bu'}_{W^{j-2,\infty}_\varrho }\Norm{h}_{H^{s-1,j-2}},\\
		\Norm{\de_\varrho^{j-1}   (\bar h \de_\bx^{\balpha}\nabla_\bx \cdot\bu) }_{ L^2(\Omega) } & \lesssim  \norm{\bar h}_{W^{j-1,\infty}_\varrho }\Norm{\bu}_{H^{s,j}}.
	\end{align*}
	Altogether, we find that for any $1\leq j\leq k$
	\begin{equation}\label{eq:est-Rj}
		\Norm{R_{\balpha,j}}_{L^2(\Omega)}    \lesssim 
		( \norm{\bar h}_{W^{k-1,\infty}_\varrho} +  \norm{\bar \bu'}_{W^{k-2,\infty}_\varrho} +\Norm{h}_{H^{s-1,k-1}}) \big( \Norm{\bu}_{H^{s, k}}+\Norm{h}_{H^{s-1,k-2}}\big).
	\end{equation}
	
	\noindent {\em Estimate of $\br_{\balpha,j}$ and $r_{\balpha,j}$ for  $0\leq j\leq k$.} We have~\eqref{eq.quasilin-j-h} with
	\begin{align*}
		\br_{\balpha,j}&:= - [\de_\varrho^{j} , \bar \bu ]  \de_\bx^{\balpha}  h - \de_\bx^{\balpha} \de_\varrho^{j}  (\bar h \bu)- (\partial_\bx^\balpha\de_\varrho^{j}\bu) h, \\
		r_{\balpha,j}&:=-[\partial^\balpha\de_\varrho^{j}\nabla_\bx\cdot;\bu,h] +(\partial_\bx^\balpha\de_\varrho^{j}\bu)\cdot\nabla_\bx h  .
	\end{align*}
	We have immediately (since $|\balpha|+j\leq s$, $ j\leq k$, and using Lemma~\ref{L.embedding})
	\begin{align*}
		\Norm{[\de_\varrho^{j}, \bar \bu]\de_\bx^\balpha  h }_{ L^2(\Omega) } & \lesssim  \norm{\bar \bu'}_{W^{k-1,\infty}_\varrho }\Norm{h}_{H^{s-1,k-1}},\\
		\Norm{\partial_\bx^\balpha\de_\varrho^{j}   (\bar h \bu) }_{ L^2(\Omega) } & \lesssim  \norm{\bar h}_{W^{k,\infty}_\varrho }\Norm{\bu}_{H^{s,k}},\\
		\Norm{ (\partial_\bx^\balpha\de_\varrho^{j}\bu) h}_{L^2(\Omega)}  & \lesssim \Norm{\bu}_{H^{s,k}} \Norm{h}_{H^{s_0+\frac12,1}}.
	\end{align*}
	By Lemma~\ref{L.commutator-Hsk-sym} and since $|\balpha|+j+1\leq s+1$, $ j\leq k\leq s$, $s+1\geq s_0+\frac52$, we find for $2\leq k\leq s$
	\[\Norm{[\partial^\balpha\de_\varrho^{j}\nabla_\bx\cdot;\bu,h] }_{ L^2(\Omega) } \lesssim \Norm{h}_{H^{s,k}}\Norm{\bu}_{H^{s,k}},\]
	and we have by Lemma~\ref{L.embedding}
	\[\Norm{ (\partial_\bx^\balpha\de_\varrho^{j}\bu)\cdot\nabla_\bx h}_{L^2(\Omega)}   \lesssim \Norm{\bu}_{H^{s,k}} \Norm{h}_{H^{s_0+\frac32,1}}.\]
	Altogether, we find that for any $0\leq j\leq k$
	\begin{align}\label{eq:est-brj}
		\Norm{\br_{\balpha,j}}_{L^2(\Omega)}   &\lesssim 
		( \norm{\bar h}_{W^{k,\infty}_\varrho} +  \norm{\bar \bu'}_{W^{k-1,\infty}_\varrho} +\Norm{h}_{H^{s-1,k-1}}) \big( \Norm{\bu}_{H^{s, k}}+\Norm{h}_{H^{s-1,k-1}}\big),\\
		\label{eq:est-rj}
		\Norm{r_{\balpha,j}}_{L^2(\Omega)}   & \lesssim 
		\Norm{\bu}_{H^{s, k}}\Norm{h}_{H^{s,k}}.
	\end{align}

	\noindent {\em Estimate of $\bR_{\balpha,0}$.} The precise expression of the second remainder in~\eqref{eq.quasilin} is the following:
	\[
	\bR_{\balpha,0}:=- \big( [\de_\bx^\balpha,  \bu] \cdot \nabla_\bx \big)\bu + \kappa  [\de_\bx^\balpha, \tfrac{1}{\bar h+h}] (\nabla_\bx h\cdot \nabla_\bx) \bu + \tfrac{\kappa }{\bar h+h} \big( [\de_\bx^\balpha, \nabla_\bx h] \cdot\nabla_\bx \big)\bu.\]
	By Lemma~\ref{L.commutator-Hs}(\ref{L.commutator-Hs-2}) and Lemma~\ref{L.embedding}
	we have
	\begin{align*}
		\Norm{\big([\de_\bx^\balpha, \bu]\cdot \nabla_\bx\big) \bu}_{L^2(\Omega)} &\lesssim \Norm{\bu}_{L^\infty_\varrho H^{s_0+1}_\bx}\Norm{\bu}_{L^2_\varrho H^s_\bx}\lesssim \Norm{\bu}_{H^{s_0+\frac32,1}}\Norm{\bu}_{H^{s,0}}.
	\end{align*}
	Next, appealing again to~\ref{L.commutator-Hs}(\ref{L.commutator-Hs-2}), we have
	\begin{align*}
		\kappa \Norm{ [\de_\bx^\balpha, \tfrac{1}{\bar h+h}] (\nabla_\bx h \cdot \nabla_\bx) \bu }_{L^2(\Omega)}  & \lesssim\kappa \Norm{\nabla_\bx \tfrac1{\bar h+h}}_{L^\infty_\varrho H^{s_0}_\bx}\Norm{(\nabla_\bx h \cdot \nabla_\bx) \bu}_{L^2_\varrho H^{s-1}_\bx} \\
		&\quad + \kappa \Norm{\nabla_\bx \tfrac1{\bar h+h}}_{L^2_\varrho H^{s-1}_\bx}\Norm{(\nabla_\bx h \cdot \nabla_\bx) \bu}_{L^\infty_\varrho H^{s_0}_\bx}.
	\end{align*}
	Now, by Lemma~\ref{L.product-Hs}(\ref{L.product-Hs-2}) and Lemma~\ref{L.composition-Hs}, one has for any $t\geq 0$ that
	\begin{align}
		\norm{\nabla_\bx \tfrac1{\bar h+h}}_{ H^{t}_\bx} &= \norm{ \tfrac{\nabla_\bx h}{(\bar h+h)^2}}_{ H^{t}_\bx}  \leq \norm{ \tfrac{\nabla_\bx h}{\bar h^2}}_{ H^{t}_\bx} +\norm{  (\tfrac1{\bar h^2}-\tfrac{1}{(\bar h+h)^2})\nabla_\bx h}_{ H^{t}_\bx} \notag \\
		& \lesssim (h_\star)^{-2} \norm{\nabla_\bx h}_{H^{t}_\bx }+ \norm{\tfrac1{\bar h^2}-\tfrac{1}{(\bar h+h)^2}}_{H^{s_0}_\bx}\norm{\nabla_\bx h}_{H^t_\bx}
		+\left\langle \norm{\tfrac1{\bar h^2}-\tfrac{1}{(\bar h+h)^2}}_{H^{t}_\bx}\norm{\nabla_\bx h}_{H^{s_0}_\bx}\right\rangle_{t>s_0} \notag\\
		&\leq  C(h_\star,\norm{h}_{H^{s_0}_\bx})  \norm{\nabla_\bx h}_{H^{t}_\bx }, \label{eq:last-fraction}
	\end{align}
	where in the last step we used that, by  Lemma~\ref{L.composition-Hs},
	\[ \norm{\tfrac1{\bar h^2}-\tfrac{1}{(\bar h+h)^2}}_{H^{s_0}_\bx}\leq  C(h_\star,\norm{h}_{H^{s_0}_\bx}) \]
	and, provided that $t>s_0$,
	\begin{align*}
		\norm{\tfrac1{\bar h^2}-\tfrac{1}{(\bar h+h)^2}}_{H^{t}_\bx}\leq \norm{\tfrac1{\bar h^2}-\tfrac{1}{(\bar h+h)^2}}_{H^{s_0}_\bx}+\norm{\nabla_\bx \tfrac{1}{(\bar h+h)^2}}_{H^{t-1}_\bx},
	\end{align*}
	and a finite induction on $t$, until $\norm{ \nabla_\bx \tfrac{1}{(\bar h+h)^2} }_{L^2_\bx} =  \norm{\tfrac{\nabla_\bx h}{\bar h+h}}_{L^2_\bx} \le  h_\star^{-2} \norm{\nabla_\bx h}_{L^2_\bx}$.
	Then, by Lemma~\ref{L.product-Hs}(\ref{L.product-Hs-2}) and Lemma~\ref{L.embedding}, we have
	\begin{align*}
		\Norm{(\nabla_\bx h \cdot \nabla_\bx) \bu}_{L^2_\varrho H^{s-1}_\bx} &\lesssim \Norm{\nabla_\bx h}_{L^2_\varrho H^{s-1}_\bx} \Norm{\bu}_{L^\infty_\varrho H^{s_0+1}_\bx} +\Norm{\nabla_\bx h}_{L^\infty_\varrho H^{s_0}_\bx} \Norm{\bu}_{L^2_\varrho H^{s}_\bx}\\ &\lesssim  \Norm{h}_{H^{s,0}} \Norm{\bu}_{H^{s_0+\frac32,1}}+ \Norm{h}_{H^{s_0+\frac32,1}} \Norm{\bu}_{H^{s,0}} 
	\end{align*}
	and
	\[\Norm{(\nabla_\bx h \cdot \nabla_\bx) \bu}_{L^\infty_\varrho H^{s_0}_\bx}\lesssim \Norm{\nabla_\bx h}_{L^\infty_\varrho H^{s_0}_\bx} \Norm{\bu}_{L^\infty_\varrho H^{s_0+1}_\bx} \lesssim \Norm{h}_{H^{s_0+\frac32,1}} \Norm{\bu}_{H^{s_0+\frac32,1}} .\]
	Finally, we have by Lemma~\ref{L.commutator-Hs}(\ref{L.commutator-Hs-2}) and Lemma~\ref{L.embedding}
	\begin{align*}
		\Norm{ \big([\de_\bx^\balpha, \nabla_\bx h] \cdot\nabla_\bx\big) \bu}_{L^2(\Omega)} & \lesssim \Norm{\nabla_\bx h}_{L^\infty_\varrho H^{s_0+1}_\bx}\Norm{\nabla_\bx \bu}_{L^2_\varrho H^{s-1}_\bx}+ \Norm{\nabla_\bx h}_{L^2_\varrho H^{s}_\bx}\Norm{\nabla_\bx \bu}_{L^\infty_\varrho H^{s_0}_\bx}\\
		& \lesssim \Norm{ \nabla_\bx h}_{ H^{s_0+\frac 32, 1}}\Norm{ \bu}_{H^{s,0}} + \Norm{ \nabla_\bx h}_{ H^{s,0}}\Norm{ \bu}_{ H^{s_0+\frac32,1}}.
	\end{align*}
	Collecting the estimates above and using that $s \ge s_0+\frac32$, we obtain 
	\begin{equation}\label{eq:est-bR0}
		\Norm{\bR_{\balpha,0}}_{L^2(\Omega)}    \lesssim   \Norm{ \bu}_{H^{s,0}} \Norm{ \bu}_{H^{s,1}}  + \kappa C(h_\star,\Norm{h}_{H^{s_0+\frac12,1}}) \big(  \Norm{  h}_{ H^{s,1}}^2+\Norm{  \nabla_\bx h}_{ H^{s,1}}  \big) \Norm{ \bu}_{H^{s,1}} .
	\end{equation}

	\noindent {\em Estimate of $\bR_{\balpha,j}$ for $1\leq j\leq k$.} The explicit expression of the second remainder in~\eqref{eq.quasilin-j} is the following
	\begin{multline*}
		\bR_{\balpha,j}:= -  \big([\de_\bx^\balpha \de_\varrho^j, \bar \bu + \bu] \cdot\nabla_\bx\big)  \bu + \kappa  [\de_\bx^\balpha \de_\varrho^j, \tfrac{1}{\bar h+h}] \big( (\nabla_\bx h\cdot\nabla_\bx)  \bu \big) + \tfrac{\kappa }{\bar h+h} \big( [\de_\bx^\balpha \de_\varrho^j, \nabla_\bx h] \cdot\nabla_\bx\big) \bu\\
		+  \de_\varrho^j\de_\bx^\balpha \left( \frac{\rho_0}{\varrho} \int_{\rho_0}^{\rho_1} \nabla_\bx h\, \dd \varrho' +\frac1\varrho\int_{\rho_0}^\varrho  \nabla_\bx  \cH \dd\varrho'\right).
	\end{multline*}
	By Lemma~\ref{L.commutator-Hsk} we have for $s\geq s_0+\frac32$ and since $0 \le |\balpha|\leq s-j$ and  $j\leq k$ with $k  \ge 2$, that
	\[
	\Norm{ \big( [\de_\bx^\balpha \de_\varrho^j,   \bu] \cdot\nabla_\bx \big) \bu}_{L^2(\Omega)} \lesssim \Norm{\bu}_{H^{s,k}} \Norm{\nabla_\bx \bu}_{H^{s-1,k}}.
	\]
	Then,
	\[
	\Norm{ \big([\de_\bx^\balpha \de_\varrho^j,  \bar \bu] \cdot\nabla_\bx\big)  \bu}_{L^2(\Omega)}
	=\Norm{ [\de_\varrho^j,  \bar \bu] \cdot\nabla_\bx \de_\bx^\balpha   \bu}_{L^2(\Omega)} \\
	\lesssim \norm{\bar\bu'}_{W^{j-1,\infty}_\varrho}\Norm{\bu}_{H^{s, j-1}}.
	\]
	Next, using Lemma~\ref{L.product-Hsk},
	\begin{align*}
		\Norm{[\de_\varrho^j,\tfrac1{\bar h}] \de_\bx^\balpha \big( (\nabla_\bx h\cdot\nabla_\bx)  \bu \big)}_{L^2(\Omega)}
		&\lesssim C(h_\star)\norm{\bar h'}_{W^{j-1,\infty}_\varrho} \Norm{ (\nabla_\bx h\cdot\nabla_\bx)  \bu}_{H^{s-1, j-1}}\\
		&\lesssim  C(h_\star)\norm{\bar h'}_{W^{j-1,\infty}_\varrho} \Norm{  h}_{H^{s,j}} \Norm{  \bu}_{H^{s,j}},
	\end{align*}
	and by Lemma~\ref{L.commutator-Hsk}, since $s\geq s_0+\frac 32$ and $2\leq j\leq s$, $|\balpha|+j\leq s$, Lemma~\ref{L.composition-Hsk-ex} and Lemma~\ref{L.product-Hsk},
	\begin{align*} \Norm{ [\de_\bx^\balpha \de_\varrho^j, \tfrac{1}{\bar h+h} -\tfrac1{\bar h}] \big( (\nabla_\bx h\cdot\nabla_\bx)  \bu \big)}_{L^2(\Omega)}
		&\lesssim \Norm{\tfrac{1}{\bar h+h} -\tfrac1{\bar h} }_{H^{s,k}} \Norm{(\nabla_\bx h\cdot\nabla_\bx)  \bu }_{H^{s-1,\min(\{k,s-1\})}}\\
		&\lesssim C(h_\star, \norm{\bar h'}_{W^{k-1,\infty}_\varrho},\Norm{h}_{H^{s-1,k-1}}) \Norm{  h}_{H^{s,k}}^2   \Norm{  \bu}_{H^{s, k}}.
	\end{align*}
	By Lemma~\ref{L.commutator-Hsk} we have for $s\geq s_0+\frac32$ and since $|\balpha|+j\leq s$ and  $2\leq j\leq s$
	\[
	\Norm{\big([\de_\bx^\balpha \de_\varrho^j, \nabla_\bx h] \cdot\nabla_\bx\big) \bu }_{L^2(\Omega)} \lesssim  \Norm{ \nabla_\bx h}_{H^{s,k}}\Norm{\bu }_{H^{s,k}}.
	\]
	We have immediately since $|\balpha|\leq s-j\leq s-1$ ,
	\[\Norm{ \de_\varrho^j\de_\bx^\balpha\Big( \frac{\rho_0}{\varrho} \int_{\rho_0}^{\rho_1} \nabla_\bx h \, \dd \varrho'\Big) }_{L^2(\Omega)} \lesssim \norm{\de_\bx^\balpha \nabla_\bx \cH\big\vert_{\varrho=\rho_0} }_{L^2_\bx} \lesssim \norm{\cH\big\vert_{\varrho=\rho_0}}_{H^{s}_\bx} \]
	and  since $(|\balpha|+1)+(j-1)\leq s$,
	\[\Norm{   \de_\varrho^j\Big( \frac1\varrho\int_{\rho_0}^\varrho \de_\bx^\balpha \nabla_\bx  \cH \dd\varrho'\Big)}_{L^2(\Omega)}\lesssim \sum_{i=0}^{j-1}\Norm{\partial_\varrho^i \de_\bx^\balpha \nabla_\bx \cH }_{L^2(\Omega)} \ \lesssim \Norm{\cH}_{H^{s,j-1}} .\]
	
	Collecting the estimates above we obtain for $1\leq j\leq k$
	\begin{multline}\label{eq:est-bRj}
		\Norm{\bR_{\balpha,j}}_{L^2(\Omega)}    \lesssim   \norm{\cH\big\vert_{\varrho=\rho_0}}_{H^{s}_\bx}+ \Norm{\cH}_{H^{s,k-1}} + \big(\norm{\bar\bu'}_{W^{k-1,\infty}_\varrho}+\Norm{ \bu}_{H^{s,k}} \big)\Norm{ \bu}_{H^{s,k}}  \\
		+ \kappa C(h_\star, \norm{\bar h'}_{W^{k-1,\infty}_\varrho},\Norm{h}_{H^{s-1,k-1}}) \big(  \Norm{  h}_{ H^{s,k}}^2+\Norm{  \nabla_\bx h}_{ H^{s,k}}  \big) \Norm{ \bu}_{H^{s,k}} .
	\end{multline}
	
	We infer the bound~\eqref{eq.est-quasilin} from~\eqref{eq:est-R0} and~\eqref{eq:est-bR0}, the bound~\eqref{eq.est-quasilin-j} from~\eqref{eq:est-Rj} and~\eqref{eq:est-bRj}, and the bound~\eqref{eq.est-quasilin-j-h} from~\eqref{eq:est-brj} and~\eqref{eq:est-rj}, and the proof is complete.
\end{proof}

\subsection{A priori energy estimates}

In this section we provide {\em a priori} energy estimates associated with the equations featured in Lemma~\ref{lem:quasilinearization}.
We start with the transport-diffusion equations in~\eqref{eq.quasilin-j} and~\eqref{eq.quasilin-j-h}, which we rewrite as
\begin{equation}\label{eq:transport-diffusion}
	\partial_t \dot h+\bu\cdot\nabla_\bx \dot h=\kappa\Delta_\bx \dot h+r+\nabla_\bx\cdot\br.
\end{equation}
\begin{lemma}\label{lem:estimate-transport-diffusion}
	There exists a universal constant $C_0>0$ such that for any $\kappa>0$ and $T>0$, for any $\bu\in L^\infty(0,T;L^\infty (\Omega))$ with $\nabla_\bx\cdot \bu \in L^1(0,T;L^\infty(\Omega))$, for any $(r,\br)\in L^2(0,T;L^2(\Omega))$ and for any $\dot h\in L^\infty(0,T;L^2(\Omega))$ with $\nabla_\bx \dot h\in  L^2(0,T;L^2(\Omega))$, such that~\eqref{eq:transport-diffusion} holds in $L^2(0,T;H^{1,0}(\Omega)')$,
	we have
	\begin{multline}
		\Norm{\dot h}_{L^\infty(0,T;L^2(\Omega))}+\kappa^{1/2}\Norm{\nabla_\bx \dot h}_{L^2(0,T;L^2(\Omega))}
		\\
		\leq C_0\big(\Norm{ \dot h\big|_{t=0}}_{L^2(\Omega)} + \Norm{r}_{L^1(0,T;L^2(\Omega))}+\kappa^{-1/2}\Norm{\br}_{L^2(0,T;L^2(\Omega))}\big)\\
		\times\exp\Big(C_0\int_0^T \Norm{\nabla_\bx\cdot\bu(t,\cdot)}_{L^\infty(\Omega)}\dd t\Big).
	\end{multline}
\end{lemma}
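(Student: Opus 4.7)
The plan is to carry out an $L^2$ energy estimate by testing the equation against $\dot h$, handle the various terms with integration by parts and Young's inequality, and conclude via a (mildly nonlinear) Gronwall argument. The only subtlety is that the source term $r$ is assumed only in $L^1_tL^2_\bx$, not $L^2_tL^2_\bx$, so the Gronwall step cannot be linear.

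Since $\dot h\in L^\infty(0,T;L^2)\cap L^2(0,T;H^{1,0}(\Omega))$ and $\partial_t\dot h\in L^2(0,T;H^{1,0}(\Omega)')$ by the equation, a standard density argument (Lions--Magenes type) legitimates $\dot h\in\cC^0([0,T];L^2)$ together with the chain rule $\frac{1}{2}\frac{d}{dt}\Norm{\dot h}_{L^2}^2=\langle\partial_t\dot h,\dot h\rangle_{H^{1,0}(\Omega)',H^{1,0}(\Omega)}$. Pairing \eqref{eq:transport-diffusion} with $\dot h$ and integrating by parts (using $\int_\Omega(\bu\cdot\nabla_\bx\dot h)\dot h=-\frac{1}{2}\int_\Omega(\nabla_\bx\cdot\bu)\dot h^2$ and the fact that $\nabla_\bx$ integrations by parts on the horizontal variable generate no boundary term on $\Omega=\RR^d\times(\rho_0,\rho_1)$) yields
\[
\tfrac{1}{2}\tfrac{d}{dt}\Norm{\dot h}_{L^2}^2+\kappa\Norm{\nabla_\bx\dot h}_{L^2}^2=\tfrac{1}{2}\int_\Omega(\nabla_\bx\cdot\bu)\dot h^2+\int_\Omega r\,\dot h-\int_\Omega\br\cdot\nabla_\bx\dot h.
\]
Cauchy--Schwarz on the source term, together with Young's inequality $|\br\cdot\nabla_\bx\dot h|\le\frac{\kappa}{2}|\nabla_\bx\dot h|^2+\frac{1}{2\kappa}|\br|^2$ applied to absorb half of the dissipation, gives the pointwise-in-$t$ estimate
\[
\tfrac{d}{dt}\Norm{\dot h}_{L^2}^2+\kappa\Norm{\nabla_\bx\dot h}_{L^2}^2\le\Norm{\nabla_\bx\cdot\bu}_{L^\infty(\Omega)}\Norm{\dot h}_{L^2}^2+2\Norm{r}_{L^2}\Norm{\dot h}_{L^2}+\tfrac{1}{\kappa}\Norm{\br}_{L^2}^2.
\]

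Integrating from $0$ to $t$ and introducing $\Psi(t):=\sup_{s\in[0,t]}\Norm{\dot h(s,\cdot)}_{L^2}^2$ and $a(s):=\Norm{\nabla_\bx\cdot\bu(s,\cdot)}_{L^\infty(\Omega)}$, the bound $\int_0^t\Norm{r}_{L^2}\Norm{\dot h}_{L^2}\le\sqrt{\Psi(t)}\,\Norm{r}_{L^1(0,t;L^2)}$ produces a quadratic inequality
\[
\Psi(t)\le A(t)+2\sqrt{\Psi(t)}\,\Norm{r}_{L^1(0,t;L^2)},\qquad A(t):=\Norm{\dot h(0,\cdot)}_{L^2}^2+\tfrac{1}{\kappa}\Norm{\br}_{L^2(0,t;L^2)}^2+\int_0^t a(s)\Psi(s)\,ds.
\]
Completing the square in $\sqrt{\Psi(t)}$ gives $\Psi(t)\le 2A(t)+8\Norm{r}_{L^1(0,t;L^2)}^2$, and applying the standard (linear) Gronwall lemma with respect to $a(s)$ yields
\[
\Psi(t)\le C\bigl(\Norm{\dot h(0,\cdot)}_{L^2}^2+\Norm{r}_{L^1(0,t;L^2)}^2+\tfrac{1}{\kappa}\Norm{\br}_{L^2(0,t;L^2)}^2\bigr)\exp\bigl(C\textstyle\int_0^t a(s)\,ds\bigr).
\]
Plugging this bound back into the time-integrated differential inequality controls $\kappa\int_0^T\Norm{\nabla_\bx\dot h}_{L^2}^2$ by the same right-hand side; taking square roots and choosing a suitable universal constant $C_0$ delivers the claimed estimate.

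The main obstacle is the nonlinearity in Gronwall's inequality caused by the $\Norm{r}_{L^2}\Norm{\dot h}_{L^2}$ term: a crude bound $2\Norm{r}\Norm{\dot h}\le\Norm{r}^2+\Norm{\dot h}^2$ would only yield an $L^2_t$ norm on $r$, contrary to the statement. The sup/quadratic-inequality trick above is precisely what converts the $L^1_tL^2_\bx$ datum into the $L^\infty_tL^2_\bx$ control of $\dot h$ without losing integrability.
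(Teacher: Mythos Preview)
Your proof is correct and follows the same approach as the paper: test the equation against $\dot h$, integrate by parts in $\bx$, apply Cauchy--Schwarz and Young's inequality, and conclude via Gronwall. The paper's proof is terser and does not spell out the quadratic-in-$\sqrt{\Psi}$ step you use to secure the $L^1_t L^2_\bx$ norm on $r$ (it simply invokes ``Gronwall's Lemma''), but the underlying argument is the same.
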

\begin{proof}
	Testing the equation against $\dot h$ and integrating by parts (with respect to the variable $\bx$) yields
	\[ \frac12\frac{\dd}{\dd t}\Norm{\dot h}_{L^2(\Omega)}^2 + \kappa \Norm{\nabla_\bx \dot h}_{L^2(\Omega)}^2 =\frac12\iint_\Omega (\nabla_\bx\cdot\bu) \dot h^2\dd\bx\dd\varrho + \iint_\Omega r\dot h\dd\bx\dd\varrho- \iint_\Omega \br\cdot\nabla_\bx\dot h\dd\bx\dd\varrho.\]
	The estimate follows from the Cauchy-Schwarz inequality and Gronwall's Lemma. 
\end{proof}

Next, we consider system~\eqref{eq.quasilin}, which we rewrite as
\begin{equation}\label{eq.system}
	\begin{aligned}
		\partial_t  \dot\cH+(\bar\bu+\bu)\cdot \nabla_\bx \dot \cH+\int_{\varrho}^{\rho_1}(\bar\bu'+\de_\varrho\bu)\cdot \nabla_\bx \dot \cH \dd\varrho'+\int_{\varrho}^{\rho_1} (\bar h+h)\nabla_\bx \cdot\dot\bu \dd\varrho'&=\kappa\Delta_\bx   \dot\cH+R,\\[1ex]
		\varrho \left(\partial_t\dot\bu+\big(({\bar \bu}+\bu-\kappa\tfrac{\nabla_\bx h}{\bar h+ h})\cdot\nabla_\bx \big)\dot\bu \right)
		+ {\rho_0}\nabla_\bx \dot\cH\big\vert_{\varrho=\rho_0} + \int_{\rho_0}^\varrho  \nabla_\bx  \dot\cH \dd\varrho'&=\varrho\nu\Delta_\bx   \dot\bu+\bR .
	\end{aligned}
\end{equation}
For the sake of readability, we introduce the following notations
\begin{equation}\label{eq:X01spaces}
	X^0:= \cC^0([\rho_0,\rho_1];L^2(\RR^d))\times L^2(\Omega)^d; \quad  X^1:= \cC^0([\rho_0,\rho_1];H^1(\RR^d))\times H^{1,0}(\Omega)^d.
\end{equation}
\begin{lemma}\label{lem:estimate-system}
	Let  $h_\star,h^\star, \bar M>0$ be fixed. There exists $C(h_\star,h^\star,M )>0$ such that for any $\kappa>0$ and $\nu\in [0,1]$, for any $(\bar h,\bar\bu)\in W^{1,\infty}((\rho_0,\rho_1))$,
	for any $T>0$ and
	$(h,\bu)\in L^\infty(0,T;W^{1,\infty}(\Omega))$ with $\Delta_\bx h\in L^1(0,T;L^\infty(\Omega))$
	satisfying~\eqref{eq:hydro-iso-nu} and, for almost every $t\in [0,T]$, the upper bound
	\[
	\Norm{ h(t,\cdot)}_{L^\infty(\Omega)}
	+\Norm{ \nabla_\bx h(t,\cdot)}_{L^\infty_\bx L^2_\varrho}
	+\nu^{1/2}\Norm{ \nabla_\bx h(t,\cdot) }_{L^\infty(\Omega)}
	+\Norm{ \nabla_\bx\cdot \bu(t,\cdot) }_{L^\infty(\Omega)}  \le M
	\]
	and the lower and upper bounds  
	\[ \forall (\bx,\varrho)\in  \Omega , \qquad h_\star \leq  \bar h(\varrho)+h(t,\bx,\varrho) \leq h^\star ;  \]
	and for any $(\dot\cH, \dot\bu) \in \cC^0([0,T];X^0)\cap L^2(0,T;X^1)$, with $X^0, X^1$ in~\eqref{eq:X01spaces}, and $(R,\bR)\in L^2(0,T;X^0)$ satisfying system~\eqref{eq.system} in $L^2(0,T;X^1)'$, the following estimate holds:
	\begin{multline*}\cE(\dot\cH(t,\cdot),\dot\bu(t,\cdot))^{1/2} + \kappa^{1/2} \Norm{\nabla_\bx \dot \cH}_{L^2(0,t;L^2(\Omega))} +  \kappa^{1/2} \norm{\nabla_\bx \dot \cH \big\vert_{\varrho=\rho_0}  }_{ L^2(0,t;L^2_\bx)}+\nu\Norm{\nabla_\bx \dot \bu}_{L^2(\Omega)}^2 \\
		\leq \left( \cE(\dot\cH(0,\cdot),\dot\bu(0,\cdot))^{1/2}+C\int_0^t \cE(R(\tau,\cdot),\bR(\tau,\cdot))^{1/2}\dd \tau \right)\\
		\times \exp\Big( C \int_0^t \big(1+ \kappa^{-1}\Norm{ \bar \bu'+\de_\varrho \bu (\tau,\cdot)}_{L^\infty_\bx L^2_\varrho}^2 \big)\dd\tau\Big),
	\end{multline*}
	where we denote
	\[ \cE(\dot\cH,\dot\bu):= \frac12 \int_{\rho_0}^{\rho_1}\int_{\RR^d} \dot\cH^2+ \varrho(\bar h+h)\big|\dot\bu\big|^2\dd\bx\dd \varrho\ + \ \frac{\rho_0}2\int_{\RR^d} \dot\cH^2\big\vert_{\varrho=\rho_0}\dd\bx.\]
\end{lemma}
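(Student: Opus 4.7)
The strategy is to differentiate $\cE$ in time, identify the structural cancellations, and close the estimate via Gronwall's lemma applied to $\cE^{1/2}$. Starting from
\[
\tfrac{d}{dt}\cE= \iint_\Omega \dot\cH\,\partial_t\dot\cH + \iint_\Omega \varrho(\bar h+h)\dot\bu\cdot\partial_t\dot\bu + \tfrac12\iint_\Omega \varrho(\partial_t h)|\dot\bu|^2 + \rho_0\int_{\RR^d} \dot\cH\,\partial_t\dot\cH\big\vert_{\varrho=\rho_0},
\]
I substitute $\partial_t\dot\cH$ and $\partial_t\dot\bu$ from~\eqref{eq.system}, $\partial_t h$ from the first equation of~\eqref{eq:hydro-iso-nu}, and the time derivative at the surface from the trace of the first equation of~\eqref{eq.system} at $\varrho=\rho_0$.

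Two structural cancellations drive the argument. First, the kinetic-energy advection $-\iint\varrho(\bar h+h)\dot\bu\cdot((\bar\bu+\bu)\cdot\nabla_\bx)\dot\bu$ equals, after integration by parts in $\bx$, the expression $\tfrac12\iint\varrho\nabla_\bx\cdot((\bar h+h)(\bar\bu+\bu))|\dot\bu|^2$; substituting $\partial_t h = -\nabla_\bx\cdot((\bar h+h)(\bar\bu+\bu))+\kappa\Delta_\bx h$ yields $-\tfrac12\iint\varrho(\partial_t h)|\dot\bu|^2 + \tfrac{\kappa}{2}\iint\varrho(\Delta_\bx h)|\dot\bu|^2$. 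The first summand cancels the $\partial_t h$-term in $\tfrac{d\cE}{dt}$, and the second is cancelled by the Gent--McWilliams-type correction $+\kappa\iint\varrho\dot\bu\cdot((\nabla_\bx h)\cdot\nabla_\bx)\dot\bu$ which, after integration by parts in $\bx$, equals $-\tfrac{\kappa}{2}\iint\varrho(\Delta_\bx h)|\dot\bu|^2$. Second, the two pressure-coupling terms $-\iint\dot\cH\int_\varrho^{\rho_1}(\bar h+h)\nabla_\bx\cdot\dot\bu\,d\varrho'$ and $-\iint(\bar h+h)\dot\bu\cdot[\rho_0\nabla_\bx\dot\cH|_{\varrho=\rho_0}+\int_{\rho_0}^\varrho\nabla_\bx\dot\cH\,d\varrho']$, combined via Fubini and integration by parts in $\bx$, cancel the analogous coupling $-\rho_0\int\dot\cH|_{\varrho=\rho_0}\int_{\rho_0}^{\rho_1}(\bar h+h)\nabla_\bx\cdot\dot\bu\,d\varrho'$ from the surface trace, leaving only lower-order remainders involving $\nabla_\bx h$ that are controlled by $\Norm{\nabla_\bx h}_{L^\infty_\bx L^2_\varrho}\leq M$ via Cauchy--Schwarz in $\varrho$. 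The diffusion provides the negative-definite contributions $-\kappa\Norm{\nabla_\bx\dot\cH}_{L^2(\Omega)}^2$, $-\kappa\rho_0\norm{\nabla_\bx\dot\cH\big\vert_{\varrho=\rho_0}}_{L^2_\bx}^2$, and (after integration by parts on $\nu\iint\varrho(\bar h+h)\dot\bu\cdot\Delta_\bx\dot\bu$) $-\nu\iint\varrho(\bar h+h)|\nabla_\bx\dot\bu|^2$; the crossed remainder $-\nu\iint\varrho\dot\bu\cdot((\nabla_\bx h)\cdot\nabla_\bx)\dot\bu$ is absorbed by Young's inequality using the hypothesis $\nu^{1/2}\Norm{\nabla_\bx h}_{L^\infty}\leq M$.

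The main obstacle is the shear-velocity term $-\iint\dot\cH\int_\varrho^{\rho_1}(\bar\bu'+\de_\varrho\bu)\cdot\nabla_\bx\dot\cH\,d\varrho'$ (and its surface analogue), which encodes the non-commutation of horizontal advection with density integration and is not controllable by plain Cauchy--Schwarz in $L^2(\Omega)$. My treatment is to apply Cauchy--Schwarz in the $\varrho'$-integration, yielding $\int_\varrho^{\rho_1}|\bar\bu'+\de_\varrho\bu||\nabla_\bx\dot\cH|\,d\varrho' \leq \Norm{\bar\bu'+\de_\varrho\bu(\bx,\cdot)}_{L^2_\varrho}\bigl(\int|\nabla_\bx\dot\cH|^2\,d\varrho'\bigr)^{1/2}$; then Cauchy--Schwarz in $\bx$ gives the bound $\Norm{\dot\cH}_{L^2(\Omega)}\Norm{\bar\bu'+\de_\varrho\bu}_{L^\infty_\bx L^2_\varrho}\Norm{\nabla_\bx\dot\cH}_{L^2(\Omega)}$, and Young's inequality absorbs a small fraction of $\kappa\Norm{\nabla_\bx\dot\cH}_{L^2(\Omega)}^2$ into the diffusion at the price of the compensating multiplier $C\kappa^{-1}\Norm{\bar\bu'+\de_\varrho\bu}_{L^\infty_\bx L^2_\varrho}^2\cE$; the surface analogue is treated identically, exploiting $-\kappa\rho_0\norm{\nabla_\bx\dot\cH\big\vert_{\varrho=\rho_0}}_{L^2_\bx}^2$. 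Handling the source terms trivially by Cauchy--Schwarz as $C\cE^{1/2}\cE(R,\bR)^{1/2}$, and collecting everything yields
\[
\tfrac{d}{dt}\cE+ c\kappa\Norm{\nabla_\bx\dot\cH}_{L^2(\Omega)}^2 +c\kappa\norm{\nabla_\bx\dot\cH\big\vert_{\varrho=\rho_0}}_{L^2_\bx}^2 + c\nu\Norm{\nabla_\bx\dot\bu}_{L^2(\Omega)}^2 \leq C\bigl(1+\kappa^{-1}\Norm{\bar\bu'+\de_\varrho\bu}_{L^\infty_\bx L^2_\varrho}^2\bigr)\cE + C\cE^{1/2}\cE(R,\bR)^{1/2},
\]
and a Gronwall argument on $\cE^{1/2}$ gives the stated bound after integrating in time. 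The crucial point is that the diffusive regularization $\kappa>0$ is what tames the shear term; without it, the scheme fails and the destabilizing influence of the shear velocity becomes manifest.
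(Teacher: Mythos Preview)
Your proof plan is correct and follows essentially the same approach as the paper's own proof: the same energy functional, the same structural cancellations (advection against $\partial_t h$ via the mass equation; pressure coupling via Fubini and integration by parts), the same absorption of the shear-velocity term into the $\kappa$-diffusion by Young's inequality, and Gronwall to close. One small slip: the surface shear term $-\rho_0\int\dot\cH|_{\varrho=\rho_0}\int_{\rho_0}^{\rho_1}(\bar\bu'+\de_\varrho\bu)\cdot\nabla_\bx\dot\cH\,d\varrho'$ involves $\nabla_\bx\dot\cH$ integrated over $(\rho_0,\rho_1)$, not its trace, so it is absorbed by the bulk diffusion $\kappa\Norm{\nabla_\bx\dot\cH}_{L^2(\Omega)}^2$ rather than the trace diffusion you cite---but this is cosmetic and does not affect the argument.
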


\begin{proof}
	We test the first equation against $\dot\cH\in L^2(0,T;H^{1,0}(\Omega)) $, its trace on $\{(\bx,\rho_0 ),\bx\in\RR^d\}$ against $\rho_0\dot\cH\big\vert_{\varrho=\rho_0}\in L^2(0,T;H^1(\RR^d))$,  and the second equation against $(\bar h+h) \dot\bu\in L^2(0,T;H^{1,0}(\Omega)) $. This yields, after integration by parts
	\begin{align*}
		& \frac{\dd}{\dd t} \cE(\dot\cH, \dot\bu)+ \kappa \Norm{\nabla_\bx \dot\cH}_{L^2(\Omega)}^2 +  \rho_0\kappa \norm{\nabla_\bx \dot \cH \big\vert_{\varrho=\rho_0}  }_{ L^2_\bx}^2+\nu\sum_{i=1}^d\int_\Omega \varrho (\bar h+h) |\partial_{x_i}\dot\bu|^2\dd\bx\dd\varrho \\
		& = - \left((\bar \bu + \bu) \cdot\nabla_\bx \dot\cH , \dot\cH\right)_{L^2(\Omega)}-\left(\int_\varrho^{\rho_1} (\bar \bu'+\de_\varrho\bu) \cdot \nabla_\bx \dot \cH \, d\varrho', \dot\cH\right)_{L^2(\Omega)} & {\rm (i)}\\
		&\quad - \left(\int_\varrho^{\rho_1} (\bar h+h) \nabla_\bx \cdot \dot\bu\, d\varrho', \dot\cH\right)_{L^2(\Omega)}+ \big(R, \dot\cH\big)_{L^2(\Omega)} & {\rm (ii)}\\
		&\quad  - \left(\varrho (\bar \bu + \bu) \cdot \nabla_\bx \dot\bu, (\bar h+h) \dot\bu\right)_{L^2(\Omega)} + \kappa \left( \varrho (\nabla_\bx h \cdot \nabla_\bx) \dot\bu, \dot\bu\right)_{L^2(\Omega)}& {\rm (iii)} \\
		& \quad
		-\big( \rho_0\nabla_\bx \dot\cH\big\vert_{\varrho=\rho_0} , (\bar h+h) \dot\bu\big)_{L^2(\Omega)}
		- \big( \int_{\rho_0}^\varrho \nabla_\bx \dot\cH \, d\varrho', (\bar h+h) \dot\bu \big)_{L^2(\Omega)}& {\rm (iv)} \\
		& \quad -\nu \big( \varrho (\nabla_\bx h\cdot\nabla)\dot\bu , \dot\bu\big)_{L^2(\Omega)} + \big(\varrho\bR, (\bar h+h) \dot\bu\big)_{L^2(\Omega)}& {\rm (v)} \\
		&\quad -\rho_0\left(\big((\bar\bu+\bu)\cdot \nabla_\bx \dot \cH\big)\big\vert_{\varrho=\rho_0},\dot\cH\big\vert_{\varrho=\rho_0} \right)_{L^2_\bx} -\rho_0\left(\int_{\rho_0}^{\rho_1}(\bar\bu'+\de_\varrho\bu)\cdot \nabla_\bx \dot \cH\dd\varrho',\dot\cH\big\vert_{\varrho=\rho_0} \right)_{L^2_\bx} & {\rm (vi)}\\
		&\quad -\rho_0\left(\int_{\rho_0}^{\rho_1}(\bar h+h)\nabla_\bx \cdot\dot\bu \dd\varrho',\dot\cH\big\vert_{\varrho=\rho_0} \right)_{L^2_\bx} +\rho_0\left(R\big\vert_{\varrho=\rho_0} ,\dot\cH\big\vert_{\varrho=\rho_0} \right)_{L^2_\bx}& {\rm (vii)}\\
		& \quad + \tfrac 12 (\varrho (\de_t h) \dot \bu, \dot \bu)_{L^2(\Omega)} . & {\rm (viii)}\\
	\end{align*}
	
	We consider first the second terms in (i) and (vi). We have by an immediate application of Cauchy-Schwarz inequality and  the continuous embedding $L^\infty((\rho_0,\rho_1))\subset   L^2((\rho_0,\rho_1))$
	\begin{align}
		& \left| \left( \int_\varrho^{\rho_1} (\bar \bu' + \de_\varrho \bu) \cdot \nabla_\bx \dot\cH  \, d\varrho' , \dot\cH \right)_{L^2(\Omega)} \right| 
		+\rho_0\left| \left( \int_{\rho_0}^{\rho_1} (\bar \bu' + \de_\varrho \bu) \cdot \nabla_\bx \dot\cH  \, d\varrho' , \dot\cH\big\vert_{\varrho=\rho_0}  \right)_{ L^2_\bx} \right| \nonumber\\
		&\qquad \lesssim \Norm{ \bar \bu'+\de_\varrho \bu }_{L^\infty_\bx L^2_\varrho} \Norm{\nabla_\bx \dot\cH }_{L^2(\Omega)} \Big(\Norm{\dot\cH }_{L^2(\Omega)}+ \norm{ \dot\cH\big\vert_{\varrho=\rho_0}  }_{ L^2_\bx} \Big).
		\label{eq:badterms}
	\end{align}
	Notice that the right-hand side~\eqref{eq:badterms} cannot be bounded by the energy functional $\cE(\dot\cH,\dot\bu)$, and this is exactly the point where we use the assumption $\kappa>0$. Let us now estimate all other terms.
	
	Using integration by parts in the variable $\bx$, we estimate the first addend of (i) and (vi)  as follows:
	\begin{multline*}
		\left| \left(  (\bar \bu + \bu) \cdot \nabla_\bx \dot\cH  , \dot\cH \right)_{L^2(\Omega)} \right| 
		+\rho_0\left| \left(  \big((\bar \bu + \bu)  \cdot \nabla_\bx \dot\cH\big)\big\vert_{\varrho=\rho_0}   , \dot\cH\big\vert_{\varrho=\rho_0}  \right)_{ L^2_\bx} \right| \nonumber\\
		\lesssim 
		\Norm{ \nabla_\bx\cdot \bu }_{L^\infty(\Omega)} \Norm{ \dot\cH }_{L^2(\Omega)}^2 + \norm{ \nabla_\bx \cdot\bu\big\vert_{\varrho=\rho_0} }_{ L^\infty_\bx} \norm{ \dot\cH\big\vert_{\varrho=\rho_0}  }_{ L^2_\bx}^2 .
	\end{multline*}
	The contributions in (iii) and (viii) compensate after integration by parts in $\bx$, using the first equation in~\eqref{eq:hydro-iso-nu}.
	Now consider the first addend of (ii)  together with the second addend of (iv). By application of Fubini's theorem 
	we have
	\[ \int_{\mathbb{R}^d} \int_{\rho_0}^{\rho_1} \left(\int_{\rho_0}^\varrho \nabla_\bx \dot\cH(\varrho') \, \dd\varrho'\,\right)  \cdot (\bar h+h)(\varrho) \dot\bu(\varrho)  \, \dd\varrho \, \dd\bx= \int_{\mathbb{R}^d} \int_{\rho_0}^{\rho_1}\left(\int_{\varrho'}^{\rho_1}  (\bar h+h)(\varrho) \dot\bu(\varrho) \, \dd \varrho \right)\cdot \nabla_\bx  \dot\cH(\varrho')  \, \dd\varrho'\, \dd\bx\]
	and hence, integrating by parts in $\bx$, we infer
	\begin{align*}
		&\Bigg|  \int_{\mathbb{R}^d} \int_{\rho_0}^{\rho_1} \int_\varrho^{\rho_1} (\bar h+h)(\varrho') \nabla_\bx \cdot \dot\bu(\varrho')\, \dd\varrho'\, \dot\cH(\varrho) \, \dd\varrho\, \dd\bx \\
		& \qquad + \int_{\mathbb{R}^d} \int_{\rho_0}^{\rho_1} \left(\int_{\rho_0}^\varrho \nabla_\bx \dot\cH(\varrho') \, \dd\varrho'\,\right)  \cdot (\bar h+h)(\varrho) \dot\bu(\varrho)  \, \dd\varrho \, \dd\bx \Bigg| \\
		& \quad = \Bigg|\int_{\mathbb{R}^d} \int_{\rho_0}^{\rho_1} \int_\varrho^{\rho_1} (\nabla_\bx h)(\varrho') \cdot\dot\bu(\varrho') \dot\cH(\varrho) \, \, \dd \varrho'\, \dd\varrho\, \dd\bx\Bigg|
		\lesssim \Norm{ \nabla_\bx h}_{L^\infty_\bx L^2_\varrho}  \Norm{\dot\bu}_{L^2(\Omega)}\Norm{\dot\cH}_{L^2(\Omega)}.
	\end{align*}
	Concerning first addend of (iv) and the first addend of (vii), we have after integrating by parts with respect to the $\bx$ variable and using Cauchy-Schwarz inequality
	\begin{align*}&\left|-\left(\rho_0\nabla_\bx \dot\cH\big\vert_{\varrho=\rho_0} , (\bar h+h) \dot\bu\right)_{L^2(\Omega)}
		- \rho_0\left(\int_{\rho_0}^{\rho_1}(\bar h+h)\nabla_\bx \cdot\dot\bu \dd\varrho,\dot\cH\big\vert_{\varrho=\rho_0} \right)_{L^2_\bx}  \right| \\
		&\quad =\rho_0\left| \left(\int_{\rho_0}^{\rho_1}(\nabla_\bx h) \cdot\dot\bu \dd\varrho,\dot\cH\big\vert_{\varrho=\rho_0} \right)_{L^2_\bx} \right|
		\lesssim\Norm{ \nabla_\bx h}_{L^\infty_\bx L^2_\varrho}  \Norm{\dot\bu}_{L^2(\Omega)}  \norm{\dot\cH\big\vert_{\varrho=\rho_0}}_{L^2_\bx}.
	\end{align*}
	Concerning the first addend of (v), we have for an arbitrarily large constant $K>0$,
	\[  \nu \left| \big( \varrho (\nabla_\bx h\cdot\nabla)\dot\bu , \dot\bu\big)_{L^2(\Omega)} \right| \leq \frac{1}{2K} \nu \Norm{ \nabla\dot\bu}_{L^2(\Omega)}^2 + \frac{K\rho_1^2}2 \nu\Norm{ \nabla_\bx h }_{L^\infty(\Omega)}^2 \Norm{\dot\bu }_{L^2(\Omega)}^2.\]
	The last contributions, namely 
	\[\Big|  \big(R, \cH\big)_{L^2(\Omega)} +  \big(\bR,\varrho (\bar h+h) \dot\bu\big)_{L^2(\Omega)} + {\rho_0\big(R\big\vert_{\varrho=\rho_0} ,\dot\cH\big\vert_{\varrho=\rho_0} \big)_{L^2_\bx}} \Big|,\]
	are easily controlled by means of Cauchy-Schwarz inequality.
	Collecting all of the above, and using that
	\[ \cE(\dot\cH,\dot\bu) \approx \Norm{\dot\cH}_{L^2(\Omega)}^2 + \Norm{\dot\bu}_{L^2(\Omega)}^2+ \norm{\dot\cH\big\vert_{\varrho=\rho_0}}_{L^2_\bx}^2\]
	and
	\[\nu\sum_{i=1}^d\int_\Omega \varrho (\bar h+h) |\partial_{x_i}\dot\bu|^2\dd\bx\dd\varrho \gtrsim \nu \Norm{\nabla_\bx\dot\bu}_{L^2(\Omega)}^2\]
	since $\rho_0 h_\star\leq \varrho(\bar h+h)\leq \rho_1 h^\star$, we obtain (choosing $K$ sufficiently large)
	\begin{multline*} \frac{\dd}{\dd t} \cE(\dot\cH, \dot\bu)+ \kappa \Norm{\nabla_\bx \dot \cH}_{L^2(\Omega)}^2 +  \rho_0\kappa \norm{\nabla_\bx \dot \cH \big\vert_{\varrho=\rho_0}  }_{ L^2_\bx}^2\\
		\leq C\,
		\cE(\dot\cH, \dot\bu) +C \Norm{ \bar \bu'+\de_\varrho \bu }_{L^\infty_\bx L^2_\varrho}   \cE(\dot\cH, \dot\bu)^{1/2} \Norm{\nabla_\bx \dot \cH}_{L^2(\Omega)}\\
		+ C \cE(\dot\cH, \dot\bu)^{1/2}  \cE(R, \bR)^{1/2} ,
	\end{multline*}
	with $C=C(h_\star,h^\star,M)$. We deduce (augmenting $C$ if necessary)
	\begin{multline*} \frac{\dd}{\dd t} \cE(\dot\cH, \dot\bu)+ \frac\kappa2 \Norm{\nabla_\bx \cH}_{L^2(\Omega)}^2 +  \rho_0\kappa \norm{\nabla_\bx \dot \cH \big\vert_{\varrho=\rho_0}  }_{ L^2_\bx}^2\\
		\leq C\, \big(1+ 
		\kappa^{-1}\Norm{ \bar \bu'+\de_\varrho \bu }_{L^\infty_\bx L^2_\varrho}^2 \big)\cE(\dot\cH, \dot\bu) + C \cE(\dot\cH, \dot\bu)^{1/2}  \cE(R, \bR)^{1/2} ,
	\end{multline*}
	and the desired estimate follows by Gronwall's inequality.
\end{proof}

\subsection{Large-time existence; proof of Theorem~\ref{thm-well-posedness}}

We prove the large-time existence and energy esti\-mates on solutions to the regularized system~\eqref{eq:hydro-iso-nu} in the following result. Compared with Proposition~\ref{P.WP-nu}, we provide an existence time which is uniformly bounded (from below) with respect to the artificial regula\-ri\-zation parameter $\nu>0$, and specify the dependency with respect to the diffusivity parameter $\kappa$, in relation with the size of the data. It is in this sense that the existence of strong solutions to the hydrostatic system holds for \emph{large} times. We then complete the proof of Theorem~\ref{thm-well-posedness} at the end of this section.

\begin{proposition}\label{P.regularized-large-time-WP}
	Let  $s, k \in \NN$ be such that $s> 2+\frac d 2$, $2\leq k\leq s$, and $\bar M,M^\star,h_\star,h^\star>0$. Then, there exists $C>0$ such that, for any $ 0< \nu \leq \kappa\leq 1$, and 
	\begin{itemize}
		\item for any $(\bar h, \bar \bu) \in W^{k,\infty}((\rho_0,\rho_1))$ such that 
		\[  \norm{\bar h}_{W^{k,\infty}_\varrho } + \norm{\bar \bu'}_{W^{k-1,\infty}_\varrho }\leq \bar M\,;\]
		\item for any initial data $(h_0, \bu_0)=(h_0(\bx, \varrho), \bu_0(\bx, \varrho)) \in H^{s,k}(\Omega)$ with
		\[
		M_0:=
		\Norm{\cH_0}_{H^{s,k}}+\Norm{\bu_0}_{H^{s,k}}+\norm{\cH_0\big\vert_{\varrho=\rho_0}}_{H^s_\bx}+\kappa^{1/2}\Norm{h_0}_{H^{s,k}} \leq M^\star,
		\]
		and 
		\[ \forall (\bx,\varrho)\in  \Omega , \qquad h_\star \leq  \bar h(\varrho)+h_0(\bx,\varrho) \leq h^\star , \]
	\end{itemize}
	the following holds. Denoting
	\[
	T^{-1}= C\, \big(1+ \kappa^{-1} \big(\norm{\bar \bu'}_{L^2_\varrho}^2+M_0^2\big)  \big),  
	\]
	there exists a unique strong solution $(h,\bu)\in  \cC([0,T];H^{s,k}(\Omega)^{1+d})$ to the Cauchy problem associated with~\eqref{eq:hydro-iso-nu} and initial data $(h,\bu)\big\vert_{t=0}=(h_0,\bu_0)$.
	Moreover, $h\in  L^2(0,T;H^{s+1,k}(\Omega))$ and one has, for any $t\in[0,T]$, the lower and the upper bounds
	\[ \forall (\bx,\varrho)\in  \Omega , \qquad h_\star/2 \leq  \bar h(\varrho)+h(t,\bx,\varrho) \leq 2\,h^\star , \]
	and the estimate
	\begin{multline*}\cF(t):=
		\Norm{\cH(t,\cdot)}_{H^{s,k}}+\Norm{\bu(t,\cdot)}_{H^{s,k}} +\norm{\cH\big\vert_{\varrho=\rho_0}(t,\cdot)}_{H^s_\bx} +\kappa^{1/2}\Norm{h(t,\cdot)}_{H^{s,k}} \\+ \kappa^{1/2} \Norm{\nabla_\bx \cH}_{L^2(0,t;H^{s,k})} +  \kappa^{1/2} \norm{\nabla_\bx \cH \big\vert_{\varrho=\rho_0}  }_{ L^2(0,t;H^s_\bx)} +\kappa \Norm{\nabla_\bx h}_{L^2(0,t;H^{s,k})} \leq C M_0.
	\end{multline*}
\end{proposition}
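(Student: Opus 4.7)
The strategy is to upgrade the short-time existence of Proposition~\ref{P.WP-nu} to an existence time uniform in the regularization parameter $\nu$, by combining the quasilinearization of Lemma~\ref{lem:quasilinearization} with the energy estimates of Lemmas~\ref{lem:estimate-transport-diffusion} and~\ref{lem:estimate-system}, and closing the a priori bound on $\cF(t)$ via a continuity/bootstrap argument. The key observation is that all the ``bad'' remainder contributions appearing in \eqref{eq.est-quasilin}, \eqref{eq.est-quasilin-j}, \eqref{eq.est-quasilin-j-h} are of the form $\kappa \Norm{\nabla_\bx h}_{H^{s,k}}$, which is precisely what is dissipated on the left-hand side of Lemma~\ref{lem:estimate-transport-diffusion} applied to $h$: after Cauchy--Schwarz in time, $\kappa\Norm{\nabla_\bx h}_{L^1_t H^{s,k}} \leq t^{1/2}\cdot \kappa^{1/2} \cdot (\kappa^{1/2}\Norm{\nabla_\bx h}_{L^2_t H^{s,k}})$, which is $\mathcal{O}(t^{1/2})$ times a quantity controlled by $\cF(t)$.

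In detail, I would apply Lemma~\ref{lem:estimate-system} to $(\cH^{(\balpha)},\bu^{(\balpha)})$ solving \eqref{eq.quasilin} for each $|\balpha|\leq s$; summing yields control on $\Norm{\cH}_{H^{s,0}}+\Norm{\bu}_{H^{s,0}}+\norm{\cH\vert_{\varrho=\rho_0}}_{H^s_\bx}$ together with the dissipation norms $\kappa^{1/2}\Norm{\nabla_\bx \cH}_{L^2_t H^{s,0}}$ and $\kappa^{1/2}\norm{\nabla_\bx \cH\vert_{\varrho=\rho_0}}_{L^2_t H^s_\bx}$. For $1\leq j\leq k$ and $|\balpha|\leq s-j$, Lemma~\ref{lem:estimate-transport-diffusion} applied to \eqref{eq.quasilin-j} (with $\br=0$) controls $\Norm{\partial_\varrho^j\cH^{(\balpha)}}_{L^\infty_t L^2}+\Norm{\partial_\varrho^j\bu^{(\balpha)}}_{L^\infty_t L^2}$; summing adds the missing $\partial_\varrho$-directions to reach full $H^{s,k}$-control of $\cH$ and $\bu$. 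For $0\leq j\leq k$ and $|\balpha|\leq s-j$, Lemma~\ref{lem:estimate-transport-diffusion} applied to \eqref{eq.quasilin-j-h} provides $\kappa^{1/2}\Norm{\partial_\varrho^j h^{(\balpha)}}_{L^\infty_t L^2}+\kappa\Norm{\nabla_\bx \partial_\varrho^j h^{(\balpha)}}_{L^2_t L^2}$, using precisely the weight $\kappa^{1/2}$ in front of $r_{\balpha,j}$ in \eqref{eq.est-quasilin-j-h} and the $\kappa^{-1/2}$ weight on $\br_{\balpha,j}$ afforded by the diffusive dissipation. Summing all these inequalities produces a closed inequality of the form
\begin{equation*}
\cF(t) \leq C_1 M_0 + C_2 \int_0^t P(\cF(\tau))\,\dd\tau + C_3\,t^{1/2}\, P(\cF(t))\cdot (\text{dissipation norm}),
\end{equation*}
multiplied by the exponential factor $\exp(C\int_0^t(1+\kappa^{-1}\Norm{\bar\bu'+\de_\varrho\bu}_{L^\infty_\bx L^2_\varrho}^2)\dd\tau)$ from Lemma~\ref{lem:estimate-system}, where $P$ is a polynomial whose arguments come from the product/commutator estimates in the remainders. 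Since $\Norm{\de_\varrho \bu}_{L^\infty_\bx L^2_\varrho}\lesssim \Norm{\bu}_{H^{s,k}}\leq \cF(t)$ by Lemma~\ref{L.embedding} (here $s>2+d/2$ and $k\geq 1$), this exponential is bounded on $[0,T]$ provided $T^{-1}\gtrsim 1+\kappa^{-1}(\Norm{\bar\bu'}_{L^2_\varrho}^2+M_0^2)$, which is the announced timescale.

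The continuation argument then proceeds as follows. Let $T^\star\in(0,\infty]$ be the supremum of times on which the solution furnished by Proposition~\ref{P.WP-nu} (extended by successive applications) satisfies $\cF(t)\leq 2C_1 M_0$ and $h_\star/2\leq \bar h+h\leq 2h^\star$. The stratification bounds are maintained via $\Norm{h(t,\cdot)-h_0}_{L^\infty(\Omega)}\lesssim \int_0^t \Norm{\de_t h}_{H^{s-1,1}}\dd\tau$, itself controlled by $\cF$ and the diffusive norm using \eqref{eq:h}. Choosing $T$ (and the constant $C$ in the statement) large enough, the a priori inequality above shows that on $[0,\min(T,T^\star)]$ one in fact has the strict bounds $\cF(t)< 2 C_1 M_0$ and strict stratification, whence $T^\star\geq T$ by a standard blow-up/continuity alternative (the only obstruction to extension in Proposition~\ref{P.WP-nu} is loss of one of these bounds). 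Uniqueness at the $H^{s,k}$ level is already given by Proposition~\ref{P.WP-nu}; passing $\nu\searrow 0$ in the resulting uniform bounds yields the statement of Theorem~\ref{thm-well-posedness} by a standard compactness/weak-limit argument, preserving the energy inequality by lower semicontinuity.

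The principal obstacle is the meticulous tracking of the $\kappa$-dependence: one must verify that every occurrence of $\kappa\Norm{\nabla_\bx h}_{H^{s,k}}$ in the remainders is absorbed by the diffusive term $\kappa^{1/2}\Norm{\nabla_\bx h}_{L^2_t H^{s,k}}$ on the left, and that the exponential factor driven by the shear $\bar\bu'+\de_\varrho\bu$ does not require additional smallness beyond the factor $\kappa^{-1}$ already explicit in $T$. A secondary subtlety is the hypothesis $\nu\leq\kappa$ of the proposition: it enters through the boundedness requirement on $\nu^{1/2}\Norm{\nabla_\bx h}_{L^\infty(\Omega)}$ in Lemma~\ref{lem:estimate-system}, which we meet by $\nu^{1/2}\leq \kappa^{1/2}$ together with the embedding $H^{s-1,1}\subset L^\infty(\Omega)$ applied to $\kappa^{1/2}\nabla_\bx h$, whose norm is controlled by $\cF$. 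Finally, the interplay between the regularity of $\cH,\bu$ (in $H^{s,k}$) and of $h=-\de_\varrho\cH$ (only $\kappa^{1/2}$-controlled in $H^{s,k}$, and in $H^{s-1,k-1}$ unconditionally) must be handled carefully in the product and commutator estimates, but this is precisely what is organized by Lemma~\ref{lem:quasilinearization}.
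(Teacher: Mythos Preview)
Your proposal is correct and follows essentially the same route as the paper: bootstrap on $\cF(t)$ using Lemma~\ref{lem:estimate-system} for the coupled $(\cH^{(\balpha)},\bu^{(\balpha)})$ at $j=0$, Lemma~\ref{lem:estimate-transport-diffusion} for the decoupled transport-diffusion equations \eqref{eq.quasilin-j} and \eqref{eq.quasilin-j-h}, and absorb the $\kappa\Norm{\nabla_\bx h}_{H^{s,k}}$ remainders into the dissipation via Cauchy--Schwarz in time; the use of $\nu\leq\kappa$ to meet the hypothesis $\nu^{1/2}\Norm{\nabla_\bx h}_{L^\infty}\leq M$ of Lemma~\ref{lem:estimate-system} is exactly what the paper does. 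Two small remarks: the paper maintains the stratification bounds more cleanly via the positivity of the heat kernel (writing $\partial_t h+\bar\bu\cdot\nabla_\bx h=\kappa\Delta_\bx h+g$ and bounding $\Norm{g}_{L^1_t L^\infty}$) rather than integrating $\partial_t h$ as you do, though your route also closes; and the passage $\nu\searrow 0$ you mention at the end is not part of this proposition but of the completion of Theorem~\ref{thm-well-posedness}.
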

\begin{proof}Let us denote by $T^\star\in(0,+\infty]$ the maximal time of existence of $(h,\bu)\in \cC^0([0,T^\star);H^{s,k}(\Omega))$ as provided by Proposition~\ref{P.WP-nu}, and 
	\begin{multline*}T_\star=\sup\Big\{ 0<T< T^\star \ : \  \forall t\in (0,T), \qquad  
		h_\star/2\leq \bar h(\varrho)+h(t,\bx,\varrho) \leq 2\, h^\star  \quad \text{ and } \quad  \cF(t)\leq C_0 M_0\Big\} ,
	\end{multline*}
	where $C_0>1$ will be determined later on.
	By the continuity in time of the solution, and using that the linear operator $h\mapsto \cH:= \int_{\varrho}^{\rho_1}h(\cdot,\varrho' )\dd\varrho'$ (resp. $h\mapsto\cH\big\vert_{\varrho=\rho_0}$) is well-defined and bounded from $H^{s,k}(\Omega)$ to itself (resp. $H^s_\bx(\RR^d)$)  we have $T_\star>0$.
	Using Lemma~\ref{lem:quasilinearization},~\ref{lem:estimate-transport-diffusion} and~\ref{lem:estimate-system} and, therein, the inequalities $\Norm{h}_{H^{s-1,k-1}}=\Norm{\de_\varrho \cH}_{H^{s-1,k-1}}\leq \Norm{\cH}_{H^{s,k}}$, and (since $\nu\leq \kappa$) $\nu^{1/2}\Norm{ \nabla_\bx h }_{L^\infty(\Omega)}\leq \kappa^{1/2}\Norm{h}_{H^{s,k}}$, we find that there exists $c_0>1$ depending only on $\rho_0h_\star, \rho_1h^\star$; and $C>0$ depending on $\bar M, h_\star, h^\star, C_0M_0$ such that for any $0<t<T_\star$,
	\begin{multline}
		\Norm{\cH(t,\cdot)}_{H^{s,0}}+\Norm{\bu(t,\cdot)}_{H^{s,0}} + \norm{ \cH\big\vert_{\varrho=\rho_0}(t,\cdot) }_{H^s_\bx} + \kappa^{1/2} \Norm{\nabla_\bx \cH}_{L^2(0,t;H^{s,0})} +  \kappa^{1/2} \norm{\nabla_\bx \cH \big\vert_{\varrho=\rho_0}  }_{ L^2(0,t;H^s_\bx)}\\
		\leq c_0 \left( \Norm{\cH_0}_{H^{s,0}}+\Norm{\bu_0}_{H^{s,0}} + \norm{ \cH_0\big\vert_{\varrho=\rho_0} }_{H^s_\bx}  +C \,C_0M_0 \, \big(t+\sqrt t \big)\right)\\
		\times \exp\Big( C \int_0^t \big(1
		+\kappa^{-1}\Norm{ \bar \bu'+\de_\varrho \bu }_{L^\infty_\bx L^2_\varrho}^2 \big)\dd\tau\Big);
	\end{multline}
	and (using a slightly adapted version of Lemma~\ref{lem:estimate-system} which does not involve the trace of $\partial_\varrho^j\cH$ at the surface) for any $1\leq j\leq k$
	\begin{multline}
		\Norm{\partial_\varrho^j\cH(t,\cdot)}_{H^{s-j,0}} + \kappa^{1/2} \Norm{\nabla_\bx \partial_\varrho^j \cH}_{L^2(0,t;H^{s-j,0})} \\
		\leq  \left( \Norm{\partial_\varrho^j\cH(0,\cdot)}_{H^{s-j,0}}+C\, C_0 M_0 \, \big(t+\sqrt t\big)\right)\\
		\times \exp\Big( C \int_0^t \Norm{\nabla_\bx\cdot \bu(\tau,\cdot) }_{L^\infty(\Omega)} \dd \tau\Big),
	\end{multline}
	and
	\begin{multline}
		\Norm{\partial_\varrho^j\bu(t,\cdot)}_{H^{s-j,0}} + \nu^{1/2} \Norm{\nabla_\bx \partial_\varrho^j \bu}_{L^2(0,t;H^{s-j,0})} \\
		\leq  \left( \Norm{\partial_\varrho^j\bu(0,\cdot)}_{H^{s-j,0}}+C \, C_0M_0 \, \big(t+\sqrt t \big)\right)\\
		\times \exp\Big( C \int_0^t  \Norm{\nabla_\bx\cdot \big(\bu-\kappa\tfrac{\nabla_\bx h}{\bar h+h}\big)(\tau,\cdot) }_{L^\infty(\Omega)} \dd \tau\Big);
	\end{multline}
	and finally for any $0\leq j\leq k$
	\begin{multline}
		\kappa^{1/2}\Norm{\partial_\varrho^jh(t,\cdot)}_{H^{s-j,0}} + \kappa \Norm{\nabla_\bx \partial_\varrho^j h}_{L^2(0,t;H^{s-j,0})} \\
		\leq  \left(\kappa^{1/2} \Norm{\partial_\varrho^jh(0,\cdot)}_{H^{s-j,0}}+C\, C_0 M_0\, \big( t+\sqrt{t} \big)\right)\\
		\times \exp\Big( C \int_0^t \Norm{\nabla_\bx\cdot \bu(\tau,\cdot) }_{L^\infty(\Omega)} \dd \tau\Big).
	\end{multline}
	By the continuous embeddings $H^{s_0+\frac12,1}\subset L^\infty_\varrho H^{s_0}\subset L^\infty(\Omega)$ for any $s_0>d/2$ (see Lemma~\ref{L.embedding}) and since $k\geq 1$ and $s>\frac32+\frac{d}2$, we have
	\[\Norm{\nabla_\bx\cdot \bu}_{L^\infty(\Omega)}  + \Norm{\nabla_\bx\cdot \big(\bu-\kappa\tfrac{\nabla_\bx h}{\bar h+h}\big) }_{L^\infty(\Omega)} \leq C(h_\star)\big( \Norm{\bu}_{H^{s,k}} +\kappa\Norm{  h}_{H^{s,k}}^2 +\kappa\Norm{ \nabla_\bx h}_{H^{s,k}}\big) .\]
	We deduce that
	\[
	\cF(t)\leq c \Big( M_0
	+C \, C_0 M_0 \, \big(t +\sqrt t\big)\Big)
	\times \exp\Big( C\big( t+\sqrt t +  \kappa^{-1}\int_0^t\Norm{ \bar \bu'+\de_\varrho \bu(\tau,\cdot) }_{L^\infty_\bx L^2_\varrho}^2\dd\tau \big)\Big),
	\]
	where we recall that  $c_0>1$ depends only on $h_\star$ and $h^\star$; and $C>0$ depends on $\bar M, C_0M_0,  h_\star, h^\star$.
	Hence choosing $C_0=2c_0$ 
	and using that (by Lemma~\ref{L.embedding} and since $k\geq 2$ and $s>\frac32+\frac{d}2$)
	\[ \Norm{ \bar \bu'+\de_\varrho \bu }_{L^\infty_\bx L^2_\varrho}\leq \Norm{ \bar \bu'+\de_\varrho \bu }_{L^2_\varrho L^\infty_\bx}^2  \lesssim \norm{\bar \bu'}_{L^2_\varrho}^2 +\Norm{\bu}_{H^{s,k}}^2\leq \norm{\bar \bu'}_{L^2_\varrho}^2+(C_0M_0)^2,\]
	we find that there exists $C_0\geq 1$ depending only on $\bar M,M^\star, h_\star, h^\star$ such that 
	\[t \big(1+\kappa^{-1}\big(\norm{\bar \bu'}_{L^2_\varrho}^2+M_0^2\big)\big) \leq C_0^{-1} \quad \Longrightarrow \quad 
	\cF(t)\leq \frac34 C_0 M_0.\]
	Now we remark that since
	\[\partial_th+\bar\bu\cdot\nabla_\bx h=\kappa\Delta_\bx h+g \quad \text{ with } \quad  g=-\nabla_\bx\cdot(\bar h \bu+h\bu)\]
	and by the positivity of the heat kernel we have
	\[\inf_{\Omega} h(t,\cdot) \geq \inf_\Omega h_0 - \Norm{g}_{L^1(0,t;L^\infty(\Omega) )}, \qquad \sup_{\Omega} h(t,\cdot) \leq \sup_\Omega h_0 + \Norm{g}_{L^1(0,t;L^\infty(\Omega))}.\]
	Now, by the continuous embedding $H^{s-1,1}(\Omega)\subset L^\infty(\Omega)$ (since $s>\frac 3 2 + \frac d 2$), we have that
	\[ \Norm{g}_{L^\infty(\Omega)} \lesssim  \norm{\bar h}_{W^{1,\infty}_\varrho} \Norm{\bu}_{H^{s,1}}
	+\Norm{h}_{H^{s,1}}\Norm{\bu}_{H^{s,1}}\\
	\leq C(\bar M)  (1+\kappa^{-1}M_0^2) .
	\]
	Hence augmenting $C_0$ if necessary we find that 
	\[ t (1+\kappa^{-1}M_0^2)\leq C_0^{-1} \quad \Longrightarrow \quad  \forall(\bx,\varrho)\in\Omega,\quad \frac23 h_\star\leq \bar h(\varrho)+h(t,\bx,\varrho) \leq \frac32 h^\star.\]
	By a continuity argument we infer $T_\star\geq \Big( C \big(1+\kappa^{-1}\big(\norm{\bar \bu'}_{L^2_\varrho}^2+M_0^2\big)\big) \Big)^{-1}$, and the proof is complete.
\end{proof}
\bigskip

{\bf Completion of the proof of Theorem~\ref{thm-well-posedness}}
\medskip

In order to complete the proof of Theorem~\ref{thm-well-posedness}, there remains to consider vanishing viscosity limit, $\nu\searrow 0$, in Proposition~\ref{P.regularized-large-time-WP}. Let us briefly sketch the standard argument. 
By Proposition~\ref{P.regularized-large-time-WP}, we construct a family $(h_\nu,\bu_\nu)\in \cC^0([0,T];H^{s,k}(\Omega))$ of solutions to~\eqref{eq:hydro-iso-nu} with $(h_\nu,\bu_\nu)\big\vert_{t=0}=(h_0,\bu_0)$ indexed by the parameter $\nu>0$. Notice that the time of existence and associated bounds provided by Proposition~\ref{P.regularized-large-time-WP} are uniform with respect to the parameter $\nu>0$. Hence by the Banach-Alaoglu theorem there exists a subsequence which converges weakly towards $(h,\bu)\in L^\infty(0,T;H^{s,k}(\Omega)^{1+d})$, satisfying the estimates of Proposition~\ref{P.regularized-large-time-WP}. Using the equations, we find that $(\partial_t\zeta_\nu,\partial_t \bu_\nu) $ are uniformly bounded in $L^\infty(0,T;H^{s-2,k})$. 
The Aubin-Lions lemma (see~\cite{Simon87}) implies that, up to extracting a subsequence, the convergence holds strongly in $(h,\bu)\in \cC^0([0,T];H^{s',k}(B)^{1+d})$ for any $0\leq s'<s$ for any bounded $B\subset \RR^d\times(\rho_0,\rho_1)$. Choosing $s'>3/2+d/2$ and using Lemma~\ref{L.embedding} and Sobolev embedding, we can pass to the limit in the nonlinear terms of the equation and infer that that $(h,\bu)$ is a strong solution to~\eqref{eq:hydro-iso-nu} with $\nu=0$. Moreover, since $(h,\bu)\in \cC^0([0,T];H^{s-2,k}(\Omega)^{1+d})$, we have $(h,\bu)\in \cC^0([0,T];H^{s',k}(\Omega)^{1+d})$ for any $0\leq s'<s$.

Uniqueness of the solution $(h,\bu)\in L^\infty(0,T;H^{s,k}(\Omega)^{1+d})$ follows by using Lemma~\ref{lem:estimate-system} on the difference between two solutions, and Gronwall's Lemma.

There remains to prove that $(h,\bu)\in \cC^0([0,T];H^{s,k}(\Omega)^{1+d})$. 
We prove the equivalent statement that for any $\balpha\in\NN^d$ and $j\in\NN$ such that $0\leq |\balpha|+j\leq s$ and $0\leq j\leq k$, $(\de_\varrho^j\de_\bx^\balpha h, \de_\varrho^j\de_\bx^\balpha\bu)\in \cC^0([0,T];L^2(\Omega)^{1+d})$. By Lemma~\ref{lem:quasilinearization}, as long as $\kappa>0$, we can write
\begin{align*}
	\partial_t (\de_\varrho^j\de_\bx^\balpha h)-\kappa\Delta_\bx (\de_\varrho^j\de_\bx^\balpha h)
	&=r_{\balpha,j}+\nabla_{\bx} \cdot \br_{\balpha,j},\\
	\partial_t (\de_\varrho^j\de_\bx^\balpha \bu)+(\bv\cdot \nabla_\bx)(\de_\varrho^j\de_\bx^\balpha \bu)
	&=R_{\balpha,j},
\end{align*}
with $(r_{\balpha,j},\br_{\balpha,j},R_{\balpha,j})\in L^2(0,T;L^2(\Omega))^{1+2d}$ and $\bv(\cdot,\varrho):=\bar\bu(\varrho)+\big(\bu-\kappa\tfrac{\nabla_\bx h}{\bar h+h}\big)(\cdot,\varrho)\in W^{k,\infty}((\rho_0,\rho_1))^d+ L^2(0,T;H^{s,k}(\Omega))^d$.
In other words, $\de_\varrho^j\de_\bx^\balpha h$ satisfies a heat equation and continuity in time stems from the Duhamel formula, as already used in Proposition~\ref{P.WP-nu}; and $\de_\varrho^j\de_\bx^\balpha \bu$ satisfies a transport equation and continuity in time is standard, see {\em e.g.}~\cite{BCD11}*{Th.~3.19}. Let us acknowledge however that our situation is slightly different, since $\Omega$ is neither the Euclidean space or the torus, and advection occurs only in the direction $\bx$ (and not $\varrho$). It is however easy to adapt the proof of~\cite{BCD11}*{Th.~3.19} to infer $\de_\varrho^j\de_\bx^\balpha \bu \in L^2(\rho_0,\rho_1;\cC^0([0,T];L^2(\RR^d)))^d\subset \cC^0([0,T];L^2(\Omega))^d$ from the facts that $R_{\balpha,j} \in L^2(0,T;L^2(\Omega))^d\subset L^2(\rho_0,\rho_1; L^1(0,T;L^2(\RR^d)))^d $ and $\nabla_\bx \bv\in L^2(0,T;H^{s-1,k}(\Omega))^{d\times d} \subset L^1(0,T;L^\infty(\rho_0,\rho_1;H^{s-3/2}(\RR^d)))^{d\times d}$ and $s-3/2>d/2$, the continuous embeddings following from Minkowski inequality and Lemma~\ref{L.embedding}. 

This concludes the proof of Theorem~\ref{thm-well-posedness}.

\section{The non-hydrostatic system}\label{S.NONHydro}

In this section we study the  local well-posedness theory for the non-hydrostatic system in isopycnal coordinates, which we recall below.
\begin{equation}\label{eq:nonhydro-iso-recall}
	\begin{aligned}
		\partial_t  h+\nabla_{ \bx} \cdot\big((\bar h +h)(\bar\bu +\bu)\big)&= \kappa \Delta_{ \bx}  h,\\
		\varrho\Big( \partial_t  \bu+\big((\bar \bu +  \bu - \kappa\tfrac{\nabla_{ \bx}  h}{ \bar h+  h})\cdot\nabla_\bx\big)  \bu\Big)+  \nabla_{ \bx}  P+ \frac{\nabla_{ \bx}  \cH}{\bar h+  h}( \de_\varrho P + \varrho \bar h) &=0,\\
		\mu \varrho\Big( \partial_t  w+\big(\bar \bu +  \bu - \kappa\tfrac{\nabla_{ \bx}   h}{ \bar h+ h}\big)\cdot\nabla_{ \bx}   w\Big)- \frac{\de_\varrho  P}{\bar h+ h} + \frac{\varrho  h}{\bar h + h}&=0,\\
		-(\bar h +  h) \nabla_{\bx} \cdot  \bu-\nabla_{\bx} \cH \cdot ({\bar \bu}'+\partial_\varrho\bu) +\partial_\varrho  w&=0, \quad \text{(div.-free cond.)} \\
		\cH(\cdot, \varrho)=\int_{\varrho}^{\rho_1}  h(\cdot, \varrho')\dd\varrho', \qquad P\big|_{\varrho=\rho_0} = 0, \qquad  w\big|_{\varrho=\rho_1}&=0.  \quad \text{(bound. cond.)}
	\end{aligned}
\end{equation}

\subsection{The pressure reconstruction}\label{S.Nonhydro:Poisson}
The first step of our analysis consists in showing how the pressure variable, $P$, can be uniquely reconstructed (thanks to the ``divergence-free'' incompressibility constraint) from prognostic variables $\bu$, $w$ and $h$ (or, equivalently, $\cH$), through an elliptic boundary-value problem. 
Differentiating the ``divergence-free'' incompressibility constraint in~\eqref{eq:nonhydro-iso-recall} with respect to time yields
\[-(\bar h+h)\nabla_\bx\cdot \partial_t \bu-(\nabla_\bx \cH)\cdot(\partial_\varrho \partial_t \bu)+\partial_\varrho \partial_t  w=(\partial_t h  )(\nabla_\bx\cdot \bu) +(\nabla_\bx \partial_t \cH)\cdot({\bar \bu}'+\partial_\varrho\bu).\]
We plug the expressions for $\de_t \bu, \de_t w, \de_t h, \de_t \cH$ provided by~\eqref{eq:nonhydro-iso-recall} inside the above identity. Reorganizing terms, this yields the following
\begin{multline*}\textstyle
	(\bar h+h)\nabla_\bx\cdot \left( \frac1\varrho\nabla_\bx  P+\frac{  \nabla_\bx \cH (\partial_\varrho  P+\varrho \bar h)}{ \varrho (\bar h+h)}\right)+(\nabla_\bx \cH)\cdot\left(\partial_\varrho \left( \frac1\varrho\nabla_\bx  P+\frac{\nabla_\bx \cH(\partial_\varrho  P + \varrho \bar h)}{\varrho (\bar h+h)} \right)\right)
	+\partial_\varrho \left(  \frac{\de_\varrho  P + \varrho \bar h }{\mu\varrho (\bar h+h)} \right)\\
	\textstyle=-(\bar h + h) \nabla_\bx\cdot  \left(\big(({\bar \bu}+\bu-\kappa\tfrac{\nabla_\bx  h}{\bar h+ h})\cdot\nabla_\bx\big) \bu \right)
	-(\nabla_\bx \cH)\cdot\left(\partial_\varrho \left(\big(({\bar \bu}+\bu-\kappa\tfrac{\nabla_\bx  h}{\bar h+ h})\cdot\nabla_\bx \big)\bu \right) \right)\\
	+\partial_\varrho\left(\big({\bar \bu}+ \bu-\kappa\tfrac{\nabla_\bx  h}{\bar h+ h}\big)\cdot\nabla_\bx  w\right)
	+\Big(\kappa\Delta_\bx  h - \nabla_\bx\cdot\big((\bar h+ h)({\bar \bu}+\bu)\big) \Big)(\nabla_\bx\cdot \bu) \\
	\textstyle+\left( \kappa\nabla_\bx\Delta_\bx  \cH -\nabla_\bx\int_{\varrho}^{\rho_1}   \nabla_\bx\cdot((\bar h+ h)({\bar \bu}+\bu)) \dd\varrho'   \right)\cdot({\bar \bu}'+\partial_\varrho\bu).
\end{multline*}
Using that $\de_\varrho\nabla_\bx\cH =-\nabla_\bx h$ we can rewrite the left-hand side in a compact formulation as
\[{\rm (LHS)}=\frac1\mu \begin{pmatrix} \sqrt\mu\nabla_\bx \\ \de_\varrho  \end{pmatrix}\cdot \left(
\begin{pmatrix}
	\frac{\bar h+h}{\varrho}\Id& \frac{\sqrt\mu\nabla_\bx \cH}{\varrho} \\
	\frac{\sqrt\mu\nabla_\bx^\top \cH}{\varrho}& \frac{1+\mu |\nabla_\bx \cH|^2}{\varrho (\bar h+h)}
\end{pmatrix}
\begin{pmatrix} \sqrt\mu\nabla_\bx \\ \de_\varrho  \end{pmatrix} (P+ \Peq)\right),
\]
with $ \Peq:=\int_{\rho_0}^{\varrho}  \varrho' \bar h(\varrho') \, \dd\varrho'$.
As for the right-hand side, we denote 
\begin{equation}\label{def:ustar}
	\bu_\star:= -\kappa\frac{\nabla_\bx  h}{\bar h+ h},
\end{equation}
and we infer
\begin{multline*}
	{\rm (RHS)}=-(\bar h+h) \nabla_\bx\cdot  \left(\big(({\bar \bu}+\bu+\bu_\star)\cdot\nabla_\bx \big)\bu \right)
	-(\nabla_\bx \cH)\cdot\left(\partial_\varrho \left(\big({\bar \bu}+\bu+\bu_\star\big)\cdot\nabla_\bx \bu \right) \right)\\
	+\partial_\varrho\left(\big({\bar \bu}+ \bu+\bu_\star\big)\cdot\nabla_\bx  w\right)
	- \nabla_\bx\cdot((\bar h+ h)({\bar \bu}+\bu+\bu_\star)) (\nabla_\bx\cdot \bu) \\
	\textstyle\quad-\left(\nabla_\bx \int_{\varrho}^{\rho_1}   \nabla_\bx\cdot((\bar h+ h)({\bar \bu}+\bu+\bu_\star)) \dd\varrho'   \right)\cdot({\bar \bu}'+\partial_\varrho\bu).
\end{multline*}
Notice the identity (reminiscent of~\eqref{eq:h}) 
\begin{equation}\label{cancellation} \int_{\varrho}^{\rho_1}   \nabla_\bx\cdot((\bar h+ h)({\bar \bu}+\bu+\bu_\star)) \dd\varrho' = ({\bar \bu}+\bu+\bu_\star)\cdot\nabla_\bx \cH-w-w_\star,
\end{equation}
where 
\begin{equation}\label{def:wstar}
	w_\star:=\kappa\Delta_\bx \cH-\kappa\frac{\nabla_\bx h\cdot\nabla_\bx \cH}{\bar h+h},
\end{equation}
which is obtained by integrating with respect to $\varrho$ the divergence-free identities
\begin{align*}-(\bar h+h)\nabla_\bx\cdot \bu-(\nabla_\bx \cH)\cdot({\bar \bu}'+\partial_\varrho\bu)+\partial_\varrho  w&=0,\\
	-(\bar h+h)\nabla_\bx\cdot \bu_\star-(\nabla_\bx \cH)\cdot(\partial_\varrho\bu_\star)+\partial_\varrho  w_\star&=0,
\end{align*}
integrating by parts with respect to $\varrho$, using the boundary condition $w\vert_{\varrho=\rho_1}=0=w_\star\vert_{\varrho=\rho_1}$ and $h=-\de_\varrho \cH$. 
Hence the above can be equivalently written as
\begin{multline}\label{eq.RHS}
	{\rm (RHS)}=-(\bar h+h) \nabla_\bx\cdot  \left(\big({\bar \bu}+\bu+\bu_\star\big)\cdot\nabla_\bx \bu \right)
	-(\nabla_\bx \cH)\cdot\left(\partial_\varrho \left(\big({\bar \bu}+\bu+\bu_\star\big)\cdot\nabla_\bx \bu \right) \right)\\
	+\partial_\varrho\left(\big({\bar \bu}+ \bu+\bu_\star\big)\cdot\nabla_\bx  w\right)
	- \nabla_\bx\cdot((\bar h+ h)({\bar \bu}+\bu+\bu_\star)) (\nabla_\bx\cdot \bu) \\
	-\left(\nabla_\bx \left(({\bar \bu}+\bu+\bu_\star)\cdot\nabla_\bx \cH-w-w_\star \right)  \right)\cdot({\bar \bu}'+\partial_\varrho\bu).
\end{multline}
Taking into account the boundary conditions in~\eqref{eq:nonhydro-iso-recall}, we find that the pressure satisfies the following problem (recalling $ \Peq:=\int_{\rho_0}^{\varrho}  \varrho' \bar h(\varrho') \, \dd\varrho'$):
\begin{equation}\label{eq.Poisson}
	\left\{\begin{array}{l}
		\displaystyle\frac1\mu \begin{pmatrix} \sqrt\mu\nabla_\bx \\ \de_\varrho  \end{pmatrix}\cdot \left(
		\begin{pmatrix}
			\frac{\bar h+h}{\varrho}\Id& \frac{\sqrt\mu\nabla_\bx \cH}{\varrho} \\
			\frac{\sqrt\mu\nabla_\bx^\top \cH}{\varrho}& \frac{1+\mu |\nabla_\bx \cH|^2}{\varrho (\bar h+h)}
		\end{pmatrix}
		\begin{pmatrix} \sqrt\mu\nabla_\bx \\ \de_\varrho  \end{pmatrix}  (P+ \Peq) \right)= 
		{\rm (RHS)},\\
		P\big\vert_{\varrho=\rho_0}=0, \qquad (\de_\varrho P)\big\vert_{\varrho=\rho_1}=\rho_1 h\big\vert_{\varrho=\rho_1}.
	\end{array}\right.
\end{equation}
This boundary value problem corresponds to~\cite{Desjardins-Lannes-Saut}*{(7)} written in isopycnal coordinates, adapting the boundary conditions to the free-surface framework, and taking into account the effective transport velocities from eddy correlation. Following~\cite{Desjardins-Lannes-Saut}, we shall infer the existence and uniqueness as well as estimates on the pressure $P$ from the elliptic theory applied to the above boundary value problem, as stated below.

\begin{lemma}\label{L.Poisson}
	Let $s_0>d/2$, $s,k\in\NN$  such that $s\geq s_0+\frac52$ and $1\leq k\leq s$. Let $\bar M,M,h_\star>0$. There exists $C>0$ such that for any $\mu\in(0,1]$, and for any $\bar h,h,\cH$ satisfying
	the following bound \[ \norm{\bar h}_{W^{1\vee k-1,\infty}_\varrho}\leq \bar M,\quad \Norm{h}_{H^{s-1,1\vee k-1}}+\sqrt\mu\Norm{\nabla_\bx\cH}_{H^{s-1,1\vee k-1}} \leq M; \]
	(where we recall the notation $a\vee b=\max(a,b)$) and the stable stratification assumption
	\[ \inf_{(\bx,\varrho)\in\Omega} \bar h(\varrho)+h(\bx,\varrho) \geq {h_\star}; \]
	and for any $(Q_0,Q_1,\bR)\in   H^{s,k-1}(\Omega)^2\times H^{s,k}(\Omega)^{d+1}$, there exists a unique $P\in H^{s+1,k+1}(\Omega)$ solution to 
	\begin{equation}\label{eq.Poisson-simple}
		\left\{\begin{array}{l}
			\nabla^\mu_{\bx,\varrho}\cdot\big( A^\mu \nabla^\mu_{\bx,\varrho} P\big) = Q_0+\sqrt\mu \Lambda Q_1 +\nabla^\mu_{\bx,\varrho}\cdot \bR
			\\
			P\big\vert_{\varrho=\rho_0}=0, \qquad \be_{d+1}\cdot (A \nabla^\mu_{\bx,\varrho} P)\big\vert_{\varrho=\rho_1}=\be_{d+1}\cdot \bR\big\vert_{\varrho=\rho_1}
		\end{array}\right.
	\end{equation}
	where we denote $\Lambda:=(\Id-\Delta_\bx)^{1/2}$, 
	\[ \nabla^\mu_{\bx,\varrho}:=\begin{pmatrix} \sqrt\mu\nabla_\bx \\ \de_\varrho  \end{pmatrix} \qquad ; \qquad  A^\mu:= \begin{pmatrix}
		\frac{\bar h+h}{\varrho}\Id& \frac{\sqrt\mu\nabla_\bx \cH}{\varrho} \\
		\frac{\sqrt\mu\nabla_\bx^\top \cH}{\varrho}& \frac{1+ \mu|\nabla_\bx \cH|^2}{\varrho (\bar h+h)}
	\end{pmatrix},\]
	and one has, denoting $\Norm{(Q_0,Q_1,\bR)}_{r,j}:=\Norm{Q_0}_{H^{r,j-1}} +\Norm{Q_1}_{H^{r,j-1}} +\Norm{\bR}_{H^{r,j}}$,
	\begin{multline}\label{ineq:est-P}
		\Norm{P}_{L^2(\Omega)} +\Norm{\nabla^\mu_{\bx,\varrho} P}_{H^{s,k}} \leq C\,\times \Big(\Norm{(Q_0,Q_1,\bR)}_{s,k}\\
		+ \big(  \Norm{  h}_{H^{s,k}}+\sqrt\mu\Norm{\nabla_\bx\cH}_{H^{s,k}}\big)\,  \Norm{(Q_0,Q_1,\bR)}_{s-1,1\vee k-1} \Big)  
	\end{multline}
	and, when $k\geq 2$,
	\begin{equation}\label{ineq:est-Plow}
		\Norm{P}_{L^2(\Omega)} +  \Norm{\nabla^\mu_{\bx,\varrho}P}_{H^{s-1,k-1}} \leq C 
		\Norm{(Q_0,Q_1,\bR)}_{s-1,k-1}.
	\end{equation}
\end{lemma}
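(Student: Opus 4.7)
The plan is to view \eqref{eq.Poisson-simple} as a uniformly elliptic boundary-value problem with respect to the scaled gradient $\nabla^\mu_{\bx,\varrho}$ and to apply standard variational techniques, followed by a bootstrap argument that makes careful use of the tame product and commutator estimates collected in the Appendix. The three main steps are: (i) coercivity of $A^\mu$ and existence by Lax--Milgram, (ii) regularity bootstrap leading to \eqref{ineq:est-Plow}, and (iii) the tame top-order estimate \eqref{ineq:est-P}.

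The coercivity of $A^\mu$ follows from completing the square: for every $\xi=(\zeta,w)\in\RR^d\times\RR$,
\[
\xi^\top A^\mu\xi=\tfrac{\bar h+h}{\varrho}\bigl|\zeta+\tfrac{\sqrt\mu\,w\,\nabla_\bx\cH}{\bar h+h}\bigr|^2+\tfrac{w^2}{\varrho(\bar h+h)}.
\]
Together with the stable stratification assumption, the prescribed bound on $h$, and the $L^\infty$-bound on $\sqrt\mu\nabla_\bx\cH$ provided by the embedding $H^{s-1,1\vee k-1}\hookrightarrow L^\infty$ (valid since $s-1>s_0+\tfrac{3}{2}$, cf.~Lemma~\ref{L.embedding}), this yields $\xi^\top A^\mu\xi\gtrsim|\xi|^2$ uniformly in $\mu\in(0,1]$, with an implicit constant depending only on $h_\star,\bar M,M$. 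I would then set up the Hilbert space
\[
V=\bigl\{P\in L^2(\Omega)\ :\ \nabla^\mu_{\bx,\varrho}P\in L^2(\Omega),\ P|_{\varrho=\rho_0}=0\bigr\}
\]
endowed with the norm $\|\nabla^\mu_{\bx,\varrho}P\|_{L^2}$, noting that $\|P\|_{L^2}$ is controlled via Poincaré in $\varrho$. Multiplying \eqref{eq.Poisson-simple} by $\phi\in V$ and integrating by parts, the boundary contribution at $\varrho=\rho_1$ cancels thanks to the co-normal condition prescribed in \eqref{eq.Poisson-simple}, while the term $\sqrt\mu\Lambda Q_1$ is treated by self-adjointness, $(\sqrt\mu\Lambda Q_1,\phi)_{L^2}=(Q_1,\sqrt\mu\Lambda\phi)_{L^2}$, together with the elementary bound $\|\sqrt\mu\Lambda\phi\|_{L^2}\lesssim\|\phi\|_{L^2}+\|\sqrt\mu\nabla_\bx\phi\|_{L^2}$ (which uses $\mu\leq 1$). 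Lax--Milgram then delivers existence, uniqueness, and the $L^2$-level estimate on $\|P\|_{L^2}+\|\nabla^\mu_{\bx,\varrho}P\|_{L^2}$.

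For \eqref{ineq:est-Plow}, I would apply $\partial_\bx^\balpha\partial_\varrho^j$ (with $|\balpha|+j\leq s-1$, $j\leq k-1$) to the equation: the commuted unknown $\partial_\bx^\balpha\partial_\varrho^j P$ solves the same elliptic problem, with the original right-hand side differentiated and augmented by $-\nabla^\mu_{\bx,\varrho}\cdot\bigl([\partial_\bx^\balpha\partial_\varrho^j,A^\mu]\nabla^\mu_{\bx,\varrho}P\bigr)$. Horizontal regularity is then controlled by testing against $\partial_\bx^\balpha\partial_\varrho^j P$ and bounding commutators via Lemma~\ref{L.product-Hsk} and Lemma~\ref{L.commutator-Hsk}, after reducing the rational coefficients of $A^\mu$ to tame expressions in $h$, $\sqrt\mu\nabla_\bx\cH$ and $1/(\bar h+h)$ via Lemma~\ref{L.composition-Hsk-ex}. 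Vertical regularity is recovered from the fact that the coefficient of $\partial_\varrho^2 P$ in \eqref{eq.Poisson-simple}, namely $(1+\mu|\nabla_\bx\cH|^2)/(\varrho(\bar h+h))$, is uniformly bounded below: one can algebraically solve for $\partial_\varrho^2 P$ in terms of $\partial_\varrho P$, $\sqrt\mu\nabla_\bx P$, $\sqrt\mu\nabla_\bx\partial_\varrho P$, $\mu\Delta_\bx P$ and the data, and iterate in $\varrho$ to reach order $k-1$.

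The main obstacle is the tame estimate \eqref{ineq:est-P}: one must distribute the $s$ derivatives in the commutator $[\partial_\bx^\balpha\partial_\varrho^j,A^\mu]$ (with $|\balpha|+j=s$, $j\leq k$) so that the top-order $H^{s,k}$-norm of the coefficients ($h$ and $\sqrt\mu\nabla_\bx\cH$) only ever multiplies a low-order $H^{s-1,1\vee k-1}$-type norm of $P$ or of $(Q_0,Q_1,\bR)$, the former being absorbed via the already-established non-tame bound \eqref{ineq:est-Plow}. A closely related point is the uniformity of all constants in $\mu\in(0,1]$, which is enforced by systematically keeping every occurrence of $\mu$ inside the combinations $\sqrt\mu\nabla_\bx$ or $\sqrt\mu\nabla_\bx\cH$ (both enjoying uniform bounds under our assumptions), and by working throughout with the scaled operator $\nabla^\mu_{\bx,\varrho}$ and the block matrix $A^\mu$ as a whole rather than with their individual components.
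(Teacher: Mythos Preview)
Your proposal is essentially the paper's own argument: Lax--Milgram with Poincar\'e in $\varrho$ for the base $L^2$ estimate, horizontal regularity by commuting $\bx$-derivatives through the elliptic operator, and vertical regularity by algebraically solving the equation for $\partial_\varrho^2 P$ (since the coefficient $(1+\mu|\nabla_\bx\cH|^2)/(\varrho(\bar h+h))$ is bounded below) and iterating. The paper uses $\Lambda^r$ rather than $\partial_\bx^\balpha$ for the horizontal part, but this is cosmetic.

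One imprecision worth flagging: you write that ``the commuted unknown $\partial_\bx^\balpha\partial_\varrho^j P$ solves the same elliptic problem'' and that you would test against it. For $j\geq 1$ this fails, because $\partial_\varrho^j P$ does not inherit the Dirichlet condition $P|_{\varrho=\rho_0}=0$ (nor the co-normal condition at $\rho_1$), so $\partial_\bx^\balpha\partial_\varrho^j P$ is not an admissible test function in $V$ and the variational coercivity argument does not apply to it. The paper avoids this by separating the two steps cleanly: it first applies $\Lambda^r$ (which commutes with restriction to $\{\varrho=\rho_0\}$ and $\{\varrho=\rho_1\}$, hence preserves both boundary conditions) to obtain $\|\nabla^\mu_{\bx,\varrho}P\|_{H^{r,0}}$ by the variational estimate, and only \emph{then} recovers $\varrho$-derivatives via the pointwise algebraic identity for $\partial_\varrho^2 P$, which requires no boundary-value structure. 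Your own next sentence already contains this correct mechanism; just make sure the testing step is restricted to $j=0$.
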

\begin{proof} Testing~\eqref{eq.Poisson-simple} with $ P$, using integration by parts and the boundary conditions, we find
	\[
	-\int_{\RR^d} \int_{\rho_0}^{\rho_1}  A^\mu \nabla^\mu_{\bx,\varrho} P \cdot \nabla^\mu_{\bx,\varrho}  P \, \dd \varrho\, \dd \bx = \int_{\RR^d} \int_{\rho_0}^{\rho_1}  Q_0 {P} +  Q_1 (\sqrt\mu\Lambda {P}) - \tilde \bR\cdot \nabla_{\bx, \varrho}^\mu  P  \, \dd \varrho \, \dd \bx.
	\]
	For $(Q_0,Q_1,\bR)\in L^2(\Omega)^{2+d+1}$, the existence and uniqueness of a (variational) solution to~\eqref{eq.Poisson-simple} in the functional space
	\[ H^1_0(\Omega):=\{ P\in L^2(\Omega) \ : \ \nabla_{\bx,\varrho} P \in L^2(\Omega) ,  \  P\big\vert_{\varrho=\rho_0} = 0\}\]
	classically follows from the Lax-Milgram Lemma thanks to the boundedness and the coercivity of the matrix $A$
	(recall that $\bar h+h\geq h_\star>0$ and the embedding of Lemma~\ref{L.embedding}), and the Poincar\'e inequality
	\begin{equation}\label{eq.Poincare} \forall P\in H^1_0(\Omega), \qquad \Norm{P}_{L^2(\Omega)}^2 = \int_{\RR^d} \int_{\rho_0}^{\rho_1} \left|\int_{\rho_0}^{\varrho} \partial_{\varrho'} P\dd \varrho'\right|^2 \dd\varrho \dd\bx
		\leq (\rho_1-\rho_0)^2 \Norm{\de_\varrho P }_{L^2(\Omega)}^2,
	\end{equation}
	and we have 
	\begin{equation}
		\Norm{\nabla^\mu_{\bx,\varrho}P}_{L^2(\Omega)}  \lesssim  \Norm{Q_0}_{L^2(\Omega)} +\Norm{Q_1}_{L^2(\Omega)} +{\Norm{\bR}_{L^2(\Omega)}} .\label{eq.basic-elliptic}
	\end{equation}
	The desired regularity for  $(Q_0,Q_1,\bR)\in   H^{s,k-1}(\Omega)^2\times H^{s,k}(\Omega)^{d+1}$ is then deduced following the standard approach for elliptic equations (notice the domain is flat) from the estimates which we obtain below. For more details, we refer  for instance to~\cite{Lannesbook}*{Chapter~2} where a very similar elliptic problem is thoroughly studied. We now focus on the estimates, assuming {\em a priori} the needed regularity to justify the following computations.
	
	First, we provide an estimate for $\Norm{ \nabla_{\bx,\varrho} P}_{H^{r,0}(\Omega)}$ for $1\leq r\leq s$.
	One readily checks that  $P_r:=\Lambda^r P$ with $\Lambda^r:=(\Id-\Delta_\bx)^{r/2}$ satisfies~\eqref{eq.Poisson-simple} 
	with $Q_0\leftarrow \Lambda^r Q_0$, $Q_1\leftarrow \Lambda^r Q_1$ and $\bR\leftarrow \Lambda^r \bR-[\Lambda^r,A^\mu ]\nabla_{\bx,\varrho} P $. We focus now on the contribution of $\bP_r:=[\Lambda^r,A^\mu ]\nabla^\mu_{\bx,\varrho} P $.
	By continuous embedding (Lemma~\ref{L.embedding}) and commutator estimates (Lemma~\ref{L.commutator-Hs}), we have
	\[  \Norm{\bP_r}_{L^2(\Omega)}   \lesssim \Norm{\nabla_\bx A^\mu}_{H^{s_0+\frac12,1}}   \Norm{\nabla^\mu_{\bx,\varrho} P}_{H^{r-1,0}} + \left\langle  \Norm{\nabla_\bx A^\mu}_{H^{r-1,0}}   \Norm{\nabla^\mu_{\bx,\varrho} P}_{H^{s_0+\frac12,1}}  \right\rangle_{r>s_0+1} . \]
	Hence, using product (Lemma~\ref{L.product-Hs},\ref{L.product-Hsk}) and composition (Lemma~\ref{L.composition-Hs},\ref{L.composition-Hsk-ex}) estimates, we deduce
	\begin{equation} \label{est-P} \Norm{\bP_r}_{L^2(\Omega)}   \leq  C\,\Big( \Norm{ \nabla^\mu_{\bx,\varrho} P}_{H^{r-1,0}}
		+\left\langle\big(\Norm{h}_{H^{r,0}} + \sqrt\mu\Norm{\nabla_\bx \cH}_{H^{r,0}}\big)   \Norm{\nabla^\mu_{\bx,\varrho} P}_{H^{s_0+\frac12,1}}  \right\rangle_{r>s_0+1}  \Big)  
	\end{equation}
	with
	$ C= C(h_\star,\norm{\bar h}_{W^{1,\infty}_\varrho},\Norm{(h,\sqrt\mu\nabla_\bx\cH)}_{ H^{s_0+\frac32,1}} )  $.

	Plugging~\eqref{est-P} in~\eqref{eq.basic-elliptic} and using continuous embedding (Lemma~\ref{L.embedding}) and $s_0+\frac32\leq s-1$ yields 
	\begin{multline}\label{eq.P-in-Hs1} 
		\Norm{\nabla^\mu_{\bx,\varrho} P}_{H^{r,0}} \lesssim \Norm{Q_0}_{H^{r,0}} +\Norm{Q_1}_{H^{r,0}} +\Norm{\bR}_{H^{r,0}} + \Norm{ \nabla^\mu_{\bx,\varrho} P}_{H^{r-1,0}}\ \\
		+ \ C\, \big( \Norm{  h}_{H^{r,0}}+\Norm{\nabla_\bx\cH}_{H^{r,0}} \big) \left\langle   \Norm{\nabla_{\bx,\varrho } P}_{H^{s_0+\frac12,1}} \right\rangle_{r>s_0+1},
	\end{multline}
	where we denote, here and thereafter, $a\lesssim b$ for $a\leq Cb$ with
	\[ C=C(h_\star,\norm{\bar h}_{W^{1,\infty}_\varrho},\Norm{h}_{H^{s-1,1}},\sqrt\mu\Norm{\nabla_\bx \cH}_{H^{s-1,1}}  ) =C(h_\star,\bar M,M) . \]
	
	Next we provide an estimate for $\Norm{ \nabla_{\bx,\varrho} P}_{H^{r,1}(\Omega)}$ appearing in the above right-hand side. This term involves $\de_\varrho^2P$, which we control by rewriting~\eqref{eq.Poisson-simple} as
	\begin{equation}\label{eq.Poisson-varrho}
		\tfrac{1+ \mu|\nabla_\bx \cH|^2}{\varrho (\bar h+h)} \partial_\varrho^2 P = -\partial_\varrho\big( \tfrac{1+\mu |\nabla_\bx \cH|^2}{\varrho (\bar h+h)}\big) (\partial_\varrho P ) - \nabla^\mu_{\bx,\varrho}\cdot A_0^\mu \nabla^\mu_{\bx,\varrho} P  +Q_0+\sqrt\mu\Lambda  Q_1 +\nabla^\mu_{\bx,\varrho}\cdot \bR   =:\widetilde\bR
	\end{equation}
	where we denote
	\begin{align*}  \nabla^\mu_{\bx,\varrho}\cdot A_0^\mu \nabla^\mu_{\bx,\varrho} P  
		&:= \nabla^\mu_{\bx,\varrho}\cdot  \left( \begin{pmatrix}
			\frac{\bar h+h}{\varrho}\Id& \frac{\sqrt\mu\nabla_\bx \cH}{\varrho} \\
			\frac{\sqrt\mu\nabla_\bx^\top \cH}{\varrho}& 0
		\end{pmatrix}\nabla^\mu_{\bx,\varrho} P\right) \\
		&=  \frac\mu\varrho \nabla_\bx\cdot \big(h  \nabla_{\bx} P +(\nabla_\bx \cH)( \partial_\varrho P)\big)+\partial_\varrho \big(\frac\mu\varrho (\nabla_\bx \cH)\cdot(\nabla_\bx P)\big) .
	\end{align*}
	When estimating the above, we use product estimates (Lemma~\ref{L.product-Hs}) and then continuous embedding (Lemma~\ref{L.embedding}), treating differently terms involving $ \Delta_\bx P$ or $\nabla_\bx\de_\varrho P$: for instance 
	\begin{align*}
		\Norm{\Lambda^{r-1} (h \Delta_\bx P)}_{L^2(\Omega)} \lesssim \Norm{h}_{H^{s_0+\frac 12, 1}} \Norm{\Delta_\bx P}_{H^{r-1,0}} + \big\langle \Norm{h}_{H^{r-\frac 12,1}} \Norm{\Delta_\bx P}_{H^{s_0,0}} \big\rangle_{r-1> s_0};
	\end{align*}
	and terms involving only $\nabla_{\bx} P$ or $\de_\varrho P$: for instance
	\begin{align*}
		\Norm{\Lambda^{r-1}((\Delta_\bx \cH) (\de_\varrho P))}_{L^2(\Omega)} & \lesssim \Norm{\Delta_\bx \cH}_{H^{s_0+\frac12,1}} \Norm{\de_\varrho P}_{H^{r-1,0}}  + \big\langle \Norm{\Delta_\bx \cH}_{H^{r-1,0}} \Norm{\de_\varrho P}_{H^{s_0+\frac 12, 1}} \big\rangle_{r-1>s_0} .
	\end{align*}
	We infer, using Lemma~\ref{L.embedding}, $\mu\in(0,1]$ and $s_0+\frac32\leq s-1$, that for any $1\leq r\leq s$,
	
	\begin{multline}\label{eq.P-in-Hs2} 
		\Norm{\partial_\varrho^2 P}_{H^{r-1,0}}\lesssim \Norm{Q_0}_{H^{r-1,0}} +\Norm{Q_1}_{H^{r,0}} +\Norm{\bR}_{H^{r,1}} + \Norm{ \nabla^\mu_{\bx,\varrho} P}_{H^{r,0}}   \\
		+\left( \Norm{  h}_{H^{r,1}}+\sqrt\mu\Norm{\nabla_\bx\cH}_{H^{r,1}}\right)\left\langle   \Norm{\nabla^\mu_{\bx,\varrho } P}_{H^{s_0+1,1}} \right\rangle_{r>s_0+1}.
	\end{multline}
	By combining~\eqref{eq.P-in-Hs1} and~\eqref{eq.P-in-Hs2} we obtain
	\begin{multline*}
		\Norm{\nabla^\mu_{\bx,\varrho} P}_{H^{r,1}} \lesssim \Norm{Q_0}_{H^{r,0}} +\Norm{Q_1}_{H^{r,0}} +\Norm{\bR}_{H^{r,1}} + \Norm{ \nabla^\mu_{\bx,\varrho} P}_{H^{r-1,0}}\ \\
		+ \ C\,  {\left\langle   \left(  \Norm{  h}_{H^{r,1}}+\sqrt\mu \Norm{\nabla_\bx\cH}_{H^{r,1}}\right) \Norm{\nabla^{\mu}_{\bx,\varrho } P}_{H^{s_0+1,1}} \right\rangle_{r>s_0+1} }
	\end{multline*}
	which, after finite induction on $1\leq r\leq s$ and using~\eqref{eq.basic-elliptic} for the initialization, yields
	\begin{multline}\label{eq.k=1}
		\Norm{\nabla^\mu_{\bx,\varrho} P}_{H^{r,1}} \lesssim \Norm{Q_0}_{H^{r,0}} +\Norm{Q_1}_{H^{r,0}} +\Norm{\bR}_{H^{r,1}}\\
		+ \left(  \Norm{  h}_{H^{r,1}}+\sqrt\mu\Norm{\nabla_\bx\cH}_{H^{r,1}}\right)\,  \times \left\langle\Norm{Q_0}_{H^{r-1,0}} +\Norm{Q_1}_{H^{r-1,0}} +\Norm{\bR}_{H^{r-1,1}}
		\right\rangle_{r>s_0+1}   .
	\end{multline}
	This, together with~\eqref{eq.Poincare}, proves~\eqref{ineq:est-P} when $k=1$.
	\medskip
	
	We now proceed to estimate higher $\varrho$-derivatives. In what follows, we denote
	\[ C=C(h_\star,\norm{\bar h}_{W^{k-1,\infty}_\varrho},\Norm{h}_{H^{s-1,k-1}},\sqrt\mu\Norm{\nabla_\bx \cH}_{H^{s-1,k-1}}  ) =C(h_\star,\bar M,M) . \]
	Let $2\leq j\leq k$. By definition, and using $\mu\in(0,1]$, we have
	\[ \Norm{\nabla^\mu_{\bx,\varrho }P}_{H^{s,j}} \leq \Norm{\nabla^\mu_{\bx,\varrho }P}_{H^{s,j-1}} +\Norm{\de_\varrho\nabla^\mu_{\bx,\varrho }P}_{H^{s-1,j-1}} \lesssim \Norm{\nabla^\mu_{\bx,\varrho }P}_{H^{s,j-1}}  +\Norm{\de_\varrho^2 P}_{H^{s-1,j-1}}  . \]
	We shall also use, when $j\leq k-1$, the corresponding estimate
	\[ \Norm{\nabla^\mu_{\bx,\varrho }P}_{H^{s-1,j}}  \lesssim \Norm{\nabla^\mu_{\bx,\varrho }P}_{H^{s-1,j-1}} +\Norm{\de_\varrho^2 P}_{H^{s-2,j-1}}. \]
	By using~\eqref{eq.Poisson-varrho} (according to which $\de_\varrho^2 P=\tfrac{\varrho (\bar h+h)}{1+ \mu|\nabla_\bx \cH|^2} \tilde \bR$), and since $1\leq j-1 \leq s-1$, using Lemma~\ref{L.product-Hsk} and Lemma~\ref{L.composition-Hsk} yields
	\[ \Norm{\partial_\varrho^{2} P}_{H^{s-1,j-1}}  \lesssim \Norm{\tfrac{\varrho (\bar h+h)}{1+ \mu|\nabla_\bx \cH|^2} }_{H^{s-1,j-1}}\Norm{ \widetilde\bR  }_{H^{s-1,j-1}} \leq C \Norm{ \widetilde\bR  }_{H^{s-1,j-1}}.\]
	If moreover $j\leq k-1\leq s-1$, then
	\[ \Norm{\partial_\varrho^{2} P}_{H^{s-2,j-1}}   \lesssim \Norm{\tfrac{\varrho (\bar h+h)}{1+\mu |\nabla_\bx \cH|^2} }_{H^{s-2,j-1}}\Norm{ \widetilde\bR  }_{H^{s-2,j-1}} \leq C \Norm{ \widetilde\bR  }_{H^{s-2,j-1}}.\]
	Applying Lemma~\ref{L.product-Hsk} and Lemma~\ref{L.composition-Hsk} to $\widetilde\bR$ defined in~\eqref{eq.Poisson-varrho}, we obtain
	\begin{multline*}\Norm{ \widetilde\bR  }_{H^{s-1,j-1}} \leq \Norm{ Q_0  }_{H^{s-1,j-1}}   +  \Norm{ Q_1  }_{H^{s,j-1}}   + \Norm{ \bR  }_{H^{s,j}} \\
		+ C \Norm{\nabla^\mu_{\bx,\varrho} P}_{H^{s,j-1}} + C\, \times\left(  \Norm{  h}_{H^{s,j}}+\sqrt\mu\Norm{\nabla_\bx\cH}_{H^{s,j}}\right) \Norm{\nabla^\mu_{\bx,\varrho} P}_{H^{s-1,j-1}} 
	\end{multline*}
	and, if moreover $j\leq k-1\leq  s-1$,
	\[ \Norm{ \widetilde\bR  }_{H^{s-2,j-1}} \leq \Norm{ Q_0  }_{H^{s-2,j-1}}  + \Norm{ Q_1  }_{H^{s-1,j-1}}  + \Norm{ \bR  }_{H^{s-1,j}} + C \Norm{\nabla^\mu_{\bx,\varrho} P}_{H^{s-1,j-1}} .
	\]
	From the second set of inequalities,~\eqref{eq.k=1} with $r=s-1$ and finite induction on $2\leq j\leq k-1$ we infer
	\[
	\Norm{\nabla^\mu_{\bx,\varrho }P}_{H^{s-1,j}}  \leq C \, \big( \Norm{ Q_0  }_{H^{s-1,j-1}}  +\Norm{ Q_1  }_{H^{s-1,j-1}}  +  \Norm{ \bR  }_{H^{s-1,j}}  \big).\]
	Then, from the first set of inequalities,~\eqref{eq.k=1} with $r=s$ and the previous result, we infer by finite induction on $2\leq j\leq k$
	\begin{multline*}
		\Norm{\nabla^\mu_{\bx,\varrho }P}_{H^{s,j}}  \leq C \, \big(
		\Norm{Q_0}_{H^{s,j-1}} + \Norm{Q_1}_{H^{s,j-1}} +\Norm{\bR}_{H^{s,j}}\big) \\
		+ C\,  \big(\Norm{  h}_{H^{s,j}}+\sqrt\mu\Norm{\nabla_\bx\cH}_{H^{s,j}}\big)\,   \big(
		\Norm{Q_0}_{H^{s-1,j-2}} + \Norm{Q_1}_{H^{s-1,j-2}} +\Norm{\bR}_{H^{s-1,j-1}}\big)   .
	\end{multline*}
	The result is proved.
\end{proof}

We now apply Lemma~\ref{L.Poisson} to obtain several estimates on the solution to~\eqref{eq.RHS}-\eqref{eq.Poisson}.

\begin{corollary}\label{C.Poisson}
	Let $s_0>d/2$, $s,k\in\NN$  such that $s\geq s_0+\frac52$ and $2\leq k\leq s$. Let $\bar M,M,h_\star>0$. There exists $C>0$ such that for any $\mu\in(0,1]$ and $\kappa\in\RR$, for any $(\bar h,\bar\bu)\in W^{k,\infty}((\rho_0,\rho_1))^{1+d}$, and for any $(h,\bu,w)\in H^{s+1,k}(\Omega)\times H^{s,k}(\Omega)^d\times H^{s,k-1}(\Omega) $ satisfying (denoting $\cH(\cdot,\varrho):=\int_\varrho^{\rho_1}h(\cdot,\varrho')\dd\varrho'$)
	\begin{itemize}
		\item the following bounds \[ \norm{\bar h}_{W^{k,\infty}_\varrho}+\norm{\bar \bu'}_{W^{k-1,\infty}_\varrho}\leq \bar M,\]  \[\Norm{h}_{H^{s-1,k-1}}+\Norm{\nabla_\bx\cH}_{H^{s-1,k-1}} +\Norm{\bu}_{H^{s,k}} +\sqrt\mu\Norm{w}_{H^{s,k-1}}\leq M; \]
		\item the stable stratification assumption
		\[ \inf_{(\bx,\varrho)\in\Omega} \bar h(\varrho)+h(\bx,\varrho) \geq {h_\star}; \]
		\item the incompressibility condition
		\[-(\bar h+h)\nabla_\bx\cdot \bu-(\nabla_\bx \cH)\cdot({\bar \bu}'+\partial_\varrho\bu)+\partial_\varrho  w=0,\] 
	\end{itemize}
	there exists a unique solution $P\in H^{s+1,k+1}(\Omega)$ to~\eqref{eq.RHS}-\eqref{eq.Poisson} and one has
	\begin{multline}\label{ineq:est-P-hydro}
		\Norm{P}_{L^2(\Omega)}  +\Norm{\nabla^\mu_{\bx,\varrho}P}_{H^{s,k}}  \leq C \, (1+\Norm{  h}_{H^{s,k}}+\sqrt\mu\Norm{\nabla_\bx\cH}_{H^{s,k}}) \\
		\times \left(
		\Norm{h}_{H^{s,k}} + \sqrt\mu\Norm{\nabla_\bx \cH}_{H^{s,k}}+ \Norm{(\bu,\bu_\star)}_{H^{s,k}} +\sqrt\mu\Norm{(w,w_\star)}_{H^{s,k-1}} \right)
	\end{multline}
	where we recall the notations $\bu_\star:=-\kappa\frac{\nabla_\bx  h}{\bar h+ h} $ and  $w_\star:=\kappa\Delta_\bx \cH-\kappa\frac{\nabla_\bx h\cdot\nabla_\bx \cH}{\bar h+h}$.
	
	Moreover, decomposing
	\[ P=\Ph+\Pnh, \qquad \Ph:=\int_{\rho_0}^{\varrho}  \varrho'  h(\cdot,\varrho') \, \dd\varrho',\]
	we have 
	\begin{multline}\label{ineq:est-Psmall-nonhydro}
		\Norm{\Pnh}_{L^2(\Omega)}  + \Norm{\nabla^\mu_{\bx,\varrho} \Pnh}_{H^{s-1,k-1}}  \leq C\sqrt\mu \Big(\Norm{ \nabla_\bx \cH}_{H^{s-1,k-1}} + \norm{\cH\big\vert_{\varrho=\rho_0}}_{H^{s}_\bx}\\
		+ \Norm{(\bu,\bu_\star)}_{H^{s,k}} +\sqrt\mu\Norm{(w,w_\star)}_{H^{s,k-1}} \Big),
	\end{multline}
	and, setting $\Lambda^\mu=1+\sqrt\mu|D|$,
	\begin{multline}\label{ineq:est-Ptame-nonhydro}
		\Norm{\Pnh}_{L^2(\Omega)}  +	\Norm{\nabla^\mu_{\bx,\varrho} \Pnh}_{H^{s-1,k-1}}  \leq C\,\mu\, \Big( \Norm{ (\Lambda^\mu)^{-1} \nabla_\bx \cH}_{H^{s,k-1}} + \norm{(\Lambda^\mu)^{-1} \cH\big\vert_{\varrho=\rho_0}}_{H^{s+1}_\bx} \\
		+ \big( \norm{\bar \bu'}_{W^{k-1,\infty}_\varrho}+\Norm{\bu}_{H^{s,k}}\big)\big( \Norm{(\bu,\bu_\star)}_{H^{s,k}}
		+\Norm{( w,w_\star)}_{H^{s,k-1}}\big) + \Norm{\bu_\star}_{H^{s,k}}  \Norm{w}_{H^{s,k-1}}\Big).	
	\end{multline}
\end{corollary}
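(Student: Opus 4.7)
The plan is to cast \eqref{eq.RHS}--\eqref{eq.Poisson} as an instance of Lemma~\ref{L.Poisson} applied to the non-hydrostatic correction $\Pnh:=P-\Ph$. Setting $\tilde P:=P+\Peq$, the full equation reads $\nabla^\mu_{\bx,\varrho}\cdot(A^\mu\nabla^\mu_{\bx,\varrho}\tilde P)=\mu\cdot(\text{RHS})$, with $\tilde P|_{\varrho=\rho_0}=0$ and $\partial_\varrho\tilde P|_{\varrho=\rho_1}=\rho_1(\bar h+h)|_{\varrho=\rho_1}$. Since the hydrostatic profile $\Ph+\Peq=\int_{\rho_0}^{\varrho}\varrho'(\bar h+h)\,d\varrho'$ satisfies precisely these boundary data, $\Pnh$ obeys the homogeneous boundary conditions $\Pnh|_{\varrho=\rho_0}=0$ and $\partial_\varrho\Pnh|_{\varrho=\rho_1}=0$; the residual off-diagonal boundary coupling $\tfrac{\mu(\nabla_\bx\cH)\cdot(\nabla_\bx\Pnh)}{\varrho}|_{\varrho=\rho_1}$ that arises when translating this into the Neumann-type condition of \eqref{eq.Poisson-simple} is $O(\mu)$ and can be absorbed into $\bR$ via a standard boundary lifting. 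The resulting equation reads
\[
\nabla^\mu_{\bx,\varrho}\cdot\big(A^\mu\nabla^\mu_{\bx,\varrho}\Pnh\big)=\mu\cdot(\text{RHS})-\nabla^\mu_{\bx,\varrho}\cdot\big(A^\mu\nabla^\mu_{\bx,\varrho}(\Ph+\Peq)\big),
\]
and a direct computation using $\partial_\varrho(\Ph+\Peq)=\varrho(\bar h+h)$, $\partial_\varrho\cH=-h$, and the explicit form of $A^\mu$ shows that every term in the second source carries a factor of $\mu$ (the constant contribution $\partial_\varrho(1)=0$ cancels as expected).

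I would then decompose the full source into the $(Q_0,Q_1,\bR)$ form of \eqref{eq.Poisson-simple}: terms in $\nabla_\bx$-divergence form are rewritten as $\sqrt\mu\,\nabla^\mu_{\bx,\varrho}\cdot(\sqrt\mu F,0)$ and loaded into the horizontal part of $\bR$; terms in $\partial_\varrho$-divergence form (including the vertical-transport contribution $\mu\partial_\varrho((\bar\bu+\bu+\bu_\star)\cdot\nabla_\bx w)$) enter the vertical part of $\bR$ directly; the remaining non-divergence-form contributions are split between $Q_0$ and $\sqrt\mu\Lambda Q_1$ via a Fourier multiplier, after using the cancellation identity~\eqref{cancellation} to rewrite $\int_\varrho^{\rho_1}\nabla_\bx\cdot((\bar h+h)(\bar\bu+\bu+\bu_\star))\,d\varrho'$ in terms of $w+w_\star$. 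Bounding each piece by means of the product, composition, and commutator estimates of Appendix~\ref{S.Appendix} and applying \eqref{ineq:est-P} of Lemma~\ref{L.Poisson} then yields \eqref{ineq:est-Psmall-nonhydro}; the surface-trace contribution $\|\cH|_{\varrho=\rho_0}\|_{H^s_\bx}$ arises from controlling the bottom boundary term in the $\partial_\varrho$-integration by parts. The estimate \eqref{ineq:est-P-hydro} follows by adding the elementary bound $\|\Ph\|_{L^2}+\|\nabla^\mu_{\bx,\varrho}\Ph\|_{H^{s,k}}\lesssim\|h\|_{H^{s,k}}+\sqrt\mu\|\nabla_\bx\cH\|_{H^{s,k}}$ obtained by direct integration of $\Ph=\int_{\rho_0}^{\varrho}\varrho' h\,d\varrho'$.

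For the tame estimate \eqref{ineq:est-Ptame-nonhydro}, I would extract an additional factor of $\sqrt\mu$ by trading one derivative against the negative-order multiplier $(\Lambda^\mu)^{-1}$. Concretely, a linear-in-$\cH$ source $\mu\nabla_\bx F$ is rewritten as $\sqrt\mu\,\Lambda\big(\sqrt\mu(\Lambda^\mu)^{-1}\Lambda^{-1}\nabla_\bx F\big)$ and loaded into $\sqrt\mu\Lambda Q_1$, giving rise to $\sqrt\mu\|(\Lambda^\mu)^{-1}\nabla_\bx\cH\|$-type contributions; quadratic sources $G\cdot H$ in which $G$ already carries a full derivative are handled similarly, the second factor contributing the $L^\infty$-type bound $\|\bar\bu'\|_{W^{k-1,\infty}_\varrho}+\|\bu\|_{H^{s,k}}$. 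Applying the low-regularity estimate \eqref{ineq:est-Plow} of Lemma~\ref{L.Poisson} then delivers \eqref{ineq:est-Ptame-nonhydro}. The main obstacle will be the careful bookkeeping of which contributions can be refactored as $\mu\nabla_\bx(\cdot)$ versus those that remain genuinely quadratic; in particular, checking that the transport-type source $-\nabla_\bx(({\bar\bu}+\bu+\bu_\star)\cdot\nabla_\bx\cH-w-w_\star)\cdot(\bar\bu'+\partial_\varrho\bu)$ in \eqref{eq.RHS} and the hydrostatic correction combine so as to leave precisely the linear-in-$\cH$ (or trace-of-$\cH$) remainders appearing on the right-hand side of \eqref{ineq:est-Ptame-nonhydro}.
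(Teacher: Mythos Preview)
Your overall plan---rewrite the problem as an instance of Lemma~\ref{L.Poisson} applied to $\Pnh$ and read off the three estimates---matches the paper's route for \eqref{ineq:est-Psmall-nonhydro} and \eqref{ineq:est-Ptame-nonhydro}. A few points deserve correction.

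First, the boundary worry is a non-issue: since $\cH|_{\varrho=\rho_1}=0$ identically, the off-diagonal coupling $\mu\varrho^{-1}(\nabla_\bx\cH)\cdot(\nabla_\bx\Pnh)|_{\varrho=\rho_1}$ vanishes, and the Neumann condition in \eqref{eq.Poisson-simple} holds exactly with $\be_{d+1}\cdot\bR|_{\varrho=\rho_1}=0$; no lifting is needed. Second, you omit the key cancellation that makes $(\mathrm{RHS})$ tractable: the second-derivative terms (those the paper groups as $R_2$) collapse, thanks to the incompressibility condition, to the single contribution $((\bar\bu+\bu+\bu_\star)\cdot\nabla_\bx h)(\nabla_\bx\cdot\bu)$, which in turn cancels against a term in $R_1$. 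Without this, your source contains undifferentiated $\Delta_\bx\bu$-type contributions that do not fit into the $(Q_0,Q_1,\bR)$ framework at the required regularity. Third, for \eqref{ineq:est-P-hydro} the paper works directly with $P$ (not $\Pnh$), absorbing the equilibrium contribution $\nabla^\mu_{\bx,\varrho}\cdot(A^\mu\nabla^\mu_{\bx,\varrho}\Peq)$ into an explicit $\bR$ whose vertical component $\tfrac{h}{\bar h+h}(1+\mu|\nabla_\bx\cH|^2)-\mu|\nabla_\bx\cH|^2$ carries no $\mu$-prefactor; this is why $\|h\|_{H^{s,k}}$ appears undamped in \eqref{ineq:est-P-hydro}. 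Your route via $\Pnh+P_\h$ is not wrong in principle, but you must then apply \eqref{ineq:est-P} at level $(s,k)$ to the $\Pnh$ equation, and the non-divergence part of $\mu(\mathrm{RHS})$ is only controlled at $(s-1,k-1)$---the paper circumvents this by loading $(\mathrm{RHS})$ entirely into $Q_1=\sqrt\mu\Lambda^{-1}(\mathrm{RHS})$. Also, for \eqref{ineq:est-Psmall-nonhydro} you should invoke \eqref{ineq:est-Plow}, not \eqref{ineq:est-P}.

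The genuine gap is the tame estimate \eqref{ineq:est-Ptame-nonhydro}. Your proposed identity $\mu\nabla_\bx F=\sqrt\mu\,\Lambda\big(\sqrt\mu(\Lambda^\mu)^{-1}\Lambda^{-1}\nabla_\bx F\big)$ is false (it would force $(\Lambda^\mu)^{-1}=\mathrm{Id}$), and loading into $Q_1$ as $Q_1=\sqrt\mu\Lambda^{-1}\nabla_\bx F$ only yields $\sqrt\mu\|F\|_{H^{s-1}}$, which does \emph{not} recover the sharper quantity $\mu\|(\Lambda^\mu)^{-1}\nabla_\bx\cH\|_{H^{s,k-1}}$. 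The paper's mechanism is different: having written the hydrostatic correction as $\nabla^\mu_{\bx,\varrho}\cdot\widetilde\bR$ with $\widetilde\bR$ bilinear in $(\bar h+h,\nabla_\bx\cH)$ and $\nabla_\bx\psi$ (where $\psi=\int_{\rho_0}^{\varrho}\cH+\rho_0\cH|_{\varrho=\rho_0}$), it introduces a mollified version $\widetilde\bR_0$ in which each factor is replaced by its $(\Lambda^\mu)^{-1}$-regularization. The difference $\widetilde\bR-\widetilde\bR_0$ gains $\sqrt\mu$ via $\mathrm{Id}-(\Lambda^\mu)^{-1}=\sqrt\mu|D|(\Lambda^\mu)^{-1}$ and is placed in $\bR$; the remaining $\nabla^\mu_{\bx,\varrho}\cdot\widetilde\bR_0$ is placed in $Q_0$ and estimated directly, the extra derivative now landing harmlessly on mollified factors. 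For the velocity-quadratic part one uses the unweighted estimate of $(\mathrm{RHS})$ (the one at level $(s-1,k-1)$ without the $\sqrt\mu$ gain), again in $Q_0$, which after the overall $\mu$ prefactor produces exactly the quadratic terms in \eqref{ineq:est-Ptame-nonhydro}.
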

\begin{proof}
	In view of Lemma~\ref{L.Poisson}, we shall first estimate {\rm (RHS)}, defined in~\eqref{eq.RHS}. We decompose
	\[ {\rm (RHS)} = R_1+R_2\]
	where $R_1$ is constituted by terms involving maximum one derivative on $h, \cH, \bu, \bu_\star, w, w_\star$, while
	\begin{multline*}
		R_2:= -( \bar h+h)   \left(\big({\bar \bu}+\bu+\bu_\star\big)\cdot\nabla_\bx (\nabla_\bx\cdot\bu) \right)
		-(\nabla_\bx \cH)\cdot\left(\big({\bar \bu}+\bu+\bu_\star\big)\cdot\nabla_\bx \partial_\varrho  \bu  \right)
		\\
		+\big({\bar \bu}+ \bu+\bu_\star\big)\cdot\nabla_\bx  \partial_\varrho w-\left(({\bar \bu}+\bu+\bu_\star)\cdot\nabla_\bx (\nabla_\bx  \cH)  \right)\cdot({\bar \bu}'+\partial_\varrho\bu).
	\end{multline*}
	Appealing to the incompressibility condition
	\[-(\bar h+h)\nabla_\bx\cdot \bu-(\nabla_\bx \cH)\cdot({\bar \bu}'+\partial_\varrho\bu)+\partial_\varrho  w=0,\]
	we have simply
	\[ R_2=  \left(\big({\bar \bu}+\bu+\bu_\star\big)\cdot(\nabla_\bx h)   \right) (\nabla_\bx\cdot\bu). \]
	As a matter of fact, this term compensates with 
	the second addend of $R_1$, so that contributions from $h$ are not differentiated. By inspecting the remaining terms and
	using Lemma~\ref{L.product-Hsk}, we infer that for any $r\geq s_0+1/2$ and $1\leq j\leq r \le s-1$,
	\begin{multline} \label{est-RHS}\Norm{{\rm (RHS)}}_{H^{r,j}}  \lesssim (1+\norm{\bar h}_{W^{j,\infty}_\varrho}+\Norm{h}_{H^{r,j}}+\Norm{\nabla_\bx \cH}_{H^{r,j}})\\
		\times\left( \big( \norm{\bar \bu'}_{W^{j,\infty}_\varrho}+\Norm{\bu}_{H^{r+1,j+1}}\big) \big(  \Norm{(\bu,\bu_\star)}_{H^{r+1,j+1}} +\Norm{(w,w_\star)}_{H^{r+1,j}} \big)+\Norm{w}_{H^{r+1,j}}\Norm{\bu_\star}_{H^{r+1,j+1}}\right).
	\end{multline}
	
	Owing to the fact that contributions from $(h,\cH,w,w^\star)$ to ${\rm (RHS)}$ are affine, and $\mu\in(0,1]$, we have
	for any $r\geq s_0+3/2 $ and $2\leq j\leq r\leq s$
	\begin{multline} \label{est-Q} \sqrt\mu\Norm{{\rm (RHS)}}_{H^{r-1,j-1}}  \lesssim (1+\norm{\bar h}_{W^{j-1,\infty}_\varrho}+\sqrt\mu\Norm{h}_{H^{r-1,j-1}}+\sqrt\mu\Norm{\nabla_\bx \cH}_{H^{r-1,j-1}})\\
		\times \left(\big(\norm{\bar \bu'}_{W^{j-1,\infty}_\varrho}+\Norm{\bu}_{H^{r,j}}\big) \big(  \Norm{(\bu,\bu_\star)}_{H^{r,j}} +\sqrt\mu\Norm{(w,w_\star)}_{H^{r,j-1}} \big)+\sqrt\mu\Norm{w}_{H^{r,j-1}} \Norm{\bu_\star}_{H^{r,j}}\right).
	\end{multline}

	For the first estimate, we write~\eqref{eq.Poisson} as~\eqref{eq.Poisson-simple} with $Q_0=0$, $Q_1=\sqrt \mu\Lambda^{-1}{\rm (RHS)}$ and 
	\[ \bR = \begin{pmatrix} -\sqrt\mu \bar h \nabla_\bx \cH\\ \frac{h}{\bar h+h} (1+\mu |\nabla_\bx \cH|^2)-\mu |\nabla_\bx \cH|^2\end{pmatrix}\]
	where we used the identities
	\[
	\frac{1+\mu|\nabla_{\bx} \cH|^2}{\varrho (\bar h + h)} \de_\varrho \Peq = \frac{\bar h (1+\mu|\nabla_{\bx} \cH|^2)}{ \bar h + h}=1+\mu|\nabla_\bx \cH|^2-\frac{h}{\bar h+h} (1+\mu|\nabla_\bx \cH|^2).
	\]
	Product (Lemma~\ref{L.product-Hsk}) and composition (Lemma~\ref{L.composition-Hsk-ex})  estimates yield if $r\geq s_0+1/2 $ and $1\leq j\leq r\leq s-1$
	\begin{equation}\label{est-R0}
		\Norm{\bR}_{H^{r,j}}   \leq  C(h_\star,\norm{\bar h}_{W^{j,\infty}_\varrho},\Norm{h}_{ H^{r,j}},\sqrt\mu\Norm{\nabla_\bx \cH }_{H^{r,j}} ) \times \big( \Norm{h}_{H^{r,j}} + \sqrt\mu\Norm{\nabla_\bx \cH}_{H^{r,j}} \big);
	\end{equation}
	and, if $r\geq s_0+3/2 $ and $2\leq j\leq r\leq s$, using the tame estimates, we obtain
	\begin{equation}\label{est-R1}
		\Norm{\bR}_{H^{r,j}}   \leq  C(h_\star,\norm{\bar h}_{W^{j,\infty}_\varrho},\Norm{h}_{ H^{r-1,j-1}},\sqrt\mu\Norm{\nabla_\bx \cH }_{H^{r-1,j-1}} ) \times \big( \Norm{h}_{H^{r,j}} + \sqrt\mu\Norm{\nabla_\bx \cH}_{H^{r,j}} \big). 
	\end{equation}
	Plugging in~\eqref{ineq:est-P} the estimates~\eqref{est-Q}  and~\eqref{est-R1} with $(r,j)=(s,k)$, and~\eqref{est-R0} with $(r,j)=(s-1,k-1)$, yields~\eqref{ineq:est-P-hydro}.
	
	For the next set of estimates, we notice that, by~\eqref{eq.Poisson}, 
	$P-P_{\h}$
	satisfies~\eqref{eq.Poisson-simple} with $\be_{d+1}\cdot \bR\big\vert_{\varrho=\rho_1} = 0$ and $Q_0+\sqrt\mu\Lambda Q_1+\nabla^\mu_{\bx,\varrho}\cdot\bR=\mu{\rm (RHS)}+\nabla^\mu_{\bx,\varrho}\cdot\widetilde\bR$ where
	\[  \widetilde \bR := -\begin{pmatrix}
		\sqrt{\mu}  \varrho^{-1} (\bar h+h) \nabla_\bx \psi \\
		{\mu}  \varrho^{-1} (\nabla_\bx \cH) \cdot (\nabla_\bx \psi) \end{pmatrix}, \quad \psi:=\int_{\rho_0}^{\varrho} \cH(\cdot,\varrho')\dd \varrho'+\rho_0 \cH\big\vert_{\varrho=\rho_0} .\]
	Indeed, we have immediately $\be_{d+1}\cdot \widetilde\bR\big\vert_{\varrho=\rho_1} = 0  = \partial_\varrho \big( P-\Ph)\big\vert_{\varrho=\rho_1}$ and we infer
	\[ - \nabla_{\bx, \varrho}^\mu \cdot \widetilde \bR = \nabla_{\bx, \varrho}^\mu \cdot \big(A^\mu  \nabla^\mu_{\bx,\varrho} (\Peq+\Ph) 
	\big)\]
	from (integrating by parts as in~\eqref{eq:nablaphi-intro})
	\begin{equation}\label{eq.id-Phydro}
		\Ph:=\int_{\rho_0}^{\varrho}  \varrho'  h(\cdot,\varrho') \, \dd\varrho' =  -\varrho\cH+\int_{\rho_0}^{\varrho} \cH(\cdot,\varrho')\dd \varrho'+\rho_0 \cH\big\vert_{\varrho=\rho_0}.
	\end{equation}
	By product estimates (Lemma~\ref{L.product-Hsk}), we infer immediately  for any $r\geq s_0+1/2 $ and $1\leq j\leq r$
	\begin{equation}\label{est-tR0}\Norm{\widetilde\bR}_{H^{r,j}}  \lesssim \Big(\norm{\bar h}_{W^{j,\infty}_\varrho}+\Norm{  h}_{H^{r,j}}+\sqrt\mu\Norm{\nabla_\bx\cH}_{H^{r,j}}\Big) \, \Big(  \sqrt\mu\Norm{\nabla_\bx \cH}_{H^{r,j}} +\sqrt\mu \norm{\cH\big\vert_{\varrho=\rho_0}}_{H^{r+1}_\bx}\Big).
	\end{equation}
	Moreover, using 
	the identity
	\[ \nabla^\mu_{\bx,\varrho}\cdot\widetilde\bR = \mu \nabla_\bx\cH\cdot \de_\varrho(\varrho^{-1} \nabla_\bx\psi)+\mu\varrho^{-1}(\bar h+h)\nabla_\bx\cdot\nabla_\bx\psi\]
	we infer for any $r\geq s_0+1/2 $ and $1\leq j\leq r$
	\begin{equation}\label{est-tR2}\Norm{\nabla^\mu_{\bx,\varrho}\cdot\widetilde\bR}_{H^{r,j}}  \lesssim \mu \Big(\norm{\bar h}_{W^{j,\infty}_\varrho}+\Norm{  h}_{H^{r,j}}+\Norm{\nabla_\bx\cH}_{H^{r,j}}\Big) \\
		\, \Big(  \Norm{\nabla_\bx \cH}_{H^{r+1,j}} + \norm{\cH\big\vert_{\varrho=\rho_0}}_{H^{r+2}_\bx}\Big).
	\end{equation}
	Finally, recalling $\Lambda^\mu=1+\sqrt\mu|D|$ and introducing
	\[\widetilde\bR_0 := -\begin{pmatrix}
		\sqrt{\mu}  \varrho^{-1} (\bar h+  (\Lambda^\mu)^{-1} h) ( (\Lambda^\mu)^{-1} \nabla_\bx \psi) \\
		{\mu}  \varrho^{-1} ((\Lambda^\mu)^{-1}\nabla_\bx \cH) \cdot ((\Lambda^\mu)^{-1}\nabla_\bx \psi), \end{pmatrix}  \]
	proceeding as for~\eqref{est-tR0} and~\eqref{est-tR2} and using  $\Id- (\Lambda^\mu)^{-1}=\sqrt\mu|D| (\Lambda^\mu)^{-1}$ and $\Norm{(\Lambda^\mu)^{-1}}_{L^2_\bx\to L^2_\bx}=1$, for any $r\geq s_0+1/2 $ and $1\leq j\leq r$ one has
	\begin{multline}\label{est-tRtame}
		\Norm{\widetilde\bR-\widetilde\bR_0}_{H^{r,j}}+\Norm{\nabla^\mu_{\bx,\varrho}\cdot\widetilde\bR_0}_{H^{r,j}}  \lesssim  \mu\, \Big(\norm{\bar h}_{W^{j,\infty}_\varrho}+\Norm{  h}_{H^{r,j}}+\Norm{\nabla_\bx\cH}_{H^{r,j}}
		\Big)\\
		\times \Big(
		\Norm{(\Lambda^\mu)^{-1}\nabla_\bx \cH}_{H^{r+1,j}} + \norm{(\Lambda^\mu)^{-1}\cH\big\vert_{\varrho=\rho_0}}_{H^{r+2}_\bx}\Big).
	\end{multline}
	We obtain~\eqref{ineq:est-Psmall-nonhydro}, by setting $Q_0=\mu{\rm (RHS)}$ and $Q_1=0$, and plug in~\eqref{ineq:est-Plow} the estimates~\eqref{est-Q}  with $(r,j)=(s,k)$, and~\eqref{est-tR0} with $(r,j)=(s-1,k-1)$. 
	For~\eqref{ineq:est-Ptame-nonhydro}, we set instead $Q_0=\mu{\rm (RHS)}+\nabla^\mu_{\bx,\varrho}\cdot\widetilde\bR_0$ and $Q_1=0$, $\bR=\widetilde\bR-\widetilde\bR_0$, and plug in~\eqref{ineq:est-Plow} the estimates~\eqref{est-RHS} and~\eqref{est-tRtame} with $(r,j)=(s-1,k-1)$.
\end{proof}

\subsection{Small-time well-posedness}\label{S.Nonhydro:small-time}
We infer small-time existence and uniqueness of regular solutions to the Cauchy problem associated with the non-hydrostatic problem,~\eqref{eq:nonhydro-iso-recall}, proceeding as for the hydrostatic system in Section~\ref{S.Hydro}, that is considering
the system as the combination of a transport-diffusion equation and transport equations, coupled through order-zero source terms (by the estimate \eqref{ineq:est-P-hydro} in Corollary~\ref{C.Poisson}). Specifically, we rewrite~\eqref{eq:nonhydro-iso-recall} as
\begin{equation}\label{eq:nonhydro-iso-redef}
	\begin{aligned}
		\partial_t  h+\nabla_{ \bx} \cdot\big((\bar h +h)(\bar\bu +  \bu)\big)&= \kappa \Delta_{ \bx}  h,\\
		\partial_t  \bu+\big((\bar \bu +  \bu - \kappa\tfrac{\nabla_{ \bx}  h}{ \bar h+  h})\cdot\nabla_\bx\big)  \bu+ \frac1\varrho \nabla_{ \bx}  P+ \big(1+\frac{\de_\varrho  P-\varrho h}{\varrho(\bar h+ h)} \big)\nabla_{ \bx}  \cH &=0,\\
		\partial_t  w+\big(\bar \bu +  \bu - \kappa\tfrac{\nabla_{ \bx}   h}{ \bar h+ h}\big)\cdot\nabla_{ \bx}   w-\frac1{\mu} \frac{\de_\varrho  P-\varrho h}{\varrho(\bar h+ h)}&=0,
	\end{aligned}
\end{equation}
where $\cH(\cdot, \varrho)=\int_{\varrho}^{\rho_1}  h(\cdot, \varrho')\dd\varrho'$ and $P$ is defined by Corollary~\ref{C.Poisson}.   Systems~\eqref{eq:nonhydro-iso-recall}  and~\eqref{eq:nonhydro-iso-redef} are equivalent (for sufficiently regular data) by the computations of Section~\ref{S.Nonhydro:Poisson}, and in particular regular solutions to~\eqref{eq:nonhydro-iso-redef} satisfy 
the boundary condition
$  w\big|_{\varrho=\rho_1}=0$ and 
the incompressibility constraint
\begin{equation}\label{eq:incompressibility-redef}
	(\bar h +  h) \nabla_{\bx} \cdot  \bu + \nabla_{\bx} \cH \cdot ({\bar \bu}'+\partial_\varrho\bu) -\partial_\varrho  w=0
\end{equation}
provided these identities hold initially.

\begin{proposition}\label{P.NONHydro-small-time} 
	Let $s_0>d/2$, $s,k\in\NN$ such that $s\geq s_0+\frac {5} 2$ and $2\leq k\leq s$. Let $h_\star,\bar M,M,\mu,\kappa>0$ and $C_0>1$. There exists $T>0$ such that for any  $(\bar h,\bar\bu)\in W^{k,\infty}((\rho_0,\rho_1))^{1+d}$, and for any initial data $(h_0,\bu_0,w_0)\in H^{s+1,k}(\Omega)\times H^{s,k}(\Omega)^d\times H^{s,k}(\Omega) $ satisfying
	\begin{itemize}
		\item the following bounds (where $\cH_0(\cdot, \varrho):=\int_{\varrho}^{\rho_1}  h_0(\cdot, \varrho')\dd\varrho'$) 
		\[ \norm{\bar h}_{W^{k,\infty}_\varrho}+\norm{\bar \bu'}_{W^{k-1,\infty}_\varrho}\leq \bar M\] 
		\[ M_0:=\Norm{h_0}_{H^{s,k}}+ \Norm{\nabla_\bx \cH_0}_{H^{s,k}}+\Norm{\bu_0}_{H^{s,k}} +\Norm{w_0}_{H^{s,k}}\leq M, \]
		\item the stable stratification assumption
		\[ \inf_{(\bx,\varrho)\in\Omega} \bar h(\varrho)+h_0(\bx,\varrho) \geq h_\star, \]
		\item the boundary condition $w_0|_{\varrho=\rho_1}=0$ and the incompressibility condition
		\[-(\bar h+h_0)\nabla_\bx\cdot \bu_0-(\nabla_\bx \cH_0)\cdot({\bar \bu}'+\partial_\varrho\bu_0)+\partial_\varrho  w_0=0,\] 
	\end{itemize}
	there exists a unique $(h,\bu,w)\in \cC^0(0,T;H^{s,k}(\Omega)^{2+d})$ and $P\in L^2(0,T;H^{s+1,k+1}(\Omega))$ solution to~\eqref{eq:nonhydro-iso-redef}
	satisfying the initial data $(h,\bu,w)\big\vert_{t=0}=(h_0,\bu_0,w_0)$. Moreover, the conditions $w\vert_{\varrho=\rho_1}=P\vert_{\varrho=\rho_0}=0$ and the incompressibility condition \eqref{eq:incompressibility-redef} hold on $[0,T]$ (and hence the solution satisfies \eqref{eq:nonhydro-iso-recall}) and one has $\cH\in L^\infty(0,T;H^{s+1,k}(\Omega))$ and $(h,\nabla_\bx\cH)\in   L^2(0,T;H^{s+1,k}(\Omega))$ and
	\begin{multline*}\cF_{s,k,T}:= \Norm{h}_{L^\infty(0,T;H^{s,k})}+\Norm{\nabla_\bx \cH}_{L^\infty(0,T;H^{s,k})}+\Norm{\bu}_{L^\infty(0,T;H^{s,k})}+\Norm{w}_{L^\infty(0,T;H^{s,k})}\\ + c_0 \kappa^{1/2} \Norm{ h}_{L^2(0,T;H^{s,k})} + c_0\kappa^{1/2} \Norm{\nabla_\bx^2\cH}_{L^2(0,T;H^{s,k})} \leq C_0M_0
	\end{multline*}
	with $c_0$ a universal constant.
\end{proposition}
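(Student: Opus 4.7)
The proof follows the pattern set by Proposition~\ref{P.WP-nu} and Proposition~\ref{P.regularized-large-time-WP} for the hydrostatic system, combined with the elliptic pressure reconstruction of Corollary~\ref{C.Poisson}. The plan is to set up a Banach fixed-point argument for a regularized version of~\eqref{eq:nonhydro-iso-redef}, and then pass to the limit using uniform energy estimates.

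First, I introduce a regularization of~\eqref{eq:nonhydro-iso-redef} by adding $\nu\Delta_\bx\bu$ and $\nu\Delta_\bx w$ on the right-hand sides of the $\bu$- and $w$-equations, and I look for a fixed point of the associated Duhamel formulation in a ball of $\cC^0([0,T];H^{s,k}(\Omega)^{2+d})$, in the spirit of the proof of Proposition~\ref{P.WP-nu}. At each iteration the pressure $P$ is reconstructed from $(h,\bu,w)$ via the elliptic BVP~\eqref{eq.RHS}--\eqref{eq.Poisson}; the tame estimate~\eqref{ineq:est-P-hydro} of Corollary~\ref{C.Poisson} controls $\nabla^\mu_{\bx,\varrho}P$ in $H^{s,k}$, so that both $\frac{1}{\varrho}\nabla_\bx P$ and $\frac{1}{\mu\varrho(\bar h+h)}(\partial_\varrho P-\varrho h)$ become bounded source terms once $T,\nu,\kappa,\mu$ are fixed. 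Together with the smoothing of the heat semigroups $e^{\kappa t\Delta_\bx}$, $e^{\nu t\Delta_\bx}$ and the product, commutator and composition estimates of the Appendix, this yields existence and uniqueness of a regularized solution $(h^\nu,\bu^\nu,w^\nu)$ on some interval $[0,T_\nu]$.

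The second stage consists in deriving energy estimates uniform in $\nu\in(0,1]$, in order to bootstrap the existence time up to $T=T(s,k,\bar M,M,h_\star,\mu,\kappa,C_0)$. This is carried out along the lines of Section~\ref{S.Hydro:quasi}: I quasi-linearize by applying $\partial_\bx^\balpha\partial_\varrho^j$ ($|\balpha|+j\leq s$, $j\leq k$) to the equations, use again Corollary~\ref{C.Poisson} to close the contributions stemming from the pressure, and exploit the parabolic absorption from $\kappa\Delta_\bx h$ (this is what justifies the $\kappa^{1/2}$-factor in front of $\Norm{h}_{L^2H^{s,k}}$ and the $\kappa$-factor in front of $\Norm{\nabla_\bx^2\cH}_{L^2H^{s,k}}$ in the functional $\cF_{s,k,T}$). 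The $w$-equation is of transport-diffusion type with zero-order source and is handled by a direct adaptation of Lemma~\ref{lem:estimate-transport-diffusion}. Passing to the limit $\nu\searrow 0$ by Aubin--Lions compactness, as at the end of the proof of Theorem~\ref{thm-well-posedness}, yields the announced solution, and uniqueness follows from energy estimates on the difference of two solutions.

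Propagation of the incompressibility constraint~\eqref{eq:incompressibility-redef} and of $w|_{\varrho=\rho_1}=0$ is the main delicate point. By construction in Section~\ref{S.Nonhydro:Poisson}, the BVP solved by $P$ is obtained precisely by differentiating the incompressibility constraint in time and substituting the (non-regularized) evolution equations, so the constraint propagates from its initial validity. To preserve this structure through the regularization, the cleanest path is to avoid viscosity on $\bu,w$ altogether and iterate using only the $\kappa\Delta_\bx h$ parabolic gain, solving the $\bu,w$ equations as pure transport equations with given smooth coefficients; alternatively, one can keep the $\nu$-viscosity and absorb the corresponding corrections into the elliptic BVP, the extra $O(\nu)$ defect in the constraint vanishing in the limit. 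The main technical hurdle throughout is the careful bookkeeping of the loss of derivatives across the pressure, which is precisely what Corollary~\ref{C.Poisson} is designed to absorb.
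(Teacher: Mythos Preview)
Your proposal is correct and follows essentially the same route as the paper: treat the pressure as a zero-order source via Corollary~\ref{C.Poisson} (estimate~\eqref{ineq:est-P-hydro}), use heat-semigroup smoothing for $h$, energy/transport estimates for $\bu,w$, construct solutions by parabolic regularization, and pass to the limit. One point the paper makes explicit that you leave implicit is the identity $\partial_t\cH+(\bar\bu+\bu)\cdot\nabla_\bx\cH-w=\kappa\Delta_\bx\cH$ (obtained by integrating the $h$-equation and using~\eqref{eq:incompressibility-redef}), which directly gives the heat structure for $\cH$ and the control of $\Norm{\nabla_\bx\cH}_{L^\infty H^{s,k}}+\kappa^{1/2}\Norm{\nabla_\bx^2\cH}_{L^2 H^{s,k}}$; your discussion of constraint propagation is more careful than the paper's, which simply defers to the derivation of the pressure BVP.
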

\begin{proof}
	Since a very similar proof has been detailed in the hydrostatic framework in Section~\ref{S.Hydro}, we will only briefly sketch the main arguments.
	As aforementioned, thanks to Corollary~\ref{C.Poisson}, we may consider the contributions of the pressure as zero-order source terms in the energy space displayed in the statement, and~\eqref{eq:nonhydro-iso-redef} is then interpreted as a standard set of evolution equations. 
	We now explain how to infer the necessary bounds on all contributions to $\cF_{s,k,T}$, assuming enough regularity.  
	
	The desired control of $ \Norm{h}_{L^\infty(0,T;H^{s,k})}+ c_0 \kappa^{1/2} \Norm{\nabla_\bx h}_{L^2(0,T;H^{s,k})} $ is a direct consequence of the first equation of~\eqref{eq:nonhydro-iso-redef}, and the regularization properties of the heat semigroup already summoned in Proposition~\ref{P.WP-nu}. The corresponding control of  $ \Norm{\nabla_x \cH}_{L^\infty(0,T;H^{s,k})}+ c_0 \kappa^{1/2} \Norm{\nabla_\bx^2 \cH}_{L^2(0,T;H^{s,k})} $ demands an additional structure. We recall  (see~\eqref{eq:h} or~\eqref{cancellation}) that
	by the identity~\eqref{eq:incompressibility-redef} and integrating the first equation of~\eqref{eq:nonhydro-iso-redef}, one has 
	\begin{equation} \label{eq.id-eta}
		\partial_t \cH +(\bar \bu+\bu)\cdot\nabla_\bx\cH -w=\kappa\Delta_\bx\cH.
	\end{equation}
	By the regularization properties of the heat semigroup, we infer (with $c_0$ a universal constant)
	\[\Norm{\nabla_\bx\cH }_{L^\infty(0,T;H^{s,k})}+c_0\kappa^{1/2} \Norm{\nabla_\bx^2 \cH}_{L^2(0,T;H^{s,k})} \leq  \Norm{\nabla_\bx\cH_0}_{H^{s,k}} +\frac1{c_0\kappa^{1/2}} \Norm{(\bar \bu+\bu)\cdot\nabla_\bx\cH -w}_{L^2(0,T;H^{s,k})},\]
	and the right-hand side is estimated by product estimates (Lemma~\ref{L.product-Hsk}).
	Finally, the desired {\em a priori} estimates on $\Norm{\bu}_{L^\infty(0,T;H^{s,k})}$ and $\Norm{w}_{L^\infty(0,T;H^{s,k})}$ for sufficiently regular solutions follow by the energy method (that is integrating by parts in the variable $\bx$) on the second and third equations of~\eqref{eq:nonhydro-iso-redef}, which can be seen as transport equations with source terms.  More precisely, by \eqref{ineq:est-P-hydro} in Corollary~\ref{C.Poisson}, we have the existence and uniqueness of $P\in L^2(0,T;H^{s+1,k+1}(\Omega))$, satisfying the bound
	\[ \Norm{P}_{L^2(0,T;H^{s+1,k+1})} \leq C(h_\star,\mu,\kappa,\bar M,\cF_{s,k,T})\cF_{s,k,T}.\]
	Moreover, the advection velocity is controlled (using Lemma~\ref{L.embedding}, $s-2\geq s_0+\frac12$, $k\geq 1$) by
	\[ \Norm{\nabla\cdot\big(\bar \bu +  \bu - \kappa\tfrac{\nabla_{ \bx}   h}{ \bar h+ h}\big)}_{L^\infty(0,T;L^\infty(\Omega))}  \leq C(h_\star,\kappa,\cF_{s,k,T}),\]
	and using commutator (Lemma~\ref{L.commutator-Hsk}) and composition (Lemma~\ref{L.composition-Hsk-ex}) estimates, one has for any $f\in H^{s,k}(\Omega)$, and any $\balpha \in\NN^d$, $j\in\NN$ with $0\leq j\leq k$ and $|\balpha|+j\leq s$, 
	\[ \Norm{[\partial_\bx^\balpha \de_\varrho^j, \bar \bu +  \bu - \kappa\tfrac{\nabla_{ \bx}   h}{ \bar h+ h}] \nabla_\bx f}_{L^2(0,T;L^2(\Omega))}  \leq C(h_\star,\kappa,\bar M,\cF_{s,k,T}) \Norm{f}_{H^{s,k}}.\]
	It follows
	\[ \Norm{\bu}_{H^{s,k}}+\Norm{w}_{H^{s,k}}\leq \Big( \Norm{\bu_0}_{H^{s,k}}+\Norm{w_0}_{H^{s,k}}+C\sqrt T \Big)\exp(C T),\]
	with $C= C(h_\star,\mu,\kappa,\bar M,\cF_{s,k,T})$.
	
	Altogether, and using standard continuity arguments, we find that for any $C_0>1$ we can restrict the time $T=T(h_\star,\mu,\kappa,\bar M,C_0 M_0)>0$ so that all sufficiently regular solutions to~\eqref{eq:nonhydro-iso-redef} satisfy the bound $\cF_{s,k,T}\leq C_0 M_0$.
	We may infer the existence of solutions using for instance the parabolic regularization approach (see the closing paragraph of Section~\ref{S.Hydro}), and uniqueness is straightforward. This concludes the proof.
\end{proof}
\begin{remark}
	Proposition~\ref{P.NONHydro-small-time} does not provide any lower bound on the time of existence (and control) of solutions with respect to either $\mu\ll 1$ or $\kappa\ll 1$, hence the ``small-time'' terminology.
\end{remark}

\subsection{Quasi-linearization of the non-hydrostatic system}\label{S.Nonhydro:quasi}

In this section we extract the leading-order terms of the equations satisfied by the spatial derivatives of the solutions to system~\eqref{eq:nonhydro-iso-redef}. This will allow us to obtain improved energy estimates in the subsequent section. Notice that starting from here, our study is restricted to the situation $k=s$.
\begin{lemma}\label{lem.quasilin-nonhydro}
	Let  $s, k \in \NN$ such that $k=s> \frac 52+\frac d 2$ and $\bar M,M,h_\star>0$. Then there exists $C>0$ such that 
	for any $\mu,\kappa\in(0,1]$, and
	for any $(\bar h, \bar \bu) \in W^{k,\infty}((\rho_0,\rho_1)) \times W^{k+1,\infty}((\rho_0,\rho_1))$ satisfying
	\[  \norm{\bar h}_{W^{k,\infty}_\varrho } + \norm{\bar \bu'}_{W^{k,\infty}_\varrho }\leq \bar M \,;\]
	and any $(h, \bu,w)  \in L^\infty(0,T;H^{s,k}(\Omega)^{d+2})$ solution to~\eqref{eq:nonhydro-iso-redef} (with $P$ defined by Corollary~\ref{C.Poisson})
	with any  $T>0$ and satisfying  for almost every $t \in [0,T]$, 
	\[
	\Norm{h(t, \cdot)}_{H^{s-1, k-1}} + \Norm{\cH(t, \cdot)}_{H^{s,k}}+\norm{\cH\big\vert_{\varrho=\rho_0}(t, \cdot)}_{H^s_\bx}+\Norm{\bu(t, \cdot)}_{H^{s,k}} +\sqrt\mu\Norm{w(t, \cdot)}_{H^{s,k}}+\kappa^{1/2}\Norm{h(t, \cdot)}_{H^{s,k}}
	\le M\]
	(where $\cH(t,\bx,\varrho):=\int_{\varrho}^{\rho_1} h(t,\bx,\varrho')\dd\varrho'$) and 
	\[ \inf_{(t,\bx,\varrho)\in  (0,T)\times\Omega } \bar h(\varrho)+h(t, \bx,\varrho) \geq h_\star,\]
	the following results hold.
	
	Denote, for any multi-index $\balpha \in \NN^d$ and any $j\in\NN$ such that $|\balpha|+j\leq s$, $ h^{(\balpha,j)}=\de_\varrho^j\de_\bx^\balpha  h,\, \cH^{(\balpha,j)}=\de_\varrho^j\de_\bx^\balpha  \cH, \, \bu^{(\balpha,j)}=\de_\varrho^j\de_\bx^\balpha \bu,\, w^{(\balpha,j)}=\de_\varrho^j\de_\bx^\balpha w$, and $\Pnh^{(\balpha)}=\de_\varrho^j\de_\bx^\balpha  \Pnh$, with
	\[ \Pnh :=P-\Ph, \qquad \Ph:=\int_{\rho_0}^{\varrho}  \varrho'  h(\cdot,\varrho') \, \dd\varrho'.\]
	We have  
	\begin{subequations}
		\begin{equation}\label{eq.quasilin-j-nonhydro}
			\begin{aligned}
				\partial_t  \cH^{(\balpha,j)}+ (\bar \bu + \bu) \cdot\nabla_\bx \cH^{(\balpha,j)} -w^{(\balpha,j)}-\kappa\Delta_\bx   \cH^{(\balpha,j)} &=\widetilde R_{\balpha,j},\\
				\partial_t  \cH^{(\balpha,j)}+ (\bar \bu + \bu) \cdot\nabla_\bx \cH^{(\balpha,j)} +\Big\langle \int_\varrho^{\rho_1} (\bar \bu' + \de_{\varrho}\bu) \cdot\nabla_\bx \cH^{(\balpha,j)} \, \dd\varrho'\phantom{-\kappa\Delta_\bx   \cH^{(\balpha,j)}}\qquad  &\\
				+\int_{\varrho}^{\rho_1} (\bar h+h)\nabla_\bx \cdot\bu^{(\balpha,j)} \dd\varrho'\Big\rangle_{j=0}-\kappa\Delta_\bx   \cH^{(\balpha,j)}&=R_{\balpha,j},\\
				\partial_t\bu^{(\balpha,j)}+\big(({\bar \bu}+\bu-\kappa\tfrac{\nabla_\bx h}{\bar h+ h})\cdot\nabla_\bx\big) \bu^{(\balpha,j)}
				+ \Big\langle  \frac{\rho_0}{ \varrho }\nabla_\bx \cH^{(\balpha,j)}\big\vert_{\varrho=\rho_0} +\frac 1 \varrho\int_{\rho_0}^\varrho  \nabla_\bx  \cH^{(\balpha,j)} \dd\varrho'\Big\rangle_{j=0} &\\
				+\frac 1 \varrho{\nabla_\bx \Pnh^{(\balpha,j)}}  + \frac{\nabla_\bx \cH}{\varrho(\bar h + h)} \de_\varrho \Pnh^{(\balpha,j)} &=\bR^\nh_{\balpha,j},\\
				\sqrt\mu  \left( \partial_t w^{(\balpha,j)} + (\bar \bu + \bu - \kappa \tfrac{\nabla_\bx h}{\bar h + h} ) \cdot \nabla_\bx w^{(\balpha,j)}\right) - \frac1{\sqrt\mu}\frac{\de_\varrho \Pnh^{(\balpha,j)}}{ \varrho( \bar h + h)}  &= R^\nh_{\balpha,  j}, \\
				- \de_\varrho w^{(\balpha)} + (\bar h + h) \nabla_\bx \cdot \bu^{(\balpha,j)} + (\nabla_\bx \cH )\cdot ( \de_\varrho  \bu^{(\balpha,j)})&\\
				+   (\nabla_\bx \cdot \bu)h^{(\balpha,j)}  + (\bar \bu'+\de_\varrho \bu) \cdot \nabla_\bx  \cH^{(\balpha,j)} &=R^{\rm div}_{\balpha,j},
			\end{aligned}
		\end{equation}
		where for almost every $t\in[0,T]$, one has $(R_{\balpha,j}(t,\cdot),\bR^\nh_{\balpha,j}(t,\cdot), R^\nh_{\balpha, j}(t,\cdot), R^{\rm div}_{\balpha, j}(t,\cdot) )\in L^2(\Omega)^{d+3}$ and  $R_{\balpha,0}(t,\cdot)\in\cC((\rho_0,\rho_1);L^2(\RR^d))$, and
		\begin{equation}
			\begin{aligned}
				&\Norm{R_{\balpha,j}}_{L^2(\Omega) } +  \norm{R_{\balpha, 0}|_{\varrho=\rho_0}}_{L^2_\bx} +\Norm{\widetilde R_{\balpha,j}}_{L^2(\Omega) } + \Norm{R^{\rm div}_{\balpha, j}}_{L^2(\Omega)}    \leq  C\, M \,, \\
				&\Norm{ \bR^\nh_{\balpha,j}}_{L^2(\Omega)}+\Norm{ R^\nh_{\balpha, j}}_{L^2(\Omega)}
				\leq  C\, M \, \big(1  +\kappa\Norm{ \nabla_\bx  h}_{H^{s,k}}  \big) \\
				&\phantom{\Norm{ \bR^\nh_{\balpha,j}}_{L^2(\Omega)}+\Norm{ R^\nh_{\balpha, j}}_{L^2(\Omega)}
					\leq  \qquad}  + C\,  \big( \Norm{ h}_{H^{s,k}}+\sqrt\mu\Norm{\nabla_\bx \cH}_{H^{s,k}}  \big) \big(M+ \Norm{\bu_\star}_{H^{s,k}}+\sqrt\mu \Norm{w_\star}_{H^{s,k-1} }  \big) \,,
			\end{aligned}
			\label{eq.est-quasilin-j-nonhydro}
		\end{equation}
	\end{subequations}
	and
	\begin{subequations}
		\begin{equation}\label{eq.quasilin-j-h-nonhydro}
			\begin{aligned}
				\partial_t   h^{(\balpha,j)}+(\bar\bu+\bu)\cdot \nabla_\bx h^{(\balpha,j)}
				&=\kappa\Delta_\bx h^{(\balpha,j)}+r_{\balpha,j}+\nabla_{\bx} \cdot \br_{\balpha,j},
			\end{aligned}
		\end{equation}
		where  for almost every $t\in[0,T]$, one has $(r_{\balpha,j}(t,\cdot),\br_{\balpha,j}(t,\cdot))\in L^2(\Omega)^{1+d}$ and
		\begin{align}\label{eq.est-quasilin-j-h-nonhydro}
			\kappa^{1/2} \Norm{r_{\balpha,j}}_{L^2(\Omega) }  + \Norm{\br_{\balpha,j}}_{L^2(\Omega) } & \leq C\,M.
		\end{align}
	\end{subequations}
\end{lemma}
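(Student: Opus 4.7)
The plan is to apply $\partial_\bx^\balpha \de_\varrho^j$ to each of the evolution equations in~\eqref{eq:nonhydro-iso-redef} and to the incompressibility constraint~\eqref{eq:incompressibility-redef}, keeping on the left-hand side only the contributions that capture the (symmetric) principal structure of the linearized system---material derivative, horizontal diffusion on $h$ and $\cH$, and leading pressure gradients in the momentum and vertical velocity equations---and routing all other contributions into the remainders, to be controlled through product, commutator and composition estimates in $H^{s,k}(\Omega)$ (see Appendix~\ref{S.Appendix}) together with the elliptic bounds of Corollary~\ref{C.Poisson} on $\Pnh$. The pressure decomposition $P=\Ph+\Pnh$ with $\Ph=\int_{\rho_0}^\varrho \varrho' h\, \dd\varrho'$ is central: the identity~\eqref{eq.id-Phydro} expresses $\nabla_\bx \Ph$ in terms of $\cH$ and its trace at $\varrho=\rho_0$, while $\de_\varrho P-\varrho h=\de_\varrho \Pnh$ isolates the non-hydrostatic contribution driving the vertical velocity.

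Most of the equations can then be derived essentially as in the hydrostatic case. The first equation for $\cH^{(\balpha,j)}$ comes from differentiating the integrated identity~\eqref{eq.id-eta}, and the second one---needed only for $j=0$---is obtained by integrating the first equation of~\eqref{eq:nonhydro-iso-redef} in $\varrho$, exactly as in the proof of Lemma~\ref{lem:quasilinearization}. The identity~\eqref{eq.quasilin-j-h-nonhydro} and the estimate~\eqref{eq.est-quasilin-j-h-nonhydro} for $h^{(\balpha,j)}$ follow verbatim from the treatment of~\eqref{eq.quasilin-j-h}, since the first equation of~\eqref{eq:nonhydro-iso-redef} coincides with that of~\eqref{eq:hydro-iso-nu}. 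The divergence identity is obtained by applying $\partial_\bx^\balpha \de_\varrho^j$ to~\eqref{eq:incompressibility-redef}, keeping the leading terms with coefficients $\bar h+h$, $\nabla_\bx \cH$, $\bar\bu'+\de_\varrho\bu$ and $\nabla_\bx\cdot\bu$ on the left and dispatching the commutators into $R^{\rm div}_{\balpha,j}$, to be bounded by Lemma~\ref{L.commutator-Hsk}.

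For the momentum equation, the pressure term will be decomposed as
\[ \tfrac1\varrho\nabla_\bx P+\big(1+\tfrac{\de_\varrho\Pnh}{\varrho(\bar h+h)}\big)\nabla_\bx \cH \ =\ \big(\tfrac1\varrho\nabla_\bx \Ph+\nabla_\bx\cH\big) + \tfrac1\varrho\nabla_\bx\Pnh + \tfrac{\nabla_\bx\cH}{\varrho(\bar h+h)}\de_\varrho\Pnh, \]
and by~\eqref{eq.id-Phydro} the first parenthesis equals $\tfrac{\rho_0}{\varrho} \nabla_\bx\cH\big|_{\varrho=\rho_0}+\tfrac1\varrho\int_{\rho_0}^\varrho \nabla_\bx\cH\, \dd\varrho'$, which retains its exact form when $j=0$; for $j\geq 1$ the $\de_\varrho$-chain hits either $1/\varrho$ or the integrand and produces only lower-order contributions, which will go into $\bR^\nh_{\balpha,j}$. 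The two non-hydrostatic pieces are retained as $\tfrac1\varrho\nabla_\bx\Pnh^{(\balpha,j)}+\tfrac{\nabla_\bx\cH}{\varrho(\bar h+h)}\de_\varrho\Pnh^{(\balpha,j)}$, the associated commutators being placed in $\bR^\nh_{\balpha,j}$. The $w$ equation is treated analogously after multiplication by $\sqrt\mu$: the identity $\de_\varrho P-\varrho h=\de_\varrho\Pnh$ yields the leading term $\tfrac1{\sqrt\mu}\tfrac{\de_\varrho\Pnh^{(\balpha,j)}}{\varrho(\bar h+h)}$ and the commutators go to $R^\nh_{\balpha,j}$.

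The elementary remainders $\widetilde R_{\balpha,j}$, $R_{\balpha,j}$, $R^{\rm div}_{\balpha,j}$, $r_{\balpha,j}$ and $\br_{\balpha,j}$ will be estimated exactly as in the proof of Lemma~\ref{lem:quasilinearization}, with minor adjustments to accommodate the new unknown $w$ and the identification $k=s$; the hypothesis $\kappa^{1/2}\Norm{h}_{H^{s,k}}\leq M$ suffices to absorb any $\kappa$-dependent contribution coming from $\bu_\star=-\kappa\tfrac{\nabla_\bx h}{\bar h+h}$. The main obstacle is the tame bound on $\bR^\nh_{\balpha,j}$ and $R^\nh_{\balpha,j}$, which has to be uniform in $\mu\in(0,1]$. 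These remainders contain commutators of $\partial_\bx^\balpha \de_\varrho^j$ with coefficients built from $1/\varrho$, $\nabla_\bx\cH$ and $(\bar h+h)^{-1}$ acting on $\nabla^\mu_{\bx,\varrho}\Pnh$, which will be estimated via Lemma~\ref{L.commutator-Hsk}, Lemma~\ref{L.product-Hsk} and Lemma~\ref{L.composition-Hsk-ex}; the norm $\Norm{\nabla^\mu_{\bx,\varrho}\Pnh}_{H^{s,k}}$ itself is controlled through the tame estimate~\eqref{ineq:est-P-hydro}. Its high-low structure produces precisely the factor $(\Norm{h}_{H^{s,k}}+\sqrt\mu\Norm{\nabla_\bx\cH}_{H^{s,k}})\,(M+\Norm{\bu_\star}_{H^{s,k}}+\sqrt\mu\Norm{w_\star}_{H^{s,k-1}})$ appearing in~\eqref{eq.est-quasilin-j-nonhydro}, while the complementary term $M\,(1+\kappa\Norm{\nabla_\bx h}_{H^{s,k}})$ accounts for the commutators of $\partial_\bx^\balpha \de_\varrho^j$ with the full advection velocity $\bar\bu+\bu+\bu_\star$, using that $\Norm{\bu_\star}_{H^{s,k}}\lesssim \kappa\Norm{\nabla_\bx h}_{H^{s,k}}$ follows from the product and composition estimates. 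Assembling these ingredients completes the derivation of~\eqref{eq.est-quasilin-j-nonhydro}.
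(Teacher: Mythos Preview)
Your overall architecture is correct and matches the paper's proof closely: the reductions to Lemma~\ref{lem:quasilinearization} for $R_{\balpha,j}$, $r_{\balpha,j}$, $\br_{\balpha,j}$, the use of~\eqref{eq.id-eta} for $\widetilde R_{\balpha,j}$, the differentiation of~\eqref{eq:incompressibility-redef} for $R^{\rm div}_{\balpha,j}$, and the pressure decomposition via~\eqref{eq.id-Phydro} are all exactly what the paper does.

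There is, however, a genuine gap in your treatment of the non-hydrostatic pressure commutators. You propose to control $\Norm{\nabla^\mu_{\bx,\varrho}\Pnh}_{H^{s,k}}$ via~\eqref{ineq:est-P-hydro}, but that estimate bounds the \emph{full} pressure $P$, not $\Pnh$, and carries no $\sqrt\mu$ prefactor. The paper instead uses~\eqref{ineq:est-Psmall-nonhydro}, which gives $\Norm{\nabla^\mu_{\bx,\varrho}\Pnh}_{H^{s-1,k-1}}\leq C\sqrt\mu\,(M+\Norm{\bu_\star}_{H^{s,k}}+\sqrt\mu\Norm{w_\star}_{H^{s,k-1}})$. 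Note also that Lemma~\ref{L.commutator-Hsk} only requires $\Norm{\de_\varrho\Pnh}_{H^{s-1,k-1}}$, not $H^{s,k}$. The $\sqrt\mu$ factor is not cosmetic: in $R^\nh_{\balpha,j}$ the commutator $[\de_\varrho^j\de_\bx^\balpha,\tfrac{1}{\varrho(\bar h+h)}]\de_\varrho\Pnh$ comes with a $\tfrac{1}{\sqrt\mu}$ prefactor, and without the $\sqrt\mu$ smallness of $\Pnh$ your bound would blow up as $\mu\searrow 0$. Likewise, in $\bR^\nh_{\balpha,j}$ the coefficient $\tfrac{\nabla_\bx\cH}{\varrho(\bar h+h)}$ produces a factor $\Norm{\nabla_\bx\cH}_{H^{s,k}}$, which is \emph{not} controlled by $M$; only after multiplication by the $\sqrt\mu$ from~\eqref{ineq:est-Psmall-nonhydro} does it become the admissible $\sqrt\mu\Norm{\nabla_\bx\cH}_{H^{s,k}}$ appearing in~\eqref{eq.est-quasilin-j-nonhydro}. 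Replace the reference to~\eqref{ineq:est-P-hydro} by~\eqref{ineq:est-Psmall-nonhydro} at the $H^{s-1,k-1}$ level and the argument goes through.
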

\begin{proof}
	Let us first point out that the estimates for $\norm{R_{\balpha, 0}|_{\varrho=\rho_0}}_{L^2_\bx}$, $ \Norm{R_{\balpha, j}}_{L^2(\Omega)}$, $\Norm{r_{\balpha,j}}_{L^2(\Omega) }$ and $ \Norm{\br_{\balpha,j}}_{L^2(\Omega) }$ have been stated and proved in  Lemma~\ref{lem:quasilinearization}. Thus we only need to focus on the other terms. 
	In the following, we denote $s_0=s-\frac52>\frac d2$.
	
	Using the identity  (already pointed out in~\eqref{eq.id-eta})
	\[
	\partial_t \cH +(\bar \bu+\bu)\cdot\nabla_\bx\cH -w=\kappa\Delta_\bx\cH,
	\]
	and  the commutator estimate in Lemma~\ref{L.commutator-Hsk}, we find immediately
	\[ \widetilde R_{\balpha,j} = [\de_\varrho^j \de_\bx^\balpha ,\bar \bu+\bu]\cdot\nabla_\bx\cH, \qquad \Norm{\widetilde R_{\balpha,j}}_{L^2(\Omega)} \lesssim \big(\norm{\bar\bu'}_{W^{k-1,\infty}_\varrho}+\Norm{\bu}_{H^{s,k}} \big) \Norm{\cH}_{H^{s,k}}.\]
	Using the decomposition $P=P_\h+P_\nh$ as in Corollary~\ref{C.Poisson} and the identity~\eqref{eq.id-Phydro} we have
	\begin{align*}
		\nabla_\bx \Ph:= \int_{\rho_0}^\varrho \varrho' \nabla_\bx  h (\cdot, \varrho') \, \dd \varrho' = - \varrho \nabla_\bx  \cH + \rho_0 \nabla_\bx  \cH |_{\varrho=\rho_0} + \int_{\rho_0}^\varrho \nabla_\bx  \cH(\cdot, \varrho') \, \dd \varrho',
	\end{align*}
	and hence the evolution equation for $ \bu$ reads 
	\begin{multline*}
		\de_t  \bu+ \big((\bar \bu + \bu - \kappa \tfrac{\nabla_\bx h}{\bar h+h} ) \cdot \nabla_\bx \big)\bu+ \frac{\rho_0 }{\varrho}\nabla_\bx  \cH |_{\varrho=\rho_0}+ \frac1{\varrho}\int_{\rho_0}^\varrho \nabla_\bx  \cH(\cdot, \varrho') \, \dd \varrho' \\
		+\frac1{\varrho}\nabla_\bx {{\Pnh}} +  \frac{\nabla_\bx \cH}{\varrho(\bar h +h)} \de_\varrho {{\Pnh}}= 0.
	\end{multline*}
	Differentiating $\balpha$ times with respect to $\bx$ and $j$ times with respect to $\varrho$ yields the corresponding equations in~\eqref{eq.quasilin-j-nonhydro}, with remainder terms
	\[
	\bR_{\balpha, j}^\nh:=\bR_{\balpha, j} - \big[ \de_\varrho^j \de_\bx^\balpha, \tfrac{\nabla_\bx \cH}{\varrho(\bar h + h)} \big] \de_\varrho  \Pnh,
	\]
	using the notation $ \bR_{\balpha, j}$ for the hydrostatic contributions introduced in Lemma~\ref{lem:quasilinearization}. The first
	addends have been estimated in Lemma~\ref{lem:quasilinearization},~\eqref{eq.est-quasilin} (when $j=0$) and~\eqref{eq.est-quasilin-j} (when $j\ge1$). We now estimate the second addend as follows.
	By the commutator estimate in Lemma~\ref{L.commutator-Hsk} with $k=s\geq s_0+3/2$, we have
	\[\Norm{ \big[ \de_\varrho^j \de_\bx^\balpha, \tfrac{\nabla_\bx \cH}{\varrho(\bar h + h)} \big] \de_\varrho  \Pnh}_{L^2(\Omega)}  \lesssim \Norm{\tfrac{\nabla_\bx \cH}{\varrho(\bar h + h)}}_{H^{s,k}} \Norm{\de_\varrho  \Pnh}_{H^{s-1, k-1}}.\]
	Then by tame product estimate Lemma~\ref{L.product-Hsk} and composition estimates in Lemma~\ref{L.composition-Hsk-ex},
	we have
	\[ \Norm{\tfrac{\nabla_\bx \cH}{\varrho(\bar h + h)}}_{H^{s,k}} \leq C(h_\star,\norm{\bar h}_{W^{k,\infty}},\Norm{h}_{H^{s-1,k-1}}) (\Norm{h}_{H^{s,k}} \Norm{\nabla_\bx \cH}_{H^{s-1,k-1}} +\Norm{\nabla_\bx \cH}_{H^{s,k}}) \]
	and there remains to use estimate~\eqref{ineq:est-Psmall-nonhydro} in Corollary~\ref{C.Poisson} to infer 
	\begin{multline*}\Norm{ \bR_{\balpha, j}^\nh}_{L^2(\Omega)} \leq C(h_\star,\bar M,M) \, M \, \big(1 +\kappa\Norm{ \nabla_\bx h}_{H^{s,k}} \big)\\
		+ C(h_\star,\bar M,M)\, \sqrt\mu\,  (\Norm{h}_{H^{s,k}} +\Norm{\nabla_\bx \cH}_{H^{s,k}}) \big(M+ \Norm{\bu_\star}_{H^{s,k}}+\sqrt\mu \Norm{w_\star}_{H^{s,k-1} }  \big).
	\end{multline*}
	Now consider
	\[
	R^\nh_{\balpha, j} := - \sqrt\mu [\de_\varrho^j \de_\bx^\balpha, \bar \bu + \bu] \cdot \nabla_\bx w + \kappa \sqrt\mu  \left[\de_\varrho^j  \de_\bx^\balpha, \tfrac{\nabla_\bx h}{\bar h+h} \right] \cdot \nabla_\bx w +  \tfrac1{\sqrt\mu}  [\de_\varrho^j  \de_\bx^\balpha, \tfrac{1}{\varrho(\bar h + h)}] \de_\varrho P.
	\]
	We have, by  Lemma~\ref{L.commutator-Hsk}  with $k=s\geq s_0+3/2$,
	\begin{align*}
		\sqrt\mu\Norm{[\de_\varrho^j\de_\bx^\balpha, \bar \bu+\bu] \cdot \nabla_\bx w}_{L^2(\Omega)}  \lesssim \sqrt\mu \big(\norm{\bar\bu'}_{W^{k-1,\infty}_\varrho} +\Norm{ \bu}_{H^{s,k}}\big) \Norm{\nabla_\bx w}_{H^{s-1,k-1}},
	\end{align*}
	and similarly, using tame product estimate Lemma~\ref{L.product-Hsk} and composition estimates in Lemma~\ref{L.composition-Hsk-ex} as above,
	\[
	\kappa \sqrt\mu \Norm{  \big[ \de_\varrho^j\de_\bx^\balpha, \tfrac{\nabla_\bx h}{\bar h+h} \big] \cdot \nabla_\bx w}_{L^2(\Omega)}  \leq \kappa\sqrt\mu\,  C(h_\star,\bar M,M)  (\Norm{h}_{H^{s,k}}^2 +\Norm{\nabla_\bx h}_{H^{s,k}}) \Norm{w}_{H^{s,k-1}} .
	\]
	and
	\[
	\tfrac1{\sqrt\mu} \Norm{  [\de_\varrho^j\de_\bx^\balpha, \tfrac1{\varrho(\bar h + h)}] \de_\varrho \Pnh}_{L^2(\Omega)} 
	\le \, \tfrac1{\sqrt\mu}  C(h_\star,\bar M,M) \Norm{h}_{H^{s,k}} \Norm{\de_\varrho \Pnh}_{H^{s-1,k-1}}.
	\]
	Collecting the above and using estimate~\eqref{ineq:est-Psmall-nonhydro} in Corollary~\ref{C.Poisson} yields 
	\begin{align*}\Norm{ R_{\balpha, j}^\nh}_{L^2(\Omega)} &\leq C(h_\star,\bar M,M) \, M \, \big(1 +\kappa\Norm{ \nabla_\bx h}_{H^{s,k}} \big)\\
		&\quad + C(h_\star,\bar M,M)\,  \Norm{h}_{H^{s,k}} \big(M+ \Norm{\bu_\star}_{H^{s,k}}+\sqrt\mu \Norm{w_\star}_{H^{s,k-1} }  \big).
	\end{align*}
	Finally, we consider the remainder (stemming from differentiating the incompressibility condition~\eqref{eq:incompressibility-redef})
	\begin{align*}
		R^{\rm{div}}_{\balpha,j}&=  \big(\de_\varrho^j \de_\bx^\balpha (h  \nabla_\bx \cdot \bu)- ( h^{(\balpha,j)} )\nabla_\bx \cdot \bu - h \nabla_\bx \cdot  \bu^{(\balpha,j)}\big) \\
		& \qquad + \big(\de_\varrho^j \de_\bx^\balpha((\de_\varrho \bu) \cdot (\nabla_\bx \cH)) -  (\de_\varrho\bu^{(\balpha,j)} ) \cdot (\nabla_\bx \cH)-(\de_\varrho \bu) \cdot( \nabla_\bx  \cH^{(\balpha,j)} )\big)\\
		& \qquad +  \big(\de_\varrho^j \de_\bx^\balpha (\bar h  \nabla_\bx \cdot \bu)-  \bar h \nabla_\bx \cdot \bu^{(\balpha,j)}\big) + \big(\de_\varrho^j \de_\bx^\balpha(\bar \bu' \cdot \nabla_\bx \cH) -\bar \bu'\cdot \nabla_\bx \cH^{(\balpha,j)} \big).
	\end{align*}
	Using Lemma~\ref{L.commutator-Hsk-sym} for the first two terms and direct estimates for the last to terms, and $k=s\geq s_0+5/2$,
	\begin{align*}
		\Norm{R_{\balpha,j}^{\rm{div}}}&\lesssim \Norm{h}_{H^{s-1, k-1}} \Norm{\nabla_\bx \cdot \bu}_{H^{s-1, k-1}} + \Norm{\de_\varrho \bu}_{H^{s-1, k-1}} \Norm{\nabla_\bx \cH}_{H^{s-1, k-1}}\\
		&\qquad + \Norm{\bar h}_{W^{k, \infty}_\varrho} \Norm{\nabla_\bx \cdot \bu}_{H^{s-1,k-1}} + \Norm{\bar \bu'}_{W^{k, \infty}_\varrho} \Norm{\nabla_\bx \cH}_{H^{s-1, k-1}}\\
		&\lesssim \big(  \Norm{\bar h}_{W^{k, \infty}_\varrho}+ \Norm{\bar \bu'}_{W^{k, \infty}_\varrho}+ \Norm{h}_{H^{s-1, k-1}}+\Norm{ \bu}_{H^{s, k}}\big) \big( \Norm{ \cH}_{H^{s, k-1}}+\Norm{ \bu}_{H^{s, k-1}} \big).
	\end{align*}
	This concludes the proof.
\end{proof}

\subsection{A priori energy estimates} In this section we provide {\em a priori} energy estimates associated with the equations featured in Lemma~\ref{lem.quasilin-nonhydro}.  We point out that
such estimates concerning $ h^{(\balpha,j)}$ solving the transport-diffusion equation~\eqref{eq.quasilin-j-h-nonhydro} have been provided in Lemma~\ref{lem:estimate-transport-diffusion}. Corresponding estimates for $\nabla_\bx\cH$ stemming from the first equation of~\eqref{eq.quasilin-j-nonhydro} are easily obtained. Hence we consider the remaining equations in~\eqref{eq.quasilin-j-nonhydro}. Specifically, recalling the notation $\dot \cH= \cH^{(\balpha,j)}, \dot h=h^{(\balpha, j)}, \dot \bu=\bu^{(\balpha, j)}, \dot w=w^{(\balpha, j)}, \dot P_{\text{nh}}= P_{\text{nh}}^{(\balpha, j)}$,
we consider the following linearized system:
\begin{equation}\label{eq.nonhydro-linearized}
	\begin{aligned}
		\partial_t  \dot\cH+(\bar\bu+\bu)\cdot \nabla_\bx \dot \cH
		+ \int_\varrho^{\rho_1} \big( (\bar \bu' + \de_{\varrho}\bu) \cdot\nabla_\bx \dot \cH + (\bar h+h)\nabla_\bx \cdot \dot\bu\big) \dd\varrho' -\kappa\Delta_\bx   \dot\cH &=R,\\
		\varrho \big( \partial_t\dot\bu+\big(({\bar \bu}+\bu-\kappa\tfrac{\nabla_\bx h}{\bar h+ h})\cdot\nabla_\bx \big)\dot\bu \big)
		+\rho_0 \nabla_\bx \dot \cH |_{\varrho=\rho_0} + \int_{\rho_0}^\varrho \nabla_\bx \dot \cH\dd\varrho'+\nabla_\bx \dot{{\Pnh}}  + \frac{\nabla_\bx \cH}{\bar h + h} \de_\varrho \dot{\Pnh} 
		&=\bR^\nh ,\\
		\sqrt\mu \varrho \big(  \partial_t \dot w + \mu \big({\bar \bu}+\bu-\kappa\tfrac{\nabla_\bx h}{\bar h+ h}\big)\cdot\nabla_\bx \dot w\big) - \frac1{\sqrt\mu}\frac{\de_\varrho \dot{\Pnh} }{\bar h + h} &=  R^\nh,\\
		-\de_\varrho \dot w + (\bar h + h) \nabla_\bx \cdot \dot \bu + \nabla_\bx \cH \cdot \de_\varrho \dot \bu - (\de_\varrho \dot \cH)\nabla_\bx \cdot \bu + \nabla_\bx \dot \cH \cdot (\bar \bu'+\de_\varrho \bu) &= R^{\rm div} ,\end{aligned}
\end{equation}
where we denote as always $\cH(\cdot,\varrho)=\int_{\varrho}^{\rho_1}h(\cdot,\varrho)\dd\varrho$.

We shall use the following definitions of the spaces $Y^0$ and $Y^1$
\begin{equation}\label{eq:Y01spaces}
	\begin{aligned}
		Y^0&:= \cC^0([\rho_0,\rho_1];L^2(\RR^d))\times L^2(\Omega)^d \times  L^2(\Omega) \times  L^2(\Omega)\ , \qquad \text{ and } \\
		Y^1&:=\left\{ (\cH,\bu,w,P)\in  H^{1,1}(\Omega)^{d+3} \ : \ \cH\big\vert_{\varrho=\rho_0}\in H^1(\RR^d),\ w\big\vert_{\varrho=\rho_1}=0,\ P\big\vert_{\varrho=\rho_0}=0\right\}.
	\end{aligned}
\end{equation}
\begin{lemma}\label{lem:estimates-nonhydro}
	Let $M, h_\star,h^\star>0$ be fixed. There exists $C(M,h_\star,h^\star)>0$ such that for any $\kappa>0$ and $\mu > 0$, and for any $(\bar h, \bar \bu) \in W^{1, \infty}((\rho_0, \rho_1))^{1+d}$ and any $T>0$ and
	$(h,\bu, w)\in L^\infty(0,T;W^{1,\infty}(\Omega))$ with $\Delta_\bx h\in L^1(0,T;L^\infty(\Omega))$
	satisfying~\eqref{eq:nonhydro-iso-redef} and, for almost any $t\in [0,T]$, the estimate
	\[
	\Norm{ h(t,\cdot)}_{L^\infty(\Omega)}
	+\Norm{ \nabla_\bx h(t,\cdot)}_{L^\infty_\bx L^2_\varrho}
	+\Norm{ \nabla_\bx\cdot \bu(t,\cdot) }_{L^\infty(\Omega)}  \le M
	\]
	and the upper and lower bounds 
	\[ \forall (\bx,\varrho)\in  \Omega , \qquad h_\star \leq  \bar h(\varrho)+h(t,\bx,\varrho) \leq h^\star ;  \]
	and
	for any $(\dot\cH, \dot\bu, \dot w,\dot \Pnh) \in \cC^0([0,T]; Y^0)\cap L^2(0,T; Y^1)$ and $(R,\bR^\nh,R^\nh,R^{\rm div})\in L^2(0,T; Y^0)$ satisfying system~\eqref{eq.nonhydro-linearized} in $L^2(0,T; Y^1)'$,
	the following inequality holds:
	\begin{align*} \frac{\dd}{\dd t} \cE(\dot\cH, \dot\bu,\dot w)+ \tfrac\kappa2 \Norm{\nabla_\bx \dot \cH}_{L^2(\Omega)}^2& +  \rho_0\kappa \norm{\nabla_\bx \dot \cH \big\vert_{\varrho=\rho_0}  }_{ L^2_\bx}^2\\
		\leq &\, C\, (1+\kappa^{-1}\Norm{ \bar \bu'+\de_\varrho \bu }_{L^\infty_\bx L^2_\varrho}^2 ) 
		\cE(\dot\cH, \dot\bu,\dot w) \\
		&+C\,\big( M+\Norm{ \bar \bu'+\de_\varrho \bu }_{ L^\infty_\bx L^\infty_\varrho} \big) \Norm{\dot{\Pnh}}_{L^2(\Omega)}  \big(  \Norm{\de_\varrho\dot \cH}_{L^2(\Omega)}+\Norm{\nabla_\bx \dot\cH}_{L^2(\Omega)} \big)  \\
		&+\Norm{\dot{\Pnh}}_{L^2(\Omega)}  \Norm{R^{\rm div}}_{L^2(\Omega)} + C\, \cE(\dot\cH, \dot\bu,\dot w)^{1/2}  \cE(R, \bR^\nh, R^\nh)^{1/2} ,
	\end{align*}
	where we denote
	\begin{equation}\label{eq:energy-nonhydro}
		\cE(\dot \cH, \dot \bu, \dot w) = \frac 12 \int_{\rho_0}^{\rho_1} \int_{\RR^d} \dot \cH^2 + \varrho (\bar h + h) |\dot\bu|^2+\mu \varrho (\bar h + h) \dot w^2 \, \dd\bx \dd \varrho + \frac 12 \int_{\RR^d} \dot \cH^2|_{\varrho=\rho_0} \, \dd \bx .
	\end{equation}
\end{lemma}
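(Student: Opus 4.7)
The strategy follows the pattern of Lemma~\ref{lem:estimate-system}, testing each equation against a carefully chosen multiplier and tracking all contributions after integration by parts. Specifically, I would test the first equation in~\eqref{eq.nonhydro-linearized} against $\dot\cH$, its trace at $\varrho=\rho_0$ against $\rho_0 \dot\cH\big\vert_{\varrho=\rho_0}$, the second equation against $(\bar h+h)\dot\bu$, and the third equation against $\sqrt\mu (\bar h+h)\dot w$. This produces $\frac{\dd}{\dd t}\cE(\dot\cH,\dot\bu,\dot w)$ together with the diffusive gains $\kappa\Norm{\nabla_\bx\dot\cH}_{L^2(\Omega)}^2+\rho_0\kappa\norm{\nabla_\bx\dot\cH\vert_{\varrho=\rho_0}}_{L^2_\bx}^2$ on the left-hand side, while all contributions not involving $\dot{\Pnh}$ can be estimated exactly as in the proof of Lemma~\ref{lem:estimate-system}. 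The single contribution that cannot be absorbed by the natural energy, namely $\big(\int_\varrho^{\rho_1}(\bar\bu'+\de_\varrho\bu)\cdot\nabla_\bx\dot\cH\dd\varrho',\dot\cH\big)$, is controlled as in~\eqref{eq:badterms}, yielding the $\kappa^{-1}\Norm{\bar\bu'+\de_\varrho\bu}_{L^\infty_\bx L^2_\varrho}^2$ factor after Young's inequality with the diffusive gain on $\nabla_\bx\dot\cH$.

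The new ingredient compared with the hydrostatic case is the treatment of the non-hydrostatic pressure, for which a key cancellation must be exhibited. Collecting all $\dot{\Pnh}$ contributions gives
\[
\mathcal I := \iint_\Omega \big[(\bar h+h)\dot\bu\cdot\nabla_\bx\dot{\Pnh} + (\dot\bu\cdot\nabla_\bx\cH)\de_\varrho\dot{\Pnh} - \dot w\,\de_\varrho\dot{\Pnh}\big]\dd\bx\dd\varrho.
\]
Integration by parts with respect to $\bx$ and $\varrho$, using the boundary conditions $\dot w\vert_{\varrho=\rho_1}=0=\dot{\Pnh}\vert_{\varrho=\rho_0}$ and the identity $\de_\varrho\nabla_\bx\cH=-\nabla_\bx h$, transforms $\mathcal I$ into
\[
\mathcal I = -\iint_\Omega \big[(\bar h+h)\nabla_\bx\cdot\dot\bu + (\nabla_\bx\cH)\cdot\de_\varrho\dot\bu - \de_\varrho\dot w\big]\,\dot{\Pnh}\dd\bx\dd\varrho.
\]
Substituting the divergence-free constraint (the fourth equation of~\eqref{eq.nonhydro-linearized}) eliminates the bracket and yields
\[
\mathcal I = -\iint_\Omega \Big[R^{\rm div} + (\de_\varrho\dot\cH)\,\nabla_\bx\cdot\bu - (\bar\bu'+\de_\varrho\bu)\cdot\nabla_\bx\dot\cH\Big]\dot{\Pnh}\dd\bx\dd\varrho,
\]
which is bounded precisely by the two explicit $\Norm{\dot{\Pnh}}_{L^2(\Omega)}$ terms appearing in the statement, invoking the assumed $L^\infty$ control of $h,\nabla_\bx\cdot\bu$ for the second term and the $L^\infty_\bx L^\infty_\varrho$ norm of $\bar\bu'+\de_\varrho\bu$ for the third.

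The remaining contributions are handled exactly as in Lemma~\ref{lem:estimate-system}: the transport terms yield commutator-type remainders controlled by $\Norm{\nabla_\bx\cdot\bu}_{L^\infty(\Omega)}$, the $\kappa$-advection in the $\bu$ and $w$ equations by $\nu\leftrightarrow\kappa$ analogues (using $\sqrt\mu\Norm{\nabla_\bx h}_{L^\infty(\Omega)}$ is not needed here thanks to $\kappa>0$ having been absorbed elsewhere, and the term $\kappa(\nabla_\bx h\cdot\nabla)\dot w$ is treated by Young with the bound on $\nabla_\bx h$), and the coupling terms from $\int_\varrho^{\rho_1}(\bar h+h)\nabla_\bx\cdot\dot\bu\dd\varrho'$ tested against $\dot\cH$ are paired with $\int_{\rho_0}^\varrho\nabla_\bx\dot\cH\dd\varrho'$ tested against $(\bar h+h)\dot\bu$ via Fubini's theorem, leaving a harmless $(\nabla_\bx h)\cdot\dot\bu\,\dot\cH$ commutator controlled by $\Norm{\nabla_\bx h}_{L^\infty_\bx L^2_\varrho}$. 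Finally the $\frac{\dd}{\dd t}\cE$ error coming from $\de_t h$ in the coefficient $\varrho(\bar h+h)$ compensates as before (using the first equation of~\eqref{eq:nonhydro-iso-redef}), and the source terms $(R,\bR^\nh,R^\nh,R^{\rm div})$ are estimated by Cauchy--Schwarz, producing the $\cE^{1/2}\cE(R,\bR^\nh,R^\nh)^{1/2}$ and $\Norm{\dot{\Pnh}}_{L^2(\Omega)}\Norm{R^{\rm div}}_{L^2(\Omega)}$ terms in the final inequality. The principal difficulty — and the motivation for decomposing $P=\Ph+\Pnh$ in the first place — is that the pressure contributions in the $\bu$ and $w$ equations individually involve one spatial derivative more than the energy controls; only via the cancellation above does the net effect reduce to a bound depending merely on $\Norm{\dot{\Pnh}}_{L^2(\Omega)}$, which is then dealt with through the elliptic estimates of Corollary~\ref{C.Poisson} at the next stage.
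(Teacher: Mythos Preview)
Your proposal is essentially correct and follows the same route as the paper: the same multipliers, the same diffusive gains, the same treatment of the coupling and bad-shear terms inherited from Lemma~\ref{lem:estimate-system}, and---crucially---the same integration-by-parts cancellation that reduces the three $\dot\Pnh$ contributions to $\big(\dot\Pnh,(\nabla_\bx\cdot\bu)\de_\varrho\dot\cH-(\bar\bu'+\de_\varrho\bu)\cdot\nabla_\bx\dot\cH+R^{\rm div}\big)_{L^2(\Omega)}$ via the linearized divergence constraint. One small imprecision: the terms $\kappa(\varrho(\nabla_\bx h\cdot\nabla_\bx)\dot\bu,\dot\bu)$ and $\mu\kappa(\varrho(\nabla_\bx h\cdot\nabla_\bx)\dot w,\dot w)$ are \emph{not} handled by Young (there is no dissipative gain on $\nabla_\bx\dot\bu$ or $\nabla_\bx\dot w$ here); rather, after integrating by parts in $\bx$ they combine \emph{exactly} with the $\tfrac12(\varrho(\de_t h)\dot\bu,\dot\bu)$ and $\tfrac\mu2(\varrho(\de_t h)\dot w,\dot w)$ terms through the first equation of~\eqref{eq:nonhydro-iso-redef}, just as (iii) and (viii) compensate in the proof of Lemma~\ref{lem:estimate-system}. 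You invoke that compensation separately, so the ingredients are all present---just merge the two remarks rather than treating the $\kappa$-advection independently.
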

\begin{proof}
	We test the first equation against $\dot \cH \in L^2(0,T; H^{1,1}(\Omega))$ and its trace on $\{(\bx,\rho_0),\ \bx\in\RR^d\}$ against $\rho_0\dot \cH|_{\varrho=\rho_0}\in L^2(0,T; H^{1}_\bx(\RR^d))$, the second equation against $(\bar h + h) \dot \bu \in L^2(0,T; H^{1,1}(\Omega)^d)$ and the third equation against $\sqrt\mu(\bar h +h) \dot w \in L^2(0,T; H^{1,1}(\Omega))$. This yields:
	\begin{align*}
		&\frac{\dd}{\dd t}\cE(\dot \cH, \dot \bu, \dot w) + \kappa \Norm{ \nabla_\bx \dot \cH}_{L^2(\Omega)}^2+\kappa \norm{ \nabla_\bx \dot \cH|_{\varrho=\rho_0}}_{L^2_\bx }^2  \\
		& = -\big( (\bar \bu + \bu) \cdot \nabla_\bx \dot \cH, \dot \cH\big)_{L^2(\Omega)} - \left(\int_\varrho^{\rho_1} (\bar \bu'+\de_\varrho \bu) \cdot \nabla_\bx \dot \cH \, \dd\varrho', \dot \cH \right)_{L^2(\Omega)}  & {\rm (i)}\\
		& \quad - \left( \int_\varrho^{\rho_1} (\bar h+h) \nabla_\bx \cdot \dot \bu \, \dd \varrho', \dot \cH\right)_{L^2(\Omega)} + \big(R, \dot \cH\big)_{L^2(\Omega)} & {\rm (ii)} \\
		&\quad - \big(\varrho (\bar \bu + \bu) \cdot \nabla_\bx \dot \bu, (\bar h + h) \dot \bu\big)_{L^2(\Omega)} + \kappa \big(\varrho (\nabla_\bx h \cdot \nabla_\bx) \dot \bu, \dot \bu\big)_{L^2(\Omega)}  & {\rm (iii)}\\
		& \quad - \big(\rho_0 \nabla_\bx \dot \cH|_{\varrho=\rho_0},(\bar h +h) \dot \bu\big)_{L^2(\Omega)}  - \left(\int_{\rho_0}^\varrho \nabla_\bx \dot \cH \, \dd \varrho', (\bar h +h) \dot \bu \right)_{L^2(\Omega)}  & {\rm (iv)}\\
		&\quad - \big(\nabla_\bx \dot {\Pnh},  (\bar h+h)\dot \bu\big)_{L^2(\Omega)} - \big((\de_\varrho \dot{\Pnh}) \, \nabla_\bx \cH , \dot \bu\big)_{L^2(\Omega)} + \big(\bR^\nh, (\bar h + h) \dot \bu\big)_{L^2(\Omega)}  & {\rm (v)}\\
		& \quad - \mu \big(\varrho (\bar \bu + \bu) \cdot \nabla_\bx \dot w, (\bar h + h) \dot w\big)_{L^2(\Omega)} +\mu \kappa \big(\varrho (\nabla_\bx h \cdot \nabla_\bx ) \dot w, \dot w\big)_{L^2(\Omega)}  & {\rm (vi)}\\
		&\quad + \big(\de_\varrho  \dot{\Pnh}, \dot w\big)_{L^2(\Omega)}  + \sqrt\mu\big( R^\nh, (\bar h + h) \dot w\big)_{L^2(\Omega)}  & {\rm (vii)}\\
		& \quad - \rho_0 \left ((\bar \bu+\bu) (\nabla_\bx \dot \cH|_{\varrho=\rho_0}), \dot \cH|_{\varrho=\rho_0} \right)_{L^2_\bx} - \rho_0 \left( \int_{\rho_0}^{\rho_1} (\bar \bu'+\de_\varrho \bu) \cdot \nabla_\bx \dot \cH \dd \varrho', \dot \cH|_{\varrho=\rho_0} \right)_{L^2_\bx} & {\rm (viii)}\\
		& \quad - \rho_0 \left( \int_{\rho_0}^{\rho_1} (h+\bar h) (\nabla_\bx \cdot \dot \bu) \dd \varrho', \dot \cH|_{\varrho=\rho_0} \right)_{L^2_\bx} + \big(R|_{\varrho=\rho_0}, \dot \cH|_{\varrho=\rho_0}\big)_{L^2_\bx} & {\rm (ix)}\\
		&\quad + \frac 12 \big(\varrho (\de_t h) \dot \bu, \dot \bu\big)_{L^2(\Omega)} + \frac \mu 2 \big(\varrho (\de_t h) \dot w, \dot w\big)_{L^2(\Omega)}. & {\rm (x)}
	\end{align*}
	Some terms have already been treated in the course of the proof of Lemma~\ref{lem:estimate-system}: the second term in \rm{(i)} and the second term in \rm{(viii)} require $\kappa>0$; the first terms in \rm{(i)}, \rm{(viii)} are advection terms; the first addend of \rm{(ii)} together with the second term in \rm{(iv)} after integration by parts; the first addend of \rm{(iv)} with the first term in \rm{(ix)}.
	The contributions in \rm{(iii)} compensate with the first addend of \rm{(x)}, using the first equation of~\eqref{eq:nonhydro-iso-redef} and, in the same way, the contributions in \rm{(vi)} compensate with the second addend of \rm{(x)}.
	It remains only to deal with the contribution frm the non-hydrostatic pressure terms in \rm{(v)} and \rm{(vii)}, and remainder terms.
	
	Consider the sum of the first two terms in \rm{(v)} and the first term in \rm{(vii)}. We integrate by parts in $\bx$ the first term and in $\varrho$ in the last two terms. Thus we have
	\begin{multline*} 
		-\big(\nabla_\bx \dot{\Pnh}, (\bar h + h) \dot \bu\big)_{L^2(\Omega)} - \big((\de_\varrho \dot{\Pnh}) \nabla_\bx  \cH, \dot \bu\big)_{L^2(\Omega)} + \big(\de_\varrho \dot{\Pnh}, \dot w\big)_{L^2(\Omega)} \\
		= \big(\dot{\Pnh}, (\bar h + h) \nabla_\bx \cdot \dot \bu\big)_{L^2(\Omega)}+\big(\dot{\Pnh} \nabla_\bx  \cH, \de_\varrho \dot \bu\big)_{L^2(\Omega)} -\big(\dot{\Pnh}, \de_\varrho \dot w\big)_{L^2(\Omega)},
	\end{multline*}
	where we used the identity $ h=-\de_\varrho \cH$ and the boundary conditions $\dot{\Pnh}|_{\varrho=\rho_0}= \dot \cH|_{\varrho=\rho_1}=\dot w|_{\varrho=\rho_1}=0$ when integrating by parts with respect to $\varrho$. 
	Using the last equation in~\eqref{eq.nonhydro-linearized} (stemming from the incompressibility condition), the above term reads
	\[
	\big(\dot{\Pnh}, (\nabla_\bx \cdot \bu)(\de_\varrho\dot \cH)-  (\bar \bu'+\de_\varrho \bu)\cdot \nabla_\bx \dot \cH +R^{\rm div}\big)_{L^2(\Omega)}.
	\]
	These terms, alike remainder terms
	\[
	\big\vert \big(R, \dot \cH\big)_{L^2(\Omega)}\big\vert+\big\vert\big(R|_{\varrho=\rho_0}, \dot \cH|_{\varrho=\rho_0}\big)_{L^2_\bx} \big\vert+\big\vert\big(\bR^\nh, (\bar h+h) \dot \bu\big)_{L^2(\Omega)}\big\vert+ \sqrt\mu\big\vert (R^\nh, (\bar h + h) \dot w\big)_{L^2(\Omega)}\big\vert\ ,
	\]
	are bounded by Cauchy-Schwarz inequality and using $\rho_0 h_\star\leq \varrho(\bar h+h)\leq \rho_1 h^\star$.

	Altogether, we obtain the differential inequality
	\begin{align*} \frac{\dd}{\dd t} \cE(\dot\cH, \dot\bu,\dot w)+ \kappa \Norm{\nabla_\bx \cH}_{L^2(\Omega)}^2 &+  \rho_0\kappa \norm{\nabla_\bx \cH \big\vert_{\varrho=\rho_0}  }_{ L^2_\bx}^2\\
		\leq &\,C\,
		\cE(\dot\cH, \dot\bu,0) +C \Norm{ \bar \bu'+\de_\varrho \bu }_{L^2_\varrho L^\infty_\bx}   \cE(\dot\cH, \dot\bu,0)^{1/2} \Norm{\nabla_\bx \dot \cH}_{L^2(\Omega)}\\
		&+C\, \big( M+\Norm{ \bar \bu'+\de_\varrho \bu }_{L^\infty_\varrho L^\infty_\bx} \big) \Norm{\dot{\Pnh}}_{L^2(\Omega)}  \big( \Norm{\de_\varrho\dot \cH}_{L^2(\Omega)}+\Norm{\nabla_\bx \dot\cH}_{L^2(\Omega)} \big)  \\
		&+\Norm{\dot{\Pnh}}_{L^2(\Omega)}  \Norm{R^{\rm div}}_{L^2(\Omega)} + C \cE(\dot\cH, \dot\bu,\dot w)^{1/2}  \cE(R, \bR^\nh, R^\nh)^{1/2} 
	\end{align*}
	with $C=C(h_\star,h^\star,M)$, and the desired estimate follows straightforwardly.
\end{proof}
\begin{remark}\label{R.estimates-nonhydro}
	Lemma~\ref{lem:estimates-nonhydro} will be applied to the system~\eqref{eq.quasilin-j-nonhydro}-\eqref{eq.est-quasilin-j-nonhydro} appearing in Lemma~\ref{lem.quasilin-nonhydro}, {\em when $j=0$}. A similar result holds for the simplified system when $j\neq 0$. The main difference is that the result does not require nor provide the control of the trace $\de_\varrho^j\cH\big\vert_{\varrho=\rho_0}$.
\end{remark}

\subsection{Large-time well-posedness}
We prove the large-time existence of strong solutions to system~\eqref{eq:nonhydro-iso-recall}. As for the hydrostatic system, \emph{large time} underlines the fact that the existence time that is provided by the following result is uniformly bounded (from below) with respect to the vanishing parameter $\mu\in(0,1]$. Besides, the result below keeps track of the dependency of this large time-scale on the diffusivity parameter $\kappa\in[\mu,1]$.

\begin{proposition}\label{P.NONHydro-large-time}
	Let  $s, k \in \NN$ be such that $k=s> \frac 52 +\frac d 2$ and $\bar M,M,h_\star,h^\star>0$. Then, there exists $C>0$ such that, for any $ 0<\mu\leq \kappa\leq 1$, and any $(\bar h, \bar \bu) \in W^{k,\infty}((\rho_0,\rho_1)) \times W^{k+1,\infty}((\rho_0,\rho_1))^d $ such that 
	\[  \norm{\bar h}_{W^{k,\infty}_\varrho } + \norm{\bar \bu'}_{W^{k,\infty}_\varrho }\leq \bar M\,;\]
	for any initial data $(h_0, \bu_0, w_0)\in H^{s,k}(\Omega)^{d+2}$ with
	\[
	M_0:=
	\Norm{\cH_0}_{H^{s,k}}+\Norm{\bu_0}_{H^{s,k}}+\sqrt \mu \Norm{w_0}_{H^{s,k}}+\norm{\cH_0\big\vert_{\varrho=\rho_0}}_{H^s_\bx}+\kappa^{1/2}\Norm{h_0}_{H^{s,k}} + \mu^{1/2}\kappa^{1/2} \Norm{\nabla_\bx \cH_0}_{H^{s,k}}  \leq M\,,
	\]
	and satisfying  the boundary condition $w_0|_{\varrho=\rho_1}=0$ and the incompressibility condition
	\[-(\bar h+h_0)\nabla_\bx\cdot \bu_0-(\nabla_\bx \cH_0)\cdot({\bar \bu}'+\partial_\varrho\bu_0)+\partial_\varrho  w_0=0,\] 
	the lower and upper bounds
	\[\forall (\bx,\varrho)\in  \Omega , \qquad h_\star \leq  \bar h(\varrho)+h_0(\bx,\varrho) \leq h^\star \,, \]
	\item and the smallness assumption
	\[  C \kappa^{-1}\, \big(\norm{ \bar \bu'}_{L^\infty_\varrho }^2+ M_0^2) \leq 1\,,\]
	the following holds. Denoting by
	\[
	T^{-1}= C\, \big(1+ \kappa^{-1} \big(\norm{\bar \bu'}_{L^2_\varrho}^2+M_0^2\big)  \big),  
	\]
	there exists a unique 
	$(h,\bu,w)\in \cC^0([0,T];H^{s,k}(\Omega)^{2+d})$ and $P\in L^2(0,T;H^{s+1,k+1}(\Omega))$ strong solution to~\eqref{eq:nonhydro-iso-redef}
	with initial data $(h,\bu,w)\big\vert_{t=0}=(h_0,\bu_0,w_0)$.
	Moreover, one has $\cH\in L^\infty(0,T;H^{s+1,k}(\Omega))$ and $(h,\nabla_\bx\cH)\in   L^2(0,T;H^{s+1,k}(\Omega))$ and
	, for any $t\in[0,T]$, the lower and the upper bounds hold
	\[\forall (\bx,\varrho)\in  \Omega , \qquad h_\star/2 \leq  \bar h(\varrho)+h(t,\bx,\varrho) \leq 2 h^\star \,, \]
	and the estimate below holds true
	\begin{align}\cF(t):&=
		\Norm{\cH(t,\cdot)}_{H^{s,k}}+\Norm{\bu(t,\cdot)}_{H^{s,k}} +\mu^{1/2}\Norm{w(t,\cdot)}_{H^{s,k}}+\norm{\cH\big\vert_{\varrho=\rho_0}(t,\cdot)}_{H^s_\bx} \notag\\ 
		&\quad+\kappa^{1/2}\Norm{h(t,\cdot)}_{H^{s,k}}  + \mu^{1/2}\kappa^{1/2} \Norm{\nabla_\bx \cH (t, \cdot)}_{H^{s,k}} \notag\\ 
		&\quad+ \kappa^{1/2} \Norm{\nabla_\bx \cH}_{L^2(0,t;H^{s,k})} +   \kappa^{1/2} \norm{\nabla_\bx \cH \big\vert_{\varrho=\rho_0}  }_{ L^2(0,t;H^s_\bx)} \notag\\ 
		&\quad+\kappa \Norm{\nabla_\bx h}_{L^2(0,t;H^{s,k})} +  \mu^{1/2}\kappa \Norm{\nabla_\bx^2 \cH }_{L^2(0,t; H^{s,k})} \leq C\, M_0. \label{eq:functional-nonhydro}
	\end{align}
\end{proposition}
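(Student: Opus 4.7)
\textbf{Plan of proof for Proposition~\ref{P.NONHydro-large-time}.}

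The strategy mirrors the proof of Proposition~\ref{P.regularized-large-time-WP}, upgraded to handle the non-hydrostatic pressure contributions. Starting from the small-time existence provided by Proposition~\ref{P.NONHydro-small-time}, let $T^\star \in (0,+\infty]$ denote the maximal existence time. I would define
\[
T_\star := \sup\big\{0 < T < T^\star \;:\; \forall t\in (0,T),\ h_\star/2 \leq \bar h + h(t,\cdot) \leq 2h^\star \ \text{and}\ \cF(t) \leq C_0 M_0\big\},
\]
for $C_0>1$ to be fixed. By continuity of $t\mapsto \cF(t)$ and the initial bounds, $T_\star > 0$. The goal is to prove that for suitable $C_0 = C_0(\bar M,M,h_\star,h^\star)$, one has $\cF(T) \leq \tfrac{3}{4}C_0 M_0$ and the stratification bounds hold strictly on $[0,T]$, whence $T_\star \geq T$ by a standard continuity argument.

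\textbf{Energy estimates for each multi-index.} I would apply the quasi-linearization of Lemma~\ref{lem.quasilin-nonhydro} together with the pressure decomposition $P = \Ph + \Pnh$ from Corollary~\ref{C.Poisson}. For each $(\balpha,j)$ with $|\balpha|+j \leq s$ and $j\leq k$:
\begin{itemize}
\item Apply Lemma~\ref{lem:estimates-nonhydro} (for $j=0$) or its variant from Remark~\ref{R.estimates-nonhydro} (for $j\geq 1$) to the system on $(\cH^{(\balpha,j)},\bu^{(\balpha,j)},w^{(\balpha,j)})$ to control $\|\cH\|_{H^{s,k}} + \|\bu\|_{H^{s,k}} + \sqrt\mu\|w\|_{H^{s,k}} + \|\cH|_{\varrho=\rho_0}\|_{H^s_\bx}$ in $L^\infty_t$ together with $\kappa^{1/2}\|\nabla_\bx\cH\|_{L^2_t H^{s,k}}$ and the trace dissipation.
\item Apply Lemma~\ref{lem:estimate-transport-diffusion} to \eqref{eq.quasilin-j-h-nonhydro} for $h^{(\balpha,j)}$ to obtain $\kappa^{1/2}\|h(t)\|_{H^{s,k}}$ and $\kappa\|\nabla_\bx h\|_{L^2_t H^{s,k}}$, using the remainder bound \eqref{eq.est-quasilin-j-h-nonhydro}.
\item For the weighted $L^\infty_t$ bound $\mu^{1/2}\kappa^{1/2}\|\nabla_\bx \cH(t,\cdot)\|_{H^{s,k}}$ and the weighted dissipation $\mu^{1/2}\kappa\|\nabla_\bx^2\cH\|_{L^2_t H^{s,k}}$, test each differentiated version of \eqref{eq.id-eta} against $\mu\kappa(-\Delta_\bx)\cH^{(\balpha,j)}$; by Young's inequality, $\mu\kappa|(w^{(\balpha,j)},\Delta_\bx \cH^{(\balpha,j)})| \leq \tfrac{\mu\kappa^2}{2}\|\Delta_\bx \cH^{(\balpha,j)}\|_{L^2}^2 + \tfrac{\mu}{2}\|w^{(\balpha,j)}\|_{L^2}^2$, and integrating in time, the last contribution is controlled by $T\cdot \|\sqrt\mu w\|_{L^\infty_t H^{s,k}}^2 \lesssim T M_0^2$.
\end{itemize}

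\textbf{Handling the non-hydrostatic pressure.} This is the main obstacle. The hydrostatic contribution $P_\h$ is absorbed into the symmetric structure already exploited in Lemma~\ref{lem:estimates-nonhydro}, exactly as in the hydrostatic case. The non-hydrostatic remainders $\bR^\nh_{\balpha,j}, R^\nh_{\balpha,j}$ are bounded via \eqref{eq.est-quasilin-j-nonhydro} and contribute $\kappa \Norm{\nabla_\bx h}_{H^{s,k}}$, which is absorbable against the $\kappa^{1/2}\Norm{\nabla_\bx h}_{L^2_t H^{s,k}}$-dissipation after Cauchy-Schwarz in time. The crucial extra term in Lemma~\ref{lem:estimates-nonhydro},
\[
C\big(M + \Norm{\bar\bu'+\de_\varrho\bu}_{L^\infty_\bx L^\infty_\varrho}\big) \Norm{\dot\Pnh}_{L^2} \big(\Norm{\de_\varrho\dot\cH}_{L^2} + \Norm{\nabla_\bx\dot\cH}_{L^2}\big),
\]
is controlled using \eqref{ineq:est-Psmall-nonhydro}, namely $\Norm{\dot\Pnh}_{L^2} \leq C\sqrt\mu\, \cF(t)$. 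The $\Norm{\nabla_\bx\dot\cH}_{L^2}$ factor is then absorbed via Young's inequality against the $\kappa\Norm{\nabla_\bx\dot\cH}_{L^2}^2$-dissipation, generating a coefficient of order $\kappa^{-1}\mu\cF(t)^2 \leq \kappa^{-1}M_0^2 \cdot \mu/\kappa \cdot \kappa$, which is harmless thanks to $\mu\leq\kappa$ and the smallness assumption. The $\Norm{\de_\varrho\dot\cH}_{L^2}$ factor is not absorbable by any dissipation, but since $\sqrt\mu\Norm{\de_\varrho\dot\cH}_{L^2} \leq \cF(t)$ and $\Norm{\dot\Pnh}_{L^2}/\sqrt\mu \lesssim \cF(t)$, this contribution is bounded by $C\cF(t)^2$ and fed into the Gronwall argument.

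\textbf{Conclusion via Gronwall and bootstrap.} Collecting the differential inequalities from the energy estimates and summing over multi-indices yields
\[
\cF(t) \leq c_0 \Big( M_0 + C\,C_0 M_0\, (t + \sqrt t)\Big) \exp\Big(C\big(t + \sqrt t + \kappa^{-1}\textstyle\int_0^t (\Norm{\bar\bu'+\de_\varrho\bu}_{L^\infty_\bx L^2_\varrho}^2)\,\dd\tau\big)\Big),
\]
with $c_0$ depending only on $h_\star,h^\star$ and $C$ on the other structural parameters. The embedding $H^{s,k} \hookrightarrow L^2_\varrho L^\infty_\bx$ (Lemma~\ref{L.embedding}) combined with the smallness hypothesis $\kappa^{-1}(\Norm{\bar\bu'}_{L^\infty_\varrho}^2 + M_0^2) \leq 1/C$ ensures the exponential factor stays bounded. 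Choosing $C_0 = 2c_0$ and $T^{-1} = C(1 + \kappa^{-1}(\Norm{\bar\bu'}_{L^2_\varrho}^2 + M_0^2))$ for $C$ large enough, one obtains $\cF(t) \leq \tfrac{3}{4}C_0 M_0$ on $[0,T]$. The maximum-principle-type argument from the end of the proof of Proposition~\ref{P.regularized-large-time-WP} (using positivity of the heat kernel applied to the equation for $h$) provides the stratification bound with constants $h_\star/2$ and $2h^\star$, after further possibly enlarging $C$. A continuity argument concludes that $T_\star \geq T$, yielding the stated existence and estimate; uniqueness is a standard consequence of the linearized estimate in Lemma~\ref{lem:estimates-nonhydro} applied to the difference of two solutions.
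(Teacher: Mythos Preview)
Your overall architecture is correct and matches the paper's: bootstrap on $\cF$, use Lemma~\ref{lem.quasilin-nonhydro} together with Lemma~\ref{lem:estimates-nonhydro} (and Remark~\ref{R.estimates-nonhydro}) for $(\cH,\bu,w)$, Lemma~\ref{lem:estimate-transport-diffusion} for $h$, and the first equation of~\eqref{eq.quasilin-j-nonhydro} for the weighted $\nabla_\bx\cH$ control. However, the treatment of the crucial non-hydrostatic pressure term contains a genuine error that affects where the smallness assumption enters.

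You claim $\Norm{\dot\Pnh}_{L^2(\Omega)} \leq C\sqrt\mu\,\cF(t)$ for $\dot\Pnh=\partial_\bx^\balpha\partial_\varrho^j\Pnh$. This is false for the full range $|\balpha|+j\leq s$. Estimate~\eqref{ineq:est-Psmall-nonhydro} controls $\Norm{\Pnh}_{L^2(\Omega)}+\Norm{\nabla^\mu_{\bx,\varrho}\Pnh}_{H^{s-1,k-1}}$ by $C\sqrt\mu(\cdots)$, but passing from $\nabla^\mu_{\bx,\varrho}=(\sqrt\mu\nabla_\bx,\partial_\varrho)$ to $\nabla_{\bx,\varrho}$ costs a factor $\mu^{-1/2}$ on the horizontal part. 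Hence $\Norm{\Pnh}_{H^{s,k}}\leq \Norm{\Pnh}_{L^2}+\mu^{-1/2}\Norm{\nabla^\mu_{\bx,\varrho}\Pnh}_{H^{s-1,k-1}}$ is only $O(1)$, not $O(\sqrt\mu)$. Moreover, because of the $\bu_\star,w_\star$ contributions (involving $\kappa\nabla_\bx h$ and $\kappa\Delta_\bx\cH$), $\Norm{\Pnh}_{H^{s,k}}$ is controlled only in $L^2_t$, not pointwise, by $C\,C_0M_0$; see~\eqref{ineq-P} in the paper's proof. Consequently your absorption arguments for both the $\Norm{\nabla_\bx\dot\cH}_{L^2}$ and $\Norm{\partial_\varrho\dot\cH}_{L^2}$ factors break down as written.

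The correct handling is the one in the paper: the time integral of the offending term $R_2\sim (\text{coef})\Norm{\Pnh}_{H^{s,k}}\big(\Norm{h}_{H^{s,k}}+\Norm{\nabla_\bx\cH}_{H^{s,k}}\big)$ is estimated by Cauchy--Schwarz in time, using that \emph{both} factors are only $L^2_t$ (with norms $\sim C_0M_0$ and $\sim \kappa^{-1/2}C_0M_0$ respectively). This produces a contribution $\sim \kappa^{-1/2}(C_0M_0+\norm{\bar\bu'}_{L^\infty_\varrho})(C_0M_0)^2$ which does \emph{not} vanish as $T\to 0$ and therefore cannot be absorbed by shrinking $T$; it is precisely here---and not in the Gr\"onwall exponential, which is already handled by the definition of $T$---that the smallness assumption $C\kappa^{-1}(\norm{\bar\bu'}_{L^\infty_\varrho}^2+M_0^2)\leq 1$ is genuinely required (this is the content of the footnote around~\eqref{eq:smallness}).
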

\begin{proof}
	As for the large-time existence for the hydrostatic system (see Proposition~\ref{P.regularized-large-time-WP}), the proof is based on a bootstrap argument on the functional $\cF$. Recalling that the (short-time) existence and uniqueness of the solution has been provided in Proposition~\ref{P.NONHydro-small-time},
	we denote by $T^\star$ the maximal existence time, and set 
	\begin{equation}\label{control-F}
		T_\star= \sup \{0 < T < T^\star \, : \; \forall \, t \in (0,T), \; h_\star/2 \le \bar h(\varrho) + h(t, \bx, \varrho) \le 2h^\star \quad \text{and} \quad \cF(t) \le C_0 M_0\},
	\end{equation}
	with $C_0=C(h_\star,h^\star,\bar M, M)$ sufficiently large (to be determined). Henceforth, we restrain to $0<T<T_\star$, and and denote by $C$ any positive constant depending uniquely on $\bar M, h_\star, h^\star, C_0M_0$ and $s,k$.
	
	By means of~\eqref{eq.quasilin-j-h-nonhydro}-\eqref{eq.est-quasilin-j-h-nonhydro} in Lemma~\ref{lem.quasilin-nonhydro} and Lemma~\ref{lem:estimate-transport-diffusion}, we infer as in the proof of Proposition~\ref{P.regularized-large-time-WP} the control
	\begin{equation}\label{control-h}
		\kappa^{1/2}\Norm{h}_{L^\infty(0,T;H^{s,k})}+\kappa\Norm{\nabla_\bx h}_{L^2(0,T;H^{s,k})}
		\leq  \left(c_0 M_0+C\, C_0 M_0\, \big( T+\sqrt{T} \big)\right)
		\times \exp\Big( C C_0 M_0\, T\Big)
	\end{equation}
	with the same notations as above and $c_0$ a universal constant.
	In the non-hydrostatic situation, additional controls can be inferred on $\cH$. Indeed, from the first equation in
	in Lemma~\ref{lem.quasilin-nonhydro},~\eqref{eq.quasilin-j-nonhydro}-\eqref{eq.est-quasilin-j-nonhydro}, we find that 
	\[	\partial_t  \cH^{(\balpha,j)}+ (\bar \bu + \bu) \cdot\nabla_\bx \cH^{(\balpha,j)}  = \kappa \Delta_\bx \cH^{(\balpha)}+\widetilde R_{\balpha,j}+ w^{(\balpha,j)} \]
	with
	\[
	\sqrt\mu \Norm{\widetilde R_{\balpha,j}+ w^{(\balpha,j)}}_{L^2(\Omega) }
	\leq C\, C_0 M_0\, . 
	\]
	Differentiating once with respect to the space variables and proceeding as in Lemma~\ref{lem:estimate-transport-diffusion}, we infer
	\begin{multline}\label{control-cH}
		\mu^{1/2}\kappa^{1/2}\Norm{\nabla_\bx \cH}_{L^\infty(0,T;H^{s,k})}+\mu^{1/2}\kappa\Norm{\nabla_\bx^2 \cH}_{L^2(0,T;H^{s,k})}
		\\
		\leq \big(c_0M_0 + C C_0 M_0(T +\sqrt T)\big)\times\exp\Big(C C_0 M_0 T\Big).
	\end{multline}
	Next we use again Lemma~\ref{lem.quasilin-nonhydro},~\eqref{eq.quasilin-j-nonhydro}-\eqref{eq.est-quasilin-j-nonhydro}, together with Lemma~\ref{lem:estimates-nonhydro} (see also Remark~\ref{R.estimates-nonhydro}) to obtain that the functional
	\[\cE^{s,k} :=\frac12\sum_{j=0}^k \sum_{|\balpha|=0}^{s-j} \iint_{\Omega} ( \de_\varrho^j \de_\bx^\balpha \cH)^2 + \varrho (\bar h + h) |\de_\varrho^j\de_\bx^\balpha\bu|^2+\mu \varrho (\bar h + h) ( \de_\varrho^j\de_\bx^\balpha w)^2 \, \dd\bx \dd \varrho + \frac 12  \sum_{|\balpha|=0}^{s}\int_{\RR^d} (\de_\bx^\balpha \cH|_{\varrho=\rho_0})^2 \, \dd \bx ,\]
	satisfies the differential inequality
	\begin{equation} \label{ineq}\frac{\dd}{\dd t} \cE^{s,k}+ \tfrac\kappa2 \Norm{\nabla_\bx \cH}_{H^{s,k}}^2 +  \rho_0\kappa \norm{\nabla_\bx \cH \big\vert_{\varrho=\rho_0}  }_{ H^s_\bx}^2\leq C\, \big(  R_1+R_2+R_3 \big);
	\end{equation}
	with
	\begin{align*}
		R_1 &:= (1+\kappa^{-1}\Norm{ \bar \bu'+\de_\varrho \bu }_{L^\infty_\bx L^2_\varrho }^2 )\cE^{s,k}  ,\\
		R_2 &:= \big( C_0 M_0+\Norm{ \bar \bu'+\de_\varrho \bu }_{L^\infty_\varrho L^\infty_\bx} \big) \Norm{{\Pnh}}_{H^{s,k}}  \big(  \Norm{ h}_{H^{s,k}}+ \Norm{\nabla_\bx \cH}_{H^{s,k}}\big),\\
		R_3 &:=\Norm{ P_\nh}_{H^{s,k}} \Norm{R^{\rm{div}}_{s,k}}_{L^2(\Omega)} + (\cE^{s,k})^{1/2} \Norm{\mathcal{R}_{s,k}}_{L^2(\Omega)},
	\end{align*}
	and
	\begin{align} 
		\Norm{R^{\rm{div}}_{s,k}}_{L^2(\Omega)} & \leq C\  C_0 M_0\,, \label{ineq-Rdiv} \\
		\Norm{\mathcal R_{s,k}}_{L^2(\Omega)}  & \leq C\, C_0M_0\, \big(1+\kappa \Norm{\nabla_\bx h}_{H^{s,k}}  \big)   \notag\\
		& \quad + C\,   \big( \Norm{ h}_{H^{s,k}}+\mu^{1/2}\Norm{\nabla_\bx \cH}_{H^{s,k}}  \big) \big(C_0M_0+ \Norm{\bu_\star}_{H^{s,k}}+\mu^{1/2} \Norm{w_\star}_{H^{s,k} }  \big)  \,. \label{ineq-R}
	\end{align}
	
	By~\eqref{control-F}, we have obviously for any  $0<t<T_\star$,
	\[  \frac1{2\rho_1 h^\star}\cE^{s,k}(t) \leq \Norm{\cH(t,\cdot)}_{H^{s,k}}^2+\Norm{\bu(t,\cdot)}_{H^{s,k}}^2 +\mu\Norm{w(t,\cdot)}_{H^{s,k}}^2+\norm{\cH\big\vert_{\varrho=\rho_0}(t,\cdot)}_{H^s_\bx}^2 \leq  \frac2{\rho_0 h_\star}\cE^{s,k}(t).\]
	Moreover, we have the following control on $\bu_\star:=-\kappa\frac{\nabla_\bx  h}{\bar h+ h} $ and  $w_\star:=\kappa\Delta_\bx \cH-\kappa\frac{\nabla_\bx h\cdot\nabla_\bx \cH}{\bar h+h}$ stemming from (tame) product and composition estimates (Lemma~\ref{L.product-Hsk} and~\ref{L.composition-Hsk-ex}), and using that $\mu\leq \kappa\leq 1$:
	\begin{equation}\label{control-u-w-star}
		\Norm{\bu_\star}_{L^2(0,T;H^{s,k})}+\mu^{1/2}\Norm{w_\star}_{L^2(0,T;H^{s,k})} \leq C \,C_0 M_0 (1+\sqrt T)\,.
	\end{equation}
	Finally, using estimate~\eqref{ineq:est-Psmall-nonhydro} in Corollary~\ref{C.Poisson} yields
	\begin{align*}
		\Norm{\Pnh}_{H^{s,k}} & \leq \Norm{\Pnh}_{L^2} +\Norm{\nabla_{\bx,\varrho} \Pnh}_{H^{s-1,k-1}} \leq \Norm{\Pnh}_{L^2} + \mu^{-1/2}  {\Norm{\nabla^\mu_{\bx,\varrho} \Pnh}_{H^{s-1,k-1}} }\notag \\
		& \le C\, \left(\Norm{ \nabla_\bx \cH}_{H^{s-1,k-1}} + \norm{\cH\big\vert_{\varrho=\rho_0}}_{H^{s}_\bx}+ \Norm{(\bu,\bu_\star)}_{H^{s,k}} +\mu^{1/2}\Norm{(w,w_\star)}_{H^{s,k-1}} \right),
	\end{align*}
	from which we infer, using the controls~\eqref{control-F} and~\eqref{control-u-w-star}, that
	\begin{equation}\label{ineq-P}
		\Norm{\Pnh}_{L^2(0,T;H^{s,k})}\leq C\, C_0 M_0(1+\sqrt T).
	\end{equation}
	
	From~\eqref{control-F} and~\eqref{ineq-Rdiv}-\eqref{ineq-R}-\eqref{control-u-w-star}-\eqref{ineq-P} we infer
	\begin{align*}
		\int_0^T R_1(t)\dd t &\leq  C\,  (C_0 M_0)^2 (1+\kappa^{-1}\norm{ \bar \bu'}_{L^2_\varrho}^2+\kappa^{-1}(C_0M_0)^2  )\, T,\\
		\int_0^T R_2(t)\dd t  &\leq  C\, \kappa^{-1/2} \, \big( C_0 M_0+\norm{ \bar \bu'}_{L^\infty_\varrho } \big) \, (C_0 M_0)^2(1+\sqrt T)^2 \, ,\\
		\int_0^T R_3(t)\dd t  &\leq    C\,  (C_0 M_0)^2(T+\sqrt T)\ +\   C\,  (C_0 M_0)^2\, \big( T+C_0 M_0 \sqrt T+ \kappa^{-1/2}(C_0 M_0)(T+\sqrt T)\big) .
	\end{align*}
	Hence there exists $C>0$, depending on $\bar M, h_\star, h^\star, C_0, M_0$ (and $s,k$), such that if
	\[C\, T\,  \big( 1+\kappa^{-1}(\norm{\bu'}_{L^2_\varrho}^2 +M_0^2))\leq 1,\]
	and imposing additionally
	\footnote{We point out that the only term requiring the above smallness condition~\eqref{eq:smallness} on the initial data is (the time integral of) $R_2$, and more precisely the product $ \Norm{P_{\text{nh}}}_{H^{s,k}}\Norm{\nabla_\bx \cH}_{H^{s,k}}$, where both terms are only square-integrable in time.} 
	that
	\begin{equation}\label{eq:smallness}
		C\, \kappa^{-1/2}\,  \big( C_0 M_0+\norm{ \bar \bu'}_{L^\infty_\varrho } \big)  \leq \tfrac1{16} \rho_0 h_\star
	\end{equation}
	we have, when integrating the differential inequality~\eqref{ineq} and combine with~\eqref{control-h} and~\eqref{control-cH},
	\[ \cE^{s,k}(t) \leq \cE^{s,k}(0)+ \tfrac18 (\rho_0h_\star) (C_0 M_0)^2\,.\]
	
	Now, setting $C_0=\max(\{ 4(\frac{\rho_1 h^\star}{\rho_0h_\star})^{1/2}, 8 c_0\}$, and $C$ accordingly, one has 
	$\cF(t) \leq  C_0 M_0/2$ for all ${0<t<T}$. We obtain as in the proof of Proposition~\ref{P.regularized-large-time-WP} the lower and upper bounds $2h_\star/3 \le \bar h(\varrho) + h(t, \bx, \varrho) \le 3h^\star/2$, augmenting $C$ if necessary, and the standard continuity argument allows to conclude the proof.
\end{proof}

\section{Convergence}\label{S.Convergence}

This section is devoted to the proof of the convergence of regular solutions to the non-hydrostatic equations~\eqref{eq:nonhydro-iso-intro} towards the corresponding solutions to the limit hydrostatic equations~\eqref{eq:hydro-iso-intro}, namely Theorem~\ref{thm-convergence}. 
Our convergence result holds in the strong sense and ``with loss of derivatives'': we prove that the solutions to the approximating (non-hydrostatic) equations converge towards the solutions to the limit (hydrostatic) equations in a suitable strong topology that is strictly weaker than the one measuring the size of the initial data.

For a given set of initial data, we use the apex $\h$ to refer to the solution to the hydrostatic equations (provided by the analysis of Section~\ref{S.Hydro} culminating with Theorem~\ref{thm-well-posedness}), and the apex $\nh$ for the corresponding solution to the non-hydrostatic equations (provided by the analysis of Section~\ref{S.NONHydro}, specifically Proposition~\ref{P.NONHydro-small-time}). The apex $\d$ denotes the difference between the non-hydrostatic solution and the hydrostatic one, whose size will be controlled in the limit $\mu \searrow 0$.

While we can appeal to Theorem~\ref{thm-well-posedness} to obtain the existence, uniqueness and control of solutions to the hydrostatic equations over a large time interval, Proposition~\ref{P.NONHydro-small-time} provides only a time interval which {\em a priori} vanishes as $\mu \searrow 0$, and Proposition~\ref{P.NONHydro-large-time} only applies to sufficiently small initial data. The standard strategy (used for instance in~\cite{KlainermanMajda82} in the context of weakly compressible flows) that we apply here relies on a bootstrap argument to control the difference between the non-hydrostatic solution and the hydrostatic one in the time-interval provided by the hydrostatic solution, from which the existence and control of the non-hydrostatic solution (again, with loss of derivatives) can be inferred. We perform this analysis in Sections~\ref{S.CV-consistency} to~\ref{S.CV-control}, where we first provide a consistency result (Lemma~\ref{L.CV-consistency}), then exhibit the (non-hydrostatic) quasilinear structure of the equations satisfied by the difference (Lemma~\ref{L.CV-quasilinear}), and finally infer the uniform control of the non-hydrostatic solution and the strong convergence towards the corresponding hydrostatic solution (Proposition~\ref{P.CV-control}). In a last step, in Section~\ref{S.CV-convergence}, we use this uniform control to offer an improved convergence rate based this time on the structure of the hydrostatic equations (Proposition~\ref{P.CV-convergence}). Propositions~\ref{P.CV-control} and~\ref{P.CV-convergence} immediately yield Theorem~\ref{thm-convergence}.

\subsection{Consistency}\label{S.CV-consistency}

In the following result we prove that solutions to the hydrostatic equations~\eqref{eq:hydro-iso-intro} emerging from smooth initial data satisfy (suitably defining the horizontal velocity and pressure variables) the non-hydrostatic equations~\eqref{eq:nonhydro-iso-intro}, up to small remainder terms.

\begin{lemma}\label{L.CV-consistency}
	There exists $p\in\NN$ such that for any $s,k\in \NN$ with $0\leq k\leq s$, the following holds. 
	Let $\bar M,M,h_\star,h^\star>0$ be fixed. Then there exists $C_0>0$ and $C_1>0$ such that for any $\kappa\in(0,1]$, any $(\bar h,\bar \bu)\in W^{k+p,\infty}((\rho_0,\rho_1))^{1+d} $ satisfying
	\[  \norm{\bar h}_{W^{k+p,\infty}_\varrho } + \norm{\bar \bu'}_{W^{k+p-1,\infty}_\varrho }\leq \bar M,\]
	and any initial data $(h_0, \bu_0) \in H^{s+p,k+p}(\Omega)$ satisfying the following estimate
	\[
	M_0:=
	\Norm{\cH_0}_{H^{s+p,k+p}}+\Norm{\bu_0}_{H^{s+p,k+p}}+\norm{\cH_0\big\vert_{\varrho=\rho_0}}_{H^{s+p}_\bx}+\kappa^{1/2}\Norm{h_0}_{H^{s+p,k+p}} 
	\le M
	\]
	(where we denote  $\cH_0(\cdot,\varrho):=\int_\varrho^{\rho_1} h_0(\cdot,\varrho')\dd\varrho'$) and the stable stratification assumption
	\[ \inf_{(\bx,\varrho)\in  \Omega } h_\star\leq \bar h(\varrho)+h_0(\bx,\varrho) \leq h^\star,\] 
	there exists a unique $(h^\h,\bu^\h)\in  \cC^0([0,T];H^{s+p,k+p}(\Omega)^{1+d})$ strong solution to~\eqref{eq:hydro-iso-intro} with initial data $(h^\h,\bu^\h)\big\vert_{t=0}=(h_0,\bu_0)$, where
	\[ T^{-1}= C_0\, \big(1+ \kappa^{-1} \big(\norm{\bar \bu'}_{L^2_\varrho}^2+M_0^2\big)  \big) \,  .\]
	Moreover, one has for all $t\in[0,T]$,
	\[ \forall (\bx,\varrho)\in  \Omega , \qquad h_\star/2 \leq  \bar h(\varrho)+h^\h(t,\bx,\varrho) \leq 2\,h^\star , \]
	and, denoting $\cH^\h(\cdot,\varrho):=\int_\varrho^{\rho_1} h^\h(\cdot,\varrho')\dd\varrho'$ and
	\begin{align}\label{def-wh}
		w^\h(\cdot,\varrho) &:=-\int_\varrho^{\rho_1} (\bar h(\varrho') +  h^\h(\cdot,\varrho')) \nabla_\bx \cdot  \bu^\h(\cdot,\varrho')+\nabla_\bx \cH^\h(\cdot,\varrho') \cdot ({\bar \bu}'(\varrho')+\partial_\varrho\bu^\h(\cdot,\varrho'))\dd\varrho'\,,\\
		\label{def-Ph} 
		P^\h(\cdot,\varrho)&:=\int_{\rho_0}^\varrho \varrho' h^\h(\cdot,\varrho')\dd\varrho'\,,
	\end{align}
	one has for any $ t\in[0,T]$,
	\begin{equation}\label{eq:hydro-estimate}
		\Norm{\big(h^\h(t,\cdot),\cH^\h(t,\cdot),\bu^\h(t,\cdot),w^\h(t,\cdot),P^\h(t,\cdot)\big)}_{H^{s+1,k+1}}\leq C_1\, M_0\,,
	\end{equation}
	and \begin{subequations}
		\begin{equation}\label{eq:hydro-consistency}
			\begin{aligned}
				\partial_t  h^\h+\nabla_\bx \cdot\big((\bar h +h^\h)(\bar\bu +\bu^\h)\big)&= \kappa \Delta_\bx  h^\h,\\
				\varrho\Big( \partial_t  \bu^\h+\big((\bar \bu +  \bu^\h - \kappa\tfrac{\nabla_\bx  h^\h}{ \bar h+  h^\h})\cdot\nabla_\bx\big)  \bu^\h\Big)+  \nabla_\bx  P^\h+ \frac{\nabla_\bx  \cH^\h}{\bar h+  h^\h}( \de_\varrho P^\h + \varrho \bar h^\h) &=0,\\
				\mu \varrho\Big( \partial_t  w^\h+\big(\bar \bu +  \bu^\h - \kappa\tfrac{\nabla_\bx   h^\h}{ \bar h+ h^\h}\big)\cdot\nabla_\bx   w^\h\Big)- \frac{\de_\varrho  P^\h}{\bar h+ h^\h} + \frac{\varrho  h^\h}{\bar h + h^\h}&=\mu\, R^\h,\\
				-(\bar h +  h^\h) \nabla_\bx \cdot  \bu^\h-\nabla_\bx \cH^\h \cdot ({\bar \bu}'+\partial_\varrho\bu^\h) +\partial_\varrho  w^\h&=0,
			\end{aligned}
		\end{equation}
		with $R^\h(t,\cdot) \in \cC^0([0,T];H^{s,k}(\Omega))$ and satisfying for any $ t\in[0,T]$,
		\begin{equation}\label{est:hydro-consistency}
			\Norm{R^\h(t,\cdot)}_{H^{s,k}}  \leq C_1\, M_0.
		\end{equation}
	\end{subequations}
\end{lemma}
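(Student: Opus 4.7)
The plan is to construct $(h^\h,\bu^\h)$ via Theorem~\ref{thm-well-posedness} applied at the elevated regularity level $H^{s+p,k+p}$, define $w^\h$ and $P^\h$ as prescribed, and then verify that the first, second, and fourth equations in~\eqref{eq:hydro-consistency} are identities rather than approximations, so that the entire non-hydrostatic discrepancy is concentrated in a single remainder $R^\h$ arising from the vertical momentum equation. The estimate on $R^\h$ will then follow from the hydrostatic equations, which allow us to trade the time derivative $\partial_t w^\h$ for extra horizontal and vertical derivatives of $(h^\h,\bu^\h)$.

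First I would invoke Theorem~\ref{thm-well-posedness} at index pair $(s+p,k+p)$ to obtain $(h^\h,\bu^\h)\in\cC^0([0,T];H^{s+p,k+p}(\Omega)^{1+d})$, the stratification bounds and the full control of $\cH^\h$ and $\cH^\h\vert_{\varrho=\rho_0}$. Product and composition estimates (Lemmas~\ref{L.product-Hsk}--\ref{L.composition-Hsk-ex}), together with the definition~\eqref{def-Ph}, immediately yield the desired $H^{s+1,k+1}$-bound on $(h^\h,\cH^\h,\bu^\h,P^\h)$ once $p\geq 1$. For $w^\h$, formula~\eqref{def-wh} expresses it as a $\varrho$-antiderivative of a combination involving one horizontal or vertical derivative of $(h^\h,\bu^\h)$; hence controlling $w^\h$ in $H^{s+1,k+1}$ costs one additional horizontal derivative on $(h^\h,\bu^\h)$, which is harmless provided $p$ is large enough.

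Next I would verify the three exact identities. The first equation in~\eqref{eq:hydro-consistency} coincides with the first line of~\eqref{eq:hydro-iso-intro-eq}. The fourth (incompressibility) equation is built into the definition~\eqref{def-wh}: differentiating in $\varrho$ gives exactly $\partial_\varrho w^\h=(\bar h+h^\h)\nabla_\bx\cdot\bu^\h+\nabla_\bx\cH^\h\cdot(\bar\bu'+\partial_\varrho\bu^\h)$, and $w^\h\vert_{\varrho=\rho_1}=0$ by construction. For the second equation, the key observation is that by~\eqref{eq:nablaphi-intro} and~\eqref{def-Ph} one has $\psi^\h=P^\h+\varrho\cH^\h$, so $\nabla_\bx\psi^\h=\nabla_\bx P^\h+\varrho\nabla_\bx\cH^\h$; combined with $\partial_\varrho P^\h=\varrho h^\h$, this gives $\tfrac{\nabla_\bx\cH^\h}{\bar h+h^\h}(\varrho\bar h+\partial_\varrho P^\h)=\varrho\nabla_\bx\cH^\h$, so that substituting into the second line of~\eqref{eq:hydro-consistency} reproduces the second line of~\eqref{eq:hydro-iso-intro-eq} exactly.

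The only genuinely non-trivial equation is the third. The identity $\partial_\varrho P^\h=\varrho h^\h$ cancels the algebraic part, so
\[
R^\h=\varrho\Big(\partial_t w^\h+\big(\bar\bu+\bu^\h-\kappa\tfrac{\nabla_\bx h^\h}{\bar h+h^\h}\big)\cdot\nabla_\bx w^\h\Big).
\]
The advective contribution is controlled in $H^{s,k}$ by product and composition estimates, using the bound on $w^\h$ in $H^{s+1,k+1}$ and the stable stratification lower bound $\bar h+h^\h\geq h_\star/2$. The delicate piece is $\partial_t w^\h$: differentiating~\eqref{def-wh} in time and substituting $\partial_t h^\h$, $\partial_t\cH^\h$ and $\partial_t\bu^\h$ using the hydrostatic equations~\eqref{eq:hydro-iso-intro-eq} (together with the formulas~\eqref{eq:nablaphi-intro}) expresses $\partial_t w^\h$ as a finite combination of $\varrho$-integrals of space derivatives of $(h^\h,\bu^\h,\cH^\h,\psi^\h)$, with the diffusive contribution $\kappa\Delta_\bx h^\h$ being the most derivative-costly. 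This is the main obstacle and is purely a bookkeeping one: tracking how each substitution consumes horizontal and vertical derivatives shows that a fixed $p\in\NN$ (independent of $s,k,\kappa,M_0$) suffices to bound $\Norm{\partial_t w^\h}_{H^{s,k}}\leq C_1 M_0$, and hence $\Norm{R^\h}_{H^{s,k}}\leq C_1 M_0$. Continuity in time of $R^\h$ follows from the continuity of $(h^\h,\bu^\h)$ in $H^{s+p,k+p}$ and the continuous dependence of each term composing $R^\h$ on this datum.
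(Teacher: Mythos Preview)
Your proposal is correct and follows essentially the same approach as the paper: apply Theorem~\ref{thm-well-posedness} at regularity $(s+p,k+p)$, use the explicit formulas~\eqref{def-wh}--\eqref{def-Ph} together with product/composition estimates to obtain~\eqref{eq:hydro-estimate}, observe via the identity $\psi^\h=P^\h+\varrho\cH^\h$ (equivalently~\eqref{eq.id-Phydro}) and $\partial_\varrho P^\h=\varrho h^\h$ that the first, second and fourth lines of~\eqref{eq:hydro-consistency} hold exactly, and then define $R^\h=\varrho\big(\partial_t w^\h+(\bar\bu+\bu^\h-\kappa\tfrac{\nabla_\bx h^\h}{\bar h+h^\h})\cdot\nabla_\bx w^\h\big)$ and estimate it by differentiating~\eqref{def-wh} in time and substituting the hydrostatic equations for $\partial_t h^\h$, $\partial_t\cH^\h$, $\partial_t\bu^\h$. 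Your write-up is in fact more explicit than the paper's on the verification of the second equation and on the bookkeeping of derivative losses in $\partial_t w^\h$.
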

\begin{proof}
	From Theorem~\ref{thm-well-posedness}  we infer immediately (for $p>2+d/2$) the existence, uniqueness and control of the hydrostatic solution $(h^\h,\bu^\h)\in \cC^0([0,T];H^{s+p,k+p}(\Omega)^{1+d})$, and $C_0>0$. From the formula~\eqref{def-wh},~\eqref{def-Ph} and product  estimates (Lemma~\ref{L.product-Hsk}) in the space $H^{s+p',k+p'}(\Omega)$ (for $1\leq p'\leq p$ sufficiently large) we infer the estimate~\eqref{eq:hydro-estimate}.
	
	We obtain similarly the desired consistency estimate,~\eqref{eq:hydro-consistency}-\eqref{est:hydro-consistency}, using the identity (recall~\eqref{eq.id-Phydro})
	\[
	P^\h +\varrho\cH^\h = \int_{\rho_0}^{\varrho} \cH^\h(\cdot,\varrho')\dd \varrho'+\rho_0 \cH^\h\big\vert_{\varrho=\rho_0}\,,
	\]
	and denoting
	\[R^\h:= \varrho\Big( \partial_t  w^\h+\big(\bar \bu +  \bu^\h - \kappa\tfrac{\nabla_\bx   h^\h}{ \bar h+ h^\h}\big)\cdot\nabla_\bx   w^\h\Big),\]
	differentiating with respect to time the identity~\eqref{def-wh}, and using~\eqref{eq:hydro-iso-intro} to infer the control of $ \de_t\bu^\h$ and $\de_t w^\h$.
\end{proof}

As a corollary to the above, we can write the equations satisfied by the difference between  $(h^\h,\bu^\h,w^\h)$, i.e. the maximal solution to the hydrostatic equations emerging from given regular, well-prepared initial data, and  $(h^\nh,\bu^\nh,w^\nh)$, i.e. the maximal solution to the non-hydrostatic with the same data (see Proposition~\ref{P.NONHydro-small-time}). Specifically, under the assumptions and using the notations of Lemma~\ref{L.CV-consistency}, we have that
\[
h^\d:=h^\nh-h^\h;  \quad 
\bu^\d:=\bu^\nh-\bu^\h;  \quad
w^\d:=w^\nh-w^\h; \quad 
\]
satisfies $(h^d,\bu^\d,w^\d)\big\vert_{t=0}=(0,0,0)$ and
\begin{equation}\label{eq:difference}
	\begin{aligned}
		\de_t h^\d 
		+\nabla_\bx\cdot \big(  (\bar \bu+\bu^\nh)   h^\d
		+ (\bar h + h^\h)  \bu^\d \big)  & =\kappa \Delta_\bx h^\d,\\
		\de_t \cH^\d 
		+ (\bar \bu+\bu^\nh) \cdot \nabla_\bx \cH^\d
		+\int_\varrho^{\rho_1} (\bar \bu'+\de_\varrho \bu^\nh) \cdot \nabla_\bx \cH^\d \, \dd \varrho' 
		+\int_\varrho^{\rho_1} (\bar h + h^\nh) \nabla_\bx \cdot \bu^\d \, \dd\varrho' & \\
		+\int_\varrho^{\rho_1} \bu^\d \cdot \nabla_\bx h^\h + h^\d \nabla_\bx \cdot \bu^\h \, \dd \varrho' & = \kappa\Delta_\bx\cH^\d,\\
		\de_t \bu^\d 
		+ \big((\bar\bu + \bu^\nh - \kappa \tfrac{\nabla_\bx h^\nh}{\bar h + h^\nh} ) \cdot \nabla_\bx\big) \bu^\d 
		+ {\frac{\rho_0}{\varrho} \nabla_\bx \cH^\d|_{\varrho=\rho_0}} 
		+ {\frac 1 \varrho \int_{\rho_0}^\varrho \nabla_\bx \cH^\d \, \dd\varrho'} & \\
		+ \big(( \bu^\d - \kappa ( \tfrac{\nabla_\bx h^\nh}{\bar h + h^\nh} - \tfrac{\nabla_\bx h^\h}{\bar h+h^\h})) \cdot \nabla_\bx\big) \bu^\h
		+ \frac{\nabla_\bx{ {\Pnh} }}{\varrho} 
		+ \frac{\nabla_\bx \cH^\nh}{\varrho (\bar h+h^\nh)} \de_\varrho{ {\Pnh} } &= 0,\\
		\mu \big( \partial_t  w^\d+\big(\bar \bu 
		+  \bu^\nh - \kappa\tfrac{\nabla_\bx   h^\nh}{ \bar h+ h^\nh}\big)\cdot\nabla_\bx   w^\d 
		+ ( \bu^\d - \kappa ( \tfrac{\nabla_\bx h^\nh}{\bar h + h^\nh} - \tfrac{\nabla_\bx h^\h}{\bar h+h^\h})) \cdot \nabla_\bx  w^\h\big)
		- \frac{\de_\varrho  P_\nh}{\varrho(\bar h+ h^\nh)}
		&=-\mu\, R^\h,\\
		-(\bar h +  h^\nh) \nabla_\bx \cdot  \bu^\d
		-  h^\d \nabla_\bx \cdot  \bu^\h
		-\nabla_\bx \cH^\d \cdot ({\bar \bu}'+\partial_\varrho\bu^\nh) 
		-\nabla_\bx \cH^\h \cdot (\partial_\varrho\bu^\d)
		+\partial_\varrho  w^\d &=0, 
	\end{aligned}
\end{equation}
where we denote  as usual $\cH^\h(\cdot,\varrho)=\int_\varrho^{\rho_1} h^\h(\cdot,\varrho')\dd\varrho'$ (and analogously $\cH^\nh$, $\cH^\d$), and define the non-hydrostatic pressure ${P_\nh(\cdot,\varrho):=P^\nh(\cdot,\varrho)-\int_{\rho_0}^\varrho \varrho' h^\nh(\cdot,\varrho')\dd\varrho'}$ where $P^\nh$ is defined by Corollary~\ref{C.Poisson}.

\subsection{Quasi-linearization}\label{S.CV-quasilinear}
In this section we extract the leading order terms of the system~\eqref{eq:difference}, in the spirit of Lemma~\ref{lem.quasilin-nonhydro}.

\begin{lemma}\label{L.CV-quasilinear}
	There exists $p\in\NN$ such that for any  $s, k \in \NN$ such that $k=s> \frac 52+\frac d 2$ and $\bar M,M,h_\star>0$,  there exists $C>0$ and $C_1>0$ such that the following holds.
	For any $0<\mu\leq \kappa \leq 1$, and
	for any $(\bar h, \bar \bu) \in W^{k+p,\infty}((\rho_0,\rho_1))^{1+d} $ satisfying
	\[  \norm{\bar h}_{W^{k+p,\infty}_\varrho } + \norm{\bar \bu'}_{W^{k+p-1,\infty}_\varrho }\leq \bar M \,;\]
	and any $(h^\nh,\bu^\nh,w^\nh) \in \cC^0([0,T^\nh];H^{s,k}(\Omega)^{d+2})$ and  $P^\nh\in L^2(0,T^\nh;H^{s+1,k+1}(\Omega))$ solution to~\eqref{eq:nonhydro-iso-redef}
	with some  $T^\nh>0$ and satisfying for any $t\in [0,T^\nh]$
	\begin{multline*}
		\Norm{h^\nh(t, \cdot)}_{H^{s-1, k-1}} + \Norm{\cH^\nh(t, \cdot)}_{H^{s,k}}+\norm{\cH^\nh(t, \cdot)\big\vert_{\varrho=\rho_0}}_{H^s_\bx}+\Norm{\bu^\nh(t, \cdot)}_{H^{s,k}}+\mu^{1/2}\Norm{w^\nh(t, \cdot)}_{H^{s,k}}\\
		+\kappa^{1/2}\Norm{h^\nh(t, \cdot)}_{H^{s,k}}+ \mu^{1/2} \kappa^{1/2} \Norm{\nabla_\bx \cH^\nh(t, \cdot)}_{H^{s,k}}
		\le M 
	\end{multline*}
	(where $\cH^\nh(t,\bx,\varrho):=\int_\varrho^{\rho_1} h^\nh(t,\bx,\varrho')\dd\varrho'$), the stable stratification assumption
	\[ \inf_{(\bx,\varrho)\in  \Omega } \bar h(\varrho)+h^\nh(t, \bx,\varrho) \geq h_\star,\]
	and the initial bound
	\[
	M_0:=
	\Norm{\cH^\nh\big\vert_{t=0}}_{H^{s+p,k+p}}+\Norm{\bu^\nh\big\vert_{t=0}}_{H^{s+p,k+p}}+\norm{(\cH^\nh\big\vert_{\varrho=\rho_0})\big\vert_{t=0}}_{H^{s+p}_\bx}+\kappa^{1/2}\Norm{h^\nh\big\vert_{t=0}}_{H^{s+p,k+p}} 
	\le M,
	\]
	we have the following.
	
	Denote $(h^\h,\bu^\h,w^\h) \in  \cC^0([0,T^\h];H^{s+1,k+1}(\Omega)^{2+d})$ the corresponding strong solution to the hydrostatic equations~\eqref{eq:hydro-iso-nu} (see Lemma~\ref{L.CV-consistency}) satisfying
	\[
	\Norm{h^\h(t,\cdot)}_{H^{s+1,k+1}} +\Norm{\bu^\h(t,\cdot)}_{H^{s+1,k+1}}+\Norm{\cH^\h(t,\cdot)}_{H^{s+1,k+1}} +\Norm{w^\h(t,\cdot)}_{H^{s+1,k+1}} \leq C_1M_0
	\]
	and, for any multi-index $\balpha \in \NN^d$ and $j\in\NN$ such that $0 \le |\balpha| +j\le s$,
	\[
	\cH^{(\balpha,j)}:=\de_\bx^\balpha\de_\varrho^j\cH^\nh-\de_\bx^\balpha\de_\varrho^j\cH^\h;  \quad 
	\bu^{(\balpha,j)}:=\de_\bx^\balpha\de_\varrho^j\bu^\nh-\de_\bx^\balpha\de_\varrho^j\bu^\h;  \quad
	w^{(\balpha,j)}:=\de_\bx^\balpha \de_\varrho^j w^\nh-\de_\bx^\balpha \de_\varrho^j w^\h; \quad 
	\]
	and
	${P^{(\balpha, j)}_\nh(\cdot,\varrho)=\de_\bx^\balpha\de_\varrho^j \big(P^\nh(\cdot,\varrho)-\int_{\rho_0}^\varrho \varrho' h^\nh(\cdot,\varrho')\dd\varrho'\big)}$.
	
	Then restricting to $t\in[0,\min(T^\h,T^\nh)]$ and such that
	\begin{align*}
		\cF_{s,k}:=\Norm{h^\d}_{H^{s-1, k-1}} + \Norm{\cH^\d}_{H^{s,k}}+\norm{\cH^\d\big\vert_{\varrho=\rho_0}}_{H^s_\bx}+\Norm{\bu^\d}_{H^{s,k}} +\mu^{1/2}\Norm{w^\d}_{H^{s,k}} &\\
		+\kappa^{1/2}\Norm{h^\d}_{H^{s,k}}+\mu^{1/2}\kappa^{1/2}\Norm{\nabla_\bx\cH^\d}_{H^{s,k}} & \leq \kappa^{1/2}M
	\end{align*}
	we have 
	\begin{subequations}
		\begin{equation}\label{eq.quasilin-diff}
			\begin{aligned}
				\partial_t  \cH^{(\balpha,j)}+ (\bar \bu + \bu^\nh) \cdot\nabla_\bx \cH^{(\balpha,j)} + w^{(\balpha,j)} - \kappa \Delta_\bx \cH^{(\balpha)}&= \widetilde R_{\balpha,j}, \\
				\partial_t  \cH^{(\balpha,j)}+ (\bar \bu + \bu^\nh) \cdot\nabla_\bx \cH^{(\balpha,j)} +\Big\langle \int_\varrho^{\rho_1} (\bar \bu' + \de_\varrho\bu^\nh) \cdot\nabla_\bx \cH^{(\balpha,j)} \, \dd\varrho'\phantom{\qquad -\kappa\Delta_\bx   \cH^{(\balpha,j)}} &\\
				+\int_\varrho^{\rho_1} (\bar h+h^\nh)\nabla_\bx \cdot\bu^{(\balpha,j)} \dd\varrho'\Big\rangle_{j=0}-\kappa\Delta_\bx   \cH^{(\balpha,j)}&=R_{\balpha,j},\\
				\partial_t\bu^{(\balpha,j)}+\big(({\bar \bu}+\bu^\nh-\kappa\tfrac{\nabla_\bx h^\nh}{\bar h+ h^\nh})\cdot\nabla_\bx\big) \bu^{(\balpha,j)}
				+\Big\langle \frac{\rho_0}{ \varrho }\nabla_\bx \cH^{(\balpha,j)}\big\vert_{\varrho=\rho_0} +\frac 1 \varrho\int_{\rho_0}^\varrho  \nabla_\bx  \cH^{(\balpha,j)} \dd\varrho' \Big\rangle_{j=0}&\\
				+\frac 1 \varrho\nabla_\bx \Pnh^{(\balpha,j)}  + \frac{\nabla_\bx \cH^\nh}{\varrho(\bar h + h^\nh)} \de_\varrho \Pnh^{(\balpha,j)} &=\bR^\nh_{\balpha,j},\\
				\mu^{1/2}  \left( \partial_t w^{(\balpha,j)} + (\bar \bu + \bu^\nh - \kappa \tfrac{\nabla_\bx h^\nh}{\bar h + h^\nh} ) \cdot \nabla_\bx w^{(\balpha,j)}\right) - \frac1{\mu^{1/2}}\frac{\de_\varrho \Pnh^{(\balpha,j)}}{ \varrho( \bar h + h^\nh)}  &= R^\nh_{\balpha,  j}, \\
				- \de_\varrho w^{(\balpha,j)} + (\bar h + h^\nh) \nabla_\bx \cdot \bu^{(\balpha,j)} + (\bar \bu'+\de_\varrho \bu^\nh) \cdot \nabla_\bx  \cH^{(\balpha,j)}&\\
				+   (\nabla_\bx \cdot \bu^\nh)h^{(\balpha,j)}+ (\nabla_\bx \cH^\nh )\cdot ( \de_\varrho  \bu^{(\balpha,j)})  &=R^{\rm div}_{\balpha,j},
			\end{aligned}
		\end{equation}
		where $(R_{\balpha,j}(t,\cdot),\bR^\nh_{\balpha,j}(t,\cdot),R_{\balpha,  j}^\nh(t,\cdot),R_{\balpha,  j}^{\rm div})\in L^2(\Omega)^{d+3}$,  $R_{\balpha,0}(t,\cdot)\in\cC((\rho_0,\rho_1);L^2(\RR^d))$ and 
		\begin{align} \label{eq.est-quasilin-diff0}
			&
			\Norm{  R_{\balpha,j}}_{L^2(\Omega) } +\norm{R_{\balpha,0}\big\vert_{\varrho=\rho_0}}_{L^2_\bx }  +  \Norm{ \widetilde R_{\balpha,j}}_{L^2(\Omega) } +\Norm{R^{\rm div}_{\balpha, j}}_{L^2(\Omega)}   \leq C\,\cF_{s,k},  \\
			\label{eq.est-quasilin-diff1}
			& \Norm{ \bR^\nh_{\balpha,j}}_{L^2(\Omega)}+\Norm{ R^\nh_{\balpha, j}}_{L^2(\Omega)} 
			\leq  
			C  \, \big(  \cF_{s,k} + \kappa  \Norm{\nabla_\bx h^\d}_{H^{s,k}} +\mu^{1/2}\kappa \Norm{\Delta_\bx \cH^\d}_{H^{s,k}} \big)+C\, \mu^{1/2}M ,
		\end{align}
	\end{subequations}
	and
	\begin{subequations}
		\begin{equation}\label{eq.quasilin-h-diff}
			\partial_t  h^{(\balpha,j)}+(\bar\bu+\bu^\nh)\cdot \nabla_\bx  h^{(\balpha,j)}-\kappa\Delta_\bx h^{(\balpha,j)}
			=r_{\balpha,j}+\nabla_\bx \cdot \br_{\balpha,j},
		\end{equation}
		where $(r_{\balpha,j}(t,\cdot),\br_{\balpha,j}(t,\cdot))\in L^2(\Omega)^{1+d}$ and
		\begin{equation}\label{eq.est-quasilin-h-diff}
			\kappa^{1/2} \Norm{r_{\balpha,j}}_{L^2(\Omega) }  + \Norm{\br_{\balpha,j}}_{L^2(\Omega) }  \leq C\,\cF_{s,k}.
		\end{equation}
	\end{subequations}
\end{lemma}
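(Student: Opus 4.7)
The proof runs in parallel to Lemma~\ref{lem.quasilin-nonhydro}, but is applied to the difference system \eqref{eq:difference} rather than directly to the non-hydrostatic equations \eqref{eq:nonhydro-iso-recall}. I apply $\de_\bx^\balpha \de_\varrho^j$ to each equation of \eqref{eq:difference}, extract the leading linear part -- a non-hydrostatic quasilinear system whose coefficients are built from $(\bar h, \bar \bu, h^\nh, \bu^\nh, \cH^\nh)$ -- as displayed in \eqref{eq.quasilin-diff}, and gather all commutators and nonlinear cross terms as remainders. The new feature compared to Lemma~\ref{lem.quasilin-nonhydro} is the presence of terms in which a difference variable is multiplied by (or differentiated alongside) a hydrostatic quantity $(h^\h, \bu^\h, w^\h)$. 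Since by Lemma~\ref{L.CV-consistency} the hydrostatic quantities are controlled in $H^{s+1,k+1}$ by $C_1 M_0$, such products are bounded linearly in $\cF_{s,k}$ via tame product (Lemma~\ref{L.product-Hsk}) and symmetric commutator (Lemma~\ref{L.commutator-Hsk-sym}) estimates; the $p$ additional derivatives available for the hydrostatic coefficients absorb the regularity demands of these estimates.

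\textbf{Remainders of ``hydrostatic type''.} The bounds for $(r_{\balpha,j},\br_{\balpha,j})$, $\widetilde R_{\balpha,j}$, $R_{\balpha,j}$ and $R^{\rm div}_{\balpha,j}$ in \eqref{eq.est-quasilin-diff0} reproduce the symmetric commutator analysis of Lemma~\ref{lem:quasilinearization} (and of the divergence-free remainder in Lemma~\ref{lem.quasilin-nonhydro}), applied now to the difference variables, plus additional genuinely linear cross terms such as $(\bar h + h^\h)\bu^\d$, $\int_\varrho^{\rho_1}\bu^\d \cdot \nabla_\bx h^\h\,\dd\varrho'$ or $h^\d\nabla_\bx\cdot \bu^\h$. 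Each of these is bounded by $\cF_{s,k}$ times $C_1 M_0$ using Minkowski's inequality and tame products. The integrated continuity equation \eqref{eq.id-eta} for $\cH^\d$ produces the first equation of \eqref{eq.quasilin-diff}, with $\widetilde R_{\balpha,j}$ consisting of a commutator $[\de_\bx^\balpha \de_\varrho^j, \bar\bu+\bu^\nh]\cdot\nabla_\bx\cH^\d$ and a cross term $\de_\bx^\balpha\de_\varrho^j(\bu^\d\cdot\nabla_\bx\cH^\h)$. The trace remainder $R_{\balpha,0}\big\vert_{\varrho=\rho_0}$ is handled verbatim as in Lemma~\ref{lem:quasilinearization} since the extra cross terms remain well-defined at $\varrho = \rho_0$.

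\textbf{Pressure, momentum and vertical velocity remainders.} The key estimate for $\bR^\nh_{\balpha,j}$ and $R^\nh_{\balpha,j}$ relies on estimate \eqref{ineq:est-Psmall-nonhydro} of Corollary~\ref{C.Poisson} applied to the non-hydrostatic pressure $P_\nh$: it yields $\Norm{\nabla^\mu_{\bx,\varrho} P_\nh}_{H^{s-1,k-1}} \lesssim \sqrt\mu\big(\Norm{\nabla_\bx\cH^\nh}_{H^{s-1,k-1}} + \norm{\cH^\nh\big\vert_{\varrho=\rho_0}}_{H^s_\bx} + \Norm{(\bu^\nh,\bu^\nh_\star)}_{H^{s,k}} + \sqrt\mu \Norm{(w^\nh,w^\nh_\star)}_{H^{s,k-1}}\big)$. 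Splitting each non-hydrostatic quantity as hydrostatic plus difference, the hydrostatic part contributes an $\mathcal{O}(M_0)$ factor which, after division by $\sqrt\mu$ in the momentum commutator $[\de_\bx^\balpha\de_\varrho^j, \nabla_\bx\cH^\nh/(\varrho(\bar h+h^\nh))]\de_\varrho P_\nh$, produces the $\sqrt\mu\,M$ term of \eqref{eq.est-quasilin-diff1}; the difference part contributes $\mathcal{O}(\cF_{s,k})$, plus -- coming specifically from $w^\d_\star = \kappa\Delta_\bx\cH^\d -\kappa \nabla_\bx h\cdot\nabla_\bx\cH/(\bar h+h)$ after splitting differences -- the $\sqrt\mu \kappa \Norm{\Delta_\bx\cH^\d}_{H^{s,k}}$ term. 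The momentum cross term $\bigl(\bu^\d - \kappa(\tfrac{\nabla_\bx h^\nh}{\bar h+h^\nh}-\tfrac{\nabla_\bx h^\h}{\bar h+h^\h})\bigr)\cdot\nabla_\bx\bu^\h$ is handled via the algebraic identity $\tfrac{\nabla_\bx h^\nh}{\bar h+h^\nh}-\tfrac{\nabla_\bx h^\h}{\bar h+h^\h} = \tfrac{\nabla_\bx h^\d}{\bar h+h^\nh} - \tfrac{h^\d\nabla_\bx h^\h}{(\bar h+h^\nh)(\bar h+h^\h)}$, together with Lemma~\ref{L.composition-Hsk-ex}: the first piece yields the $\kappa\Norm{\nabla_\bx h^\d}_{H^{s,k}}$ contribution, while the second piece is $\mathcal{O}(\cF_{s,k})$ via the $H^{s+1,k+1}$ control of $h^\h$. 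For $R^\nh_{\balpha,j}$, the analogous commutators of $\sqrt\mu(\bar\bu+\bu^\nh-\kappa\nabla_\bx h^\nh/(\bar h+h^\nh))$ with $\nabla_\bx w^\d$ yield terms bounded by $\sqrt\mu\cdot M\cdot\Norm{\nabla_\bx w^\d}_{H^{s-1,k-1}} \lesssim \cF_{s,k}$ (since $\sqrt\mu\Norm{w^\d}_{H^{s,k}}$ is part of $\cF_{s,k}$), while the consistency residual $\mu R^\h$ from Lemma~\ref{L.CV-consistency}, after division by $\sqrt\mu$, provides the $\sqrt\mu\,M$ contribution.

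\textbf{Main obstacle.} The principal difficulty is obtaining the \emph{linear} dependence on $\cF_{s,k}$ in \eqref{eq.est-quasilin-diff0}--\eqref{eq.est-quasilin-diff1}, rather than the quadratic dependence that would naively arise from products of difference variables. This is where the smallness constraint $\cF_{s,k} \leq \kappa^{1/2}M$ is decisive: any $\mathcal{O}(\cF_{s,k}^2)$ contribution is converted into $\mathcal{O}(\kappa^{1/2}M\cdot\cF_{s,k})$ and absorbed into the universal constant $C$. A second delicate point is isolating the $\sqrt\mu$ prefactor arising from Corollary~\ref{C.Poisson} in the commutators involving $P_\nh$; this requires the explicit decomposition $P_\nh = P^\nh - \Ph$ (and not $P_\nh = P^\nh - P^\h$), so that $\de_\varrho P_\nh/(\varrho(\bar h+h^\nh))$ inherits the full $\sqrt\mu$ scaling even though the hydrostatic pressures $P^\nh_\h$ and $P^\h$ differ by $\mathcal{O}(\cF_{s,k})$ only. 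The restriction $\mu \leq \kappa$ enters several times, both in applying Corollary~\ref{C.Poisson} and in balancing the weights that $\cF_{s,k}$ assigns to $w^\d$ and $\nabla_\bx\cH^\d$.
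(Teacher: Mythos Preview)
Your overall architecture is right, and the treatment of $r_{\balpha,j},\br_{\balpha,j},\widetilde R_{\balpha,j},R_{\balpha,j},R^{\rm div}_{\balpha,j}$ matches the paper. The gap is in the pressure step for $\bR^\nh_{\balpha,j}$ and $R^\nh_{\balpha,j}$.

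You propose to use estimate \eqref{ineq:est-Psmall-nonhydro}, which gives $\Norm{\nabla^\mu_{\bx,\varrho}P_\nh}_{H^{s-1,k-1}}\lesssim \sqrt\mu\,(\cdots)$. But the remainder estimate actually requires controlling $\mu^{-1/2}\Norm{\nabla^\mu_{\bx,\varrho}P_\nh}_{H^{s-1,k-1}}$: in $\bR^\nh_{\balpha,j}$ the term $[\de_\bx^\balpha\de_\varrho^j,\tfrac1\varrho]\nabla_\bx P_\nh$ carries no $\sqrt\mu$, and in $R^\nh_{\balpha,j}$ the term $\mu^{-1/2}[\de_\bx^\balpha\de_\varrho^j,\tfrac{1}{\varrho(\bar h+h^\nh)}]\de_\varrho P_\nh$ carries an explicit $\mu^{-1/2}$. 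With \eqref{ineq:est-Psmall-nonhydro} the hydrostatic part of the right-hand side (e.g.\ $\Norm{\nabla_\bx\cH^\nh}_{H^{s-1,k-1}}\lesssim M$) then produces $\mu^{-1/2}\cdot\sqrt\mu\cdot M=M$, not $\sqrt\mu M$; your sentence ``after division by $\sqrt\mu$ \dots\ produces the $\sqrt\mu\,M$ term'' has the arithmetic backwards. This would destroy the bootstrap in Proposition~\ref{P.CV-control}.

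The paper instead invokes the sharper estimate \eqref{ineq:est-Ptame-nonhydro}, which has prefactor $\mu$ but involves $(\Lambda^\mu)^{-1}$ on $\nabla_\bx\cH^\nh$ and $\cH^\nh|_{\varrho=\rho_0}$, and bare norms $\Norm{w^\nh}_{H^{s,k-1}}$, $\Norm{w_\star^\nh}_{H^{s,k-1}}$. One then splits each such quantity as hydrostatic plus difference and exploits $(\Lambda^\mu)^{-1}$ asymmetrically: on the hydrostatic piece one simply drops it and uses the extra derivative available from Lemma~\ref{L.CV-consistency}, e.g.\ $\Norm{(\Lambda^\mu)^{-1}\nabla_\bx\cH^\h}_{H^{s,k-1}}\le\Norm{\nabla_\bx\cH^\h}_{H^{s,k-1}}\lesssim M$; on the difference piece one uses $\Norm{(\Lambda^\mu)^{-1}f}_{H^{s,k-1}}\le \mu^{-1/2}\Norm{f}_{H^{s-1,k-1}}$ to trade one derivative for $\mu^{-1/2}$, e.g.\ $\Norm{(\Lambda^\mu)^{-1}\nabla_\bx\cH^\d}_{H^{s,k-1}}\le \mu^{-1/2}\Norm{\cH^\d}_{H^{s,k-1}}\lesssim\mu^{-1/2}\cF_{s,k}$. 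Similarly $\Norm{w^\nh}_{H^{s,k-1}}\le\Norm{w^\h}_{H^{s,k-1}}+\Norm{w^\d}_{H^{s,k-1}}\lesssim M+\mu^{-1/2}\cF_{s,k}$. After the global factor $\mu^{-1/2}\cdot\mu=\mu^{1/2}$ this yields precisely $\mu^{1/2}M+\cF_{s,k}$, together with the $\mu^{1/2}\kappa\Norm{\Delta_\bx\cH^\d}_{H^{s,k}}$ contribution from $w_\star^\nh$ and the $\kappa\Norm{\nabla_\bx h^\d}_{H^{s,k}}$ contribution from $\bu_\star^\nh$. This $(\Lambda^\mu)^{-1}$ splitting is the missing idea.
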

\begin{proof}
	Explicit expressions for the remainder terms follow from~\eqref{eq:difference}. Specifically, 
	the following equation is obtained by combining the second and last equation (recall~\eqref{eq.id-eta})
	\[\partial_t  \cH^\d+ (\bar \bu + \bu^\nh) \cdot\nabla_\bx \cH^\d + \bu^\d\cdot\nabla_\bx\cH^\h - w^\d = \kappa \Delta_\bx \cH^\d \]
	and hence
	\[ \widetilde R_{\balpha,j}:=-[\de_\bx^\balpha\de_\varrho^j, \bar \bu+\bu^\nh]\cdot\nabla_\bx\cH^\d- \de_\bx^\balpha\de_\varrho^j(\bu^\d\cdot\nabla_\bx\cH^\h),\]
	and it follows from product (Lemma~\ref{L.product-Hsk}) and commutator (Lemma~\ref{L.commutator-Hsk}) estimates
	\[ \Norm{\widetilde R_{\balpha,j}}_{L^2(\Omega)} \lesssim \big(\norm{\bar\bu'}_{W^{k-1,\infty}_\varrho}+\Norm{\bu^\nh}_{H^{s,k}} +\Norm{\cH^\h}_{H^{s+1,k}} \big)\,\big(\Norm{\cH^\d}_{H^{s,k-1}}+\Norm{ \bu^\d}_{H^{s,k}}\big) .\]
	
	Then, from the second equation we have
	\[ R_{\balpha,j} := R^{(i)}_{\balpha,j}+R^{(ii)}_{\balpha,j}\]
	with $R^{(i)}_{\balpha,j}:=- [\de_\bx^\balpha\de_\varrho^j,\bar\bu+\bu^\nh] \cdot \nabla_\bx \cH^\d$ and
	\[R^{(ii)}_{\balpha,j}:=\begin{cases}
		-\int_\varrho^{\rho_1} [\partial_\bx^\balpha , \de_\varrho \bu^\nh ]\cdot \nabla_\bx \cH^\d + [\partial_\bx^\balpha , h^\nh] \nabla_\bx \cdot \bu^\d 
		+ \partial_\bx^\balpha \big( \bu^\d \cdot \nabla_\bx h^\h + h^\d \nabla_\bx \cdot \bu^\h\big) \, \dd \varrho' & \text{if $j=0$,} \\
		\de_\varrho^{j-1}\partial_\bx^\balpha \big( (\bar \bu'+\de_\varrho \bu^\nh) \cdot \nabla_\bx \cH^\d+ (\bar h + h^\nh) \nabla_\bx \cdot \bu^\d + \bu^\d \cdot \nabla_\bx h^\h + h^\d \nabla_\bx \cdot \bu^\h\big)  & \text{if $j\geq1$.}
	\end{cases}
	\]
	Using Lemma~\ref{L.product-Hsk}, Lemma~\ref{L.commutator-Hsk} and the continuous embedding $L^\infty((\rho_0,\rho_1))\subset L^2((\rho_0,\rho_1)) \subset L^1((\rho_0,\rho_1))$ we find
	\begin{multline*}\Norm{ R_{\balpha,j}}_{L^2(\Omega)} \lesssim \big( \norm{\bar\bu'}_{W^{k-1,\infty}_\varrho}+\Norm{\bu^\nh}_{H^{s,k}}+\Norm{h^\nh}_{H^{s-1,k-1}}+\Norm{\cH^\nh}_{H^{s,k-1}} +\Norm{h^\h}_{H^{s+1,k-1}} +\Norm{\bu^\h}_{H^{s+1,k-1}}\big)\\
		\times\big(\Norm{\cH^\d}_{H^{s,k-1}}+\Norm{ \bu^\d}_{H^{s,k-1}}+\Norm{ h^\d}_{H^{s-1,k-1}}\big) 
	\end{multline*}
	where for $j=0$ we used the identities (and Lemma~\ref{L.embedding} and Lemma~\ref{L.commutator-Hs},(\ref{L.commutator-Hs-3}) and (\ref{L.commutator-Hs-3}))
	\begin{multline*} \int_\varrho^{\rho_1} [\partial_\bx^\balpha , \de_\varrho \bu^\nh ]\cdot \nabla_\bx \cH^\d + [\partial_\bx^\balpha , h^\nh] \nabla_\bx \cdot \bu^\d \dd\varrho' = \int_\varrho^{\rho_1} [\partial_\bx^\balpha ; \de_\varrho \bu^\nh ,\nabla_\bx \cH^\d]+ [\partial_\bx^\balpha ; h^\nh, \nabla_\bx \cdot \bu^\d ] \dd\varrho'\\
		+\int_\varrho^{\rho_1} \partial_\bx^\balpha \bu^\nh\cdot \nabla_\bx h^\d +(\partial_\bx^\balpha  \cH^\nh) (\nabla_\bx \cdot \de_\varrho\bu^\d) \dd\varrho'+ \partial_\bx^\balpha \bu^\nh\cdot\nabla_\bx \cH^\d  +(\partial_\bx^\balpha  \cH^\nh) (\nabla_\bx \cdot \bu^\d)
	\end{multline*}
	and
	\[ \int_\varrho^{\rho_1} \partial_\bx^\balpha ( h^\d \nabla_\bx \cdot \bu^\h) \dd\varrho' = \int_\varrho^{\rho_1} [\partial_\bx^\balpha ,\nabla_\bx \cdot \bu^\h] h^\d  +(\partial_\bx^\balpha \cH^\d)(\nabla_\bx \cdot \de_\varrho\bu^\h)\dd\varrho' +  (\partial_\bx^\balpha \cH^\d)(\nabla_\bx \cdot \bu^\h).\]
	This yields the desired estimate for $\Norm{ R_{\balpha,j}}_{L^2(\Omega)}$ and the corresponding estimate for $\norm{R_{\balpha,0}\vert_{\varrho=\rho_0}}_{L^2_\bx }$ relies on the additional estimate (stemming from Lemma \ref{L.commutator-Hs}(\ref{L.commutator-Hs-2}) and Lemma \ref{L.embedding})
	\[ \norm{ (R^{(i)}_{\balpha,0}+ \partial_\bx^\balpha \bu^\nh\cdot\nabla_\bx \cH^\d )\vert_{\varrho=\rho_0}}_{L^2_\bx }= \norm{ [\de_\bx^\balpha;\bu^\nh\vert_{\varrho=\rho_0}, \nabla_\bx \cH^\d\vert_{\varrho=\rho_0}]}_{L^2_\bx }\lesssim \Norm{\bu^\nh}_{H^{s,1}}\norm{\cH^\d\vert_{\varrho=\rho_0}}_{H^s_\bx}.\]
	
	Then, we have
	\begin{align*} R^{\rm div}_{\balpha, j}&:= [\de_\bx^\balpha\de_\varrho^j, \bar h ]\nabla_\bx \cdot \bu^\d +  [\de_\bx^\balpha\de_\varrho^j, \bar \bu'  ]\cdot \nabla_\bx  \cH^\d \\
		&\quad + [\de_\bx^\balpha\de_\varrho^j,  h^\nh ]\nabla_\bx \cdot \bu^\d +[\de_\bx^\balpha\de_\varrho^j, \nabla_\bx \cdot \bu^\h ] h^\d + \nabla_\bx \cdot (\bu^\h-\bu^\nh)\de_\bx^\balpha\de_\varrho^j h^\d\\
		&\quad + [\de_\bx^\balpha\de_\varrho^j, \de_\varrho \bu^\nh ]\cdot \nabla_\bx  \cH^\d+[\de_\bx^\balpha\de_\varrho^j, \nabla_\bx \cH^\h ] \cdot  \de_\varrho  \bu^\d +\nabla_\bx (\cH^\h-\cH^\nh)\cdot \de_\bx^\balpha\de_\varrho^j  \de_\varrho  \bu^\d  .
	\end{align*}
	Decomposing $ h^\nh= h^\h+ h^\d$, $ \de_\varrho \bu^\nh= \de_\varrho \bu^\h+ \de_\varrho \bu^\d$, some manipulations of the terms to exhibit symmetric commutators and the use of Lemma~\ref{L.commutator-Hsk} and~\ref{L.commutator-Hsk-sym} lead to
	\begin{multline*} \Norm{ R^{\rm div}_{\balpha, j} }_{L^2(\Omega)}  \lesssim \big(\norm{\bar h}_{W^{k,\infty}_\varrho}+\norm{\bar \bu'}_{W^{k,\infty}_\varrho}+\Norm{ h^\d}_{H^{s-1,k-1}}+\Norm{ \de_\varrho \bu^\d}_{H^{s-1,k-1}}\\
		+\Norm{ h^\h}_{H^{s,k}} +\Norm{ \nabla_\bx \cH^\h}_{H^{s,k}}+\Norm{ \bu^\h}_{H^{s+1,k+1}} \big)
		\times\big( \Norm{\bu^\d}_{H^{s,k}}+\Norm{\cH^\d}_{H^{s,k-1}}+\Norm{h^\d}_{H^{s-1,k-1}}\big) 
	\end{multline*}
	which concludes the estimate~\eqref{eq.est-quasilin-diff0}.
	
	We focus now on $\Norm{ \bR^\nh_{\balpha,j}}_{L^2(\Omega)}$ and $\Norm{ R^\nh_{\balpha, j}}_{L^2(\Omega)} $.
	We have
	\begin{align*}
		\bR^\nh_{\balpha,j}&\textstyle:=[\de_\bx^\balpha\de_\varrho^j,(\bar\bu + \bu^\nh - \kappa \tfrac{\nabla_\bx h^\nh}{\bar h + h^\nh})\cdot\nabla_\bx ]  \bu^\d 
		+ \big\langle\de_\bx^\balpha\de_\varrho^j \big(\frac{\rho_0}{\varrho} \nabla_\bx \cH^\d|_{\varrho=\rho_0} 
		+ \frac 1 \varrho \int_{\rho_0}^\varrho \nabla_\bx \cH^\d \, \dd\varrho' \big)\big\rangle_{j\geq 1} \\
		&\quad + \de_\bx^\balpha\de_\varrho^j\big(\big(( \bu^\d - \kappa ( \tfrac{\nabla_\bx h^\nh}{\bar h + h^\nh} - \tfrac{\nabla_\bx h^\h}{\bar h+h^\h})) \cdot \nabla_\bx\big) \bu^\h\big)
		+  [\de_\bx^\balpha\de_\varrho^j,\tfrac{1}{\varrho}]\nabla_\bx {\Pnh}  
		+ [\de_\bx^\balpha\de_\varrho^j,\tfrac{\nabla_\bx \cH^\nh}{\varrho (\bar h+h^\nh)}] \de_\varrho {\Pnh}  ,\\
		R^\nh_{\balpha, j}&:=\mu^{1/2} [\de_\bx^\balpha\de_\varrho^j,\big(\bar \bu +  \bu^\nh - \kappa\tfrac{\nabla_\bx   h^\nh}{ \bar h+ h^\nh}\big)\cdot\nabla_\bx]   w^\d 
		+ \mu^{1/2} \de_\bx^\balpha\de_\varrho^j\big( ( \bu^\d - \kappa ( \tfrac{\nabla_\bx h^\nh}{\bar h + h^\nh} - \tfrac{\nabla_\bx h^\h}{\bar h+h^\h})) \cdot \nabla_\bx  w^\h\big)\\
		&\quad -\tfrac1{\mu^{1/2}} [\de_\bx^\balpha\de_\varrho^j,\tfrac{1}{\varrho(\bar h+ h^\nh)}] \de_\varrho  P_\nh -\mu^{1/2}\de_\bx^\balpha\de_\varrho^j R^\h,
	\end{align*}
	where $R^\h$ is the consistency remainder introduced in Lemma~\ref{L.CV-consistency},~\eqref{eq:hydro-consistency} and estimated in~\eqref{est:hydro-consistency}, namely \[\Norm{\de_\bx^\balpha\de_\varrho^j R^\h}_{L^2(\Omega)} \lesssim M_0\leq M.\]
	Let us estimate each contribution. In the following, we shall use repeatedly that $\cF^{s,k}\leq \kappa^{1/2}M$ and hence  $\Norm{ h^\d}_{H^{s,k}}\leq M$. As a consequence, by Lemma~\ref{L.composition-Hsk-ex} and triangular inequality,
	\[\Norm{\tfrac{h^\nh }{\bar h+ h^\nh}}_{H^{s,k}} \leq C(h_\star,\norm{\bar h}_{W^{k,\infty}_\varrho}\Norm{h^\nh}_{H^{s-1,k-1}})\Norm{h^\nh }_{H^{s,k}} \leq  C(h_\star,\bar M,M)M. \]
	By Lemma~\ref{L.commutator-Hsk}, we have
	\begin{multline*}\Norm{ [\de_\bx^\balpha \de_\varrho^j,(\bar\bu+\bu^\nh )\cdot \nabla_\bx] \bu^\d}_{L^2(\Omega)}+\mu^{1/2}\Norm{[\de_\bx^\balpha \de_\varrho^j,(\bar\bu+\bu^\nh )\cdot \nabla_\bx] w^\d }_{L^2(\Omega)}\\ \lesssim \big(\norm{\bar\bu'}_{W^{k-1,\infty}_\varrho}+\Norm{\bu^\nh}_{H^{s,k}}\big)\big(\Norm{\nabla_\bx \bu^\d}_{H^{s-1,k-1}}+\mu^{1/2}\Norm{\nabla_\bx w^\d}_{H^{s-1,k-1}}\big) \leq C(\bar M, M) \cF_{s,k}.
	\end{multline*}
	By Lemma~\ref{L.commutator-Hsk-sym}, Lemma~\ref{L.product-Hsk} and Lemma~\ref{L.composition-Hsk-ex} 
	\begin{align*}\Norm{  [\de_\bx^\balpha\de_\varrho^j,\big(\tfrac{\nabla_\bx h^\nh}{\bar h+ h^\nh} \cdot \nabla_\bx\big)] \bu^\d}_{L^2(\Omega)}
		&\lesssim \Norm{\tfrac{\nabla_\bx h^\nh}{\bar h+ h^\nh}}_{H^{s,k}}\Norm{\nabla_\bx \bu^\d}_{H^{s-1,k-1}}\\
		&\lesssim \Norm{\tfrac{\nabla_\bx h^\nh}{\bar h}}_{H^{s,k}}\big(1+\Norm{\tfrac{h^\nh }{\bar h+ h^\nh}}_{H^{s,k}}\big) \cF_{s,k}\\
		&\leq C(h_\star,\bar M,M) \big(M+\Norm{\nabla_\bx h^\d}_{H^{s,k}} \big) \cF_{s,k}.
	\end{align*}
	In the same way, we have
	\[ \mu^{1/2}\Norm{  [\de_\bx^\balpha\de_\varrho^j,\big(\tfrac{\nabla_\bx h^\nh}{\bar h+ h^\nh} \cdot \nabla_\bx\big)] w^\d}_{L^2(\Omega)}\leq C(h_\star,\bar M,M) \big(M+\Norm{\nabla_\bx h^\d}_{H^{s,k}} \big) \cF_{s,k}.\]
	When $j\geq 1$,
	\begin{multline*}\Norm{ \de_\bx^\balpha\de_\varrho^j \big(\frac{\rho_0}{\varrho} \nabla_\bx \cH^\d|_{\varrho=\rho_0} \big)}_{L^2(\Omega)}
		+ \Norm{ \de_\bx^\balpha\de_\varrho^j \big(\frac 1 \varrho \int_{\rho_0}^\varrho \nabla_\bx \cH^\d \, \dd\varrho' \big)}_{L^2(\Omega)}\\
		\lesssim \norm{ \nabla_\bx \cH^\d|_{\varrho=\rho_0}}_{H^{s-1}_\bx}+ \Norm{ \nabla_\bx \cH^\d}_{H^{s-1,k-1}}\leq \cF_{s,k}.
	\end{multline*}
	By Lemma~\ref{L.product-Hsk}, we have
	\begin{multline*}\Norm{ \de_\bx^\balpha \de_\varrho^j \big(( \bu^\d \cdot \nabla_\bx) \bu^\h\big)}_{L^2(\Omega)}+\mu^{1/2}\Norm{\de_\bx^\balpha \de_\varrho^j \big(( \bu^\d \cdot \nabla_\bx) w^\h\big) }_{L^2(\Omega)}\\ \lesssim \Norm{\bu^\d}_{H^{s,k}}\big(\Norm{\nabla_\bx \bu^\h}_{H^{s,k}}+\mu^{1/2}\Norm{\nabla_\bx w^\d}_{H^{s,k}}\big) \leq C(\bar M, M) \cF_{s,k}.
	\end{multline*}
	By repeated use of tame estimates in Lemma~\ref{L.product-Hsk} and Lemma~\ref{L.composition-Hsk}, we find
	\begin{align*} \Norm{\de_\bx^\balpha\de_\varrho^j\big( (\tfrac{\nabla_\bx h^\nh}{\bar h + h^\nh} - \tfrac{\nabla_\bx h^\h}{\bar h+h^\h}) \cdot \nabla_\bx\big) \bu^\h\big)}_{L^2(\Omega)}
		&\lesssim \Norm{\tfrac{\nabla_\bx h^\d}{\bar h +h^\h}+ \tfrac{h^\d\nabla_\bx h^\nh}{(\bar h+h^\nh)(\bar h+h^\h)}}_{H^{s,k}}
		\Norm{\nabla_\bx \bu^\h}_{H^{s,k}}\\
		&\leq  C(h_\star,\bar M,M) M ( \Norm{\nabla_\bx h^\d}_{H^{s,k}} + M\Norm{ h^\d}_{H^{s,k}}),
	\end{align*}
	and similarly
	\[\mu^{1/2} \Norm{\de_\bx^\balpha\de_\varrho^j\big( (\tfrac{\nabla_\bx h^\nh}{\bar h + h^\nh} - \tfrac{\nabla_\bx h^\h}{\bar h+h^\h}) \cdot \nabla_\bx\big) w^\h\big)}_{L^2(\Omega)}\leq  C(h_\star,\bar M,M) M ( \Norm{\nabla_\bx h^\d}_{H^{s,k}} + M\Norm{ h^\d}_{H^{s,k}}).\]
	Contributions from the pressure remain.
	By direct inspection, and since $|\balpha|+j-1\leq s-1$,
	\[  \Norm{  [\de_\bx^\balpha\de_\varrho^j,\tfrac{1}{\varrho} ]\nabla_\bx \Pnh }_{L^2(\Omega)}\lesssim \Norm{\nabla_\bx \Pnh}_{H^{s-1,k-1}}.\]
	By Lemma~\ref{L.commutator-Hsk} and since $s=k>\frac52+\frac d2$, using the above and Lemma~\ref{L.embedding}
	\begin{align*}  \Norm{  [\de_\bx^\balpha\de_\varrho^j,\tfrac{\nabla_\bx \cH^\nh}{\varrho(\bar h + h^\nh)} ]\de_\varrho \Pnh}_{L^2(\Omega)} 
		&\lesssim \Norm{\tfrac{\nabla_\bx \cH^\nh}{\varrho(\bar h + h^\nh)}}_{H^{s,k}}\Norm{\de_\varrho \Pnh}_{H^{s-1,k-1}}\\
		&\leq  C(h_\star,\bar M,M)\,\big( M+ \Norm{\nabla_\bx \cH^\d}_{H^{s,k}} \big) \Norm{\de_\varrho \Pnh}_{H^{s-1,k-1}}. 
	\end{align*}
	Similarly,
	\begin{align*}  \Norm{  [\de_\bx^\balpha\de_\varrho^j,\tfrac{1}{\varrho(\bar h + h^\nh)}]\de_\varrho \Pnh }_{L^2(\Omega)} 
		&\lesssim \big(\norm{\tfrac1{\varrho\bar h}}_{W^{k,\infty}_\varrho}+\Norm{\tfrac{h^\nh}{\varrho(\bar h + h^\nh)}}_{H^{s,k}}\big)\Norm{\de_\varrho \Pnh}_{H^{s-1,k-1}}\\
		&\leq  C(h_\star,\bar M,M)\,  \Norm{\de_\varrho \Pnh }_{H^{s-1,k-1}}. 
	\end{align*}
	Altogether, and using $\cF_{s,k} \leq \kappa^{1/2}M$ and $\mu\leq \kappa$,  we find 
	\begin{equation}\label{est-Rj} \Norm{ \bR^\nh_{\balpha,j}}_{L^2(\Omega)}+\Norm{ R^\nh_{\balpha, j}}_{L^2(\Omega)}  \leq C(h_\star,\bar M,M)  \, \big(  \cF_{s,k}  + \kappa  \Norm{\nabla_\bx h^\d}_{H^{s,k}} +  \mu^{-1/2}  \Norm{\nabla^\mu_{\bx,\varrho} \Pnh }_{H^{s-1,k-1}}\big).
	\end{equation}
	Now, we use Corollary~\ref{C.Poisson}, specifically~\eqref{ineq:est-Ptame-nonhydro}:
	\begin{align*}
		\Norm{\nabla^\mu_{\bx,\varrho} \Pnh}_{H^{s-1,k-1}} & \leq C(h_\star,\bar M,M)\,\mu\, \Big( \Norm{ (\Lambda^\mu)^{-1} \nabla_\bx \cH^\nh}_{H^{s,k}} + \norm{(\Lambda^\mu)^{-1} \cH^\nh\big\vert_{\varrho=\rho_0}}_{H^{s+1}_\bx} \\
		&\qquad +\Norm{(\bu^\nh,\bu_\star^\nh)}_{H^{s,k}} +\Norm{( w^\nh,w_\star^\nh)}_{H^{s,k-1}} + \Norm{\bu_\star^\nh}_{H^{s,k}} \Norm{ w^\nh}_{H^{s,k-1}}\Big).
	\end{align*}
	where we  recall the notations $\Lambda^\mu:=1+\sqrt \mu |D|$, $\bu_\star^\nh:=-\kappa\frac{\nabla_\bx  h^\nh}{\bar h+ h^\nh} $ and  $w_\star^\nh:=\kappa\Delta_\bx \cH^\nh-\kappa\frac{\nabla_\bx h^\nh\cdot\nabla_\bx \cH^\nh}{\bar h+h^\nh}$.
	Then we use on one hand that
	\[ \Norm{ (\Lambda^\mu)^{-1} h^\nh}_{H^{s,k-1}} \leq \Norm{ h^\h}_{H^{s,k-1}}+\mu^{-1/2}\Norm{  h^\d}_{H^{s-1,k-1}} \lesssim M+\mu^{-1/2} \cF_{s,k},\]
	and, similarly,
	\begin{align*}\Norm{ (\Lambda^\mu)^{-1} \nabla_\bx \cH^\nh}_{H^{s,k}} &
		\leq \Norm{  \nabla_\bx \cH^\h}_{H^{s,k}} +\mu^{-1/2}\Norm{ \nabla_\bx \cH^\d}_{H^{s-1,k}}\lesssim M+\mu^{-1/2} \cF_{s,k},\\
		\norm{(\Lambda^\mu)^{-1} \cH^\nh\big\vert_{\varrho=\rho_0}}_{H^{s+1}_\bx}&\leq  \norm{\cH^\h\big\vert_{\varrho=\rho_0}}_{H^{s+1}_\bx} +\mu^{-1/2} \norm{\cH^\d\big\vert_{\varrho=\rho_0}}_{H^{s}_\bx} \lesssim M+\mu^{-1/2} \cF_{s,k}.
	\end{align*}
	On the other hand,
	\[\Norm{ w^\nh}_{H^{s,k-1}} \leq  \Norm{ w^\h}_{H^{s,k-1}}+\Norm{  w^\d}_{H^{s,k-1}} \lesssim C(\bar M,M) M+\mu^{-1/2} \cF_{s,k}\]
	where, for the first contribution, we applied the product estimates to the expression in~\eqref{def-wh}.
	Then, we have
	\begin{align*}\Norm{\bu_\star^\nh}_{H^{s,k}} &\leq \kappa \Norm{ \tfrac{\nabla_\bx h^\nh}{\bar h+h^\nh} }_{H^{s,k}} \leq  \kappa \big( \Norm{\nabla_\bx h^\nh }_{H^{s,k}} + \Norm{h^\nh }_{H^{s,k}}^2\big) \\
		&\leq C(h_\star,\bar M,M)\,\big( M+\kappa \Norm{\nabla_\bx h^\d }_{H^{s,k}}\big),\\  
		\Norm{ w_\star^\nh}_{H^{s,k-1}} &\leq \kappa\Norm{\Delta_\bx \cH^\nh}_{H^{s,k-1}}+C(h_\star,\bar M,M)\kappa\Norm{\nabla_\bx h^\nh}_{H^{s,k-1}} \Norm{\nabla_\bx \cH^\nh}_{H^{s,k-1}}\\
		&\leq C(h_\star,\bar M,M)\,\big(M+ \kappa\Norm{\Delta_\bx \cH^\d}_{H^{s,k-1}} + \kappa^{1/2}M\Norm{\nabla_\bx h^\d}_{H^{s,k-1}} \big).
	\end{align*}
	
	Altogether, this yields
	\begin{multline*}
		\mu^{-1/2}\Norm{\nabla^\mu_{\bx,\varrho} \Pnh}_{H^{s-1,k-1}}  \leq C_0\, \Big( \mu^{1/2}M+ \cF_{s,k} + \mu^{1/2}\kappa^{1/2} \Norm{\nabla_\bx h^\d }_{H^{s,k}}+\mu^{1/2}\kappa \Norm{\Delta_\bx \cH^\d}_{H^{s,k}} \\
		+\big( M+\kappa \Norm{\nabla_\bx h^\d }_{H^{s,k}} \big)\,\big( \mu^{1/2}M+ \cF_{s,k} \big)\Big).
	\end{multline*}
	Plugging this estimate in~\eqref{est-Rj}, using $\cF_{s,k}\leq \kappa^{1/2} M$ and $\mu\leq \kappa$, we obtain~\eqref{eq.est-quasilin-diff1}.
	
	\medskip
	
	Finally, we set
	\[
	\br_{\balpha,j}:= - [\de_\bx^\balpha\de_\varrho^j, \bar \bu+\bu^\nh ]  h^\d - \de_\bx^{\balpha} \de_\varrho^{j} \big( (\bar h+ h^\h) \bu^\d\big), \qquad 
	r_{\balpha,j}:= -(\de_\bx^\balpha\de_\varrho^j  h^\d )\nabla_\bx \cdot \bu^\nh  .
	\]
	By Lemma~\ref{L.product-Hsk} and Lemma~\ref{L.commutator-Hsk} and since  $s\geq s_0+\frac32$ and $2\leq k= s$, we have
	\[\Norm{\br_{\balpha,j}}_{L^2(\Omega)} \lesssim  \big(\norm{\bar \bu'}_{W^{k-1,\infty}_\varrho}+\Norm{\bu^\nh}_{H^{s,k}}\big)\Norm{h^\d}_{H^{s-1,k-1}} +\big(\norm{\bar h}_{W^{k,\infty}_\varrho}+\Norm{h^\h}_{H^{s,k}}\big)\Norm{\bu^\d}_{H^{s,k}}\]
	and (by Lemma~\ref{L.embedding})
	\[\Norm{r_{\balpha,j}}_{L^2(\Omega)} \lesssim \Norm{h^\d}_{H^{s,k}}\Norm{\bu^\nh}_{H^{s,k}}.\]
	This yields immediately~\eqref{eq.est-quasilin-h-diff}. The proof is complete.
\end{proof}

\subsection{Strong convergence}\label{S.CV-control}
In this section, we prove that for $\mu$ sufficiently small and starting from regu\-lar and well-prepared initial data, the solution to the non-hydrostatic equations exists at least within the existence time of the solution to the hydrostatic equation. We also prove the strong convergence of the non-hydrostatic system to the hydrostatic one as $\mu\searrow0$.

\begin{proposition}\label{P.CV-control}
	There exists $p\in\NN$ such that for any  $s, k \in \NN$ such that $k=s> \frac 52+\frac d 2$ and any $\bar M,M,h_\star,h^\star>0$,  there exists $C=C(s,k,\bar M,M,h_\star,h^\star)>0$ such that the following holds. For any $ 0< M_0\leq M$, $0<\kappa\leq 1$, and $\mu>0$ such that 
	\[\mu \leq \kappa/(C M_0^2 ),\] 
	for any $(\bar h, \bar \bu) \in W^{k+p,\infty}((\rho_0,\rho_1))^{1+d} $ satisfying
	\[  \norm{\bar h}_{W^{k+p,\infty}_\varrho } + \norm{\bar \bu'}_{W^{k+p-1,\infty}_\varrho }\leq \bar M \,;\]
	for any initial data $(h_0, \bu_0, w_0)\in H^{s,k}(\Omega)^{2+d}$ satisfying the boundary condition $w_0|_{\varrho=\rho_1}=0$ and the incompressibility condition
	\[-(\bar h+h_0)\nabla_\bx\cdot \bu_0-(\nabla_\bx \cH_0)\cdot({\bar \bu}'+\partial_\varrho\bu_0)+\partial_\varrho  w_0=0,\] 
	(denoting $\cH_0(\cdot,\varrho)=\int_\varrho^{\rho_1} h_0(\cdot,\varrho')\dd\varrho'$), the bounds
	\[
	\Norm{\cH_0}_{H^{s+p,k+p}}+\Norm{\bu_0}_{H^{s+p,k+p}}+\norm{\cH_0\big\vert_{\varrho=\rho_0}}_{H^{s+p}_\bx}+\kappa^{1/2}\Norm{h_0}_{H^{s+p,k+p}} =M_0
	\le M
	\]
	and the stable stratification assumption
	\[ \inf_{(\bx,\varrho)\in  \Omega } h_\star\leq \bar h(\varrho)+h_0(\bx,\varrho) \leq h^\star,\]
	the following holds. Denoting
	\[
	(T^\h)^{-1}= C^\h\, \big(1+ \kappa^{-1} \big(\norm{\bar \bu'}_{L^2_\varrho}^2+M_0^2\big)  \big),  
	\]
	as in Lemma~\ref{L.CV-consistency} 
	there exists a unique strong solution $(h^\nh,\bu^\nh,w^\nh)\in \cC^0([0,T^\h];H^{s,k}(\Omega)^{1+d})$ to the non-hydrostatic equations~\eqref{eq:nonhydro-iso-intro} with initial data $(h^\nh,\bu^\nh, w^\nh)\big\vert_{t=0}=(h_0,\bu_0, w_0)$.
	Moreover, one has $h^\nh\in  L^2(0,T^\h;H^{s+1,k}(\Omega))$, $\cH^\nh\in  L^2(0,T^\h;H^{s+2,k}(\Omega))$ and, for any $t\in[0,T^\h]$, the lower and the upper bounds hold
	\[    \inf_{(\bx,\varrho)\in \Omega } \bar h(\varrho)+h^\nh(t,\bx,\varrho) \geq h_\star/3, \quad  \sup_{(\bx,\varrho)\in  \Omega } \bar h(\varrho)+h^\nh(t,\bx,\varrho) \le 3h^\star, \]
	and the estimate below holds true
	\begin{align}&
		\Norm{\cH^\nh(t,\cdot)}_{H^{s,k}}+\Norm{\bu^\nh(t,\cdot)}_{H^{s,k}} +\mu^{1/2}\Norm{w^\nh(t,\cdot)}_{H^{s,k}}+\norm{\cH^\nh\big\vert_{\varrho=\rho_0}(t,\cdot)}_{H^s_\bx} \notag\\ 
		&\quad+\kappa^{1/2}\Norm{h^\nh(t,\cdot)}_{H^{s,k}}  + \mu^{1/2}\kappa^{1/2} \Norm{\nabla_\bx \cH^\nh (t, \cdot)}_{H^{s,k}} \notag\\ 
		&\quad+ \kappa^{1/2} \Norm{\nabla_\bx \cH^\nh}_{L^2(0,t;H^{s,k})} +   \kappa^{1/2} \norm{\nabla_\bx \cH^\nh \big\vert_{\varrho=\rho_0}  }_{ L^2(0,t;H^s_\bx)} \notag\\ 
		&\quad+\kappa \Norm{\nabla_\bx h^\nh}_{L^2(0,t;H^{s,k})} +  \mu^{1/2}\kappa \Norm{\nabla_\bx^2 \cH^\nh }_{L^2(0,t; H^{s,k})} \leq  C\, M_0, \label{eq:control-F-nh}
	\end{align}
	and $(h^\nh,\bu^\nh)$ converges strongly in $L^\infty(0,T;H^{s,k}(\Omega)^{1+d})$ towards $(h^\h,\bu^\h)$ the corresponding solution to the hydrostatic equations~\eqref{eq:hydro-iso-intro}, as $\mu\searrow 0$.
\end{proposition}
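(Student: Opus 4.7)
The plan is a continuity (bootstrap) argument on the difference $(h^\d,\bu^\d,w^\d) := (h^\nh-h^\h,\bu^\nh-\bu^\h,w^\nh-w^\h)$, viewing the hydrostatic solution of Lemma~\ref{L.CV-consistency} as an approximate solution to the non-hydrostatic equations with consistency error $\mu R^\h$ of size $\mu M_0$. Proposition~\ref{P.NONHydro-small-time} yields a maximal existence time $T^\nh > 0$ for the non-hydrostatic solution with the prescribed initial data. Set
\[
T_\star := \sup\Big\{T\in (0,\min(T^\h,T^\nh)] \ : \ \forall t\in[0,T],\ \cF_{s,k}(t)\leq K\mu^{1/2}M_0 \ \text{and}\ \tfrac{h_\star}{3}\leq \bar h + h^\nh(t,\cdot)\leq 3h^\star\Big\},
\]
where $\cF_{s,k}$ is the functional introduced in Lemma~\ref{L.CV-quasilinear} and $K = K(s,k,\bar M,M,h_\star,h^\star)$ is a large constant to be fixed at the end. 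Since $(h^\d,\bu^\d,w^\d)\big\vert_{t=0}=0$ and $h^\nh$ is continuous in time with values in $H^{s,k}\subset L^\infty$, we have $T_\star>0$. The goal is to prove that under the smallness assumption $\mu\leq\kappa/(CM_0^2)$ with $C$ sufficiently large, the two inequalities defining $T_\star$ hold strictly on $[0,T_\star]$, from which we deduce $T_\star = \min(T^\h,T^\nh)$ and then, by the blow-up criterion underlying Proposition~\ref{P.NONHydro-small-time}, $T_\star = T^\h$.

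On $[0,T_\star]$ the hypotheses of Lemma~\ref{L.CV-quasilinear} are met (note $\mu\leq\kappa$ and $K\mu^{1/2}M_0 \leq \kappa^{1/2}M$ when $\mu\leq\kappa/(CM_0^2)$ with $C$ large). We combine the quasilinearized system~\eqref{eq.quasilin-diff}--\eqref{eq.est-quasilin-diff1} with Lemma~\ref{lem:estimates-nonhydro} applied to the highest-order subsystem ($j=0$, $|\balpha|\leq s$) and with its adaptation indicated in Remark~\ref{R.estimates-nonhydro} for $j\geq 1$; we apply Lemma~\ref{lem:estimate-transport-diffusion} to the transport-diffusion equations~\eqref{eq.quasilin-h-diff} satisfied by $\de_\varrho^j\de_\bx^\balpha h^\d$, and to the first line of~\eqref{eq.quasilin-diff} for $\de_\varrho^j\de_\bx^\balpha\cH^\d$ (after differentiating once in $\bx$, as in the proof of Proposition~\ref{P.NONHydro-large-time}). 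Summing the ensuing differential inequalities produces, for some universal $c_0>0$ and $C_1 = C_1(\bar M,M,h_\star,h^\star)$, a bound of the form
\begin{multline*}
\tfrac{\dd}{\dd t}\cF_{s,k}^2 + c_0\,\kappa\Big(\Norm{\nabla_\bx h^\d}_{H^{s,k}}^2 + \mu\Norm{\Delta_\bx \cH^\d}_{H^{s,k}}^2 + \Norm{\nabla_\bx\cH^\d}_{H^{s,k}}^2 + \norm{\nabla_\bx\cH^\d\vert_{\varrho=\rho_0}}_{H^s_\bx}^2\Big) \\
\leq C_1\big(1+\kappa^{-1}(\norm{\bar\bu'}_{L^2_\varrho}^2+M_0^2)\big)\cF_{s,k}^2 \;+\; C_1\,\mu^{1/2}\,M_0\,\cF_{s,k} \;+\; C_1\,\cF_{s,k}\,\kappa\Big(\Norm{\nabla_\bx h^\d}_{H^{s,k}} + \mu^{1/2}\Norm{\Delta_\bx\cH^\d}_{H^{s,k}}\Big).
\end{multline*}
The main delicate point, and the main obstacle in this proof, is the treatment of the loss-of-derivative remainders $\kappa\Norm{\nabla_\bx h^\d}_{H^{s,k}}$ and $\mu^{1/2}\kappa\Norm{\Delta_\bx\cH^\d}_{H^{s,k}}$ coming from~\eqref{eq.est-quasilin-diff1}: these are absorbed into the parabolic dissipation on the left-hand side via Young's inequality $ab\leq \tfrac{\epsilon}{2}a^2 + \tfrac{1}{2\epsilon}b^2$, using that $\cF_{s,k}\leq K\mu^{1/2}M_0$ together with $\mu/\kappa\ll 1$, which makes the prefactor $C_1\cF_{s,k}/(c_0\kappa)$ arbitrarily small.

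Integrating in time and using that $1+\kappa^{-1}(\norm{\bar\bu'}_{L^2_\varrho}^2+M_0^2) \lesssim (T^\h)^{-1}$ by the very definition of $T^\h$, Gronwall's lemma yields $\cF_{s,k}(t)^2 \leq C_2\,\mu\, M_0^2$ on $[0,T_\star]$, with $C_2 = C_2(s,k,\bar M,M,h_\star,h^\star)$. Fixing $K^2 \geq 4C_2$ gives $\cF_{s,k}(t)\leq \tfrac{1}{2}K\mu^{1/2}M_0$, strictly improving the first bound defining $T_\star$. The pointwise bounds on $\bar h+h^\nh$ follow from the decomposition $\bar h+h^\nh = (\bar h+h^\h)+h^\d$: Lemma~\ref{L.CV-consistency} gives $h_\star/2\leq \bar h+h^\h \leq 2h^\star$, while the embedding $H^{s-1,k-1}\subset L^\infty(\Omega)$ of Lemma~\ref{L.embedding} yields $\Norm{h^\d}_{L^\infty(\Omega)}\lesssim \Norm{h^\d}_{H^{s-1,k-1}}\leq \kappa^{-1/2}\cF_{s,k}\leq K\mu^{1/2}\kappa^{-1/2}M_0$, which is less than $h_\star/6$ when $\mu\leq \kappa\,(h_\star/(6KM_0))^2$, strictly improving the second bound. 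Continuity then forces $T_\star = \min(T^\h,T^\nh)$, and the \emph{a priori} $H^{s,k}$-control of $(h^\nh,\bu^\nh,w^\nh) = (h^\h+h^\d,\bu^\h+\bu^\d,w^\h+w^\d)$ just obtained rules out $T^\nh<T^\h$ via the blow-up criterion of Proposition~\ref{P.NONHydro-small-time}, so $T_\star = T^\h$. The estimate~\eqref{eq:control-F-nh} is then inherited by summing the hydrostatic bound of Lemma~\ref{L.CV-consistency} with $\cF_{s,k}\leq K\mu^{1/2}M_0$ (using $\mu\leq 1$), and the strong convergence of $(h^\nh,\bu^\nh)$ towards $(h^\h,\bu^\h)$ in $L^\infty(0,T^\h;H^{s,k})$ as $\mu\searrow 0$ follows from $\cF_{s,k}\leq K\mu^{1/2}M_0\to 0$.
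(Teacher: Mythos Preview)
Your overall strategy coincides with the paper's: a bootstrap on the difference functional, fed by Lemma~\ref{L.CV-quasilinear} together with Lemma~\ref{lem:estimates-nonhydro} (and Remark~\ref{R.estimates-nonhydro}) and Lemma~\ref{lem:estimate-transport-diffusion}, then closed using the smallness $\mu\ll\kappa$. The paper uses an \emph{exponential} bootstrap hypothesis $\cF(t)\leq \mu^{1/2}M_0\exp(C_0 t)$ together with the second constraint $\cF(t)\leq\kappa^{1/2}M_0$, and arrives at an \emph{integral} inequality $\cF(t)\leq C_1\norm{\cF}_{L^1_t}+C_2\norm{\cF}_{L^2_t}+C_3\mu^{1/2}M_0\,t$ which is then closed by plugging in the exponential ansatz. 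Your constant bootstrap plus differential Gronwall is a legitimate variant; the observation that the linear growth rate is $\lesssim (T^\h)^{-1}$, so that the Gronwall exponential at $t=T^\h$ is uniformly bounded, is exactly what makes the paper's exponential ansatz close as well.

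There is however one genuine omission in your displayed differential inequality. Lemma~\ref{lem:estimates-nonhydro} produces, besides the remainder contributions you listed, the pressure term
\[
C\big(M+\Norm{\bar\bu'+\de_\varrho\bu^\nh}_{L^\infty}\big)\,\Norm{\dot P_\nh}_{L^2(\Omega)}\big(\Norm{\de_\varrho\dot\cH}_{L^2(\Omega)}+\Norm{\nabla_\bx\dot\cH}_{L^2(\Omega)}\big)
\]
(plus $\Norm{\dot P_\nh}_{L^2}\Norm{R^{\rm div}}_{L^2}$), where $\dot P_\nh=\de_\bx^\balpha\de_\varrho^j P_\nh$ involves the \emph{full} non-hydrostatic pressure while $\de_\varrho\dot\cH=-\de_\bx^\balpha\de_\varrho^j h^\d$ is a top-order difference quantity not in $\cF_{s,k}$ without the $\kappa^{1/2}$ weight. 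Summed over $(\balpha,j)$ this is the analog of the term $R_2$ in the proof of Proposition~\ref{P.NONHydro-large-time}, which there forced the smallness assumption on the data. In the present difference setting the smallness instead comes from the bootstrap on $\cF_{s,k}$, but the term does not fit any of the three contributions you wrote; in particular your Young-absorption argument, as stated, concerns only the $\kappa\Norm{\nabla_\bx h^\d}_{H^{s,k}}$ and $\mu^{1/2}\kappa\Norm{\Delta_\bx\cH^\d}_{H^{s,k}}$ pieces from~\eqref{eq.est-quasilin-diff1} and does not address this pressure coupling. You should bound $\Norm{P_\nh}_{H^{s,k}}$ (e.g.\ via~\eqref{ineq:est-Psmall-nonhydro} and the control~\eqref{eq:control-F-nh} on the non-hydrostatic functional, noting that the $\bu_\star^\nh,w_\star^\nh$ contributions are only $L^2$ in time) and then explain how the resulting term, schematically $\Norm{P_\nh}_{H^{s,k}}\cdot\kappa^{-1/2}\cF_{s,k}$, enters your inequality and closes. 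The paper handles this through the $\norm{\cF}_{L^2_t}$ contribution of its integral inequality; in your differential framework this requires a separate argument.
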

\begin{proof}
	We closely follow the proof of Proposition~\ref{P.NONHydro-large-time} and exhibit a bootstrap argument on the functional 
	\begin{align*}\cF(t):&=
		\Norm{\cH^\d(t,\cdot)}_{H^{s,k}}+\Norm{\bu^\d(t,\cdot)}_{H^{s,k}} +\mu^{1/2}\Norm{w^\d(t,\cdot)}_{H^{s,k}}+\norm{\cH^\d\big\vert_{\varrho=\rho_0}(t,\cdot)}_{H^s_\bx} \notag\\ 
		&\quad+\kappa^{1/2}\Norm{h^\d(t,\cdot)}_{H^{s,k}}  + \mu^{1/2}\kappa^{1/2} \Norm{\nabla_\bx \cH^\d (t, \cdot)}_{H^{s,k}} \notag\\ 
		&\quad+ \kappa^{1/2} \Norm{\nabla_\bx \cH^\d}_{L^2(0,t;H^{s,k})} +   \kappa^{1/2} \norm{\nabla_\bx \cH^\d \big\vert_{\varrho=\rho_0}  }_{ L^2(0,t;H^s_\bx)} \notag\\ 
		&\quad+\kappa \Norm{\nabla_\bx h^\d}_{L^2(0,t;H^{s,k})} +  \mu^{1/2}\kappa \Norm{\nabla_\bx^2 \cH^\d }_{L^2(0,t; H^{s,k})} 
	\end{align*}
	where we denote
	\[ h^\d:= h^\nh-h^\h;\quad \cH^\d:=\cH^\nh-\cH^\h; \quad \bu^\d:=\bu^\nh-\bu^\h;\quad w^\d:=w^\nh-w^\h\]
	with the usual notation for $\cH^\nh$, $\cH^\h$, and $w^\h$ is defined by~\eqref{def-wh}. Denoting  by $T^\star$ the maximal existence time of the non-hydrostatic solution provided by Proposition~\ref{P.NONHydro-small-time}, we set
	\begin{multline}\label{eq:control-F-diff}
		T^\nh= \sup \Big\{0 < T < \min(T^\star,T^\h) \, : \; \forall \, t \in (0,T), \; h_\star/3 \le \bar h(\varrho) + h^\nh(t, \bx, \varrho) \le 3h^\star \quad \\
		\text{and} \quad \cF(t) \le \mu^{1/2} M_0 \exp(C_0t), \quad  \cF(t) \le \kappa^{1/2} M_0\Big\},
	\end{multline}
	with $C_0$ sufficiently large (to be determined later on). We will show by the standard continuity argument that $T^\nh=\min(T^\star,T^\h)$, which in turns yields $T^\star> T^\h$ and shows the result. Indeed, the converse inequality $T^\nh=T^\star\leq T^\h$ yields a contradiction by Proposition~\ref{P.NONHydro-small-time} and the desired estimates immediately follow from the control of $\cF$, the bound 
	\begin{equation} \label{eq:control-F-h}
		\Norm{h^\h(t,\cdot)}_{H^{s+1,k+1}} +\Norm{\bu^\h(t,\cdot)}_{H^{s+1,k+1}}+\Norm{\cH^\h(t,\cdot)}_{H^{s+1,k+1}} +\Norm{w^\h(t,\cdot)}_{H^{s+1,k+1}}  \leq  C^\h M_0
	\end{equation}
	provided by Lemma~\ref{L.CV-consistency}, and triangular inequality (when $C$ is chosen sufficiently large).
	
	Let us now derive from Lemma~\ref{L.CV-quasilinear} the necessary estimates for the bootstrap argument. In the following we repeatedly use the triangular inequality to infer from~\eqref{eq:control-F-diff} and~\eqref{eq:control-F-h} the corresponding control~\eqref{eq:control-F-nh} with $C$ depending only $C^\h$, $T^\h$ (and $\kappa\leq 1$). We shall denote by $C$ a constant depending uniquely on $s,k,\bar M,M,h_\star,h^\star$ and $C^\h$, $T^\h$, but not on $C_0$, and which may change from line to line.
	
	By means of~\eqref{eq.quasilin-h-diff}-\eqref{eq.est-quasilin-h-diff} and Lemma~\ref{lem:estimate-transport-diffusion}, we infer from~\eqref{eq:control-F-diff}-\eqref{eq:control-F-h}
	\[
	\kappa^{1/2}\Norm{h^\d}_{L^\infty(0,T;H^{s,k})}+\kappa\Norm{\nabla_\bx h^\d}_{L^2(0,T;H^{s,k})}
	\leq C\, \big( \norm{\cF}_{L^1_T}+\norm{\cF}_{L^2_T} \big).
	\]
	Next, by differentiating with respect to space the first equation of~\eqref{eq.quasilin-diff} using~\eqref{eq.est-quasilin-diff0} and  Lemma~\ref{lem:estimate-transport-diffusion}, we infer
	\[
	\mu^{1/2}\kappa^{1/2}\Norm{\nabla_\bx \cH^\d}_{L^\infty(0,T;H^{s,k})}+\mu^{1/2}\kappa\Norm{\nabla_\bx^2 \cH^\d}_{L^2(0,T;H^{s,k})}
	\leq C\, \big( \norm{\cF}_{L^1_T}+\norm{\cF}_{L^2_T} \big).
	\]
	Now, we use~\eqref{eq.quasilin-diff}-\eqref{eq.est-quasilin-diff0}-\eqref{eq.est-quasilin-diff1}
	and proceeding as in the proof of Proposition~\ref{P.NONHydro-large-time} (together with the above estimates)  we infer that for any $t\in(0,T)$,
	\[
	\cF(t) \leq  C_1 \norm{\cF}_{L^1_t}+C_2 \norm{\cF}_{L^2_t}+
	C_3\, \mu^{1/2}M_0  \,  t\]
	with $C_i$ ($i\in\{1,2,3\}$) depending uniquely on $s,k,\bar M,M,h_\star,h^\star$ and $C^\h$, $T^\h$.
	By using the inequality $ \cF(t) \le \mu^{1/2} M_0 \exp(C_0t)$ from~\eqref{eq:control-F-diff} and the inequality $\tau\leq \exp(\tau)$ (for $\tau\geq0$), we deduce
	\[
	\cF(t) \leq  C_1 \mu^{1/2} M_0 C_0^{-1} \exp(C_0t) +C_2  \mu^{1/2} M_0  (2C_0)^{-1/2} \exp(C_0t) + 	C_3\mu^{1/2}M_0 C_0^{-1} \exp(C_0t).\]
	There remains to choose $C_0$ sufficiently large so that $C_1 C_0^{-1} + C_2 (2C_0)^{-1/2} + C_3  C_0^{-1} <1$, and restrict to $\mu$ sufficiently small so that $\mu^{1/2} M_0 \exp(C_0 T^\h) \leq \mu^{1/2} M_0 C^{1/2}/2\leq \kappa^{1/2}/2$. The upper and lower bounds for $\bar h+h^\nh$ follow immediately from the corresponding ones for $\bar h+h^\h $ provided by Lemma~\ref{L.CV-consistency} and triangular inequality, augmenting $C$ if necessary.
	Then the usual continuity argument yields, as desired, $T^\nh=\min(T^\star,T^\h)$.
\end{proof}

\subsection{Improved convergence rate}\label{S.CV-convergence}

Proposition~\ref{P.CV-control} established the strong convergence for regular well-prepared initial data of the solution to the non-hydrostatic equations, \eqref{eq:nonhydro-iso-intro}, towards the corresponding solution to the hydrostatic equations, \eqref{eq:hydro-iso-intro}, as $\mu \searrow 0$. The convergence rate displayed in the proof is $\mathcal O(\mu^{1/2})$. The aim of this section is to provide an improved and \emph{optimal} convergence rate $\mathcal O (\mu)$. The strategy is based on the interpretation of the  non-hydrostatic solution as an approximate solution to the hydrostatic equations (in the sense of consistency) and the use of the uniform control obtained in Proposition~\ref{P.CV-control}.

\begin{proposition}\label{P.CV-convergence}
	There exists $p\in\NN$ such that for any  $s, k \in \NN$ with $k=s> \frac 52+\frac d 2$ and $\bar M,M,h_\star,h^\star>0$,  there exists $C=C(s,k,\bar M,M,h_\star,h^\star)>0$ such that under the assumptions of Proposition~\ref{P.CV-control} and using the notations therein, 
	\[	\Norm{ h^\nh-h^\h}_{L^\infty(0,T^\h;H^{s-1,k-1})}+\Norm{ \cH^\nh-\cH^\h}_{L^\infty(0,T^\h;H^{s,k})}+\Norm{\bu^\nh-\bu^\h}_{L^\infty(0,T^\h;H^{s,k})}\leq C\,\mu.\]
\end{proposition}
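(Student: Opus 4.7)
The plan is to view the non-hydrostatic solution $(h^\nh,\bu^\nh)$ as an approximate solution to the hydrostatic system~\eqref{eq:hydro-iso-intro} with a consistency residual of size $\mathcal O(\mu)$, and then use the hydrostatic quasilinearization of Lemma~\ref{lem:quasilinearization} together with the energy estimates of Lemmas~\ref{lem:estimate-system} and~\ref{lem:estimate-transport-diffusion} to control the difference $(h^\d,\bu^\d):=(h^\nh-h^\h,\bu^\nh-\bu^\h)$.

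First I would split the non-hydrostatic pressure as $P^\nh=\Ph^\nh+\Pnh$ with the hydrostatic part $\Ph^\nh:=\int_{\rho_0}^\varrho\varrho'\,h^\nh\dd\varrho'$. Since $\de_\varrho\Ph^\nh=\varrho\,h^\nh$, combining this with the identity~\eqref{eq.id-Phydro} in the second equation of~\eqref{eq:nonhydro-iso-intro} yields
\[
\varrho\bigl(\de_t\bu^\nh+\bigl((\bar\bu+\bu^\nh-\kappa\tfrac{\nabla_\bx h^\nh}{\bar h+h^\nh})\cdot\nabla_\bx\bigr)\bu^\nh\bigr)+\nabla_\bx\psi^\nh=-\nabla_\bx\Pnh-\tfrac{\nabla_\bx\cH^\nh}{\bar h+h^\nh}\de_\varrho\Pnh,
\]
where $\psi^\nh$ is the Montgomery potential~\eqref{eq:nablaphi-intro} built from $h^\nh$. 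Since the mass equation is common to both systems, this shows that $(h^\nh,\bu^\nh)$ solves the hydrostatic equations up to the explicit velocity source $\mathcal R^\h:=-\nabla_\bx\Pnh-\tfrac{\nabla_\bx\cH^\nh}{\bar h+h^\nh}\de_\varrho\Pnh$. The required control of $\Pnh$ comes from the tame elliptic estimate~\eqref{ineq:est-Ptame-nonhydro} of Corollary~\ref{C.Poisson}, which, once fed with the uniform $H^{s+p,k+p}$ bounds of Proposition~\ref{P.CV-control} (applied at the required regularity) and of Lemma~\ref{L.CV-consistency}, delivers
\[
\Norm{\Pnh}_{L^2(\Omega)}+\Norm{\nabla^\mu_{\bx,\varrho}\Pnh}_{H^{s-1,k-1}}\leq C\,\mu.
\]

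Subtracting~\eqref{eq:hydro-iso-intro} for $(h^\h,\bu^\h)$ from the perturbed hydrostatic system for $(h^\nh,\bu^\nh)$ produces a system for $(h^\d,\bu^\d)$ which, after the hydrostatic quasilinearization of Lemma~\ref{lem:quasilinearization} with $\nu=0$, falls in the framework of~\eqref{eq.quasilin},~\eqref{eq.quasilin-j} and~\eqref{eq.quasilin-j-h}, augmented by the extra source $\de_\varrho^j\de_\bx^\balpha\mathcal R^\h$ in the velocity equation; the remainders $R_{\balpha,j}$, $\bR_{\balpha,j}$, $r_{\balpha,j}$, $\br_{\balpha,j}$ are estimated as in Lemma~\ref{L.CV-quasilinear} and are linear in the natural difference functional
\[
\cF^\d_{s,k}(t):=\Norm{\cH^\d(t)}_{H^{s,k}}+\norm{\cH^\d\big\vert_{\varrho=\rho_0}(t)}_{H^s_\bx}+\Norm{\bu^\d(t)}_{H^{s,k}}+\Norm{h^\d(t)}_{H^{s-1,k-1}}+\kappa^{1/2}\Norm{h^\d(t)}_{H^{s,k}}.
\]
Applying Lemma~\ref{lem:estimate-system} (with $\nu=0$) to each $(\cH^{(\balpha,j)},\bu^{(\balpha,j)})$ and Lemma~\ref{lem:estimate-transport-diffusion} to each $h^{(\balpha,j)}$, the only non-routine contribution is $\bigl(\de_\varrho^j\de_\bx^\balpha\mathcal R^\h,(\bar h+h^\nh)\de_\varrho^j\de_\bx^\balpha\bu^\d\bigr)_{L^2(\Omega)}$: the $\de_\varrho\Pnh$ piece is bounded directly by $C\mu\,\Norm{\bu^\d}_{H^{s,k}}$ thanks to the control above, whereas the $\nabla_\bx\Pnh$ piece is integrated by parts in $\bx$ to become $\bigl(\Pnh,\,\nabla_\bx\cdot\bigl((\bar h+h^\nh)\de_\varrho^j\de_\bx^\balpha\bu^\d\bigr)\bigr)_{L^2(\Omega)}$, bounded by $\Norm{\Pnh}_{L^2}\Norm{\bu^\d}_{H^{s+1,k}}\leq C\,\mu\,\Norm{\bu^\d}_{H^{s+1,k}}$. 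The latter higher-order norm is controlled uniformly by $CM_0$ thanks to the higher-regularity uniform bounds of Proposition~\ref{P.CV-control} and Lemma~\ref{L.CV-consistency}, at the price of one order of regularity absorbed by the parameter $p$. Gronwall's inequality on $[0,T^\h]$ then yields $\cF^\d_{s,k}(t)\leq C\mu$, which is exactly the claim.

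The main obstacle is the suboptimal scaling of the horizontal gradient $\nabla_\bx\Pnh$: because of the anisotropic operator $\nabla^\mu_{\bx,\varrho}=(\sqrt\mu\,\nabla_\bx,\de_\varrho)^\top$ appearing in~\eqref{ineq:est-Ptame-nonhydro}, only $\Norm{\sqrt\mu\,\nabla_\bx\Pnh}_{H^{s-1,k-1}}=\mathcal O(\mu)$ is available, so that $\Norm{\nabla_\bx\Pnh}_{H^{s-1,k-1}}$ by itself is merely $\mathcal O(\sqrt\mu)$, while $\Norm{\Pnh}_{L^2}$ and $\Norm{\de_\varrho\Pnh}_{H^{s-1,k-1}}$ are genuinely $\mathcal O(\mu)$. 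Consequently, a direct $L^2$ bound of the $\nabla_\bx\Pnh$ contribution in the energy estimate would only reproduce the suboptimal $\mathcal O(\sqrt\mu)$ rate already obtained in Proposition~\ref{P.CV-control}. The trick is precisely to never differentiate $\Pnh$ in $\bx$: integration by parts against the top-order velocity test function exchanges a horizontal derivative on $\Pnh$ for a horizontal derivative on $\bu^\d$, thereby exploiting the genuine $\mathcal O(\mu)$ smallness of $\Norm{\Pnh}_{L^2}$. The resulting loss of one $\bx$-derivative is exactly what the extra regularity $p$ built into Theorem~\ref{thm-convergence} is designed to compensate.
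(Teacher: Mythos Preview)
Your overall strategy---viewing $(h^\nh,\bu^\nh)$ as an approximate solution to the hydrostatic system with source $\bR^\nh=-\tfrac{1}{\varrho}\nabla_\bx\Pnh-\tfrac{\nabla_\bx\cH^\nh}{\varrho(\bar h+h^\nh)}\de_\varrho\Pnh$, and then running the hydrostatic energy machinery of Lemmas~\ref{lem:quasilinearization},~\ref{lem:estimate-system},~\ref{lem:estimate-transport-diffusion} on the difference---is exactly the paper's approach. The gap lies in your treatment of the $\nabla_\bx\Pnh$ contribution.

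After one integration by parts in $\bx$, the term $\bigl(\de_\varrho^j\de_\bx^\balpha\nabla_\bx\Pnh,(\bar h+h^\nh)\de_\varrho^j\de_\bx^\balpha\bu^\d\bigr)_{L^2}$ becomes $-\bigl(\de_\varrho^j\de_\bx^\balpha\Pnh,\nabla_\bx\cdot((\bar h+h^\nh)\de_\varrho^j\de_\bx^\balpha\bu^\d)\bigr)_{L^2}$: the operator $\de_\varrho^j\de_\bx^\balpha$ still sits on $\Pnh$. Hence the bound you need is $\Norm{\Pnh}_{H^{s,k}}$, not $\Norm{\Pnh}_{L^2}$. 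With the estimate $\Norm{\nabla^\mu_{\bx,\varrho}\Pnh}_{H^{s-1,k-1}}\leq C\mu$ alone, one only gets $\Norm{\Pnh}_{H^{s,k}}=\mathcal O(\sqrt\mu)$ (the $\bx$-component of $\nabla^\mu_{\bx,\varrho}$ carries the $\sqrt\mu$), so your argument reproduces the suboptimal rate and does not close.

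The paper resolves this differently, without any integration by parts: it applies Corollary~\ref{C.Poisson} (specifically~\eqref{ineq:est-Ptame-nonhydro}) at a \emph{higher} regularity level $(s+p',k+p')$, using the uniform $H^{s+p',k+p'}$ bounds on the non-hydrostatic solution from Proposition~\ref{P.CV-control}. The $\de_\varrho$-component of $\nabla^\mu_{\bx,\varrho}$ has coefficient $1$ (no $\sqrt\mu$), so this yields $\Norm{\de_\varrho\Pnh}_{H^{s+p'-1,k+p'-1}}\leq C\mu$. Combined with the Poincar\'e inequality~\eqref{eq.Poincare} (since $\Pnh\big\vert_{\varrho=\rho_0}=0$), this gives $\Norm{\Pnh}_{H^{s+1,k+1}}\leq C\mu$ for $p'$ large enough, and hence $\Norm{\bR^\nh}_{H^{s,k}}\leq C\mu$ \emph{directly}. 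The loss of derivatives is absorbed by $p$, but it is spent on applying the elliptic estimate at a higher level, not on shifting derivatives onto $\bu^\d$.
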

\begin{corollary}
	Incrementing $p\in\NN$, we find that  for any  $s, k \in \NN$ such that $k=s> \frac 32+\frac d 2$,
	\[	\Norm{ h^\nh-h^\h}_{L^\infty(0,T^\h;H^{s,k})}+\Norm{ \cH^\nh-\cH^\h}_{L^\infty(0,T^\h;H^{s+1,k+1})}+\Norm{\bu^\nh-\bu^\h}_{L^\infty(0,T^\h;H^{s+1,k+1})}\leq C\,\mu\]
	with $C=C(s+1,k+1,\bar M,M,h_\star,h^\star)>0$.
\end{corollary}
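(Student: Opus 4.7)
\textbf{Proof plan for Proposition~\ref{P.CV-convergence}.} The strategy is to reverse the roles played in the proof of Proposition~\ref{P.CV-control}: there, the hydrostatic solution was treated as an approximate solution to the non-hydrostatic system; here, I would treat $(h^\nh,\bu^\nh)$ as an approximate solution to the hydrostatic system \eqref{eq:hydro-iso-intro}, and crucially exploit the tame pressure estimate \eqref{ineq:est-Ptame-nonhydro} of Corollary~\ref{C.Poisson} (rather than the square-root bound \eqref{ineq:est-Psmall-nonhydro} used before). The first step is to decompose $P^\nh=P_\h^\nh+P_\nh$, with $P_\h^\nh=\int_{\rho_0}^\varrho\varrho' h^\nh(\cdot,\varrho')\dd\varrho'$, and verify using the identity \eqref{eq.id-Phydro} that
\[
\nabla_\bx P_\h^\nh+\tfrac{\nabla_\bx\cH^\nh}{\bar h+h^\nh}(\varrho\bar h+\de_\varrho P_\h^\nh)=\nabla_\bx\psi^\nh,
\]
where $\psi^\nh$ is the Montgomery potential \eqref{eq:nablaphi-intro} built from $h^\nh$. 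Substituting this into the second equation of \eqref{eq:nonhydro-iso-intro} shows that $(h^\nh,\bu^\nh)$ solves the hydrostatic system \eqref{eq:hydro-iso-intro} up to the source term $-\nabla_\bx P_\nh-\tfrac{\nabla_\bx\cH^\nh}{\bar h+h^\nh}\de_\varrho P_\nh$.

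In the second step I would write the difference system for $(h^\d,\bu^\d)=(h^\nh-h^\h,\bu^\nh-\bu^\h)$. It reads, schematically, as the hydrostatic system linearized about $(h^\nh,\bu^\nh)$, with remainder terms of two types: quadratic contributions involving $(h^\d,\bu^\d)$ and the source term above. Applying the hydrostatic quasilinearization of Lemma~\ref{lem:quasilinearization} (adapted to this difference system, as was done for the non-hydrostatic case in Lemma~\ref{L.CV-quasilinear}, but now without the $w$-equation and without the diagnostic vertical velocity entering through the incompressibility constraint), one obtains equations of the form \eqref{eq.quasilin}--\eqref{eq.quasilin-j-h} whose right-hand sides split as $R^{\text{lin}}_{\balpha,j}+R^{\text{src}}_{\balpha,j}$. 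The linear part is controlled by $\cF_{s,k}(h^\d,\bu^\d)$ multiplied by the uniform bounds on $(h^\nh,\bu^\nh)$ and $(h^\h,\bu^\h)$ provided by Proposition~\ref{P.CV-control} and Lemma~\ref{L.CV-consistency}. For the source part, \eqref{ineq:est-Ptame-nonhydro} together with these uniform controls yields
\[
\Norm{\nabla^\mu_{\bx,\varrho}P_\nh}_{H^{s-1,k-1}}\le C\,\mu,
\]
provided we started with initial data in $H^{s+p,k+p}$ for $p$ large enough to absorb the loss of derivatives coming from the elliptic theory and the tame commutator estimates.

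The third step is to close the estimate by invoking Lemma~\ref{lem:estimate-system} with $\nu=0$ on the linearized hydrostatic system satisfied by $(\cH^\d,\bu^\d)$ (together with Lemma~\ref{lem:estimate-transport-diffusion} for the transport–diffusion equation on $h^\d$). Since the initial data for $(h^\d,\bu^\d)$ vanish, the energy functional $\cE$ from Lemma~\ref{lem:estimate-system} starts at zero, and Gronwall's inequality then gives
\[
\cF_{s,k}(t)\le C\,\mu\,\exp\Bigl(C\int_0^t\bigl(1+\kappa^{-1}\Norm{\bar\bu'+\de_\varrho\bu^\nh}_{L^\infty_\bx L^2_\varrho}^2\bigr)\dd\tau\Bigr)
\]
on $[0,T^\h]$, where the exponential is uniformly bounded thanks to the estimate \eqref{eq:control-F-nh} and the lower bound on $T^\h$ from Lemma~\ref{L.CV-consistency}. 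This yields the $\mathcal O(\mu)$ bound.

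The main obstacle I anticipate is bookkeeping the loss of derivatives. The tame estimate \eqref{ineq:est-Ptame-nonhydro} already costs one derivative in $\bx$ on $\nabla_\bx\cH^\nh$ and on $\cH^\nh|_{\varrho=\rho_0}$; the quasilinearization of the difference system costs further derivatives through commutator estimates (Lemma~\ref{L.commutator-Hsk}) and the density-dependent coefficients in front of $P_\nh$ (handled via Lemma~\ref{L.composition-Hsk-ex}); finally, matching the functional space used in Lemma~\ref{lem:estimate-system} requires that the control of $(h^\nh,\bu^\nh)$ and $(h^\h,\bu^\h)$ in Proposition~\ref{P.CV-control} and Lemma~\ref{L.CV-consistency} be available in a regularity at least $p$ higher than $(s,k)$. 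Choosing $p$ large enough (but fixed and independent of $\mu$) to accommodate all these losses is routine but must be tracked carefully, and it is precisely what forces the statement's assumption $(h_0,\bu_0,w_0)\in H^{s+p,k+p}$.
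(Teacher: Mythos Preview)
Your plan is correct and follows essentially the same approach as the paper's proof: interpret $(h^\nh,\bu^\nh)$ as an approximate hydrostatic solution with source term $-\tfrac{1}{\varrho}\nabla_\bx P_\nh-\tfrac{\nabla_\bx\cH^\nh}{\varrho(\bar h+h^\nh)}\de_\varrho P_\nh$, bound this source in $H^{s,k}$ by $C\mu$ via the tame pressure estimate \eqref{ineq:est-Ptame-nonhydro} applied with the uniform higher-regularity control from Proposition~\ref{P.CV-control} (at level $s+p',k+p'$), then close by the hydrostatic quasilinearization and energy estimates (Lemmata~\ref{lem:quasilinearization}, \ref{lem:estimate-system}, \ref{lem:estimate-transport-diffusion}) exactly as in the proof of Proposition~\ref{P.regularized-large-time-WP}. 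The Corollary itself then follows by applying Proposition~\ref{P.CV-convergence} with $(s+1,k+1)$ in place of $(s,k)$ and incrementing $p$ accordingly.
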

\begin{proof} Since all arguments of the proof have been already used in slightly different contexts, we only quickly sketch the argument.

	For any $p'\in\NN$, we may use Proposition~\ref{P.CV-control} with indices $s+p'$ and $k+p'$ to infer the existence of the non-hydrostatic solution $(h^\nh,\bu^\nh,w^\nh)\in  \cC([0,T^\h];H^{s+p',k+p'}(\Omega)^{2+d})$ and the control
	\[	\sup_{t\in [0,T^\h]}\Big( 	\Norm{\cH^\nh(t,\cdot)}_{H^{s+p',k+p'}}+\Norm{\bu^\nh(t,\cdot)}_{H^{s+p',k+p'}} +\norm{\cH^\nh\big\vert_{\varrho=\rho_0}(t,\cdot)}_{H^{s+p'}_\bx} \Big) \leq C\, M_0.
	\]
	By using $h^\nh=-\de_\varrho \cH^\nh$ and the divergence-free condition
	\[ w^\nh=  ({\bar \bu}+\bu^\nh)\cdot\nabla_\bx \cH^\nh-\int_\varrho^{\rho_1}   \nabla_\bx\cdot((\bar h+ h^\nh)({\bar \bu}+\bu^\nh)) \dd\varrho'\]
	we obtain (augmenting $C$ if necessary)
	\[	\sup_{t\in [0,T^\h]}\Big( 	\Norm{h^\nh(t,\cdot)}_{H^{s+p'-1,k+p'-1}}+\Norm{w^\nh(t,\cdot)}_{H^{s+p'-1,k+p'-1}} \Big)\leq C\, M_0
	\]
	and hence, by Corollary~\ref{C.Poisson} (specifically~\eqref{ineq:est-Ptame-nonhydro}), Poincar\'e inequality~\eqref{eq.Poincare} and choosing $p'$ sufficiently large, that $P_\nh(\cdot,\varrho):=P^\nh(\cdot,\varrho)-\int_{\rho_0}^\varrho \varrho' h^\nh(\cdot,\varrho')\dd\varrho'$ satisfies
	\[	\sup_{t\in [0,T^\h]} \Norm{P_\nh(t,\cdot)}_{H^{s+1,k+1}}\leq C\,\mu \, M_0.\]
	From this estimate we infer (by Lemma~\ref{L.CV-consistency}) that
	$
	h^\d:=h^\nh-h^\h$ and $ 
	\bu^\d:=\bu^\nh-\bu^\h$
	satisfies 
	\begin{equation}\label{eq:difference-CV}
		\begin{aligned}
			\de_t \cH^\d 
			+ (\bar \bu+\bu^\nh) \cdot \nabla_\bx \cH^\d
			+\int_\varrho^{\rho_1} (\bar \bu'+\de_\varrho \bu^\nh) \cdot \nabla_\bx \cH^\d \, \dd \varrho' 
			+\int_\varrho^{\rho_1} (\bar h + h^\nh) \nabla_\bx \cdot \bu^\d \, \dd\varrho' & \\
			+\int_\varrho^{\rho_1} \bu^\d \cdot \nabla_\bx h^\h + h^\d \nabla_\bx \cdot \bu^\h \, \dd \varrho' & = \kappa\Delta_\bx\cH^\d,\\
			\de_t \bu^\d 
			+ \big((\bar\bu + \bu^\nh - \kappa \tfrac{\nabla_\bx h^\nh}{\bar h + h^\nh} ) \cdot \nabla_\bx\big) \bu^\d 
			+ {\frac{\rho_0}{\varrho} \nabla_\bx \cH^\d|_{\varrho=\rho_0}} 
			+ {\frac 1 \varrho \int_{\rho_0}^\varrho \nabla_\bx \cH^\d \, \dd\varrho'} & \\
			+ \big(( \bu^\d - \kappa ( \tfrac{\nabla_\bx h^\nh}{\bar h + h^\nh} - \tfrac{\nabla_\bx h^\h}{\bar h+h^\h})) \cdot \nabla_\bx\big) \bu^\h  &=  \bR^\nh,
		\end{aligned}
	\end{equation}
	with $\bR^\nh:=- \frac{\nabla_\bx{ {\Pnh} }}{\varrho} 
	- \frac{\nabla_\bx \cH^\nh}{\varrho (\bar h+h^\nh)} \de_\varrho{ {\Pnh} } $ satisfying
	(by Lemma~\ref{L.product-Hsk} and Lemma~\ref{L.composition-Hsk-ex}) the bound
	\[	\sup_{t\in [0,T^\h]}\Norm{\bR^\nh(t,\cdot)}_{H^{s,k}}\leq C\,\mu\, M_0.\]
	From this, inspecting the proof of Lemma~\ref{L.CV-quasilinear}, we infer that as long as
	\[ \cF_{s,k}:=\Norm{h^\d}_{H^{s-1, k-1}} + \Norm{\cH^\d}_{H^{s,k}}+\norm{\cH^\d\big\vert_{\varrho=\rho_0}}_{H^s_\bx}+\Norm{\bu^\d}_{H^{s,k}} +\kappa^{1/2}\Norm{h^\d}_{H^{s,k}} \leq \kappa^{1/2}M_0,\]
	one has for any $\balpha\in\NN^d$ and $j\in\NN$ such that $|\balpha|+j\leq s$ that $\cH^{(\balpha,j)}:=\de_\bx^\balpha\de_\varrho^j\cH^\d$,
	$\bu^{(\balpha,j)}:=\de_\bx^\balpha\de_\varrho^j\bu^\d$ and $h^{(\balpha,j)}:=\de_\bx^\balpha\de_\varrho^jh^\d$ satisfy
	\[
	\begin{aligned}
		\partial_t  \cH^{(\balpha,j)}+ (\bar \bu + \bu^\nh) \cdot\nabla_\bx \cH^{(\balpha,j)} +\Big\langle \int_\varrho^{\rho_1} (\bar \bu' + \de_\varrho\bu^\nh) \cdot\nabla_\bx \cH^{(\balpha,j)} \, \dd\varrho' \phantom{-\kappa\Delta_\bx   \cH^{(\balpha,j)}}\qquad  &\\
		+\int_\varrho^{\rho_1} (\bar h+h^\nh)\nabla_\bx \cdot\bu^{(\balpha,j)} \dd\varrho'\Big\rangle_{j=0}-\kappa\Delta_\bx   \cH^{(\balpha,j)}&=R_{\balpha,j},\\
		\partial_t\bu^{(\balpha,j)}+\big(({\bar \bu}+\bu^\nh-\kappa\tfrac{\nabla_\bx h^\nh}{\bar h+ h^\nh})\cdot\nabla_\bx\big) \bu^{(\balpha,j)}
		+\Big\langle \frac{\rho_0}{ \varrho }\nabla_\bx \cH^{(\balpha,j)}\big\vert_{\varrho=\rho_0} \qquad &\\
		+\frac 1 \varrho\int_{\rho_0}^\varrho  \nabla_\bx  \cH^{(\balpha,j)} \dd\varrho' \Big\rangle_{j=0} &=\bR^\nh_{\balpha,j},
	\end{aligned}
	\]
	and
	\[
	\partial_t  h^{(\balpha,j)}+(\bar\bu+\bu)\cdot \nabla_\bx  h^{(\balpha,j)}
	=\kappa\Delta_\bx  \partial_\varrho^{j} h^{(\balpha,j)}+r_{\balpha,j}+\nabla_\bx \cdot \br_{\balpha,j},
	\]
	with 
	\[
	\Norm{R_{\balpha,j}}_{L^2(\Omega) }+\Norm{ \bR^\nh_{\balpha,j}}_{L^2(\Omega)}
	\leq  
	C  \, \big(  \cF_{s,k} + M_0\kappa  \Norm{\nabla_\bx h^\d}_{H^{s,k}} \big)	+C\, \mu\, M_0 
	\]
	and
	\[
	\kappa^{1/2} \Norm{r_{\balpha,j}}_{L^2(\Omega) }  + \Norm{\br_{\balpha,j}}_{L^2(\Omega) }  \leq C\,\cF_{s,k}.
	\]
	We may then proceed as in the proof of Proposition~\ref{P.regularized-large-time-WP}, and bootstrap the control
	\[\cF_{s,k}(t)+ \kappa^{1/2} \Norm{\nabla_\bx \cH^\d}_{L^2(0,t;H^{s,k})} +  \kappa^{1/2} \norm{\nabla_\bx \cH^\d \big\vert_{\varrho=\rho_0}  }_{ L^2(0,t;H^s_\bx)} +\kappa \Norm{\nabla_\bx h^\d}_{L^2(0,t;H^{s,k})} \leq C \,\mu\, M_0
	\]
	(choosing $C$ large enough) on the time interval $[0,T^\h]$. This concludes the proof.
\end{proof}

\appendix

\section{Product, composition and commutator estimates}\label{S.Appendix}

In this section we collect useful estimates in the spaces
$H^{s,k}(\Omega)$ introduced in~\eqref{def:Hsk}.
Our results will follow from standard estimates in Sobolev spaces $H^s(\RR^d)$ (see {\em e.g.} \cite{Lannesbook}*{Appendix~B} and references therein), and the following continuous embedding. Henceforth we denote  $\Omega=\RR^d\times (\rho_0,\rho_1)$. 
\begin{lemma}\label{L.embedding}
	For any $s\in \RR$ and $\rho_0<\rho_1$, $H^{s+1/2,1}(\Omega) \subset \cC^0([\rho_0,\rho_1]; H^s(\RR^d) )$ and there exists $C>0$ such that for any $F\in H^{s+1/2,1}(\Omega) $,
	\[ \max_{\varrho \in [\rho_0, \rho_1]}\norm{F(\cdot,\varrho)}_{H^s_\bx } \leq   C \Norm{F}_{H^{s+1/2,1}} .\]
	More generally, for any $k\geq 1$, $H^{s+1/2,1}(\Omega) \subset \bigcap_{j=0}^{k-1}\cC^j([\rho_0,\rho_1]; H^{s-j}(\RR^d) )$, and in particular, for any $s_0>d/2$ and $j\in\NN$, $H^{j+s_0+\frac12,j+1}(\Omega)\subset\big( \cC^j(\Omega)\cap W^{j,\infty}(\Omega)\big)$.
\end{lemma}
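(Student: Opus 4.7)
The plan is to reduce the whole statement to a single Fourier-analytic estimate on horizontal frequencies combined with a standard one-dimensional trace inequality in $\varrho$, and then bootstrap.

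\textbf{Step 1: the base case $k=1$.} Fix $F \in H^{s+1/2,1}(\Omega)$ and write $\hat F(\xi,\varrho)$ for its partial Fourier transform in $\bx$. For each fixed $\xi\in\RR^d$, the one-dimensional Sobolev embedding $H^1((\rho_0,\rho_1)) \hookrightarrow \cC^0([\rho_0,\rho_1])$ gives the interpolation inequality
\[
\max_{\varrho \in [\rho_0,\rho_1]} |g(\varrho)|^2 \leq C_0 \big(\Norm{g}_{L^2_\varrho}^2 + \Norm{g}_{L^2_\varrho}\,\Norm{g'}_{L^2_\varrho}\big)
\]
applied to $g=\hat F(\xi,\cdot)$. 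Multiplying by $\langle\xi\rangle^{2s}$ and integrating over $\xi$, the diagonal piece is trivially bounded by $\Norm{F}_{H^{s+1/2,0}}^2$, and the key step is to split the weight in the cross term as $\langle\xi\rangle^{2s} = \langle\xi\rangle^{s-1/2}\cdot\langle\xi\rangle^{s+1/2}$ so that Cauchy–Schwarz in $\xi$ produces
\[
\int \langle\xi\rangle^{2s} \Norm{\hat F(\xi,\cdot)}_{L^2_\varrho}\Norm{\de_\varrho\hat F(\xi,\cdot)}_{L^2_\varrho}\dd\xi
\leq \Norm{F}_{H^{s+1/2,0}}\Norm{\de_\varrho F}_{H^{s-1/2,0}}\leq \Norm{F}_{H^{s+1/2,1}}^2\,.
\]
This yields the desired pointwise-in-$\varrho$ bound on $\Norm{F(\cdot,\varrho)}_{H^s_\bx}$. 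Continuity in $\varrho$ with values in $H^s_\bx$ is then recovered by density of smooth compactly supported functions, for which it is evident, combined with the just-proved uniform estimate.

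\textbf{Step 2: higher $\varrho$-regularity.} For $F \in H^{s+1/2,k}(\Omega)$ and $0\leq j\leq k-1$, the function $\de_\varrho^j F$ belongs to $H^{(s-j)+1/2,1}(\Omega)$ directly from the definition of the anisotropic norm. Applying Step 1 to $\de_\varrho^j F$ for each such $j$ then gives $\de_\varrho^j F \in \cC^0([\rho_0,\rho_1]; H^{s-j}(\RR^d))$, i.e.\ $F \in \bigcap_{j=0}^{k-1} \cC^j([\rho_0,\rho_1]; H^{s-j}(\RR^d))$.

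\textbf{Step 3: the `in particular' statement.} Taking $s = s_0 + j$ and $k = j+1$ in Step 2 yields $\de_\varrho^i F \in \cC^0([\rho_0,\rho_1]; H^{s_0 + j - i}(\RR^d))$ for $0 \leq i \leq j$. Since $s_0 + j - i \geq s_0 > d/2$, the classical Sobolev embedding $H^{s_0+r}(\RR^d) \hookrightarrow \cC_b^{r}(\RR^d)$ for $r\in\NN$ implies that $\de_\bx^{\balpha}\de_\varrho^i F$ is continuous and bounded on $\Omega$ for all $(i,\balpha)$ with $i + |\balpha| \leq j$, giving $F \in \cC^j(\Omega) \cap W^{j,\infty}(\Omega)$.

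The only real technical point — and arguably the only place where a wrong calculation could produce the wrong statement — is the weight allocation in Step 1: a naive splitting of $\langle\xi\rangle^{2s}$ would yield the stronger norm $\Norm{F}_{H^{s,1}}$ on the right-hand side, whereas the symmetric splitting $\langle\xi\rangle^{s-1/2}\cdot\langle\xi\rangle^{s+1/2}$ is exactly what distributes the half-derivative between the function and its $\varrho$-derivative, recovering the trace-type norm $\Norm{F}_{H^{s+1/2,1}}$.
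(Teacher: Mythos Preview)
Your proof is correct and follows essentially the same idea as the paper's: both hinge on distributing the half-derivative in $\bx$ between $F$ and $\partial_\varrho F$, which in your Fourier-side language is the splitting $\langle\xi\rangle^{2s}=\langle\xi\rangle^{s+1/2}\langle\xi\rangle^{s-1/2}$ and in the paper's physical-space language is the duality $\norm{\Lambda^s F}_{H^{1/2}_\bx}\norm{\Lambda^s\partial_\varrho F}_{H^{-1/2}_\bx}$. The only cosmetic difference is that the paper obtains the pointwise-in-$\varrho$ control via a cutoff function and the fundamental theorem of calculus applied to $\phi(\varrho)(\Lambda^s F)^2$, whereas you invoke the one-dimensional interpolation inequality $\max|g|^2\lesssim\Norm{g}_{L^2}^2+\Norm{g}_{L^2}\Norm{g'}_{L^2}$ directly; both are standard routes to the same trace estimate.
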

\begin{proof}
	By a density argument, we only need to prove the inequality for smooth functions $F$. Set $\phi:[\rho_0,\rho_1]\to\RR^+$ a smooth function such that $\phi(\rho_0)=0$ and $\phi(\varrho)=1$ if $\varrho\geq \tfrac{\rho_0+\rho_1}2$,  and deduce that for any $\varrho\geq \tfrac{\rho_0+\rho_1}2$, recalling the notation $\Lambda^s:=(\Id-\Delta_\bx)^{s/2}$,
	\begin{align*} 
		\int_{\RR^d} (\Lambda^s F)^2(\bx,\varrho) \dd\bx&= \int_{\RR^d}\int_{\rho_0}^\varrho \partial_\varrho \big(\phi(\varrho')(\Lambda^s F)^2(\bx,\varrho')\big)\dd \varrho'\dd\bx \\
		&\leq 2\norm{\phi}_{L^\infty_\varrho}\int_{\rho_0}^{\rho_1}\norm{\Lambda^s F(\cdot, \varrho)}_{H^{1/2}_\bx}\norm{\Lambda^s \partial_\varrho F(\cdot,\varrho)}_{H^{-1/2}_\bx}\dd \varrho\\
		&\quad +\norm{\phi'}_{L^\infty}\int_{\rho_0}^{\rho_1}\norm{\Lambda^s F(\cdot, \varrho)}_{L^2}\norm{\Lambda^s F(\cdot, \varrho)}_{L^2}\dd \varrho\\
		&\lesssim \Norm{F}_{H^{s+1/2,0}}^2+\Norm{\partial_\varrho F}_{H^{s-1/2,0}}^2
	\end{align*}
	Using symmetrical considerations when $\varrho< \tfrac{\rho_0+\rho_1}2$, we prove {the claimed inequality, which yields the first continuous embedding. Higher-order embeddings follow immediately.}
\end{proof}

Recall the notation
\[ A_s+\big\langle B_s\big\rangle_{s>s_\star} =\begin{cases}
	A_s &\text{ if } s\leq s_\star\,,\\
	A_s+B_s& \text{otherwise.}
\end{cases}\]

\subsection*{Product estimates}
Recall the standard product estimates in Sobolev spaces $H^s(\RR^d)$.
\begin{lemma}\label{L.product-Hs}
	Let $d\in\NN^\star$, $s_0>d/2$. 
	\begin{enumerate}
		\item \label{L.product-Hs-1} For any $s,s_1,s_2\in\RR$ such that $s_1\geq s$, $s_2\geq s$ and $s_1+s_2\geq s+s_0$, there exists $C>0$ such that for any $f\in H^{s_1}(\RR^d)$ and $g\in H^{s_2}(\RR^d)$, $fg\in H^s(\RR^d)$ and
		\[\norm{fg}_{H^s}\leq C \norm{f}_{H^{s_1}}\norm{g}_{H^{s_2}}.\]
		\item \label{L.product-Hs-2} For any $s\geq -s_0$, there exists $C>0$ such that for any $f\in H^{s}(\RR^d)$ and $g\in H^{s}(\RR^d)\cap H^{s_0}(\RR^d)$, $fg\in H^s(\RR^d)$ and
		\[\norm{fg}_{H^s}\leq C \norm{f}_{H^{s_0}}\norm{g}_{H^{s}} +C\left\langle  \norm{f}_{H^{s}}\norm{g}_{H^{s_0}}\right\rangle_{s>s_0}.\]
		\item \label{L.product-Hs-3} For any $s_1,\dots,s_n\in\RR$ such that $s_i\geq 0$ and $s_1+\dots+s_n\geq (n-1)s_0$, there exists $C>0$ such that for any $(f_1,\dots,f_n)\in H^{s_1}(\RR^d)\times \cdots\times H^{s_n}(\RR^d) $, $\prod_{i=1}^n  f_i\in L^2(\RR^d)$ and
		\[\norm{\prod_{i=1}^n  f_i }_{L^2}\leq C \prod_{i=1}^n \norm{f_i}_{H^{s_i}}.\]
	\end{enumerate}
\end{lemma}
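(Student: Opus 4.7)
\textbf{Proof plan for Lemma~\ref{L.product-Hs}.} All three estimates are standard Sobolev product laws and can be proved using Littlewood--Paley decomposition and Bony's paraproduct calculus. The plan is to first dispose of (\ref{L.product-Hs-1}) and (\ref{L.product-Hs-2}) via the decomposition $fg=T_fg+T_gf+R(f,g)$, and then deduce (\ref{L.product-Hs-3}) by finite induction from (\ref{L.product-Hs-1}).

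For (\ref{L.product-Hs-1}), I would dyadically decompose $f=\sum_j\Delta_j f$ and $g=\sum_k\Delta_k g$, regrouping into the three pieces of Bony's decomposition. The paraproduct $T_fg=\sum_k S_{k-N}f\,\Delta_k g$ is handled by pulling the $L^\infty$-type factor $\|S_{k-N}f\|_{L^\infty}\lesssim 2^{k(s_0-s_1)}\|f\|_{H^{s_1}}$ (valid once $s_1\geq s_0$; otherwise one uses $\|S_{k-N}f\|_{L^2}$ and spends derivatives on $\Delta_k g$), and summing the weighted $\ell^2$ norms provided $s\leq s_2$ and $s_1+s_2-s_0\geq s$. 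The symmetric term $T_gf$ is estimated identically. The remainder $R(f,g)=\sum_{|j-k|\leq N}\Delta_j f\,\Delta_k g$ is the only one requiring $s_1+s_2>s_0+s$ (rather than $\geq s$ on each factor separately), and it is controlled by a standard Cauchy--Schwarz argument on the doubly-indexed sum, using that summation is against a Schur kernel once the exponent condition holds. Combining these three contributions yields the claimed bilinear estimate, possibly after a limiting argument at the endpoint $s_1+s_2=s+s_0$ (one can, alternatively, reduce to strict inequality by interpolation with the case $s_1+s_2=s+s_0+\varepsilon$).

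For (\ref{L.product-Hs-2}), the same Bony decomposition is used but one now asymmetrizes the two factors: $T_fg$ is estimated with $f$ in $H^{s_0}$ (so that $S_{k-N}f$ is uniformly bounded in $L^\infty$), and $\Delta_k g$ is weighted by $2^{ks}$, giving the bound $\|T_fg\|_{H^s}\lesssim \|f\|_{H^{s_0}}\|g\|_{H^s}$; this part works for every $s\geq -s_0$. Conversely $T_gf$ produces $\|g\|_{H^{s_0}}\|f\|_{H^s}$, which only appears under the bracket $\langle\cdot\rangle_{s>s_0}$ because, for $s\leq s_0$, one can reabsorb it into the first term by embedding $H^{s_0}\subset H^s$ and swapping roles. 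The remainder $R(f,g)$ is estimated as in (\ref{L.product-Hs-1}) with $(s_1,s_2)\in\{(s_0,s),(s,s_0)\}$, again dropping the high-regularity piece when $s\leq s_0$. The main subtle point here is that when $s\leq s_0$, the ``symmetric'' contribution from $T_gf$ is controlled by $\|g\|_{H^{s_0}}\|f\|_{H^s}\lesssim \|f\|_{H^{s_0}}\|g\|_{H^s}$ after Sobolev embedding in one factor, which is exactly what the bracket notation encodes.

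For (\ref{L.product-Hs-3}), I would induct on $n$. The base case $n=1$ is trivial since $s_1\geq 0$ implies $H^{s_1}\hookrightarrow L^2$. For the inductive step, given $(f_1,\dots,f_n)$ with $s_1+\cdots+s_n\geq (n-1)s_0$, one may assume (after reindexing) $s_n=\min_i s_i$, set $\sigma:=\min(s_n,s_0)\geq 0$, and apply (\ref{L.product-Hs-1}) to write
\[
\bigl\|\textstyle\prod_{i=1}^n f_i\bigr\|_{L^2}=\bigl\|\bigl(\textstyle\prod_{i=1}^{n-1} f_i\bigr)f_n\bigr\|_{L^2}\lesssim \bigl\|\textstyle\prod_{i=1}^{n-1} f_i\bigr\|_{H^{s_0-\sigma}}\|f_n\|_{H^{\sigma}},
\]
which reduces the problem to $n-1$ factors with exponents $s_1,\dots,s_{n-1}$ and target regularity $s_0-\sigma$, whose total sum condition $s_1+\cdots+s_{n-1}\geq (n-2)s_0+(s_0-\sigma)$ is ensured by $s_n\geq \sigma$ and the hypothesis on the original sum. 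The main obstacle throughout will be the careful verification of the exponent arithmetic at the endpoint cases (particularly in (\ref{L.product-Hs-1}) when $s_1+s_2=s+s_0$, and in the threshold $s=s_0$ of (\ref{L.product-Hs-2})), which is handled either by interpolation or by the standard strict/non-strict reduction in Littlewood--Paley estimates.
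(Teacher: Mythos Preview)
The paper does not supply its own proof of this lemma; it is stated as a recall of standard Sobolev product estimates, with the opening sentence of the Appendix referring the reader to \cite{Lannesbook}*{Appendix~B}. Your paraproduct approach for parts~(\ref{L.product-Hs-1}) and~(\ref{L.product-Hs-2}) is indeed the standard one and is correct in outline. One small point in~(\ref{L.product-Hs-2}): the sentence claiming that for $s\leq s_0$ the contribution $\|g\|_{H^{s_0}}\|f\|_{H^s}$ from $T_gf$ can be ``reabsorbed into the first term by embedding $H^{s_0}\subset H^s$ and swapping roles'' is not a valid manipulation as written (you cannot exchange which function sits in which norm). What actually happens is simpler: for $-s_0\leq s\leq s_0$ the whole estimate is already the special case $(s_1,s_2)=(s_0,s)$ of~(\ref{L.product-Hs-1}), so the tame splitting with two terms is only genuinely needed when $s>s_0$.

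Your inductive argument for~(\ref{L.product-Hs-3}) has a genuine gap. After applying~(\ref{L.product-Hs-1}) you need $\bigl\|\prod_{i=1}^{n-1}f_i\bigr\|_{H^{s_0-\sigma}}$, but the inductive hypothesis is~(\ref{L.product-Hs-3}) itself, which only controls the product in $L^2$, not in $H^{s_0-\sigma}$. You are tacitly inducting on a stronger statement (arbitrary target regularity $t\geq0$ with $\sum s_i\geq(n-1)s_0+t$), but that stronger statement, when unwound through~(\ref{L.product-Hs-1}), also requires each remaining exponent to satisfy $s_i\geq t$; with $s_n=\min_i s_i$ and $t=s_0-\sigma$ this can fail (take for instance $n=3$, $s_1=2s_0$, $s_2=s_3=0$, where the step asks for $f_1f_2\in H^{s_0}$ although $s_2=0<s_0$). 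One way to repair the induction is to peel off a factor with $s_i\geq s_0$ (if any) in $L^\infty$, and otherwise argue directly via Sobolev embeddings $H^{s_i}\hookrightarrow L^{p_i}$ and H\"older's inequality with $\sum 1/p_i=1/2$; the exponent arithmetic then hinges on the strict inequality $s_0>d/2$ and requires some care at configurations where more than one $s_i$ vanishes.
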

Let us turn to product estimates in $H^{s,k}(\Omega)$ spaces.
\begin{lemma}\label{L.product-Hsk}
	Let $d\in\NN^\star$, $s_0>d/2$. Let  $s,k\in\NN$ such that  $s\geq s_0+\frac12$ and $1\leq k\leq s$. Then $H^{s,k}(\Omega)$ is a Banach algebra and there exists $C>0$ such that for any $F,G\in H^{s,k}(\Omega)$,
	\[\Norm{FG}_{H^{s,k}}\leq C\Norm{F}_{H^{s,k}}\Norm{G}_{H^{s,k}}.\]
	Moreover, if  $s\geq s_0+\frac32$ and $2\leq k\leq s$, then there exists $C'>0$ such that for any $F,G\in H^{s,k}(\Omega)$,
	\[\Norm{FG}_{H^{s,k}}\leq C'\Norm{F}_{H^{s,k}}\Norm{G}_{H^{s-1,k-1}}+C'\Norm{F}_{H^{s-1,k-1}}\Norm{G}_{H^{s,k}},\]
	and if $s\geq s_0+\frac32$ and $k=1$, then there exists $C''>0$ such that for any $F,G\in H^{s,k}(\Omega)$,
	\[\Norm{FG}_{H^{s,1}}\leq C''\Norm{F}_{H^{s,1}}\Norm{G}_{H^{s-1,1}}+C''\Norm{F}_{H^{s-1,1}}\Norm{G}_{H^{s,1}}.\]
\end{lemma}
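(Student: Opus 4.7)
The plan is to reduce both the Banach algebra property and the two tame inequalities to product-type bounds via a Leibniz expansion in the $\varrho$-direction, combining the horizontal product estimate of Lemma~\ref{L.product-Hs} at fixed $\varrho$ with the anisotropic trace-type embedding of Lemma~\ref{L.embedding}, which trades half an $\bx$-derivative and one $\varrho$-derivative for an $L^\infty_\varrho$-type control.

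First I would fix $(\balpha,j)\in\NN^{d+1}$ with $|\balpha|+j\leq s$ and $j\leq k$, and expand
\[
\de_\bx^\balpha\de_\varrho^j(FG)=\sum_{i=0}^j \binom{j}{i}\,\de_\bx^\balpha\big((\de_\varrho^i F)(\de_\varrho^{j-i} G)\big),
\]
bounding each summand in $L^2(\Omega)=L^2_\varrho L^2_\bx$. For fixed $\varrho$, Lemma~\ref{L.product-Hs}(\ref{L.product-Hs-2}) (in either of its two symmetric forms) yields, since $s-j\geq -s_0$,
\[
\Norm{\de_\bx^\balpha\big((\de_\varrho^i F)(\de_\varrho^{j-i} G)\big)(\cdot,\varrho)}_{L^2_\bx}\lesssim \Norm{f(\cdot,\varrho)}_{H^{s_0}_\bx}\Norm{g(\cdot,\varrho)}_{H^{s-j}_\bx},
\]
where $\{f,g\}=\{\de_\varrho^i F,\de_\varrho^{j-i} G\}$ in either assignment. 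Integrating in $\varrho$ by Hölder and placing the $H^{s_0}_\bx$-factor in $L^\infty_\varrho$ via Lemma~\ref{L.embedding}, which provides $\Norm{\de_\varrho^l H}_{L^\infty_\varrho H^{s_0}_\bx}\lesssim \Norm{H}_{H^{l+s_0+\frac12,\,l+1}}$, each summand is controlled by $\Norm{F}_{H^{s,k}}\Norm{G}_{H^{s,k}}$ provided the $\varrho$-order $l$ of the side sent to $L^\infty_\varrho$ satisfies $l+s_0+\tfrac12\leq s$ and $l+1\leq k$. Since $l_F+l_G=j\leq k$, at least one of the two available choices meets these constraints in most configurations, while the saturated cases $i=k$ (hence $j=k$, $j-i=0$) or its symmetric counterpart are handled directly by putting the $\varrho$-undifferentiated factor in $L^\infty(\Omega)$ through $H^{s_0+\frac12,1}\subset L^\infty(\Omega)$, which is available under $s\geq s_0+\tfrac12$ and $k\geq 1$.

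The tame estimates follow by the same scheme, with the critical thresholds sharpened to $l+s_0+\tfrac12\leq s-1$ and $l+1\leq k-1$, exactly matched by the stronger hypotheses $s\geq s_0+\tfrac32$ and $k\geq 2$; the case $k=1$ is handled in parallel with $j\in\{0,1\}$ and the embedding $H^{s-1,1}\subset L^\infty(\Omega)$. The main technical obstacle is the edge-case analysis in the low-regularity regime $s_0+\tfrac12\leq s<2s_0+1$ (possible only in low spatial dimensions), where the plain horizontal-plus-embedding split can fail because both Leibniz factors may simultaneously carry ``heavy'' orders. In those residual cases I would fall back either on $(d+1)$-dimensional Sobolev embeddings on $\Omega$ or on the more balanced exponent choice afforded by Lemma~\ref{L.product-Hs}(\ref{L.product-Hs-3}) coupled with Hölder/interpolation in $\varrho$; the bookkeeping is systematic but slightly tedious, and no ingredient beyond Lemmas~\ref{L.product-Hs} and~\ref{L.embedding} is required to close every case consistently with the stated hypotheses.
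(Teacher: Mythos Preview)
Your strategy---Leibniz expansion, horizontal product estimate at fixed $\varrho$, then the trace embedding of Lemma~\ref{L.embedding} to place one factor in $L^\infty_\varrho$---is the same toolkit the paper uses, but the paper organizes it more cleanly in two respects. First, the paper expands Leibniz in \emph{both} $\bx$ and $\varrho$, writing $\partial^\balpha(FG)$ as a sum of $(\partial^\bbeta F)(\partial^\bgamma G)$ with $\bbeta+\bgamma=\balpha$ in $\NN^{d+1}$, and then estimates each product directly in $L^2(\Omega)$. Second, and more importantly, it uses Lemma~\ref{L.product-Hs}(\ref{L.product-Hs-1}) at the $L^2_\bx$ level with exponents $(s_1,s_2)=(s-|\bbeta|,\,s-|\bgamma|-\tfrac12)$, so that $s_1+s_2=2s-|\balpha|-\tfrac12\geq s-\tfrac12\geq s_0$ uses the hypothesis $s\geq s_0+\tfrac12$ \emph{exactly}. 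The half-derivative deficit on the $G$-side is then absorbed by Lemma~\ref{L.embedding}, and the only case split is the binary pigeonhole ``$(\bgamma_\varrho\leq k-1$ and $|\bgamma|\leq s-1)$ or the symmetric statement for $\bbeta$'', which always holds since $\bbeta+\bgamma=\balpha$. No residual cases remain; the tame estimates follow the same pattern with exponents shifted by one.

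Your route via Lemma~\ref{L.product-Hs}(\ref{L.product-Hs-2}) is not wrong in principle, but the phrasing ``in either of its two symmetric forms'' and ``at least one of the two available choices'' conflates two different things. When $s-j>s_0$ the lemma produces a \emph{sum} of two terms, not a choice: you must control both, each by its own H\"older split in $\varrho$. That is doable (when $s-j>s_0$ you in fact have $j<s-s_0$, which gives enough room for the embedding on either factor), but it is extra bookkeeping that you have not carried out, and your closing appeal to $(d{+}1)$-dimensional embeddings or Lemma~\ref{L.product-Hs}(\ref{L.product-Hs-3}) for the ``low-regularity regime'' is a placeholder, not an argument. Switching to Lemma~\ref{L.product-Hs}(\ref{L.product-Hs-1}) with the paper's exponent choice eliminates all of this at once.
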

\begin{proof}
	We set two multi-indices $\bbeta=(\bbeta_\bx,\bbeta_\varrho)\in\NN^{d+1}$ and $\bgamma=(\bgamma_\bx,\bgamma_\varrho)\in\NN^{d+1}$ being such that $|\bbeta|+|\bgamma|\leq s$ and $\bbeta_\varrho+\bgamma_\varrho\leq k$. Let us first assume furthermore that $\bgamma_\varrho\leq k-1$ and $|\bgamma|\leq s-1$. Then 
	\begin{align*}
		\Norm{(\partial^\bbeta F)(\partial^\bgamma G)}_{L^2(\Omega)}^2
		&\lesssim  \int_{\rho_0}^{\rho_1} \norm{\partial^\bbeta F(\cdot,\varrho)}_{H^{s-|\bbeta|}_\bx}^2\norm{\partial^\bgamma G(\cdot,\varrho)}_{H^{s-|\bgamma|-\frac12}_\bx}^2\dd \varrho\\
		&\lesssim \Norm{\partial^\bbeta F}_{H^{s-|\bbeta|,0}}^2\Norm{\partial^\bgamma G}_{H^{s-|\bgamma|,1}}^2\leq \Norm{ F}_{H^{s,k}}^2\Norm{ G}_{H^{s,k}}^2.
	\end{align*}
	where we used Lemma~\ref{L.product-Hs}(\ref{L.product-Hs-1}) with $(s,s_1,s_2)=(0,s-|\bbeta|,s-|\bgamma|-\frac12)$, and Lemma~\ref{L.embedding}. If $\bgamma_\varrho=k$ or $|\bgamma|=s$, and since $1\leq k\leq s$, we have $\bbeta_\varrho\leq k-1$ and $|\bbeta|\leq s-1$ and we may make use of the symmetric estimate. Hence the proof of the first statement follows from Leibniz rule.
	
	For the second statement, we assume first that $\max(\{\bbeta_\varrho,\bgamma_\varrho\})\leq k-1$ and $\max(\{|\bbeta|,|\bgamma|\})\leq s-1$. Then, using Lemma~\ref{L.product-Hs} with $(s,s_1,s_2)=(0,s-|\bbeta|-\frac12,s-|\bgamma|-1)$ (recall $s\geq s_0+\frac32$), and Lemma~\ref{L.embedding},
	\begin{align*}
		\Norm{(\partial^\bbeta F)(\partial^\bgamma G)}_{L^2(\Omega)}
		&\lesssim  \norm{\norm{\partial^\bbeta F(\cdot,\varrho)}_{H^{s-|\bbeta|-\frac12}_\bx}\norm{\partial^\bgamma G(\cdot,\varrho)}_{H^{s-|\bgamma|-1}_\bx} }_{L^2_\varrho}\\
		&\lesssim \Norm{ F}_{H^{s,\bbeta_\varrho+1}}\Norm{ G}_{H^{s-1,\bgamma_\varrho}}\leq \Norm{F}_{H^{s,k}}\Norm{G}_{H^{s-1,k-1}},
	\end{align*}
	Then if $\bbeta_\varrho=k$ or $|\bbeta|= s$, we have (since $s\geq k\geq 2$) $\bgamma_\varrho\leq k-2$ and $|\bgamma|\leq s-2$, and we infer
	\begin{align*}
		\Norm{(\partial^\bbeta F)(\partial^\bgamma G)}_{L^2(\Omega)}
		&\lesssim  \norm{ \norm{\partial^\bbeta F(\cdot,\varrho)}_{H^{s-|\bbeta|}_\bx}\norm{\partial^\bgamma G(\cdot,\varrho)}_{H^{s-|\bgamma|-\frac32}_\bx} }_{L^2_\varrho}\\
		&\lesssim  \Norm{ F}_{H^{s,\bbeta_\varrho}}\Norm{ G}_{H^{s-1,\bgamma_\varrho+1}}\leq \Norm{F}_{H^{s,k}}\Norm{G}_{H^{s-1,k-1}}.
	\end{align*}
	Of course we have the symmetrical result when $\bgamma_\varrho=k$ or $|\bgamma|= s$, which complete the proof.
	
	{Finally, for the last statement, we consider first the case  $\bbeta_\varrho=0$ and $\max(\{|\bbeta|,|\bgamma|\})\leq s-1$, and infer as above
		\[ \Norm{(\partial^\bbeta F)(\partial^\bgamma G)}_{L^2(\Omega)} \lesssim  \norm{\norm{\partial^\bbeta F(\cdot,\varrho)}_{H^{s-|\bbeta|-\frac12}_\bx}\norm{\partial^\bgamma G(\cdot,\varrho)}_{H^{s-|\bgamma|-1}_\bx} }_{L^2_\varrho}
		\lesssim \Norm{ F}_{H^{s,1}}\Norm{ G}_{H^{s-1,1}}.\]
		The case $\bbeta_\varrho=1$ (and hence $\bgamma_\varrho=0$) and $\max(\{|\bbeta|,|\bgamma|\})\leq s-1$ is treated symmetrically. Then if $|\bbeta|= s$ we have  $\bgamma_\varrho=|\bgamma|=0$, and we infer
		\[
		\Norm{(\partial^\bbeta F)(\partial^\bgamma G)}_{L^2(\Omega)}
		\lesssim  \norm{ \norm{\partial^\bbeta F(\cdot,\varrho)}_{H^{s-|\bbeta|}_\bx}\norm{\partial^\bgamma G(\cdot,\varrho)}_{H^{s-|\bgamma|-\frac32}_\bx} }_{L^2_\varrho}
		\lesssim \Norm{F}_{H^{s,1}}\Norm{G}_{H^{s-1,1}}.\]
		The case $|\bgamma|= s$ is treated symmetrically, and the proof is complete.}
\end{proof}

\subsection*{Composition estimates}
Let us recall the standard composition estimate in Sobolev spaces $H^s(\RR^d)$.
\begin{lemma}\label{L.composition-Hs}
	Let $d\in\NN^\star$, $s_0>d/2$. For any $\varphi\in \cC^\infty(\RR;\RR)$ such that $\varphi(0)=0$, and any $M>0$, there exists $C>0$ such that for any $f\in H^{s_0}(\RR^d) \cap H^s(\RR^d)$ with $\norm{f}_{H^{s_0}} \leq M$, one has $\varphi(f)\in H^s(\RR^d)$ and
	\[\norm{\varphi(f)}_{H^s} \leq C\norm{f}_{H^s}.\]
\end{lemma}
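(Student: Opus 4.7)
The plan is to combine the Sobolev embedding $H^{s_0}(\RR^d)\hookrightarrow L^\infty(\RR^d)$ (valid since $s_0>d/2$) with a Littlewood--Paley / Meyer-type decomposition of the composition, exploiting that all derivatives of $\varphi$ are bounded on the range of $f$. Equivalently, for integer $s$, one can argue by induction on $s$ using Faà di Bruno's formula together with Lemma~\ref{L.product-Hs}(\ref{L.product-Hs-3}).

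First I would observe that the hypothesis $\norm{f}_{H^{s_0}}\leq M$ gives $\norm{f}_{L^\infty}\leq C_0 M$ by Sobolev embedding. Since $\varphi\in\cC^\infty(\RR)$ and $\varphi(0)=0$, the smooth function $\varphi$ restricted to the compact interval $[-C_0M,C_0M]$ satisfies the pointwise bound $|\varphi(y)|\leq C|y|$ and, more generally, $\norm{\varphi^{(k)}}_{L^\infty([-C_0M,C_0M])}\leq C_k$ for every $k\in\NN$, with all constants depending only on $M$ and on $\varphi$. In particular, the pointwise inequality yields $\norm{\varphi(f)}_{L^2}\leq C\norm{f}_{L^2}$, which already settles the case $s=0$.

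Second, for general $s$, I would introduce a Littlewood--Paley decomposition $\Id=S_0+\sum_{j\geq 0}\Delta_j$ and use the telescoping identity
\[
\varphi(f)=\varphi(S_0 f)+\sum_{j\geq 0}\bigl(\varphi(S_{j+1}f)-\varphi(S_j f)\bigr).
\]
By the fundamental theorem of calculus each summand equals $m_j\,\Delta_j f$ with $m_j:=\int_0^1\varphi'(S_j f+t\Delta_j f)\,dt$, and the preceding step provides $\norm{m_j}_{L^\infty}\leq C$ uniformly in $j$. The crucial observation is that $m_j$ is approximately frequency-localized at scales $\lesssim 2^{j}$ (since $S_j f+t\Delta_j f$ is a low-frequency object), so that the product $m_j\Delta_j f$ inherits frequency localization at scale $\sim 2^j$ up to rapidly decaying tails. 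A standard Plancherel/Bernstein argument then yields
\[
\norm{\varphi(f)}_{H^s}^2 \lesssim \sum_{j\geq -1} 2^{2js}\norm{\Delta_j f}_{L^2}^2 \lesssim \norm{f}_{H^s}^2,
\]
with constants depending only on $s$, $s_0$, $M$ and $\varphi$.

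The main obstacle is making this ``approximate frequency localization'' of the multiplier $m_j$ quantitative, which is the content of Meyer's nonlinear multiplier theorem: one must prove $\norm{\nabla^k m_j}_{L^\infty}\lesssim 2^{jk}$ for every $k\in\NN$, a bound obtained by applying Faà di Bruno's formula to $\varphi'(S_j f+t\Delta_j f)$, using the step-one control of $\norm{\varphi^{(\ell)}}_{L^\infty}$, and invoking Bernstein's inequality on the factors $\nabla^\alpha(S_j f+t\Delta_j f)$. A lighter route, when $s\in\NN$, is induction on $s$: expanding $\partial^\alpha\varphi(f)$ via Faà di Bruno gives a sum of terms of the form $\varphi^{(n)}(f)\prod_{i}\partial^{\beta_i}f$ with $\sum_i|\beta_i|=|\alpha|$, each estimated by Lemma~\ref{L.product-Hs}(\ref{L.product-Hs-3}) together with the uniform bound on $\norm{\varphi^{(n)}(f)}_{L^\infty}$; the resulting inequality is exactly the claimed estimate, after redistributing derivatives.
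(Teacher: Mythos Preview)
The paper does not actually prove this lemma: it is stated in the Appendix as a ``standard composition estimate in Sobolev spaces $H^s(\RR^d)$'' that is simply recalled (with reference to \cite{Lannesbook}*{Appendix~B}), and the paper immediately moves on to the anisotropic analogue, Lemma~\ref{L.composition-Hsk}. So there is no proof in the paper to compare against.

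Your proposal is correct and follows the standard route. The Littlewood--Paley/Meyer paralinearization argument you outline is precisely how this result is established for general $s\geq 0$; the key technical step (the uniform bound $\norm{\nabla^k m_j}_{L^\infty}\lesssim 2^{jk}$ via Fa\`a di Bruno and Bernstein) is exactly the content of Meyer's multiplier theorem. Your alternative Fa\`a di Bruno argument for integer $s$ is also valid, and is in fact the template the paper uses in its proof of Lemma~\ref{L.composition-Hsk}: one minor caveat is that invoking Lemma~\ref{L.product-Hs}(\ref{L.product-Hs-3}) with exponents $s_i=s-|\bbeta_i|$ requires $\sum s_i=(n-1)s\geq (n-1)s_0$, i.e.\ $s\geq s_0$, so the range $0\leq s<s_0$ should be handled separately (e.g.\ by interpolation between the $L^2$ bound and the case $s=\lceil s_0\rceil$, or directly via the paraproduct decomposition).
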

We now consider composition estimates in $H^{s,k}(\Omega)$.
\begin{lemma}\label{L.composition-Hsk}
	Let $d\in\NN^\star$, $s_0>d/2$. Let $s,k\in\NN$ with $s\geq s_0+\frac12$ and $1\leq k\leq s$, and $M>0$. There exists $C>0$ such that for any $\varphi\in W^{s,\infty}(\RR;W^{k,\infty}((\rho_0,\rho_1)))$ with $\varphi(0;\cdot)\equiv 0$, and any $F\in H^{s,k}(\Omega)$ such that $\Norm{F}_{H^{s,k}} \leq M$, then $\varphi\circ F:(\bx,\varrho)\mapsto\varphi(F(\bx,\varrho);\varrho)\in H^{s,k}(\Omega)$ and
	\[ \Norm{\varphi\circ F}_{H^{s,k}}\leq C\norm{\varphi}_{W^{s,\infty}( \RR ; W^{k,\infty}( (\rho_0,\rho_1) ) )}\Norm{F}_{H^{s,k}}.\]
	If moreover $s\geq s_0+\frac32$ and $2\leq k\leq s$, then there exists $C'>0$ such that for any $F\in H^{s,k}(\Omega)$ such that $\Norm{F}_{H^{s-1,k-1}} \leq M$, 
	\[ \Norm{\varphi\circ F}_{H^{s,k}}\leq C'\norm{\varphi}_{W^{s,\infty}(\RR;W^{k,\infty}((\rho_0,\rho_1)))}\Norm{F}_{H^{s,k}}.\]
\end{lemma}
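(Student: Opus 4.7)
The plan is to use Faà di Bruno's formula to write $\partial_\bx^\balpha \partial_\varrho^j(\varphi \circ F)$ (for $|\balpha|+j \leq s$ and $j \leq k$) as a finite linear combination of terms of the form
\[
T := (\partial_y^m \partial_\varrho^l \varphi)(F(\bx,\varrho);\varrho) \, \prod_{i=1}^r \bigl(\partial_\bx^{\balpha_i} \partial_\varrho^{j_i} F\bigr)^{q_i},
\]
with $m = \sum_{i=1}^r q_i \geq 0$, $0 \leq l \leq j$, $\sum q_i|\balpha_i| = |\balpha|$, $\sum q_i j_i = j-l$, and $|\balpha_i|+j_i \geq 1$. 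The composed prefactor is pointwise bounded in $L^\infty(\Omega)$ by $\norm{\varphi}_{W^{s,\infty}(\RR;W^{k,\infty})}$ whenever $m \leq s$ and $l \leq k$, which is ensured by the constraints.

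For the first (standard) estimate, each product of derivatives of $F$ in $T$ with $m\geq 1$ is bounded in $L^2(\Omega)$ by the iterated product estimate of Lemma~\ref{L.product-Hs}(\ref{L.product-Hs-3}) applied slicewise in $\varrho$, combined with Lemma~\ref{L.embedding} to distribute the derivatives in $\varrho$ through $L^\infty_\varrho$ or $L^2_\varrho$; the resulting bound is at most $C(\Norm{F}_{H^{s,k}})\,\Norm{F}_{H^{s,k}}$ per term. The exceptional case $m = 0$ (which forces $|\balpha| = 0$) yields the term $(\partial_\varrho^j \varphi)(F;\varrho)$, which is not a priori in $L^2(\Omega)$. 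Here the hypothesis $\varphi(0;\cdot) \equiv 0$ is essential: differentiating $j$ times in $\varrho$ preserves the vanishing at $y=0$, so by Taylor's theorem $(\partial_\varrho^j \varphi)(y;\varrho) = y\,g_j(y;\varrho)$ with $g_j$ smooth and bounded, whence
\[
\Norm{(\partial_\varrho^j \varphi)(F;\cdot)}_{L^2(\Omega)} \leq \norm{g_j}_{L^\infty} \Norm{F}_{L^2(\Omega)}.
\]
Summing over the finitely many Faà di Bruno terms and over all $(\balpha,j)$ with $|\balpha|+j \leq s$, $j \leq k$, yields the first assertion.

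For the tame estimate (second assertion), the same decomposition is used, but now Lemma~\ref{L.product-Hsk} (second and third parts) is applied so that exactly one derivative factor of $F$ in $T$ is placed in $H^{s,k}$ (the ``top-order'' one, of total order $|\balpha_i|+j_i$ maximal among the factors) while the remaining factors are placed in $H^{s-1,k-1}$ and controlled by $M$. The composed prefactor $(\partial_y^m \partial_\varrho^l \varphi)(F;\varrho)$ is then needed not merely in $L^\infty(\Omega)$ but in $H^{s-1,k-1}$, which I obtain by induction on $s+k$: decompose this factor as $(\partial_y^m \partial_\varrho^l \varphi)(0;\varrho) + \widetilde\varphi(F;\varrho)$ with $\widetilde\varphi(0;\cdot) \equiv 0$, apply the first (non-tame) statement to $\widetilde\varphi \circ F$ at level $(s-1,k-1)$ using $\Norm{F}_{H^{s-1,k-1}} \leq M$, and observe that the purely $\varrho$-dependent remainder $(\partial_y^m \partial_\varrho^l \varphi)(0;\varrho)$ combines harmlessly with the $F$-factors via Lemma~\ref{L.product-Hsk} since it depends only on $\varrho$ and lies in $W^{k-1,\infty}_\varrho$.

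The main obstacle is the exceptional $m=0$ case in the Faà di Bruno expansion, which would produce a contribution outside $L^2(\Omega)$ without the hypothesis $\varphi(0;\cdot) \equiv 0$; factoring out $F$ using Taylor's theorem is exactly what reconciles this with the $H^{s,k}$ topology. The secondary delicate point is the bookkeeping of the Faà di Bruno indices in the tame estimate, namely verifying that for each term one can indeed designate a single ``top-order'' factor $\partial_\bx^{\balpha_i} \partial_\varrho^{j_i} F$ of total order at most $s$ (and $\varrho$-order at most $k$), so that it may carry the $H^{s,k}$ norm while the remaining factors and the composed prefactor are all controlled using only $\Norm{F}_{H^{s-1,k-1}} \leq M$.
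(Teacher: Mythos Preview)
Your proof of the first assertion is correct and matches the paper's: Fa\`a di Bruno, the $m=0$ case via the mean value theorem (your Taylor factorization is the same thing), and Lemma~\ref{L.product-Hs}(\ref{L.product-Hs-3}) slicewise in $\varrho$ combined with Lemma~\ref{L.embedding}.

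For the tame estimate, however, your outline diverges from the paper and contains a type mismatch. You invoke Lemma~\ref{L.product-Hsk}, which bounds $\Norm{FG}_{H^{s,k}}$, but what is needed is to bound each Fa\`a di Bruno term $T$ in $L^2(\Omega)$. For that, the prefactor $(\partial_y^m\partial_\varrho^l\varphi)(F;\varrho)$ only ever needs to be in $L^\infty(\Omega)$; your plan to estimate it in $H^{s-1,k-1}$ via induction is unnecessary and does not obviously combine with the derivative factors to yield an $L^2$ bound on $T$. The paper keeps the prefactor in $L^\infty$ throughout and proves the tame bound by a direct combinatorial refinement of the first argument: order the multi-indices so that $|\balpha^{i,j}_{1,\varrho}|\geq\cdots\geq|\balpha^{i,j}_{i,\varrho}|$, then split into the cases $|\balpha^{i,j}_{1,\varrho}|=k$ (forcing all other $\varrho$-orders to vanish and all other total orders to be $\leq s-2$) versus $|\balpha^{i,j}_{1,\varrho}|\leq k-1$ (where one shows the third and later factors have $\varrho$-order $\leq k-2$ and total order $\leq s-2$). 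In each case Lemma~\ref{L.product-Hs}(\ref{L.product-Hs-3}) at fixed $\varrho$ together with Lemma~\ref{L.embedding} yields $\Norm{F}_{H^{s,k}}\Norm{F}_{H^{s-1,k-1}}^{i-1}$, which is the tame structure. Your intuition that ``exactly one factor carries $H^{s,k}$'' is correct, but it must be implemented at the $\varrho$-slice level via Lemma~\ref{L.product-Hs}(\ref{L.product-Hs-3}), not via Lemma~\ref{L.product-Hsk}.
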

\begin{proof}
	Let $\balpha=(\balpha_\bx,\balpha_\varrho)\in\NN^{d+1}\setminus\{{\bf 0}\}$ with $0\leq|\balpha|\leq s$ and $0\leq\balpha_\varrho\leq k$. We have by Fa\`a di Bruno's formula
	\[ \Norm{\partial^\balpha (\varphi\circ F)}_{L^2(\Omega)} \lesssim
	\sum\Norm{\big((\de_1^i\de_2^j\varphi)\circ F \big)\,  (\partial^{\balpha^{i,j}_1} F)\cdots (\partial^{\balpha^{i,j}_i} F)}_{L^2(\Omega)}, \]
	where $i,j\in\NN$ with $i+j\leq |\balpha|\leq s$, and the multi-indices $\balpha^{i,j}_\ell=(\balpha^{i,j}_{\ell,\bx},\balpha^{i,j}_{\ell,\varrho})\in\NN^{d+1}\setminus\{{\bf 0}\}$ satisfy $ \sum_{\ell=1}^i \balpha^{i,j}_{\ell,\bx}=\balpha_\bx $ and $ j+\sum_{\ell=1}^i \balpha^{i,j}_{\ell,\varrho}=\balpha_\varrho $. If $i=0$ then we have from the mean value theorem that for any $0\leq j\leq k$
	\[ \Norm{(\de_2^j\varphi)\circ F}_{L^2(\Omega)}=\Norm{(\de_2^j\varphi)\circ F-(\de_2^j\varphi)\circ 0}_{L^2(\Omega)}\leq \norm{\de_1\de_2^j\varphi}_{L^\infty(\RR\times (\rho_0,\rho_1))} \Norm{F}_{L^2(\Omega)}.\]
	The case $i=1$ is straightforward, and we now focus on the case $i\geq 2$. We assume without loss of generality that $|\balpha^{i,j}_{1,\varrho}|\geq |\balpha^{i,j}_{2,\varrho} |\geq \cdots \geq |\balpha^{i,j}_{i,\varrho} |$ and remark that for $\ell\neq 1$, $|\balpha^{i,j}_{\ell,\varrho}|\leq k-1$ (otherwise $|\balpha^{i,j}_{1,\varrho}|+|\balpha^{i,j}_{\ell,\varrho}|=2k>k\geq |\balpha_\varrho|$) and $ |\balpha^{i,j}_{\ell}  |\leq s-|\balpha^{1,j}_1|\leq s-1 $.
	Hence we have
	\begin{align*}\textstyle\Norm{\prod_{\ell=1}^i (\partial^{\balpha^{i,j}_\ell} F)}_{L^2(\Omega)}
		&\textstyle\lesssim \norm{\norm{\partial^{\balpha^{i,j}_1} F}_{H^{s-|\balpha^{i,j}_1|}_\bx} \big(\prod_{\ell=2}^i \norm{\partial^{\balpha^{i,j}_\ell} F}_{H^{s-|\balpha^{i,j}_2|-\frac12}_\bx}\big) }_{L^2_\varrho}\\
		&\textstyle\lesssim \Norm{ F}_{H^{s,\balpha^{i,j}_{1,\varrho}}} \big(\prod_{\ell=2}^i \Norm{ F}_{H^{s,\balpha^{i,j}_{\ell,\varrho}+1}} \big)\leq \Norm{ F}_{H^{s,k}}^i
	\end{align*}
	where we used Lemma~\ref{L.product-Hs}(\ref{L.product-Hs-3}) and $(i-1)(s-\frac12)\geq (i-1)s_0 $ and Lemma~\ref{L.embedding}. The first claim follows. 
	
	Now we assume additionally that $k\geq 2$ and $s\geq s_0+\frac32$. The cases $i\in\{0,1\}$ can be treated exactly as above and we deal only with the case $i\geq2$, ordering  $|\balpha^{i,j}_{1,\varrho}|\geq |\balpha^{i,j}_{2,\varrho}| \geq \cdots \geq |\balpha^{i,j}_{i,\varrho}| $ as above. Assume first that $|\balpha^{i,j}_{1,\varrho}|=k\geq 2$. Then  for all $\ell\neq 1$, $|\balpha^{i,j}_{\ell,\varrho}|=0$ and $|\balpha^{i,j}_{\ell}|\leq s-2$, and we conclude as before with
	\[\textstyle\Norm{\prod_{\ell=1}^i (\partial^{\balpha^{i,j}_\ell} F)}_{L^2(\Omega)}
	\lesssim \norm{\norm{\partial^{\balpha^{i,j}_1} F}_{H^{s-|\balpha^{i,j}_1|}_\bx} \big(\prod_{\ell=1}^i \norm{\partial^{\balpha^{i,j}_\ell} F}_{H^{s-|\balpha^{i,j}_\ell|-\frac32}_\bx} \big)}_{L^2_\varrho} \lesssim \Norm{ F}_{H^{s,k}}\Norm{ F}_{H^{s-1,1}}^{i-1}.
	\]
	Otherwise we have $|\balpha^{i,j}_{2,\varrho}|\leq |\balpha^{i,j}_{1,\varrho}|\leq k-1$ and $|\balpha^{i,j}_{2}|\leq s-|\balpha^{i,j}_{1}|\leq s-1$ and notice that for $\ell \geq 3$, $|\balpha^{i,j}_{\ell,\varrho}|\leq k-2$ (since otherwise we have the contradiction $|\balpha^{i,j}_{1,\varrho}|+ |\balpha^{i,j}_{2,\varrho}|+|\balpha^{i,j}_{3,\varrho}|\geq 3(k-1)\geq k+1\geq |\balpha_\varrho|+1$) and $|\balpha^{i,j}_{\ell}|\leq s-|\balpha^{i,j}_{1}|-|\balpha^{i,j}_{2}|\leq s-2$. Hence
	\begin{align*}\textstyle\Norm{\prod_{\ell=1}^i (\partial^{\balpha^{i,j}_\ell} F)}_{L^2(\Omega)}
		&\textstyle\lesssim  \norm{\norm{\partial^{\balpha^{i,j}_1} F}_{H^{s-|\balpha^{i,j}_1|-\frac12}_\bx}\norm{\partial^{\balpha^{i,j}_2} F}_{H^{s-|\balpha^{i,j}_2|-1}_\bx}\big(\prod_{\ell=3}^i \norm{\partial^{\balpha^{i,j}_\ell} F}_{H^{s-|\balpha^{i,j}_\ell|-\frac32}_\bx}\big)}_{L^2_\varrho}\\
		&\textstyle\lesssim \Norm{ F}_{H^{s,\balpha^{i,j}_{1,\varrho}+1}}\Norm{ F}_{H^{s-1,\balpha^{i,j}_{2,\varrho}}} \big(\prod_{\ell=3}^i\Norm{ F}_{H^{s-1,\balpha^{i,j}_{\ell,\varrho}+1}}\big) \lesssim \Norm{ F}_{H^{s,k}}\Norm{ F}_{H^{s-1,k-1}}^{i-1}.
	\end{align*}
	This concludes the proof.
\end{proof}

We shall apply the above to estimate quantities such as (but not restricted to) 
\[ \Phi:(\bx,\varrho)\in\Omega\mapsto\frac{h(\bx,\varrho)}{\bar h(\varrho)+h(\bx,\varrho)},\]
with $\bar h\in W^{k,\infty}((\rho_0,\rho_1))$ and  $h\in H^{s,k}(\RR^d)$ satisfying the condition $\inf_{(\bx,\varrho)\in\Omega}\bar h(\varrho)+h(\bx,\varrho)\geq h_\star>0$. Let us detail the result and its proof for this specific example.
\begin{lemma}\label{L.composition-Hsk-ex}
	Let $d\in\NN^\star$, $s_0>d/2$. Let $s,k\in\NN$ with $s\geq s_0+\frac12$ and $1\leq k\leq s$, and $M,\bar M,h_\star>0$. There exists $C>0$ 
	such that for any $\bar h\in  W^{k,\infty}((\rho_0,\rho_1))$ with $\norm{\bar h}_{W^{k,\infty}_\varrho}\leq \bar M$ and any $h\in H^{s,k}(\Omega)$ with $\Norm{h}_{H^{s,k}}\leq M$ and satisfying the condition $\inf_{(\bx,\varrho)\in\Omega}\bar h(\varrho)+h(\bx,\varrho)\geq h_\star$, then 
	\[ \Phi:(\bx,\varrho)\mapsto\frac{h(\bx,\varrho)}{\bar h(\varrho)+h(\bx,\varrho)} \in H^{s,k}(\Omega),\]
	and
	\[ \Norm{\Phi}_{H^{s,k}} \leq C \Norm{h}_{H^{s,k}}.\]
	If moreover $s> \frac d2+\frac32$ and $2\leq k\leq s$, then the above holds for any $h\in H^{s,k}(\Omega)$ with $\Norm{h}_{H^{s-1,k-1}}\leq M$.
\end{lemma}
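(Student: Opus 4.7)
The strategy is to reduce the claim to the generic composition estimate (Lemma~\ref{L.composition-Hsk}) by constructing a compactly supported smooth function $\tilde\varphi(x;\varrho)$ that coincides with the (formally singular) function $\varphi(x;\varrho) := x/(\bar h(\varrho)+x)$ on the range of $h$, so that $\Phi = \tilde\varphi(h;\cdot)$ pointwise.

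The first step, and the one requiring some thought, is the preliminary pointwise lower bound $\bar h(\varrho) \geq h_\star$ for every $\varrho \in (\rho_0,\rho_1)$. This is not stated among the hypotheses, but follows from the stable stratification $\bar h(\varrho)+h(\bx,\varrho) \geq h_\star$ together with the decay of $h(\cdot,\varrho)$ at infinity. More precisely, since $s \geq s_0+\tfrac12$ with $s_0>d/2$, Lemma~\ref{L.embedding} embeds $h$ into $\cC^0([\rho_0,\rho_1]; H^{s-1/2}(\RR^d))$; each slice $h(\cdot,\varrho)$ then lies in $H^{s-1/2}(\RR^d)$ with $s-\tfrac12>d/2$, hence is continuous and vanishes at infinity (by density of $\cC^\infty_c(\RR^d)$ in $H^t$ combined with the Sobolev embedding into $L^\infty$). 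Letting $|\bx|\to\infty$ in the stratification inequality at fixed $\varrho$ yields the claim.

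Next, denote $L := \Norm{h}_{L^\infty(\Omega)}$, which by Lemma~\ref{L.embedding} is bounded by $C_{s,k}M$ under the hypothesis $\Norm{h}_{H^{s,k}}\leq M$ in the first statement, and by $C_{s,k}'M$ under $\Norm{h}_{H^{s-1,k-1}}\leq M$ in the second statement (where $s\geq s_0+\tfrac32$, $k\geq 2$ ensure $s-1\geq s_0+\tfrac12$, $k-1\geq 1$). Choose cutoffs $\chi \in \cC^\infty_c(\RR)$ with $\chi \equiv 1$ on $[-L,L]$, and $\eta \in \cC^\infty(\RR; [h_\star/2,+\infty))$ with $\eta(t)=t$ for $t\geq h_\star$, and set
\[
\tilde\varphi(x;\varrho) := \chi(x)\, \frac{x}{\eta(\bar h(\varrho)+x)}.
\]
On the graph $\{(h(\bx,\varrho),\varrho)\}\subset \RR\times(\rho_0,\rho_1)$ one has $|h|\leq L$ and $\bar h + h \geq h_\star$, so $\chi(h)=1$ and $\eta(\bar h+h)=\bar h+h$, giving $\tilde\varphi(h;\cdot)=\Phi$ pointwise. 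The inequality $\bar h(\varrho)\geq h_\star$ from the first step ensures $\eta(\bar h(\varrho))=\bar h(\varrho)>0$, whence $\tilde\varphi(0;\varrho)=0$ as required by Lemma~\ref{L.composition-Hsk}.

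The last step is to verify that $\tilde\varphi$ lies in $W^{s,\infty}(\RR; W^{k,\infty}((\rho_0,\rho_1)))$ with norm controlled by $C(s,k,h_\star,\bar M,M)$: because $\chi$ has compact support and $\eta$ is uniformly bounded below by $h_\star/2$, all mixed derivatives $\partial_x^i\partial_\varrho^j\tilde\varphi$ (with $i\leq s$, $j\leq k$) are continuous and uniformly bounded, the dependence on $\varrho$ entering only through $\bar h^{(\ell)}(\varrho)$ with $\ell\leq k$, controlled by $\norm{\bar h}_{W^{k,\infty}_\varrho}\leq \bar M$. Applying the first (resp. second) statement of Lemma~\ref{L.composition-Hsk} then yields
\[
\Norm{\Phi}_{H^{s,k}} = \Norm{\tilde\varphi(h;\cdot)}_{H^{s,k}} \leq C \Norm{h}_{H^{s,k}},
\]
with the constant $C$ depending only on the stated parameters. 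The only genuinely nontrivial step is the derivation of the pointwise lower bound $\bar h\geq h_\star$; once established, the construction of $\tilde\varphi$ and the verification of its regularity are routine, and the invocation of Lemma~\ref{L.composition-Hsk} closes the argument.
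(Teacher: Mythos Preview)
Your proof is correct and follows essentially the same route as the paper's: build a smooth stand-in $\tilde\varphi$ for $x/(\bar h(\varrho)+x)$ that agrees with it on the range of $h$, vanishes at $x=0$, and has controlled $W^{s,\infty}(\RR;W^{k,\infty}_\varrho)$ norm, then invoke Lemma~\ref{L.composition-Hsk}. The paper does this via a smooth extension of $f(y,z)=y/(y+z)$ off a compact set in $\RR^2$, while you use the explicit cutoff-and-floor construction $\chi(x)\,x/\eta(\bar h+x)$; both are fine.

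One small remark: the step you flag as ``genuinely nontrivial'' (deriving $\bar h\geq h_\star$ from decay of $h$ at infinity) is in fact not needed for your own construction. Indeed $\tilde\varphi(0;\varrho)=\chi(0)\cdot 0/\eta(\bar h(\varrho))=0$ holds automatically from the factor $x$, since the denominator is already bounded below by $h_\star/2$ regardless of the value of $\bar h(\varrho)$. The observation is true and arguably fills a gap in the paper's terser argument (where one must ensure $(0,\bar h(\varrho))$ lies in the region where the extension $f$ still coincides with $y/(y+z)$), but in your setup it is superfluous.
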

\begin{proof}
	We can write $\Phi=\varphi\circ h$ with $\varphi(\cdot,\varrho)=f(\cdot,\bar h(\varrho))$ where $f\in \cC^\infty(\RR^2)$ is set such that $f(y,z)=\frac{y}{y+z}$ on  $\omega:=\{(y,z) \ : \ \ |y|\leq  \Norm{h}_{L^\infty(\Omega)} ,\ |z|\leq  \norm{\bar h}_{L^\infty((\rho_0,\rho_1))},\ y+z\geq h_\star \}$. We can construct $f$ as above such that the control of $\norm{\varphi}_{W^{s,\infty}(\RR;W^{k,\infty}((\rho_0,\rho_1)))} $ depends only on  $\Norm{h}_{L^\infty(\Omega)} $ (which is bounded appealing to Lemma~\ref{L.embedding}, if $h \in H^{s,k}$ with $s>\frac d2 + \frac 12, \, 1 \le k \le s$), $ \norm{\bar h}_{W^{k,\infty}((\rho_0,\rho_1))}$ and $h_\star>0$.
	The result is now a direct application of Lemma~\ref{L.composition-Hsk}.\end{proof}

\subsection*{Commutator estimates}
We now recall standard commutator estimates in $H^s(\RR^d)$.
\begin{lemma}\label{L.commutator-Hs}
	Let $d\in\NN^\star$, $s_0>d/2$ and $s\geq0$. 
	\begin{enumerate}
		\item \label{L.commutator-Hs-1} For any $s_1,s_2\in\RR$ such that $s_1\geq s$, $s_2\geq s-1$ and $s_1+s_2\geq s+s_0$, there exists $C>0$ such that for any $f\in H^{s_1}(\RR^d)$ and $g\in H^{s_2}(\RR^d)$, $[\Lambda^s,f]g:=\Lambda^s(fg)-f\Lambda^s g\in L^2(\RR^d)$ and
		\[\norm{[\Lambda^s,f]g}_{L^2}\leq C \norm{f}_{H^{s_1}}\norm{g}_{H^{s_2}}.\]
		\item \label{L.commutator-Hs-2} There exists $C>0$ such that for any $f\in L^\infty(\RR^d)$ such that $\nabla f \in H^{s-1}(\RR^d)\cap H^{s_0}(\RR^d)$ and for any $g\in H^{s-1}(\RR^d)$, one has  $[\Lambda^s,f]g\in L^2(\RR^d)$ and
		\[\norm{[\Lambda^s,f]g}_{L^2}\leq C \norm{\nabla f}_{H^{s_0}}\norm{g}_{H^{s-1}} +C\left\langle  \norm{\nabla f}_{H^{s-1}}\norm{g}_{H^{s_0}}\right\rangle_{s>s_0+1}.\]
		\item \label{L.commutator-Hs-3} There exists $C>0$ such that for any $f,g\in H^{s}(\RR^d)\cap H^{s_0+1}(\RR^d)$, the symmetric commutator $[\Lambda^s;f,g]:=\Lambda^s(fg)-f\Lambda^s g - g\Lambda^s f \in L^2(\RR^d)$ and
		\[\norm{[\Lambda^s;f,g]}_{L^2}\leq C \norm{f}_{H^{s_0+1}}\norm{g}_{H^{s-1}} +C \norm{f}_{H^{s-1}}\norm{g}_{H^{s_0+1}}  .\]
	\end{enumerate}
	{The validity of the above estimates persist when replacing the operator $\Lambda^s$ with the operator $\partial^\balpha$ with $\balpha\in\NN^d$ a multi-index such that $|\balpha|\leq s$.}
\end{lemma}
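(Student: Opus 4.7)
The plan is to reduce everything to Bony's paradifferential decomposition $fg = T_f g + T_g f + R(f,g)$, where $T$ is the paraproduct and $R$ the resonant remainder, and then appeal to the two classical building blocks: (a) the paraproduct commutator gain $\Norm{[\Lambda^s,T_f]g}_{L^2}\lesssim \norm{\nabla f}_{L^\infty}\norm{g}_{H^{s-1}}$, and (b) the continuity estimates $\norm{T_f g}_{H^\sigma}\lesssim \norm{f}_{L^\infty}\norm{g}_{H^\sigma}$ together with $\norm{R(f,g)}_{H^{\sigma_1+\sigma_2-d/2}}\lesssim\norm{f}_{H^{\sigma_1}}\norm{g}_{H^{\sigma_2}}$ whenever $\sigma_1+\sigma_2>0$. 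These are standard and can be lifted from e.g.\ \cite{Lannesbook}*{Appendix~B}.

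For (\ref{L.commutator-Hs-1}), I would write
\[
[\Lambda^s,f]g \;=\; [\Lambda^s,T_f]g \;+\; \Lambda^s(T_g f) - T_{\Lambda^s g}f \;+\; \Lambda^s R(f,g) - R(f,\Lambda^s g),
\]
and bound each piece using (a) and (b): the first contribution costs $\norm{\nabla f}_{L^\infty}\norm{g}_{H^{s-1}}$, absorbed by $\norm{f}_{H^{s_1}}\norm{g}_{H^{s_2}}$ thanks to $s_1\ge s$, $s_2\ge s-1$ and $H^{s_0}\hookrightarrow L^\infty$; the paraproduct and remainder pieces are handled by their mapping properties, the assumption $s_1+s_2\ge s+s_0$ with $s_0>d/2$ being exactly what guarantees the resonant part lands in $L^2$.

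For (\ref{L.commutator-Hs-2}), the same decomposition delivers the tame estimate: the $[\Lambda^s,T_f]g$ piece yields $\norm{\nabla f}_{H^{s_0}}\norm{g}_{H^{s-1}}$ via $H^{s_0}\hookrightarrow L^\infty$, while the remaining four terms (paraproduct of $g$ times $f$, and symmetric remainder pieces) must be estimated by putting high regularity on $f$ and low on $g$; this is only available when $s-1>s_0$, which explains the bracket $\langle \cdot \rangle_{s>s_0+1}$. Estimate (\ref{L.commutator-Hs-3}) is specifically designed so the two ``bad'' paraproducts cancel: one obtains
\[
[\Lambda^s;f,g] \;=\; [\Lambda^s,T_f]g + [\Lambda^s,T_g]f \;+\; \Lambda^s R(f,g) - R(f,\Lambda^s g) - R(g,\Lambda^s f),
\]
and now both sides can be treated tamely by (a) and by the remainder estimate, since $(s_0+1)+(s-1)-d/2\geq 0$ provides the needed $L^2$ boundedness symmetrically in $f$ and $g$.

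The main technical bookkeeping, and the only mildly delicate step, is tracking the endpoint thresholds reflected in the brackets $\langle\cdot\rangle_{s>s_0+\cdot}$, which correspond exactly to those cases where one of the two summands becomes absorbed into the other via the embedding $H^{s_0+1}\hookrightarrow W^{1,\infty}$. Finally, the extension from $\Lambda^s$ to $\partial^\balpha$ for any multi-index $\balpha\in\NN^d$ with $|\balpha|\le s$ is routine: $\partial^\balpha$ is a Fourier multiplier of order $|\balpha|\le s$ whose symbol is smooth away from zero, so the exact same paradifferential argument goes through; alternatively one may write $\partial^\balpha = \partial^\balpha\Lambda^{-|\balpha|}\cdot \Lambda^{|\balpha|}$ and interpolate between integer-order commutator identities (using Leibniz) and the fractional case already treated, no new analytic input being required.
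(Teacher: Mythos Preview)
The paper does not actually prove this lemma: it is stated in the appendix as a recalled standard result, with the blanket reference to \cite{Lannesbook}*{Appendix~B} covering it. So there is no ``paper's proof'' to compare against, and your paraproduct approach is precisely the standard route taken in that reference and in \cite{BCD11}.

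Your sketch is correct in outline. One imprecision worth flagging concerns part~(\ref{L.commutator-Hs-1}): you write that $[\Lambda^s,T_f]g$ ``costs $\norm{\nabla f}_{L^\infty}\norm{g}_{H^{s-1}}$'' and is then absorbed via $H^{s_0}\hookrightarrow L^\infty$. But the hypotheses $s_1\ge s$, $s_2\ge s-1$, $s_1+s_2\ge s+s_0$ do not by themselves force $s_1>s_0+1$, so $\norm{\nabla f}_{L^\infty}$ need not be controlled by $\norm{f}_{H^{s_1}}$ alone. In that regime one must instead use the more general paracommutator bound placing $f$ in a Besov space of negative H\"older type and compensate with the extra regularity available on $g$ (since then $s_2>s-1$); this is routine and is in the cited references, but your one-line justification does not quite cover it. The treatment of (\ref{L.commutator-Hs-2}) and (\ref{L.commutator-Hs-3}) is accurate, including the cancellation mechanism in the symmetric commutator and the role of the threshold $s>s_0+1$.
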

We conclude with commutator estimates in the spaces $H^{s,k}(\Omega)$.
\begin{lemma}\label{L.commutator-Hsk}
	Let $d\in\NN^\star$, $s_0>d/2$. Let $s\geq s_0+\frac32$ and $k\in\NN$ such that $2\leq k\leq s$. Then there exists $C>0$ such that for any $\balpha=(\balpha_\bx,\balpha_\varrho)\in\NN^{d+1}$ with $|\balpha|\leq s$ and $\balpha_\varrho\leq k$, one has
	\[\Norm{[\partial^\balpha,F]G}_{L^2(\Omega)}\leq C\Norm{F}_{H^{s,k}}\Norm{G}_{H^{s-1,\min(\{k,s-1\})}}.\]
\end{lemma}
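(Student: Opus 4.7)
The starting point is the Leibniz formula
\[
[\partial^\balpha, F]G = \sum_{\substack{\bbeta + \bgamma = \balpha \\ |\bbeta| \geq 1}} \binom{\balpha}{\bbeta} (\partial^\bbeta F)(\partial^\bgamma G),
\]
with $\bbeta = (\bbeta_\bx, \bbeta_\varrho)$, $\bgamma = (\bgamma_\bx, \bgamma_\varrho)$ subject to $|\bbeta| + |\bgamma| \leq s$, $\bbeta_\varrho + \bgamma_\varrho \leq k$, and $|\bgamma| \leq s - 1$. The task thus reduces to proving, for each such pair, the bound $\Norm{(\partial^\bbeta F)(\partial^\bgamma G)}_{L^2(\Omega)} \leq C \Norm{F}_{H^{s,k}} \Norm{G}_{H^{s-1,\min(\{k,s-1\})}}$. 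The strategy, inspired by the isotropic proof of Lemma~\ref{L.commutator-Hs}, is to apply Lemma~\ref{L.product-Hs}(\ref{L.product-Hs-1}) slice-wise in $\varrho$ (with target exponent $0$ and indices $s_1, s_2 \geq 0$ satisfying $s_1 + s_2 \geq s_0$), then integrate in $\varrho$ after using Lemma~\ref{L.embedding} to place the factor carrying fewer $\varrho$-derivatives in $L^\infty_\varrho$.

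When $\bbeta_\varrho \leq \bgamma_\varrho$, the constraint $\bbeta_\varrho + \bgamma_\varrho \leq k$ together with $k \geq 2$ gives $\bbeta_\varrho \leq k - 1$, so Lemma~\ref{L.embedding} affords $\Norm{\partial^\bbeta F}_{L^\infty_\varrho H^{s - |\bbeta_\bx| - 1/2}_\bx} \lesssim \Norm{F}_{H^{s, \bbeta_\varrho+1}} \leq \Norm{F}_{H^{s,k}}$ as long as $|\bbeta_\bx| \leq s - 1$. This pairs with $\Norm{\partial^\bgamma G}_{L^2_\varrho H^{s - 1 - |\bgamma_\bx|}_\bx} \leq \Norm{G}_{H^{s-1, \bgamma_\varrho}} \leq \Norm{G}_{H^{s-1, \min(\{k,s-1\})}}$ (using $\bgamma_\varrho \leq \min(\{k, s-1\})$) and with the slice-wise product estimate, whose budget is satisfied since $(s - |\bbeta_\bx| - \tfrac12) + (s - 1 - |\bgamma_\bx|) \geq s - \tfrac32 \geq s_0$. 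The remaining configuration $|\bbeta_\bx| = s$ forces $\bbeta_\varrho = 0$ and $\bgamma = 0$, and is dispatched by $\Norm{(\partial_\bx^\balpha F) G}_{L^2} \leq \Norm{F}_{H^{s,0}} \Norm{G}_{L^\infty(\Omega)}$ together with the embedding $H^{s_0 + 1/2, 1} \hookrightarrow L^\infty(\Omega)$ from Lemma~\ref{L.embedding}, which yields $\Norm{G}_{L^\infty(\Omega)} \lesssim \Norm{G}_{H^{s-1, \min(\{k, s-1\})}}$ since $s \geq s_0 + \tfrac32$ and $k \geq 2$.

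The symmetric case $\bbeta_\varrho > \bgamma_\varrho$ forces $\bgamma_\varrho \leq (k-1)/2$, and since $s \geq s_0 + \tfrac32$ implies $s \geq 3$ (recall $s_0 > d/2 \geq \tfrac12$), one deduces $\bgamma_\varrho + 1 \leq \min(\{k, s - 1\})$. Lemma~\ref{L.embedding} then provides $\Norm{\partial^\bgamma G}_{L^\infty_\varrho H^{s - 3/2 - |\bgamma_\bx|}_\bx} \lesssim \Norm{G}_{H^{s-1, \min(\{k, s-1\})}}$ whenever $|\bgamma_\bx| \leq s - 2$, and coupling with $\Norm{\partial^\bbeta F}_{L^2_\varrho H^{s - |\bbeta_\bx|}_\bx} \leq \Norm{F}_{H^{s,k}}$ and the slice-wise product estimate closes these configurations. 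The only remaining singular case is $|\bgamma_\bx| = s - 1$, which forces $\bgamma_\varrho = 0$ and $\bbeta = (\bz, 1)$; I treat it separately by $\Norm{(\partial_\varrho F)(\partial_\bx^{\bgamma_\bx} G)}_{L^2} \leq \Norm{\partial_\varrho F}_{L^\infty(\Omega)} \Norm{G}_{H^{s-1, 0}}$, using the embedding $H^{s_0 + 1/2, 2} \hookrightarrow L^\infty(\Omega)$ applied to $\partial_\varrho F$, which is available thanks to $k \geq 2$ and $s \geq s_0 + \tfrac32$.

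The main obstacle is bookkeeping rather than analysis: one must verify, across all admissible splits $(\bbeta, \bgamma)$, that the $\tfrac12$-order spent by the vertical embedding in Lemma~\ref{L.embedding} and the $s_0$-order spent by the horizontal product estimate of Lemma~\ref{L.product-Hs} together fit within the budget $s - \tfrac32 \geq s_0$, and that the factor relegated to $L^\infty_\varrho$ has enough spare $\varrho$-regularity (requiring $k \geq 2$). The extremal subcases $|\bbeta_\bx| = s$ and $|\bgamma_\bx| = s - 1$ illustrate why the full $L^\infty(\Omega)$ embedding is needed as a back-up when the vertical-regularity budget of Lemma~\ref{L.embedding} is exhausted.
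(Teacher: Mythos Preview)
Your overall plan---Leibniz expansion, slice-wise application of Lemma~\ref{L.product-Hs}(\ref{L.product-Hs-1}), and Lemma~\ref{L.embedding} to put one factor into $L^\infty_\varrho$---is correct and is what the paper does. However, there is a systematic indexing error that makes your estimates false as written: throughout, you use the \emph{horizontal} order $|\bbeta_\bx|$ (resp.\ $|\bgamma_\bx|$) where the \emph{total} order $|\bbeta|=|\bbeta_\bx|+\bbeta_\varrho$ (resp.\ $|\bgamma|$) is needed. For instance, your claim $\Norm{\partial^\bgamma G}_{L^2_\varrho H^{s-1-|\bgamma_\bx|}_\bx}\leq \Norm{G}_{H^{s-1,\bgamma_\varrho}}$ fails whenever $\bgamma_\varrho\geq 1$: the left side controls $\partial_\varrho^{\bgamma_\varrho}G$ in $L^2_\varrho H^{s-1}_\bx$, whereas the $H^{s-1,\bgamma_\varrho}$ norm (see~\eqref{def:Hsk}) only controls $\partial_\varrho^{\bgamma_\varrho}G$ in $L^2_\varrho H^{s-1-\bgamma_\varrho}_\bx$. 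The anisotropic norm caps the \emph{total} derivative count by $s$, not the horizontal count alone. The same defect afflicts your embedding step for $F$ and your $L^2_\varrho H^{s-|\bbeta_\bx|}_\bx$ bound in the second case. Replacing $|\bbeta_\bx|,|\bgamma_\bx|$ by $|\bbeta|,|\bgamma|$ fixes everything; the budget then reads $(s-|\bbeta|-\tfrac12)+(s-|\bgamma|-1)\geq s-\tfrac32\geq s_0$, which still closes.

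Once the indices are corrected, your argument is valid but your case split on $\bbeta_\varrho\lessgtr\bgamma_\varrho$ is more convoluted than necessary and forces you to treat two extremal sub-cases via the full $L^\infty(\Omega)$ embedding. The paper instead splits directly on the condition that makes the embedding work for $F$: either $\bbeta_\varrho\leq k-1$ and $|\bbeta|\leq s-1$ (put $F$ in $L^\infty_\varrho$), or the negation, which---since $k\geq 2$---forces $\bgamma_\varrho=0$ and $|\bgamma|\leq s-2$ (put $G$ in $L^\infty_\varrho$). This two-case split needs no further exceptions. A minor point: your claim ``$s\geq s_0+\tfrac32$ implies $s\geq 3$'' only gives $s>2$; the conclusion $\bgamma_\varrho+1\leq\min(\{k,s-1\})$ nonetheless holds, but the correct justification uses $\bgamma_\varrho\leq\lfloor(k-1)/2\rfloor$ together with $k\leq s$.
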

\begin{proof}
	We set two multi-indices $\bbeta=(\bbeta_\bx,\bbeta_\varrho)\in\NN^{d+1}$ and $\bgamma=(\bgamma_\bx,\bgamma_\varrho)\in\NN^{d+1}$ with $\bbeta+\bgamma=\balpha$, and $|\bgamma|\leq s-1$. Assume first that $\bbeta_\varrho\leq k-1$ and $|\bbeta|\leq s-1$. Then 
	\begin{align*}
		\Norm{(\partial^\bbeta F)(\partial^\bgamma G)}_{L^2(\Omega)}
		&\lesssim  \norm{\norm{\partial^\bbeta F(\cdot,\varrho)}_{H^{s-|\bbeta|-\frac12}_\bx}\norm{\partial^\bgamma G(\cdot,\varrho)}_{H^{s-|\bgamma|-1}_\bx} }_{L^2_\varrho}\\
		&\lesssim \Norm{ F}_{H^{s,\bbeta_\varrho+1}}\Norm{ G}_{H^{s-1,\bgamma_\varrho}}\leq \Norm{F}_{H^{s,k}}\Norm{G}_{H^{s-1,\min(\{k,s-1\})}},
	\end{align*}
	where we used Lemma~\ref{L.product-Hs}(\ref{L.product-Hs-1}) with $(s,s_1,s_2)=(0,s-|\bbeta|-\frac12,s-|\bgamma|-1)$, and Lemma~\ref{L.embedding}. Otherwise $\bgamma_\varrho=0$ and $|\bgamma|\leq s-|\bbeta|\leq s-2$, and we have
	\begin{align*}
		\Norm{(\partial^\bbeta F)(\partial^\bgamma G)}_{L^2(\Omega)}
		&\lesssim  \norm{ \norm{\partial^\bbeta F(\cdot,\varrho)}_{H^{s-|\bbeta|}_\bx}\norm{\partial^\bgamma G(\cdot,\varrho)}_{H^{s-|\bgamma|-\frac32}_\bx} }_{L^2_\varrho}\\
		&\lesssim  \Norm{ F}_{H^{s,\bbeta_\varrho}}\Norm{ G}_{H^{s-1,1}} \leq \Norm{F}_{H^{s,k}}\Norm{G}_{H^{s-1,\min(\{k,s-1\})}}.
	\end{align*}
	The claim follows from decomposing $[\partial^\balpha,F]G$ as a sum of products as above.
\end{proof}
\begin{lemma}\label{L.commutator-Hsk-sym}
	Let $d\in\NN^\star$, $s_0>d/2$. Let $s\geq s_0+\frac52$ and $k\in\NN$ such that $2\leq k\leq s$. Then there exists $C>0$ such that for any $\balpha=(\balpha_\bx,\balpha_\varrho)\in\NN^{d+1}$ with $|\balpha|\leq s$ and $\balpha_\varrho\leq k$, one has
	\[\Norm{[\partial^\balpha;F,G]}_{L^2(\Omega)}\leq  C\Norm{F}_{H^{s-1,\min(\{k,s-1\})}}\Norm{G}_{H^{s-1,\min(\{k,s-1\})}}. \]
\end{lemma}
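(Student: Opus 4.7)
The plan is to follow the strategy of Lemma~\ref{L.commutator-Hsk}, exploiting the extra cancellation provided by the symmetrized commutator. I would first expand via Leibniz's rule
\[
[\partial^\balpha; F, G] \;=\; \partial^\balpha(FG) - F\,\partial^\balpha G - G\,\partial^\balpha F \;=\; \sum_{\substack{\bbeta + \bgamma = \balpha \\ |\bbeta|\geq 1,\; |\bgamma|\geq 1}} \binom{\balpha}{\bbeta}\, (\partial^\bbeta F)(\partial^\bgamma G),
\]
the two subtracted terms killing exactly the endpoint contributions $(\bbeta,\bgamma) = (\bz,\balpha)$ and $(\balpha,\bz)$. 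Every surviving index pair therefore satisfies $1\leq |\bbeta|,|\bgamma|\leq s-1$ and $\bbeta_\varrho + \bgamma_\varrho \leq k$, so that neither factor consumes the full $s$ derivatives of $F$ or $G$; the proof reduces to establishing, for each such pair, the tame bound
\[
\Norm{(\partial^\bbeta F)(\partial^\bgamma G)}_{L^2(\Omega)} \;\lesssim\; \Norm{F}_{H^{s-1, k'}}\Norm{G}_{H^{s-1, k'}}, \qquad k' := \min(\{k, s-1\}).
\]

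The decisive step is to decide which factor to estimate in $L^\infty_\varrho$, paying half a horizontal derivative through the trace embedding of Lemma~\ref{L.embedding}, and which to keep in $L^2_\varrho$. Because the target bound is symmetric under $(F,\bbeta)\leftrightarrow(G,\bgamma)$, I would relabel if necessary to ensure that the factor placed in $L^\infty_\varrho$, say $\partial^\bbeta F$, satisfies both $|\bbeta|\leq s-2$ and $\bbeta_\varrho\leq\min(\{k-1,s-2\})$. Such a choice is always available: if both $|\bbeta|,|\bgamma|\leq s-2$, then pigeonholing $\bbeta_\varrho+\bgamma_\varrho\leq k$ yields one index with $\varrho$-part $\leq\lfloor k/2\rfloor\leq k-1$, and one selects that one; in the remaining case $|\bgamma|=s-1$, which forces $|\bbeta|=1$, one selects $\bbeta$ directly since $\bbeta_\varrho\leq 1\leq k-1$. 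Both branches rely on the hypotheses $k\geq 2$ and $s\geq s_0+\tfrac52$ (the latter already implies $s\geq 3$).

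With the small factor fixed, I would then apply Lemma~\ref{L.product-Hs}(\ref{L.product-Hs-1}) in the horizontal variable with indices $r_1:=s-\tfrac32-|\bbeta|$ and $r_2:=s-1-|\bgamma|$. Both are nonnegative by the small-factor choice, and $r_1+r_2=2s-\tfrac52-|\balpha|\geq s-\tfrac52\geq s_0$, so the horizontal product rule applies. Combining with Lemma~\ref{L.embedding}, which yields $\Norm{\partial^\bbeta F}_{L^\infty_\varrho H^{r_1}_\bx}\lesssim\Norm{F}_{H^{|\bbeta|+r_1+1/2,\,\bbeta_\varrho+1}}=\Norm{F}_{H^{s-1,\bbeta_\varrho+1}}$, together with $\Norm{\partial^\bgamma G}_{L^2_\varrho H^{r_2}_\bx}\leq\Norm{G}_{H^{s-1,\bgamma_\varrho}}$, produces the required bound for each term since $\bbeta_\varrho+1\leq k'$ and $\bgamma_\varrho\leq k'$. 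Summing the finitely many contributions of the Leibniz expansion closes the proof. The only delicate point, and the main obstacle, is the case analysis used to pick the $L^\infty_\varrho$ factor: the derivative balance is tight, and the hypothesis $s\geq s_0+\tfrac52$ is essentially sharp, corresponding to one full derivative absorbed by the symmetric cancellation, one half-derivative paid to the trace embedding, and $s_0$ demanded by the horizontal product rule.
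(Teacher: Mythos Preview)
Your proof is correct and follows essentially the same approach as the paper: both expand via Leibniz, exploit the symmetric cancellation of the endpoint terms so that $1\leq|\bbeta|,|\bgamma|\leq s-1$, and then control each surviving product by placing one factor in $L^\infty_\varrho$ via the trace embedding of Lemma~\ref{L.embedding} and the other in $L^2_\varrho$, using Lemma~\ref{L.product-Hs}(\ref{L.product-Hs-1}) with the very same indices $(0,\,s-|\bbeta|-\tfrac32,\,s-|\bgamma|-1)$.

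The only difference is organizational. The paper first proves the bound under the assumption that one factor is ``small'' (say $|\bbeta|\leq s-2$ and $\bbeta_\varrho\leq k-1$), and then shows by a four-way case exclusion that the complementary situation---neither factor small---is impossible. You instead give a constructive recipe for selecting the small factor (pigeonhole on the $\varrho$-parts when both $|\bbeta|,|\bgamma|\leq s-2$; otherwise pick the index of total order $1$). The two arguments are contrapositives of one another and equally valid.
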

\begin{proof}
	We can decompose
	\[[\partial^\balpha;F,G]= \sum_{\bbeta+\bgamma=\balpha} (\partial^\bbeta F)(\partial^\bgamma G)\]
	with multi-indices $\bbeta=(\bbeta_\bx,\bbeta_\varrho)\in\NN^{d+1}$ and $\bgamma=(\bgamma_\bx,\bgamma_\varrho)\in\NN^{d+1}$ such that $|\bbeta|+|\bgamma|\leq s$ and $\bbeta_\varrho+\bgamma_\varrho\leq k$, and $1\leq |\bbeta|,|\bgamma|\leq s-1$. Assume furthermore that $\bbeta_\varrho\leq k-1$ and $|\bbeta|\leq s-2$. Then 
	\begin{align*}
		\Norm{(\partial^\bbeta F)(\partial^\bgamma G)}_{L^2(\Omega)}
		&\lesssim  \norm{ \norm{\partial^\bbeta F(\cdot,\varrho)}_{H^{s-|\bbeta|-\frac32}_\bx}\norm{\partial^\bgamma G(\cdot,\varrho)}_{H^{s-|\bgamma|-1}_\bx} }_{L^2_\varrho}\\
		&\lesssim  \Norm{ F}_{H^{s-1,\bbeta_\varrho+1}}\Norm{ G}_{H^{s-1,\bgamma_\varrho}}\leq  \Norm{F}_{H^{s-1,\min(\{k,s-1\})}}\Norm{G}_{H^{s-1,\min(\{k,s-1\})}},
	\end{align*}
	where we used Lemma~\ref{L.product-Hs}(\ref{L.product-Hs-1}) with $(s,s_1,s_2)=(0,s-|\bbeta|-\frac32,s-|\bgamma|-1)$, and Lemma~\ref{L.embedding}. 
	By symmetry, the result holds if $\bgamma_\varrho\leq k-1$ and $|\bgamma|\leq s-2$. Hence there remains to consider the situation where ($\bbeta_\varrho=k$ or $|\bbeta| =s-1$) and ($\bgamma_\varrho=k$ or $|\bgamma| =s-1$). Since $s> 2$  and $|\bbeta|+|\bgamma|\leq s$, we cannot have  $|\bbeta| =|\bgamma|=s-1$. In the same way, we cannot have $\bbeta_\varrho=\bgamma_\varrho=k$ since $k> 0$.  Furthermore , we cannot have $\bbeta_\varrho=k$  and $|\bgamma| =s-1$, since the former implies $|\bbeta|\geq \bbeta_\varrho=k\geq 2$ and the latter implies $|\bbeta|\leq 1$. Symmetrically, we cannot have $\bgamma_\varrho=k$  and $|\bbeta| =s-1$. This concludes the proof.
\end{proof}

\noindent{\bf Acknowledgments}
VD thanks Eric Blayo for revealing to him the work of Gent and McWilliams, Mahieddine Adim for his careful proofreading, as well as the Centre Henri Lebesgue, program ANR-11-LABX-0020-0. RB is partially supported by the GNAMPA group of INdAM. VD and RB thank Charlotte Perrin for identifying the relationship between Gent and McWilliams eddy-diffusivity contributions and the BD entropy, and the anonymous referee for pointing out many additional relevant references.


\begin{bibdiv}
	\begin{biblist}
		
		\bib{Adim}{unpublished}{
			author={Adim, Mahieddine},
			title={Approximating a continuously stratified hydrostatic system by the
				multi-layer shallow water system},
			note={arXiv preprint:2307.11426},
		}
		
		\bib{AzeradGuillen2001}{article}{
			author={Az\'{e}rad, Pascal},
			author={Guill\'{e}n, Francisco},
			title={Mathematical justification of the hydrostatic approximation in
				the primitive equations of geophysical fluid dynamics},
			date={2001},
			ISSN={0036-1410},
			journal={SIAM J. Math. Anal.},
			volume={33},
			number={4},
			pages={847\ndash 859},
			url={https://doi.org/10.1137/S0036141000375962},
			review={\MR{1884725}},
		}
		
		\bib{BCD11}{book}{
			author={Bahouri, Hajer},
			author={Chemin, Jean-Yves},
			author={Danchin, Rapha{\"e}l},
			title={Fourier analysis and nonlinear partial differential equations},
			publisher={Springer},
			date={2011},
			volume={343},
		}
		
		\bib{Brenier99}{article}{
			author={Brenier, Yann},
			title={Homogeneous hydrostatic flows with convex velocity profiles},
			date={1999},
			ISSN={0951-7715},
			journal={Nonlinearity},
			volume={12},
			number={3},
			pages={495\ndash 512},
			url={https://doi.org/10.1088/0951-7715/12/3/004},
			review={\MR{1690189}},
		}
		
		\bib{Brenner04}{article}{
			author={Brenner, Howard},
			title={Is the tracer velocity of a fluid continuum equal to its mass
				velocity?},
			date={2004},
			journal={Physical Review E},
			volume={70},
			number={6},
			pages={061201},
		}
		
		\bib{Brenner05}{article}{
			author={Brenner, Howard},
			title={Navier-{S}tokes revisited},
			date={2005},
			ISSN={0378-4371},
			journal={Phys. A},
			volume={349},
			number={1-2},
			pages={60\ndash 132},
			url={https://doi.org/10.1016/j.physa.2004.10.034},
			review={\MR{2120925}},
		}
		
		\bib{Brenner12}{article}{
			author={Brenner, Howard},
			title={Beyond {N}avier-{S}tokes},
			date={2012},
			ISSN={0020-7225},
			journal={Internat. J. Engrg. Sci.},
			volume={54},
			pages={67\ndash 98},
			url={https://doi.org/10.1016/j.ijengsci.2012.01.006},
			review={\MR{2903008}},
		}
		
		\bib{BreschDesjardins03}{article}{
			author={Bresch, Didier},
			author={Desjardins, Beno\^{i}t},
			title={Existence of global weak solutions for a 2{D} viscous shallow
				water equations and convergence to the quasi-geostrophic model},
			date={2003},
			ISSN={0010-3616},
			journal={Comm. Math. Phys.},
			volume={238},
			number={1-2},
			pages={211\ndash 223},
			url={https://doi.org/10.1007/s00220-003-0859-8},
			review={\MR{1989675}},
		}
		
		\bib{BreschDesjardins04}{article}{
			author={Bresch, Didier},
			author={Desjardins, Beno{\^{\i}}t},
			title={Some diffusive capillary models of {Korteweg} type},
			language={French},
			date={2004},
			ISSN={1631-0721},
			journal={C. R., M{\'e}c., Acad. Sci. Paris},
			volume={332},
			number={11},
			pages={881\ndash 886},
		}
		
		\bib{BreschDesjardinsLin03}{article}{
			author={Bresch, Didier},
			author={Desjardins, Beno\^{i}t},
			author={Lin, Chi-Kun},
			title={On some compressible fluid models: {K}orteweg, lubrication, and
				shallow water systems},
			date={2003},
			ISSN={0360-5302},
			journal={Comm. Partial Differential Equations},
			volume={28},
			number={3-4},
			pages={843\ndash 868},
			url={https://doi.org/10.1081/PDE-120020499},
			review={\MR{1978317}},
		}
		
		\bib{BreschDesjardinsZatorska15}{article}{
			author={Bresch, Didier},
			author={Desjardins, Beno\^{i}t},
			author={Zatorska, Ewelina},
			title={Two-velocity hydrodynamics in fluid mechanics: {P}art {II}.
				{E}xistence of global {$\kappa$}-entropy solutions to the compressible
				{N}avier-{S}tokes systems with degenerate viscosities},
			date={2015},
			ISSN={0021-7824},
			journal={J. Math. Pures Appl. (9)},
			volume={104},
			number={4},
			pages={801\ndash 836},
			url={https://doi.org/10.1016/j.matpur.2015.05.004},
			review={\MR{3394617}},
		}
		
		\bib{BreschNoble07}{article}{
			author={Bresch, Didier},
			author={Noble, Pascal},
			title={Mathematical justification of a shallow water model},
			date={2007},
			ISSN={1073-2772},
			journal={Methods Appl. Anal.},
			volume={14},
			number={2},
			pages={87\ndash 117},
			url={https://doi.org/10.4310/MAA.2007.v14.n2.a1},
			review={\MR{2437099}},
		}
		
		\bib{BreschVasseurYu22}{article}{
			author={Bresch, Didier},
			author={Vasseur, Alexis~F.},
			author={Yu, Cheng},
			title={Global existence of entropy-weak solutions to the compressible
				{N}avier-{S}tokes equations with non-linear density dependent viscosities},
			date={2022},
			ISSN={1435-9855},
			journal={J. Eur. Math. Soc. (JEMS)},
			volume={24},
			number={5},
			pages={1791\ndash 1837},
			url={https://doi.org/10.4171/jems/1143},
			review={\MR{4404790}},
		}
		
		\bib{CaoIbrahimNakanishiEtAl15}{article}{
			author={Cao, Chongsheng},
			author={Ibrahim, Slim},
			author={Nakanishi, Kenji},
			author={Titi, Edriss~S.},
			title={Finite-time blowup for the inviscid primitive equations of
				oceanic and atmospheric dynamics},
			date={2015},
			ISSN={0010-3616},
			journal={Comm. Math. Phys.},
			volume={337},
			number={2},
			pages={473\ndash 482},
			url={https://doi.org/10.1007/s00220-015-2365-1},
			review={\MR{3339156}},
		}
		
		\bib{CT2014}{article}{
			author={Cao, Chongsheng},
			author={Li, Jinkai},
			author={Titi, Edriss~S.},
			title={Local and global well-posedness of strong solutions to the 3{D}
				primitive equations with vertical eddy diffusivity},
			date={2014},
			ISSN={0003-9527},
			journal={Arch. Ration. Mech. Anal.},
			volume={214},
			number={1},
			pages={35\ndash 76},
			url={https://doi.org/10.1007/s00205-014-0752-y},
			review={\MR{3237881}},
		}
		
		\bib{CT2016}{article}{
			author={Cao, Chongsheng},
			author={Li, Jinkai},
			author={Titi, Edriss~S.},
			title={Global well-posedness of the three-dimensional primitive
				equations with only horizontal viscosity and diffusion},
			date={2016},
			ISSN={0010-3640},
			journal={Comm. Pure Appl. Math.},
			volume={69},
			number={8},
			pages={1492\ndash 1531},
			url={https://doi.org/10.1002/cpa.21576},
			review={\MR{3518238}},
		}
		
		\bib{CaoTiti07}{article}{
			author={Cao, Chongsheng},
			author={Titi, Edriss~S.},
			title={Global well-posedness of the three-dimensional viscous primitive
				equations of large scale ocean and atmosphere dynamics},
			date={2007},
			ISSN={0003-486X},
			journal={Ann. of Math. (2)},
			volume={166},
			number={1},
			pages={245\ndash 267},
			url={https://doi.org/10.4007/annals.2007.166.245},
			review={\MR{2342696}},
		}
		
		\bib{ChapronCrisanHolmEtAl23}{book}{
			editor={Chapron, Bertrand},
			editor={Crisan, Dan},
			editor={Holm, Darryl},
			editor={M{\'e}min, Etienne},
			editor={Radomska, Anna},
			title={Stochastic transport in upper ocean dynamics. {STUOD} 2021
				workshop, {London}, {UK}, {September} 20--23, 2021},
			language={English},
			series={Math. Planet Earth},
			publisher={Cham: Springer},
			date={2023},
			volume={10},
			ISBN={978-3-031-18987-6; 978-3-031-18990-6; 978-3-031-18988-3},
		}
		
		\bib{ChapronCrisanHolmEtAl24}{book}{
			editor={Chapron, Bertrand},
			editor={Crisan, Dan},
			editor={Holm, Darryl},
			editor={M\'{e}min, Etienne},
			editor={Radomska, Anna},
			title={Stochastic {T}ransport in {U}pper {O}cean {D}ynamics {II}},
			series={Mathematics of Planet Earth},
			publisher={Springer, Cham},
			date={2024},
			volume={11},
			ISBN={978-3-031-40093-3; 978-3-031-40094-0},
			url={https://doi.org/10.1007/978-3-031-40094-0},
			note={STUOD 2022 Workshop, London, UK, September 26--29},
			review={\MR{4676410}},
		}
		
		\bib{ChaudhuriGwiazdaZatorska23}{article}{
			author={Chaudhuri, Nilasis},
			author={Gwiazda, Piotr},
			author={Zatorska, Ewelina},
			title={Analysis of the generalized aw-rascle model},
			date={2023},
			journal={Communications in Partial Differential Equations},
			volume={48},
			number={3},
			pages={440\ndash 477},
		}
		
		\bib{CoudercDuranVila17}{article}{
			author={Couderc, F.},
			author={Duran, A.},
			author={Vila, J.-P.},
			title={An explicit asymptotic preserving low {F}roude scheme for the
				multilayer shallow water model with density stratification},
			date={2017},
			ISSN={0021-9991},
			journal={J. Comput. Phys.},
			volume={343},
			pages={235\ndash 270},
			url={https://doi.org/10.1016/j.jcp.2017.04.018},
			review={\MR{3654059}},
		}
		
		\bib{Desjardins-Lannes-Saut}{article}{
			author={Desjardins, Beno\^{\i}t},
			author={Lannes, David},
			author={Saut, Jean-Claude},
			title={Normal mode decomposition and dispersive and nonlinear mixing in
				stratified fluids},
			date={2021},
			ISSN={2523-367X},
			journal={Water Waves},
			volume={3},
			number={1},
			pages={153\ndash 192},
			url={https://doi.org/10.1007/s42286-020-00041-x},
			review={\MR{4246392}},
		}
		
		\bib{Duchene16}{article}{
			author={Duch\^{e}ne, Vincent},
			title={The multilayer shallow water system in the limit of small density
				contrast},
			date={2016},
			ISSN={0921-7134},
			journal={Asymptot. Anal.},
			volume={98},
			number={3},
			pages={189\ndash 235},
			url={https://doi.org/10.3233/ASY-161366},
			review={\MR{3512900}},
		}
		
		\bib{DuranVilaBaraille17}{article}{
			author={Duran, Arnaud},
			author={Vila, Jean-Paul},
			author={Baraille, R\'{e}my},
			title={Semi-implicit staggered mesh scheme for the multi-layer shallow
				water system},
			date={2017},
			ISSN={1631-073X},
			journal={C. R. Math. Acad. Sci. Paris},
			volume={355},
			number={12},
			pages={1298\ndash 1306},
			url={https://doi.org/10.1016/j.crma.2017.09.011},
			review={\MR{3730506}},
		}
		
		\bib{FeireislGwiazdaSwierczewska-GwiazdaEtAl16}{article}{
			author={Feireisl, Eduard},
			author={Gwiazda, Piotr},
			author={\'{S}wierczewska Gwiazda, Agnieszka},
			author={Wiedemann, Emil},
			title={Dissipative measure-valued solutions to the compressible
				{N}avier-{S}tokes system},
			date={2016},
			ISSN={0944-2669},
			journal={Calc. Var. Partial Differential Equations},
			volume={55},
			number={6},
			pages={Art. 141, 20},
			url={https://doi.org/10.1007/s00526-016-1089-1},
			review={\MR{3567640}},
		}
		
		\bib{FeireislVasseur10}{incollection}{
			author={Feireisl, Eduard},
			author={Vasseur, Alexis},
			title={New perspectives in fluid dynamics: mathematical analysis of a
				model proposed by {H}oward {B}renner},
			date={2010},
			booktitle={New directions in mathematical fluid mechanics},
			series={Adv. Math. Fluid Mech.},
			publisher={Birkh\"{a}user Verlag, Basel},
			pages={153\ndash 179},
			review={\MR{2732009}},
		}
		
		\bib{flandoliGL2021-2}{article}{
			author={Flandoli, Franco},
			author={Galeati, Lucio},
			author={Luo, Dejun},
			title={Mixing, dissipation enhancement and convergence rates for scaling
				limit of spdes with transport noise},
			date={2021},
			journal={arXiv e-prints},
			eprint={arXiv:2104.01740},
		}
		
		\bib{flandoliGL2021}{article}{
			author={Flandoli, Franco},
			author={Galeati, Lucio},
			author={Luo, Dejun},
			title={Eddy heat exchange at the boundary under white noise turbulence},
			date={2022},
			ISSN={1364-503X},
			journal={Philos. Trans. Roy. Soc. A},
			volume={380},
			number={2219},
			pages={Paper No. 20210096, 13},
			review={\MR{4402221}},
		}
		
		\bib{FurukawaGigaHieberEtAl20}{article}{
			author={Furukawa, Ken},
			author={Giga, Yoshikazu},
			author={Hieber, Matthias},
			author={Hussein, Amru},
			author={Kashiwabara, Takahito},
			author={Wrona, Marc},
			title={Rigorous justification of the hydrostatic approximation for the
				primitive equations by scaled {N}avier-{S}tokes equations},
			date={2020},
			ISSN={0951-7715},
			journal={Nonlinearity},
			volume={33},
			number={12},
			pages={6502\ndash 6516},
			url={https://doi.org/10.1088/1361-6544/aba509},
			review={\MR{4164683}},
		}
		
		\bib{Gallay}{article}{
			author={Gallay, Thierry},
			title={Stability of vortices in ideal fluids: the legacy of kelvin and
				rayleigh},
			date={2018},
			volume={10},
			pages={42\ndash 59},
			note={Hyperbolic Problems: Theory, Numerics, Applications, Proceedings
				of HYP2018},
		}
		
		\bib{GMCW90}{article}{
			author={Gent, P.~R.},
			author={McWilliams, J.~C.},
			title={Isopycnal mixing in ocean circulation models},
			date={1990},
			journal={J. Phys. Oceanogr.},
			volume={20},
			number={1},
			pages={150\ndash 155},
			url={https://doi.org/10.1175/1520-0485(1990)020<0150:IMIOCM>2.0.CO;2},
		}
		
		\bib{Gent93}{article}{
			author={Gent, Peter~R},
			title={The energetically consistent shallow-water equations},
			date={1993},
			journal={Journal of the atmospheric sciences},
			volume={50},
			number={9},
			pages={1323\ndash 1325},
		}
		
		\bib{Gent01}{incollection}{
			author={Gent, Peter~R.},
			title={Parameterizing eddies in ocean climate models},
			language={English},
			date={2001},
			booktitle={Iutam symposium on advances in mathematical modelling of
				atmosphere and ocean dynamics. proceedings of the iutam symposium, limerick,
				ireland, july 2--7, 2000.},
			publisher={Dordrecht: Kluwer Academic Publishers},
			pages={19\ndash 30},
		}
		
		\bib{Gent11}{article}{
			author={Gent, Peter~R.},
			title={The gent--mcwilliams parameterization: 20/20 hindsight},
			date={2011},
			journal={Ocean Modelling},
			volume={39},
			number={1-2},
			pages={2\ndash 9},
		}
		
		\bib{GMCW96}{article}{
			author={Gent, P.R.},
			author={McWilliams, J.C.},
			title={Eliassen–palm fluxes and the momentum equation in
				non-eddy-resolving ocean circulation models},
			date={1996},
			journal={J. of Phys. Oceanogr.},
			volume={6},
			number={11},
			pages={2539\ndash 2546},
		}
		
		\bib{GMCW95}{article}{
			author={Gent, P.R.},
			author={Willebrand, J.},
			author={McDougall, T.J.},
			author={McWilliams, J.C.},
			title={Parameterizing eddy-induced tracer transports in ocean
				circulation models},
			date={1995},
			journal={J. of Phys. Oceanogr.},
			volume={25},
			number={4},
			pages={463\ndash 474},
		}
		
		\bib{GerbeauPerthame01}{article}{
			author={Gerbeau, J.-F.},
			author={Perthame, B.},
			title={Derivation of viscous {S}aint-{V}enant system for laminar shallow
				water; numerical validation},
			date={2001},
			ISSN={1531-3492},
			journal={Discrete Contin. Dyn. Syst. Ser. B},
			volume={1},
			number={1},
			pages={89\ndash 102},
			url={https://doi.org/10.3934/dcdsb.2001.1.89},
			review={\MR{1821555}},
		}
		
		\bib{GisclonLacroix-Violet15}{article}{
			author={Gisclon, M.},
			author={Lacroix-Violet, I.},
			title={About the barotropic compressible quantum {N}avier-{S}tokes
				equations},
			date={2015},
			ISSN={0362-546X},
			journal={Nonlinear Anal.},
			volume={128},
			pages={106\ndash 121},
			url={https://doi.org/10.1016/j.na.2015.07.006},
			review={\MR{3399521}},
		}
		
		\bib{Grenier99}{article}{
			author={Grenier, Emmanuel},
			title={On the derivation of homogeneous hydrostatic equations},
			date={1999},
			ISSN={0764-583X},
			journal={M2AN Math. Model. Numer. Anal.},
			volume={33},
			number={5},
			pages={965\ndash 970},
			url={https://doi.org/10.1051/m2an:1999128},
			review={\MR{1726718}},
		}
		
		\bib{Griffies2003}{book}{
			author={Griffies, Stephen~M.},
			title={Fundamentals of ocean climate models},
			publisher={Princeton University Press, Princeton, NJ},
			date={2004},
			ISBN={0-691-11892-2},
			note={With a foreword by Trevor J. McDougall},
			review={\MR{2079980}},
		}
		
		\bib{GuermondPopov14}{article}{
			author={Guermond, Jean-Luc},
			author={Popov, Bojan},
			title={Viscous regularization of the {E}uler equations and entropy
				principles},
			date={2014},
			ISSN={0036-1399},
			journal={SIAM J. Appl. Math.},
			volume={74},
			number={2},
			pages={284\ndash 305},
			url={https://doi.org/10.1137/120903312},
			review={\MR{3176331}},
		}
		
		\bib{GustafssonSundstroem78}{article}{
			author={Gustafsson, Bertil},
			author={Sundstr\"{o}m, Arne},
			title={Incompletely parabolic problems in fluid dynamics},
			date={1978},
			ISSN={0036-1399},
			journal={SIAM J. Appl. Math.},
			volume={35},
			number={2},
			pages={343\ndash 357},
			url={https://doi.org/10.1137/0135030},
			review={\MR{502778}},
		}
		
		\bib{HanKwanNguyen16}{article}{
			author={Han-Kwan, Daniel},
			author={Nguyen, Toan~T.},
			title={Ill-posedness of the hydrostatic {E}uler and singular {V}lasov
				equations},
			date={2016},
			ISSN={0003-9527},
			journal={Arch. Ration. Mech. Anal.},
			volume={221},
			number={3},
			pages={1317\ndash 1344},
			url={https://doi.org/10.1007/s00205-016-0985-z},
			review={\MR{3509003}},
		}
		
		\bib{Howard61}{article}{
			author={Howard, Louis~N.},
			title={A note on the existence of certain viscous flows},
			date={1961},
			journal={J. Math. and Phys.},
			volume={40},
			pages={172\ndash 176},
		}
		
		\bib{IbrahimLinTiti21}{article}{
			author={Ibrahim, Slim},
			author={Lin, Quyuan},
			author={Titi, Edriss~S.},
			title={Finite-time blowup and ill-posedness in {S}obolev spaces of the
				inviscid primitive equations with rotation},
			date={2021},
			ISSN={0022-0396},
			journal={J. Differential Equations},
			volume={286},
			pages={557\ndash 577},
			url={https://doi.org/10.1016/j.jde.2021.03.037},
			review={\MR{4235246}},
		}
		
		\bib{Juengel10}{article}{
			author={J\"{u}ngel, Ansgar},
			title={Global weak solutions to compressible {N}avier-{S}tokes equations
				for quantum fluids},
			date={2010},
			ISSN={0036-1410},
			journal={SIAM J. Math. Anal.},
			volume={42},
			number={3},
			pages={1025\ndash 1045},
			url={https://doi.org/10.1137/090776068},
			review={\MR{2644915}},
		}
		
		\bib{KlainermanMajda82}{article}{
			author={Klainerman, Sergiu},
			author={Majda, Andrew},
			title={Compressible and incompressible fluids},
			date={1982},
			ISSN={0010-3640},
			journal={Comm. Pure Appl. Math.},
			volume={35},
			number={5},
			pages={629\ndash 651},
			review={\MR{668409 (84a:35264)}},
		}
		
		\bib{KornTiti}{unpublished}{
			author={Korn, Peter},
			author={Titi, Edriss~S.},
			title={Global well-posedness of the primitive equations of large-scale
				ocean dynamics with the gent-mcwilliams-redi eddy parametrization model},
			note={arXiv preprint:2304.03242},
		}
		
		\bib{KukavicaTemamVicolEtAl11}{article}{
			author={Kukavica, Igor},
			author={Temam, Roger},
			author={Vicol, Vlad~C.},
			author={Ziane, Mohammed},
			title={Local existence and uniqueness for the hydrostatic {E}uler
				equations on a bounded domain},
			date={2011},
			ISSN={0022-0396},
			journal={J. Differential Equations},
			volume={250},
			number={3},
			pages={1719\ndash 1746},
			url={https://doi.org/10.1016/j.jde.2010.07.032},
			review={\MR{2737223}},
		}
		
		\bib{Lannesbook}{book}{
			author={Lannes, David},
			title={The water waves problem},
			series={Mathematical Surveys and Monographs},
			publisher={American Mathematical Society, Providence, RI},
			date={2013},
			volume={188},
			ISBN={978-0-8218-9470-5},
			url={https://doi.org/10.1090/surv/188},
			note={Mathematical analysis and asymptotics},
			review={\MR{3060183}},
		}
		
		\bib{LiTiti2019}{article}{
			author={Li, Jinkai},
			author={Titi, Edriss~S.},
			title={The primitive equations as the small aspect ratio limit of the
				{N}avier-{S}tokes equations: rigorous justification of the hydrostatic
				approximation},
			date={2019},
			ISSN={0021-7824},
			journal={J. Math. Pures Appl. (9)},
			volume={124},
			pages={30\ndash 58},
			url={https://doi.org/10.1016/j.matpur.2018.04.006},
			review={\MR{3926040}},
		}
		
		\bib{LiTitiYuan21}{article}{
			author={Li, Jinkai},
			author={Titi, Edriss~S.},
			author={Yuan, Guozhi},
			title={The primitive equations approximation of the anisotropic
				horizontally viscous 3{$D$} {N}avier-{S}tokes equations},
			date={2022},
			ISSN={0022-0396},
			journal={J. Differential Equations},
			volume={306},
			pages={492\ndash 524},
			url={https://doi.org/10.1016/j.jde.2021.10.048},
			review={\MR{4335132}},
		}
		
		\bib{Marche07}{article}{
			author={Marche, Fabien},
			title={Derivation of a new two-dimensional viscous shallow water model
				with varying topography, bottom friction and capillary effects},
			date={2007},
			ISSN={0997-7546},
			journal={Eur. J. Mech. B Fluids},
			volume={26},
			number={1},
			pages={49\ndash 63},
			url={https://doi.org/10.1016/j.euromechflu.2006.04.007},
			review={\MR{2281291}},
		}
		
		\bib{MasmoudiWong12}{article}{
			author={Masmoudi, Nader},
			author={Wong, Tak~Kwong},
			title={On the {$H^s$} theory of hydrostatic {E}uler equations},
			date={2012},
			ISSN={0003-9527},
			journal={Arch. Ration. Mech. Anal.},
			volume={204},
			number={1},
			pages={231\ndash 271},
			url={https://doi.org/10.1007/s00205-011-0485-0},
			review={\MR{2898740}},
		}
		
		\bib{MelletVasseur07}{article}{
			author={Mellet, A.},
			author={Vasseur, A.},
			title={On the barotropic compressible {N}avier-{S}tokes equations},
			date={2007},
			ISSN={0360-5302},
			journal={Comm. Partial Differential Equations},
			volume={32},
			number={1-3},
			pages={431\ndash 452},
			url={https://doi.org/10.1080/03605300600857079},
			review={\MR{2304156}},
		}
		
		\bib{Miles61}{article}{
			author={Miles, John~W.},
			title={On the stability of heterogeneous shear flows},
			date={1961},
			journal={J. Fluid Mech.},
			volume={10},
			pages={496\ndash 508},
		}
		
		\bib{PaicuZhangZhang20}{article}{
			author={Paicu, Marius},
			author={Zhang, Ping},
			author={Zhang, Zhifei},
			title={On the hydrostatic approximation of the {N}avier-{S}tokes
				equations in a thin strip},
			date={2020},
			ISSN={0001-8708},
			journal={Adv. Math.},
			volume={372},
			pages={107293, 42},
			url={https://doi.org/10.1016/j.aim.2020.107293},
			review={\MR{4125518}},
		}
		
		\bib{PuZhou22}{article}{
			author={Pu, Xueke},
			author={Zhou, Wenli},
			title={On the rigorous mathematical derivation for the viscous primitive
				equations with density stratification},
			date={2023},
			ISSN={0252-9602},
			journal={Acta Math. Sci. Ser. B (Engl. Ed.)},
			volume={43},
			number={3},
			pages={1081\ndash 1104},
			url={https://doi.org/10.1007/s10473-023-0306-1},
			review={\MR{4581569}},
		}
		
		\bib{PuZhou21}{article}{
			author={Pu, Xueke},
			author={Zhou, Wenli},
			title={Rigorous derivation of the full primitive equations by the scaled
				{B}oussinesq equations with rotation},
			date={2023},
			ISSN={0126-6705},
			journal={Bull. Malays. Math. Sci. Soc.},
			volume={46},
			number={3},
			pages={Paper No. 88, 23},
			url={https://doi.org/10.1007/s40840-023-01482-6},
			review={\MR{4565034}},
		}
		
		\bib{Redi82}{article}{
			author={Redi, Martha~H.},
			title={Oceanic isopycnal mixing by coordinate rotation},
			date={1982},
			journal={Journal of Physical Oceanography},
			volume={12},
			number={10},
			pages={1154\ndash 1158},
		}
		
		\bib{Renardy09}{article}{
			author={Renardy, Michael},
			title={Ill-posedness of the hydrostatic {E}uler and {N}avier-{S}tokes
				equations},
			date={2009},
			ISSN={0003-9527},
			journal={Arch. Ration. Mech. Anal.},
			volume={194},
			number={3},
			pages={877\ndash 886},
			url={https://doi.org/10.1007/s00205-008-0207-4},
			review={\MR{2563627}},
		}
		
		\bib{Simon87}{article}{
			author={Simon, Jacques},
			title={Compact sets in the space {$L^p(0,T;B)$}},
			date={1987},
			ISSN={0003-4622},
			journal={Ann. Mat. Pura Appl. (4)},
			volume={146},
			pages={65\ndash 96},
		}
		
		\bib{VasseurYu16}{article}{
			author={Vasseur, Alexis~F.},
			author={Yu, Cheng},
			title={Existence of global weak solutions for 3{D} degenerate
				compressible {N}avier-{S}tokes equations},
			date={2016},
			ISSN={0020-9910},
			journal={Invent. Math.},
			volume={206},
			number={3},
			pages={935\ndash 974},
			url={https://doi.org/10.1007/s00222-016-0666-4},
			review={\MR{3573976}},
		}
		
		\bib{Wong15}{article}{
			author={Wong, Tak~Kwong},
			title={Blowup of solutions of the hydrostatic {E}uler equations},
			date={2015},
			ISSN={0002-9939},
			journal={Proc. Amer. Math. Soc.},
			volume={143},
			number={3},
			pages={1119\ndash 1125},
			url={https://doi.org/10.1090/S0002-9939-2014-12243-X},
			review={\MR{3293727}},
		}
		
	\end{biblist}
	\end{bibdiv}

\end{document}